\newcolumntype{C}[1]{>{\centering\arraybackslash$}p{#1}<{$}}
\newtheorem{theorem}[equation]{Theorem}
\newtheorem{proposition}[equation]{Proposition}
\newtheorem{lemma}[equation]{Lemma}
\newtheorem*{myLemma}{Lemma}
\newtheorem{corollary}[equation]{Corollary}
\theoremstyle{definition}
\newtheorem{definition}[equation]{Definition}
\newtheorem{example}[equation]{Example}
\newtheorem*{notation}{Terminology}
\theoremstyle{remark}
\newtheorem*{ack}{Acknowledgments}
\newtheorem{remark}[equation]{Remark}
\DeclareMathAlphabet{\mathscr}{LS1}{stixscr}{m}{n}
\newcommand\xleftrightarrow[2][]{%
  \ext@arrow 9999{\longleftrightarrowfill@}{#1}{#2}}
\newcommand\longleftrightarrowfill@{%
  \arrowfill@\leftarrow\relbar\rightarrow}
\numberwithin{equation}{section}
\def\one{\mathbb{1}}
\def\dun{d^{\textrm{un}}}
\def\cI{\mathcal{I}}
\def\cD{\mathcal{D}}
\def\cC{\mathcal{C}}
\def\cF{\mathcal{F}}
\def\cH{\mathcal{H}}
\def\cG{\mathcal{G}}
\def\cB{\mathcal{B}}
\def\Z{\mathbb{Z}}
\def\R{\mathbb{R}}
\def\C{\mathbb{C}}
\def\F{\mathbb{F}}
\def\S{\mathbb{S}}
\def\cA{\mathcal{A}}
\def\cS{\mathcal{S}}
\def\cP{\mathcal{P}}
\def\sas{\mathscr{s}}
\def\sad{\mathscr{d}}
\def\saq{\mathscr{q}}
\def\sah{\mathscr{h}}
\def\sat{\mathscr{t}}
\def\sal{\mathscr{l}}
\def\sao{\mathscr{o}}
\def\sl{\mathfrak{sl}}
\def\sln{\sl_N} 
\def\one{\mathbf{1}}
\def\Lpm{L_+}
\def\Lmm{L_-}
\def\Lom{L_0}
\newsavebox{\@brx}
\newcommand{\lllangle}[1][]{\savebox{\@brx}{\(\m@th{#1\langle}\)}%
  \mathopen{\copy\@brx\kern-0.5\wd\@brx\usebox{\@brx}}}
\newcommand{\rrrangle}[1][]{\savebox{\@brx}{\(\m@th{#1\rangle}\)}%
  \mathclose{\copy\@brx\kern-0.5\wd\@brx\usebox{\@brx}}}
\newcommand{\llbra}[1][]{\savebox{\@brx}{\(\m@th{#1[}\)}%
  \mathopen{\copy\@brx\kern-0.5\wd\@brx\usebox{\@brx}}}
\newcommand{\rrbra}[1][]{\savebox{\@brx}{\(\m@th{#1]}\)}%
  \mathclose{\copy\@brx\kern-0.5\wd\@brx\usebox{\@brx}}}
\newfont{\letterbeta}{diagram}
\newcommand{\smalldown}{\mskip0.2mu{\textrm{\letterbeta U}}\mskip-0.2mu}
\newcommand{\smallloop}{{\textrm{\letterbeta X}\mskip-0.1mu}}
\newcommand{\smallcircle}{{\textrm{\letterbeta V}}\mskip-0.1mu}
\newcommand{\otherdown}{{\textrm{\letterbeta Z}}\mskip-0.1mu}
\tikzset{myendarrow/.style={%
      decoration={%
	markings, %
	mark=at position 0.9 with {\arrow{latex}}%
      },%
      postaction={decorate}%
  }}
  \tikzset{mymidarrow/.style={%
      decoration={%
	markings, %
	mark=at position 0.5 with {\arrow{latex}}%
      },%
      postaction={decorate}%
  }}
  \tikzset{mybegarrow/.style={%
      decoration={%
	markings, %
	mark=at position 0.2 with {\arrow{latex}}%
      },%
      postaction={decorate}%
    }}
\def\rrbran{\rrbra} 
\DeclareMathOperator{\kh}{Kh}
\DeclareMathOperator{\Gr}{Gr}
\DeclareMathOperator{\KR}{KR}
\DeclareMathOperator{\EKR}{EKR}
\DeclareMathOperator{\Lee}{Lee}
\DeclareMathOperator{\ELee}{ELee}
\DeclareMathOperator{\LeeP}{LeeP}
\DeclareMathOperator{\KRP}{KRP}
\DeclareMathOperator{\RT}{RT}
\DeclareMathOperator{\DP}{DRT}
\DeclareMathOperator{\EM}{EM}
\DeclareMathOperator{\DDJ}{DJ}
\DeclareMathOperator{\Ext}{Ext}
\DeclareMathOperator{\Hom}{Hom}
\DeclareMathOperator{\Mor}{Mor}
\DeclareMathOperator{\Res}{Res}
\DeclareMathOperator{\Tot}{Tot}
\DeclareMathOperator{\Ind}{Ind}
\DeclareMathOperator{\Iso}{Iso}
\DeclareMathOperator{\lk}{lk}
\DeclareMathOperator{\Cube}{Cube}
\DeclareMathOperator{\SCube}{SCube}
\DeclareMathOperator{\ExCube}{\Cube^+}
\DeclareMathOperator{\cSym}{\mathbf{Sym}} 
\DeclareMathOperator{\Sym}{Sym} 
\DeclareMathOperator{\ev}{ev}
\DeclareMathOperator{\Cr}{Cr}
\DeclareMathOperator{\supp}{supp}
\newcommand{\bnbracket}[1]{[\kern-1.5pt [ #1 ]\kern-1.5pt]_{\operatorname{BN}}}
\newcommand{\Kom}[1]{\operatorname{Kom}\left( #1 \right)}
\newcommand{\ol}[1]{\overline{#1}}
\newcommand{\wt}[1]{\widetilde{#1}}
\newcommand{\wh}[1]{\widehat{#1}}
\newcommand{\Foam}{\mathbf{Foam}}
\newcommand{\Vect}{\mathbf{Vect}}
\newcommand{\Kar}{\mathbf{Kar}}
\def\SFoam{\S\Foam}
\def\SigFoam{\Sigma\Foam}
\def\SigPFoam{\Sigma'\Foam}
\def\OFoam{0\Foam}
\def\KO{\Kar^0(\SigPFoam_N)}
\def\KFoam{\Kom{\SFoam_N}}
\title{Khovanov-Rozansky $\sln$-homology for periodic links}
\author{Maciej Borodzik}
\address{Institute of Mathematics of Polish Academy of Science, ul \'Sniadeckich 8, 00-656 Warsaw, Poland}
\email{mcboro@mimuw.edu.pl}
\author{Wojciech Politarczyk}
\address{Institute of Mathematics, University of Warsaw, ul. Banacha 2,
02-097 Warsaw, Poland.}
\email{wpolitarczyk@mimuw.edu.pl}
\author{Ramazan Yozgyur}
\address{Institute of Mathematics, University of Warsaw, ul. Banacha 2,
02-097 Warsaw, Poland.}
\email{ryozgyur@mimuw.edu.pl}
\date{\today}
\subjclass[2010]{primary: 57M25. } 
\keywords{periodic links, $\sln$ homology}
\begin{document}
\begin{abstract}
  For an $m$-periodic link $L$, we show that the Khovanov-Rozansky $\sln$-homology carries an action of the group $\Z_m$.
  As an example of applications, we prove an analog of the periodicity criterion of Borodzik--Politarczyk using $\sln$-homology instead
  of Khovanov homology.
\end{abstract}
\maketitle

\section{Introduction}
\subsection{Overview}
Let $L\subset S^3$ be a link. For $m\ge 2$, we say that $L$ is \emph{$m$-periodic}, if it is invariant under a semi-free $\Z_m$-action on $S^3$
and $L$ is disjoint from the fixed point set. 
Given a periodic link, we ask in what way the symmetry of the link is reflected in the structure of link invariants.
An exemplary answer is the Murasugi formula \cite{Murasugi}. Not only it gives a useful periodicity
criterion, but also it relates the Alexander polynomial of $L$ and the Alexander polynomial of its quotient. 

The symmetry of the link is also visible in Khovanov homology.
Equivariant Khovanov homology for periodic links was defined in \cite{Politarczyk-Khovanov}. 
If $R$ is the coefficient ring for Khovanov homology and the group algebra $R[\Z_m]$ is semisimple, the
equivariant Khovanov homology of a link is equal as an $R$-module, to the Khovanov homology. That is to say, the group action on $S^3$
induces a well-defined group action on the Khovanov homology modules $\kh(L;R)$. Using this fact, the
first two authors established an obstruction for periodicity of link based on Khovanov homology. This obstruction refines
previously known obstructions based on the Jones polynomial \cite{Traczyk,Traczyk2,Politarczyk-Jones}.
Furthermore, a relation between Khovanov homology of the link and of its quotient was recently 
found in \cite{BPS,StoffregenZhang}. 

The goal of the present article is to prove the results of \cite{BP,Politarczyk-Khovanov} in
the case of $\sln$-homology.
$\sln$-homology for links was introduced by Khovanov and Rozansky \cite{Khovanov_Rozanski1,Khovanov_Rozanski2} 
as a generalization of Khovanov homology.
The original construction
used matrix factorizations. Over the years, an alternative approach using webs and foams has been  developed, see
\cite{Queffelec_Rose_foam,RobertWagner,ETW}. This approach, sketched in
Section~\ref{sec:webs_and_foams} below, is a generalization of Bar-Natan's definition of Khovanov homology~\cite{BarNatan}. 
This combinatorial definition
turns out to be very well-suited for studying periodic links.

\subsection{Main results}
In this paper we use the approach of \cite{Politarczyk-Jones,Politarczyk-Khovanov,BP} in the context of 
$\sln$-homology. Namely, we show that for a periodic link, the symmetry group $\Z_m$ of the link induces
a $\Z_m$-action on its $\sln$-homology.
The action does not depend on the choice of the link diagram, up to equivariant isotopy.
The existence of the action can be used to obtain periodicity obstructions, generalizing the results from~\cite{BP}.
Recall that the graded Euler characeristic of the Khovanov homology is the Jones polynomial.
Equivariant Khovanov homology allows for a refinement of the Jones polynomial, called \emph{difference Jones polynomials}, see
\cite{Politarczyk-Jones}. 
Analogously, the Euler characteristic of $\sln$-homology yields a well-studied Reshetikin-Turaev polynomial, known
also as the $\sln$-polynomial. For a periodic link, we define analogs of difference Jones polynomials for $\sln$-homology. 

We establish a skein spectral sequence for a change of an orbit of crossings
in $\sln$-homology. An analogous skein spectral sequence was considered in \cite{Politarczyk-Khovanov} for Khovanov homology of
a periodic link. 
The skein spectral sequence implies a relation between the difference $\sln$-polynomials under a change of an orbit of
crossings. As in \cite{BP}, this allows us to define periodicity criterion.

Unfortunately, the new criterion, 
like the criterion using Khovanov homology \cite{BP}, cannot obstruct periodicity of order $3$ and $4$.
The proof of this failure is slightly more involved than in the Khovanov homology case.

\subsection{Structure of paper}
Section~\ref{sec:webs_and_foams} recollects basic facts about webs and foams. We build mostly on previous papers like \cite{ETW,Queffelec_Rose_foam,RobertWagner}. We aim to give precise references for readers who are not that familiar with $\sln$-homology. 
Section~\ref{sec:webs_and_foams} finishes with the definition of Khovanov-Rozansky homology as a module over the ring of symmetric polynomials.
Section~\ref{sec:modules} discusses specializations of this definition. We give
a precise construction of the Lee-Gornik spectral sequence. Next, we study properties
of Lee-Gornik homology.

The core of the paper is Section~\ref{sec:periodic}. We study Khovanov-Rozansky homology for periodic links. We prove the main
result of this paper, namely Theorem~\ref{thm:group_action}, stating that for $m$ periodic links, there is a
well-defined action of $\Z_m$ on Khovanov-Rozansky homology.

The remaining part of the paper is devoted to exemplary applications of Theorem~\ref{thm:group_action}. In Section~\ref{sec:skein-spectr-sequ} we study the skein spectral sequence that appears when an \emph{orbit} of crossings on a periodic link diagram is changed. This section
is based on analogous results from Khovanov homology \cite{Politarczyk-Jones,Politarczyk-Khovanov}. In Section~\ref{sec:polynomial}
we relate equivariant Khovanov-Rozansky homology to Reshetikhin-Turaev polynomials. By analogy to \cite{Politarczyk-Jones}, we 
construct the so-called difference polynomials. Section~\ref{sec:polynomial} ends up with periodicity criterion, Theorem~\ref{thm:periodicity},
which is a generalization of the corresponding result~\cite{BP} for Khovanov homology.

The main difference between the present paper and previous papers on equivariant Khovanov homology, lies in the control of signs. While
Koszul's sign rule is sufficient to control signs for non-equivariant $\sln$-homology, one has to be much more careful, while constructing
the group actions. The situation is more complex than in Khovanov homology, because the underlying modules of Khovanov bracket have 
canonical bases. A careful analysis of sign assignments is the main technical difficulty in the present paper.

\begin{notation}
  In the literature, the notion of `equivariant $\sln$-homology' is often used in the context of homology over the ring of symmetric polynomials. In the present paper, we use the word `equivariant' only in the context of the group action on a periodic link.

  Throughout the paper, $N$ denotes a fixed integer greater than $1$. Unless specified otherwise, all links are oriented.
\end{notation}
\begin{ack}
  The authors are grateful to Louis-Hadrien Robert, Emmanuel Wagner, and Paul Wedrich for fruitful conversations and patiently
  clarifying definitions of $\sln$-homology. MB and RO were supported by NCN OPUS 2019/B/35/ST1/01120 grant.
\end{ack}

\section{Webs, foams and categories}\label{sec:webs_and_foams}

\subsection{Webs and foams}
\begin{definition}[\(N\)-webs]\label{def:web}
  Fix an integer \(N \geq 2\).
  A \emph{closed \(N\)-web} (shortly: a \emph{web}) is a finite oriented trivalent graph $V$ without sources and sinks
  properly embedded in $\R^2$. Each edge is assumed to be labelled by an integer $0,\dots,N$.
  The labellings are required to 
  satisfy the following rule, called the \emph{flow condition}; see Figure~\ref{fig:web_flow}.
  \begin{itemize}
  \item If two edges with labels $a$ and $b$ enter a vertex, then the outgoing edge is labelled by $a+b$. A vertex with two
    incoming edges is called a \emph{merge vertex};
  \item If two edges with labels $a$ and $b$ exit a vertex, then the incoming edge is labelled by $a+b$. A vertex with
    two outgoing vertices is called a \emph{split} vertex.
  \end{itemize}
\end{definition}
\begin{remark}
  By convention, edges labelled by $0$ might be erased without changing the web. Therefore, in some articles,
  the labelling is assumed to take values from $1$ to $N$.
\end{remark}

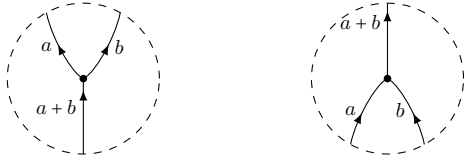
\begin{figure}
  \begin{tikzpicture}
    \begin{scope}
      \draw[dashed] (0,0) circle (1);
      \clip (0,0) circle (1);
      \draw[myendarrow] (0,-1.5) -- node[scale=0.7,left,near end] {$a+b$} (0,0);
      \draw[mymidarrow] (0,0) .. controls ++ (155:0.2) and ++(0,-0.3) .. node [scale=0.7,left] {$a$} (-0.5,1);
      \draw[mymidarrow] (0,0) .. controls ++ (25:0.2) and ++(0,-0.3) .. node [scale=0.7,right] {$b$} (0.5,1);
      \fill (0,0) circle (0.05);
    \end{scope}
    \begin{scope}[xshift=4cm]
      \draw[dashed] (0,0) circle (1);
      \clip (0,0) circle (1);
      \draw[myendarrow] (0,0) -- node[scale=0.7,left,near end] {$a+b$} (0,1);
      \draw[mymidarrow] (-0.5,-1) .. controls ++(0,0.3) and ++ (205:0.2) .. node [scale=0.7,left] {$a$} (0,0);
      \draw[mymidarrow] (0.5,-1) .. controls ++(0,0.3) and ++ (335:0.2) .. node [scale=0.7,left] {$b$} (0,0);
      \fill (0,0) circle (0.05);
    \end{scope}
  \end{tikzpicture}
  \caption{The flow condition of Definition~\ref{def:web}.}\label{fig:web_flow}
\end{figure}

Suppose $W_0$ and $W_1$ are two webs. We think of $W_0$ as a web in $\R^2\times\{0\}$ and of $W_1$ as a web in $\R^2\times\{1\}$.
\begin{definition}[\(N\)-Foams]
  Fix an integer \(N \geq 2\).
  An \emph{undecorated \(N\)-foam} (shortly an \emph{undecorated foam} or a \emph{foam} if it is clear from the context)
  $F\colon W_0\to W_1$ is a finite 2-dimensional CW-complex properly embedded in $\R^2\times[0,1]$ such that:
  \begin{itemize}
  \item If $x\in F\setminus (W_0\cup W_1)$, then there exists a neighborhood $U$ of $x$ in $F$ homeomorphic to one of the following three models:
    \begin{itemize}
    \item a \emph{smooth point}, $U$ is homeomorphic to a disk in $\R^2$;
    \item a $Y$-shaped point (codimension 1 singularity): $U$ is homeomorphic to union of three distinct rays stemming out of a commont point, times $(0,1)$;
    \item a cone over a 1-skeleton of a tetrahedron (codimension 2 singularity), when $x$ is a triple point. Compare Figure~\ref{fig:foam}.
    \end{itemize}
  \item Every \emph{facet} $F_i$ of $F$, i.e. a connected component of the set of smooth points
    carries an orientation and a label by an integer $0,\dots,N$;
  \item Every \emph{seam} $C_i$, that is a connected component of the set of $Y$-shaped points, of $F$ carries an orientation;
  \item The orientation of every seam agrees with the orientation of precisely two adjacent facets; if these two facets are labelled by $a$ and $b$, then the third facet is labelled by $a+b$;
  \item The \emph{bottom boundary} of each facet $F_i$, that is $\ol{F_i}\cap (\R^2\times\{0\})$ is an edge of $W_0$ with the same label and the orientation opposite to the orientation induced by $F_i$;
  \item The \emph{top boundary} of each facet $F_i$, that is $\ol{F_i}\cap (\R^2\times\{1\})$ is an edge of $W_1$ with the same label and the orientation agreeing with the orientation induced by $F_i$;
  \end{itemize}
\end{definition}
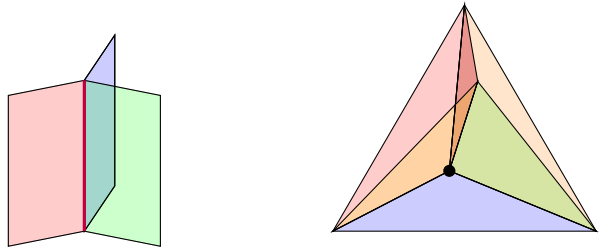
\begin{figure}
  \begin{tikzpicture}
    \begin{scope}
      \coordinate (OT) at (0,1);
      \coordinate (AT) at (0.4,1.6);
      \coordinate (BT) at (1,0.8);
      \coordinate (CT) at (-1,0.8);
      \coordinate (OD) at (0,-1);
      \coordinate (AD) at (0.4,-0.4);
      \coordinate (BD) at (1,-1.2);
      \coordinate (CD) at (-1,-1.2);
      \draw[fill=blue,opacity=0.2] (OT) -- (AT) -- (AD) -- (OD) -- (OT);
      \draw (OT) -- (AT) -- (AD) -- (OD) -- (OT);
      \draw[fill=green,opacity=0.2] (OT) --(BT) -- (BD) -- (OD) -- (OT);
      \draw[fill=red,opacity=0.2] (OT) -- (CT) -- (CD) -- (OD) -- (OT);
      
      \draw (OT) -- (AT) -- (AD) -- (OD) -- (OT);
      \draw (OT) -- (BT) -- (BD) -- (OD) -- (OT);
      \draw (OT) -- (CT) -- (CD) -- (OD) -- (OT);
      \draw[very thick,purple] (OT) -- (OD);
    \end{scope}
    \begin{scope}[xshift=5cm]
      \coordinate (O) at (-0.2,-0.2);
      \coordinate (A) at (90:2);
      \coordinate (B) at (210:2);
      \coordinate (C) at (330:2);
      \coordinate (D) at (80:1);
      \fill[purple,opacity=0.4] (O) -- (D) -- (A) -- (O);
      \fill[red,opacity=0.2] (O) -- (A) -- (B) -- (O);
      \fill[blue,opacity=0.2] (O) -- (B) -- (C) -- (O);
      \fill[green,opacity=0.2] (O) -- (C) -- (D) -- (O);
      \fill[orange,opacity=0.2] (O) -- (C) -- (A) -- (O);
      \fill[yellow,opacity=0.2] (O) -- (B) -- (D) -- (O);
      \draw (O) -- (C) -- (D) -- (O);
      \draw (O) -- (D) -- (A) -- (O);
      \draw (O) -- (A) -- (B) -- (O);
      \draw (O) -- (B) -- (C) -- (O);
      \draw (O) -- (B) -- (D) -- (O);
      \draw (O) -- (A) -- (C) -- (O);
      \fill (O) circle (0.08);
    \end{scope}
  \end{tikzpicture}
  \caption{Codimension 1 and 2 singular points of a foam.}\label{fig:foam}
\end{figure}

Suppose $F_{01}$ is a foam from $W_0$ to $W_1$ and $F_{12}$ is a foam from $W_1$ to $W_2$. There is an obvious notion of a composition of foams.
\begin{definition}[Composition of foams]\label{def:composition}
  The composition $F_{02}$ of webs $F_{01}$ and $F_{12}$ is the union $\wt{F}_{01}\cup\wt{F}_{12}$, where $\wt{F}_{01}$ is the foam
  scaled along the vertical axis to $\R^2\times[0,1/2]$ and $\wt{F}_{12}$ is the foam $F_{12}$ scaled and moved to $\R^2\times[1/2,1]$.
\end{definition}

A prominent role among foams is played by closed foams:
\begin{definition}[Closed foam]
  A \emph{closed foam} is a foam from an empty web to an empty web.
\end{definition}

\subsection{Colorings and decorations}
Colorings and decorations are additional structures assigned to foams and webs needed to define link invariants.

\begin{definition}[Coloring of a web]\label{def:coloring_web}
  Let $W$ be a web.
  A \emph{coloring} of \(W\) assigns to any \(a\)-labelled edge $e$ of $W$ a set of colors $c(e)\subset\{1,\dots,N\}$ such that $\# c(e) = a$.
  The assignment satisfies the following conditions, generalizing the flow condition of Definition~\ref{def:web}.
  \begin{itemize}
  \item If two edges \(e_{1},e_{2}\) enter a vertex, then the outgoing edge, denoted \(e_{3}\), has colors $c(e_{3}) = c(e_{1}) \cup c(e_{2})$;
    in particular $c(e_{1}) \cap c(e_{2})=\emptyset$;
  \item If two edges \(e_{1},e_{2}\) exit from a vertex, then the incoming edge \(e_{3}\) has colors $c(e_{3}) = c(e_{1}) \cup c(e_{2})$. 
  \end{itemize}
\end{definition}

\begin{figure}
  \begin{tikzpicture}
    \begin{scope}
      \draw[dashed] (0,0) circle (1);
      \clip (0,0) circle (1);
      \draw[myendarrow] (0,-1.5) -- node[scale=0.7,left,near end] {$A\cup B$} (0,0);
      \draw[mymidarrow] (0,0) .. controls ++ (155:0.2) and ++(0,-0.3) .. node [scale=0.7,left] {$A$} (-0.5,1);
      \draw[mymidarrow] (0,0) .. controls ++ (25:0.2) and ++(0,-0.3) .. node [scale=0.7,right] {$B$} (0.5,1);
      \fill (0,0) circle (0.05);
    \end{scope}
    \begin{scope}[xshift=4cm]
      \draw[dashed] (0,0) circle (1);
      \clip (0,0) circle (1);
      \draw[myendarrow] (0,0) -- node[scale=0.7,left,midway] {$A\cup B$} (0,1);
      \draw[mymidarrow] (-0.5,-1) .. controls ++(0,0.3) and ++ (205:0.2) .. node [scale=0.7,left] {$A$} (0,0);
      \draw[mymidarrow] (0.5,-1) .. controls ++(0,0.3) and ++ (335:0.2) .. node [scale=0.7,left] {$B$} (0,0);
      \fill (0,0) circle (0.05);
    \end{scope}
  \end{tikzpicture}
  \caption{The flow condition for colored webs (Definition~\ref{def:coloring_web})}\label{fig:web_flow2}
\end{figure}
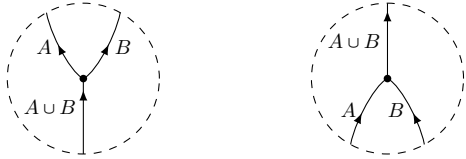
Colorings of webs are used extensively in the study of Lee homology, see Subsection~\ref{sub:properties_of_lee}.
The definition of a coloring of a foam is analogous.
\begin{definition}[Coloring of a foam]\label{def:coloring_foam}
  Let $F$ be a foam.
  A \emph{coloring} of $F$ assigns a set of colors $c(f)\subset\{1,\dots,N\}$  to each face $f$ of the foam such that $\#c(f)$ is the labelling of $f$.
  The assignment is required to satisfy the following compatibility relation, extending
  the compatibility relation for labels: near each seam meeting facets \(f_{1},f_{2},f_{3}\), where the orientations of \(f_{1}\) and \(f_{2}\) agree with the orientation of the seam,
  we have \(c(f_{3}) = c(f_{1}) \cup c(f_{2})\).

  A \emph{colored foam} is a foam equipped with a coloring.
\end{definition}

\subsection{Decorations, degrees and evaluations}

We first recall the following definition.
\begin{definition}[Degree of an undecorated foam]\label{def:degree}
  The degree $\dun(F)$ of a foam $F$ is the sum of the following contributions.
  \begin{itemize}
  \item Each facet $f$ with label $a$ contributes $-a(N-a)\chi(f)$. Here, $\chi$ denotes the Euler characteristic;
  \item Each seam, which is not a circle, and which is surrounded by faces with labels $a,b,a+b$ contributes $ab+(a+b)(N-a-b)$;
  \item Each singular point surrounded by faces with labels $a,b,c,a+b,b+c,a+b+c$ contributes $ab+bc+cd+da+ac+bd$, where $d=N-a-b-c$.
  \end{itemize}
\end{definition}

We now introduce the notion of a decoration. We note that various authors give slightly different ways of decorating foams, which eventually
lead to equivalent theories. Our approach follows mostly \cite{RobertWagner}.
\begin{definition}[Decoration of a foam]
  Let $F$ be a foam. A \emph{decoration} of \(F\) is an assignment of a symmetric homogeneous polynomial $P_f$ in $a$ variables to any \(a\)-labelled facet $f$.
  A \emph{decorated foam} is a foam together with a decoration.
\end{definition}
Suppose $F_{01}$ and $F_{12}$ are two \emph{decorated} foams. While stacking $F_{12}$ over $F_{01}$ as in Definition~\ref{def:composition},
we merge some pairs of facets into one. In that case, the polynomials decorating two facets get multiplied; see Figure~\ref{fig:get_multiplied}.
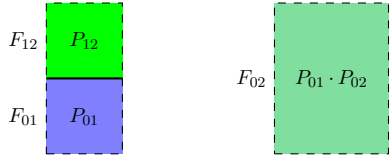
\begin{figure}
  \begin{tikzpicture}
    \fill[blue!50] (-3,-1) rectangle (-2,0);
    \fill[green] (-3,0) rectangle (-2,1);
    \draw[dashed] (-3,-1) -- (-2,-1) -- (-2,1) -- (-3,1) -- (-3,-1);
    \draw[thick] (-3,0) -- (-2,0);
    \draw (-2.5,-0.5) node [scale=0.7] {$P_{01}$};
    \draw (-2.5,0.5) node [scale=0.7] {$P_{12}$};
    \draw (-3.3,-0.5) node [scale=0.7] {$F_{01}$};
    \draw (-3.3,0.5) node [scale=0.7] {$F_{12}$};
    \fill[blue!25!green!50] (0,-1) rectangle (1.5,1);
    \draw[dashed] (0,-1) -- (1.5,-1) -- (1.5,1) -- (0,1) -- (0,-1);
    \draw(0.75,0) node [scale=0.7] {$P_{01}\cdot P_{02}$};
    \draw(-0.3,0) node [scale=0.7] {$F_{02}$};
  \end{tikzpicture}
  \caption{Rule for gluing decorated foams.}\label{fig:get_multiplied}
\end{figure}

\begin{definition}[Degree of a decorated foam]\label{def:decorated}
  Let $F$ be a decorated foam. The degree of $F$ is equal to $d(F):=\dun(F)+2\sum\deg P_f$, where the sum is taken over all facets $f$ of $F$.
\end{definition}

In \cite[Definition 2.12]{RobertWagner} Robert and Wagner assign a polynomial to any decorated closed foam.
We recall a few details of this construction for further use.
Fix once and for all variables $X_1,\dots,X_N$. 
For a decorated closed foam $F$ and a coloring $c$, one associates a rational function in $X_1,\dots,X_N$ denoted by $\langle F,c\rangle$.
One sets
\begin{equation}\label{eq:eval_color}
  \langle F\rangle = \sum_{c} \langle F,c\rangle,
\end{equation}
where the sum is taken over all colorings of $F$.
Note that $\langle F\rangle$ is always a polynomial in $X_1,\dots,X_N$; see~\cite[Proposition 2.18]{RobertWagner}.

We make an observation, which follows promptly from the definition of $\left\langle F \right\rangle$.
\begin{lemma}\label{lem:invariant}
  Suppose $F$ and $F'$ are two closed foams in $\R^2\times[0,1]$. If $F$ and $F'$ are
  isotopic, then $\left\langle F' \right\rangle = \left\langle F  \right\rangle $.
\end{lemma}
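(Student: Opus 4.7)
The plan is to reduce the statement to the purely combinatorial nature of the Robert--Wagner evaluation. Recall from \cite[Definition 2.12]{RobertWagner} that for a decorated closed foam $F$ and a coloring $c$, the rational function $\langle F,c\rangle \in \Q(X_1,\dots,X_N)$ is assembled from data which are local to the facets, seams and singular points of $F$: each facet contributes through its Euler characteristic, its color and its decorating polynomial, while each seam and each triple point contributes through its local labels and colors. None of these ingredients depend on the specific embedding of $F$ into $\R^2\times[0,1]$, only on the underlying labelled/decorated/oriented CW-structure.

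Concretely, I would argue as follows. An ambient isotopy $H_t\colon \R^2\times[0,1]\to\R^2\times[0,1]$, $t\in[0,1]$, with $H_0=\mathrm{id}$ and $H_1(F)=F'$, is by definition orientation-preserving and restricts to a homeomorphism of pairs between $F$ and $F'$. Hence it induces a bijection between the facets, seams and singular points of $F$ and those of $F'$, respecting all decorations, labels and orientations (because these are part of the data of a decorated foam, carried along by the isotopy). Under this bijection the flow condition for a coloring is transported verbatim, so colorings of $F$ and $F'$ are in canonical bijection $c\mapsto H_1(c)$.

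Once this bijection is in place, the equality $\langle F,c\rangle = \langle F',H_1(c)\rangle$ follows by inspection of the Robert--Wagner formula, since every term in that formula (Euler characteristics, local labels near seams, local colorings at singular points, decorating polynomials) is preserved. Summing over colorings using \eqref{eq:eval_color} gives
\[
\langle F\rangle = \sum_{c}\langle F,c\rangle = \sum_{c}\langle F',H_1(c)\rangle = \sum_{c'}\langle F',c'\rangle = \langle F'\rangle,
\]
which proves the lemma. The only point requiring any attention is to ensure that the isotopy is orientation-preserving on all facets and seams, which is automatic because we work with ambient isotopies in the oriented manifold $\R^2\times[0,1]$ and decorated foams are by definition rigid with respect to their labels, orientations and decorations. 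Thus there is no genuine obstacle: the lemma is essentially a tautology once one unpacks the combinatorial definition of $\langle F\rangle$.
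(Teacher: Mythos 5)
Your proof is correct and takes the same approach as the paper, which offers no proof at all beyond the remark that the claim ``follows promptly from the definition of $\langle F\rangle$''; you simply unpack the details the paper takes for granted. One minor caveat worth noting: some ingredients of the Robert--Wagner evaluation (such as the positivity count entering the sign $(-1)^{s(F,c)}$) do use embedding data beyond the abstract CW-structure, but since that data is transported by the ambient isotopy, your argument goes through unchanged.
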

\subsection{Categories of foams}\label{sub:catego}
Webs and foams can be combined into a category.
In this subsection, we give several precise incarnations of this idea.
\begin{definition}[The category $\Foam^*_N$]\label{def:foam_cat}
  Fix an integer \(N \geq 2\).
  The category $\Foam^*_N$ is the graded category whose objects are formal shifts of $N$-webs denoted \(q^{a} W\), for \(a \in \Z\) and \(W\) an \(N\)-web.
  Morphisms between $q^{a_{0}} W_0$ and $q^{a_{1}} W_1$ are decorated \(N\)-foams~\emph{from $W_0$ to $W_1$} of
  (decorated) degree~\(a_{1}-a_{0}\). 
  Composition of morphisms is realized by composition of foams, as described in Definition~\ref{def:composition}.
\end{definition}
\begin{remark}
  In order to comply with terminology of \cite{ETW}, we should call this category $\Foam^*_N(cl)$, because we consider only closed webs. 
  In the present paper, unless specified otherwise,
  all webs are assumed to be closed.
\end{remark}

We recall now the definition of the category $\SFoam^*_N$; see \cite[Definition 2.13]{ETW}.
Henceforth, for a fixed integer \(N \geq 2\), we denote by 
\[\S_N:=\Sym[X_1,\dots,X_N]\]
the (graded) ring of symmetric polynomials with integer coefficients. We adopt a convention that the degree
of each variable $X_1,\dots,X_N$ is equal to $2$.
\begin{definition}[The category $\SFoam^*_N$]
  The category $\SFoam^*_N$ is the \(\S_{N}\)-linear closure of \(\Foam^{\ast}_{N}\).
  \begin{itemize}
  \item \(\SFoam_{N}^{\ast}\) is closed under finite direct sums, hence objects of \(\SFoam^{\ast}_{N}\) are finite formal direct sums $q^{a_{1}}W_{1} \oplus \cdots \oplus q^{a_{k}}W_{k}$, where $W_{1},\ldots,W_{k}$ are \(N\)-webs and $a_{1},\ldots,a_{k}\in\Z$ denote grading shifts;
  \item Morphisms from \(q^{a_{1}} V_{1} \oplus \cdots \oplus q^{a_{k}} V_{k}\) to \(q^{b_{1}} W_{1} \oplus \cdots \oplus q^{b_{l}} W_{l}\) are \(l \times k\) matrices \(M\) such that the \((i,j)\)-th coefficient of \(M\), for \(1 \leq i \leq l\) and \(1 \leq
    j \leq l\), is a formal \(\S_{N}\)-linear
    combination of decorated \(N\)-foams \(V_{j} \to W_{i}\) of degree \(b_{i}-a_{j}\).
  \end{itemize}
\end{definition}

The basic object in theory of $\sln$-homologies of links is the evaluation functor.
\begin{definition}[Naive evaluation functor]\label{def:evaluation}
  The \emph{naive evaluation functor} $\wt{\cF}$ from the category $\SFoam^*_N$ to the category $\cSym^*_N$ of graded projective (not necessarily finitely generated) $\S_N$-modules is the representable functor
  \[\wt{\cF}(-) := \Hom_{\SFoam^{\ast}_{N}}(\emptyset,-).\]
  Observe that the evaluation functor is grading-preserving and \(\S_{N}\)-linear.
  Alternatively, the naive evaluation functor can be described by its action on objects and morphisms.
  \begin{itemize}
  \item For a shifted web \(q^{a} V\), $\wt{\cF}(q^{a} V) := \Hom_{\SFoam^*_N}(\emptyset, q^{a} V)$.
    In other words, \(\wt{\cF}(q^{a}V)\) is the free \(\S_{N}\)-module spanned by \(\Hom_{\Foam_N}(\emptyset,q^{a}V)\).
  \item For any morphism $F\colon q^{a} V\to q^{b} W$ in $\Foam_N^{\ast}$, $\wt{\cF}$ assigns the postcomposition map
    \[\Hom_{\SFoam^*_N}(\emptyset,q^{a} V) \xrightarrow{\wt{\cF}(F)(-) := F\circ(-)} \Hom_{\SFoam^*_N}(\emptyset, q^{b} W).\]
    Observe that the degree of the map \(\wt{\cF}(F)\) is \(b-a\).
  \item The action of \(\wt{\cF}\) on direct sums of shifted webs and maps between them is determined by the fact that the functor preserves direct sums.
  \end{itemize}
\end{definition}

The objects in the image of the naive evaluation functor are rather large.
For instance, if $V$ is a web, any two isotopic non-equal foams from $\emptyset$ to $V$ will be mapped by $\wt{\cF}$
to different generators of the $\S_N$-module $\wt{\cF}(V)$. That is, $\wt{\cF}(V)$ will usually not be finitely generated.
We can rectify the naive evaluation functor so that the new evaluation functor assigns finitely generated graded projective \(\S_{N}\)-modules to any web \(V\).
Suppose $q^{a} V$ is a web and $F'\in\Hom_{\Foam_N^*}(q^{a}V,\emptyset)$.
Consider the \(\S_{N}\)-linear map $\phi_{F'} \colon \wt{\cF}(q^aV) \to \S_N$ given by the formula $\phi_{F'}(F)=\langle F'\circ F\rangle$.
Set
\begin{equation}\label{eq:civ}
  \cI(q^{a}V) := \bigcap_{F'\in\Hom_{\Foam_N^*}(V,\emptyset)} \ker\phi_{F'}.
\end{equation}
The submodule $\cI(V)$ consists of all $\S_N$-linear combinations of foams from $\emptyset$ to $q^{a}V$ that evaluate to zero, when capped by any foam from $q^{a}V$ to $\emptyset$.
For example, if $F$ and $F'$ are two isotopic foams from $\emptyset$ to $q^{a}V$, then $F-F'$ belongs to $\cI(q^{a}V)$ by Lemma~\ref{lem:invariant}.

\begin{definition}[The evaluation functor]\label{def:evaluation_2}
  The evaluation functor $\cF$ from the category $\SFoam^*_N$ to the category $\cSym_N$ of \emph{finitely generated} graded projective $\S_N$-modules assigns to any shifted web \(q^{a}V\) the quotient module
  \[\cF(q^{a}V) := \wt{\cF}(q^{a}V) / \cI(q^{a}V) = \Hom_{\SFoam^*_N}(\emptyset,q^{a}V)/\cI(q^{a} V).\]
  Since for any \(F \in \Hom_{\SFoam_{N}}(q^{a}V,q^{b}W)\), we have \(\wt{\cF}(F)(\cI(q^{a}V)) \subset \cI(q^{b}W)\), the naive evaluation functor descends to a well-defined quotient functor~\(\cF\).
  It is argued in~\cite[Section 3]{RobertWagner} that the target category of~$\cF$ is the category $\cSym_N$.
\end{definition} 

We now simplify our foam category further by introducing relations coming from the evaluation of functor.
\begin{definition}[The category $\SFoam_N$]\label{def:sfoam}
  The category $\SFoam_N$ is the category is the quotient category of \(\SFoam^{\ast}_{N}\), where
  \[\Hom_{\SFoam_N}(V,W):=\Hom_{\SFoam^*_N}(V,W)/\ker\cF,\]
  and \(\cF\) denotes the evaluation functor.
\end{definition}
For example, if two foams are isotopic,
they are already identified as morphisms in $\SFoam_N$. 
The functor $\cF$ descends to an evaluation functor on the quotient category \(\SFoam_{N}\); we will denote it by the same letter. To be more precise, \(\cF \colon \SFoam_{N} \to \Sym_{N}\) is the representable functor
\[\cF(V)=\Hom_{\SFoam_N}(\emptyset,V).\]
In particular, $\cF$ acts via compositions on morphisms.


\begin{definition}
  If \(\mathcal{A}\) is an additive category, we denote by \(\Kom{\mathcal{A}}\) the category of bounded cochain complexes in \(\mathcal{A}\).
  Morphisms in \(\Kom{\mathcal{A}}\) are cochain maps.
\end{definition}
In this paper we will be mainly interested in categories \(\KFoam\) and \(\Kom{\cSym_{N}}\).
In particular, the evaluation functor \(\cF \colon \SFoam_{N} \to \cSym_{N}\) extends to a functor \(\cF \colon \KFoam \to \Kom{\cSym_{N}}\).

\begin{figure}
  \begin{tikzpicture}[every node/.style={scale=0.7}]
    \begin{scope}
      \draw[thin] (-4.2,-1) -- (-4.3,-1) -- (-4.3,1) -- (-4.2,1);
      \draw[thin] (-4.25,-1) -- (-4.25,1);
      \draw[thin] (-2.8,-1) -- (-2.7,-1) -- (-2.7,1) -- (-2.8,1);
      \draw[thin] (-2.75,-1) -- (-2.75,1);
      \begin{scope}[thick]
        \draw[myendarrow,mybegarrow] (-3,-1) -- (-4,1);
        \draw(-3,1.1) node {$a$};
        \draw(-3,-1.1) node {$b$};
        \draw(-4,1.1) node {$b$};
        \draw(-4,-1.1) node {$a$};
        \fill[white] (-3.5,0) circle (0.2);
        \draw[myendarrow,mybegarrow] (-4,-1) -- (-3,1);
      \end{scope}
    \end{scope}
    \begin{scope}[xshift=0cm, very thick]
      \draw(-1.3,0) node[scale=1.2] {$q^{-x}$};
      \draw[myendarrow,mybegarrow] (-1,-1) -- (-1,1);
      \draw[myendarrow,mybegarrow] (0,-1) -- (0,1);
      \draw[mymidarrow] (-1,0.3) -- node[midway, below] {$a-b$} (0,0.5);
      \draw(0,1.1) node {$a$};
      \draw(0,-1.1) node {$b$};
      \draw(-1,1.1) node {$b$};
      \draw(-1,-1.1) node {$a$};
    \end{scope}
    \begin{scope}[xshift=1cm,very thick]
      \draw(1.2,0) node[scale=1.2] {$q^{1-x}$};
      \draw[myendarrow,mybegarrow,mymidarrow] (1.5,-1) -- (1.5,1);
      \draw[myendarrow,mybegarrow,mymidarrow] (3,-1) -- (3,1);
      \draw(1.5,1.1) node {$a$};
      \draw(1.5,-1.1) node {$b$};
      \draw(3,1.1) node {$b$};
      \draw(3,-1.1) node {$a$};
      \draw[mymidarrow] (1.5,0.3) -- node[midway, above] {$a-b+1$} (3,0.5);
      \draw[mymidarrow] (1.5,-0.3) -- node[midway, below] {$1$} (3,-0.5);
    \end{scope}
    \begin{scope}[xshift=-1cm, yshift=-3cm,very thick]
      \draw(-0.6,0) node[scale=1.2] {$q^{b-1-x}$};
      \draw(0,1.1) node {$a$};
      \draw(0,-1.1) node {$b$};
      \draw(1,1.1) node {$b$};
      \draw(1,-1.1) node {$a$};
      \draw[myendarrow,mybegarrow,mymidarrow] (0,-1) -- (0,1);
      \draw[myendarrow,mybegarrow,mymidarrow] (1,-1) -- (1,1);
      \draw[mymidarrow] (0,0.3) -- node[midway, above] {$a-1$} (1,0.5);
      \draw[mymidarrow] (1,-0.3) -- node[midway, below] {$b-1$} (0,-0.5);
    \end{scope}
    \begin{scope}[xshift=2.5cm, yshift=-3cm,very thick]
      \draw(-0.5,0) node[scale=1.2] {$q^{b-x}$};
      \draw(0,1.1) node {$a$};
      \draw(0,-1.1) node {$b$};
      \draw(1.5,1.1) node {$b$};
      \draw(1.5,-1.1) node {$a$};
      \draw[mymidarrow] (0,-1) -- (0,1);
      \draw[mymidarrow] (1.5,-1) -- (1.5,-0.3);
      \draw[mymidarrow] (1.5,0.5) -- (1.5,1);
      \draw[mymidarrow] (0,0.3) -- node[midway, above] {$b$} (1.5,0.5);
      \draw[mymidarrow] (1.5,-0.3) -- node[midway, below] {$a$} (0,-0.5);
    \end{scope}

    \draw(-2,0) node[scale=1.6] {$\stackrel{a\ge b}{=}$};
    \draw(6,0) node[scale=1.2] {$\dots$};
    \draw(-4,-3) node[scale=1.2] {$\dots$};
    \draw[->] (0.5,0) -- node[above,midway] {$d_0^+$} (1.5,0);
    \draw[->] (0.5,-3) -- node[above,midway] {$d_{b-1}^+$} (1.5,-3);
    \draw[->] (4.5,0) -- node[above,midway] {$d_1^+$} (5.5,0);
    \draw[->] (-3.5,-3) -- node[above,midway] {$d_{b-2}^+$} (-2.5,-3);
  \end{tikzpicture}
  \caption{The resolution of a crossing. Here $x=b(N-b)$ and $q$ denotes the quantum grading shift. The first term is at homological degree zero.}\label{fig:resolution}
\end{figure}
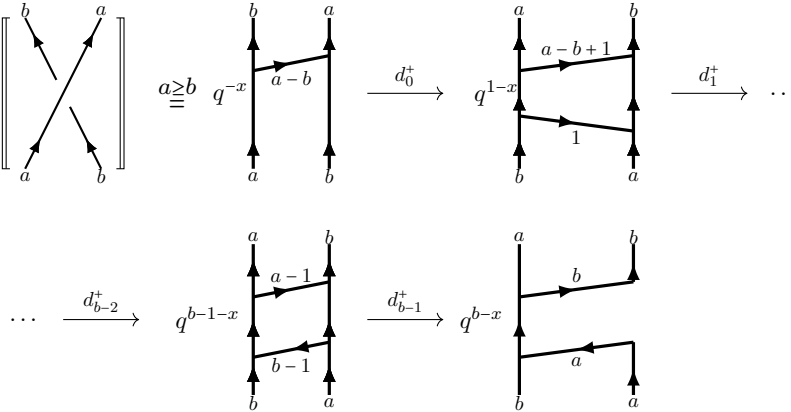
\subsection{$\S_N$-equivariant $\sln$-homology}\label{sub:sym_eq}

In this subsection we follow closely \cite[Section 3]{ETW}, with the exception that we focus on link invariants, as opposed to more general tangle invariants. Let $L$ be a link in $\R^3$. We may assume that $L$
is labelled, that is, components of $L$ carry an integer between $1$ and $N$ (if the link is not labelled, we label all its components by \(1\)).
We use the word `labelled' and not `colored', because this assignment is not meant to distinguish link components as, say, colored signatures, but to stress the thickness of components as e.g. in \cite{Khovanov_Rozanski1,MOY}. 

Let $D$ a planar diagram for the link $L$. We define the cochain complex (the bracket)
$\llbra D\rrbra\in\KFoam$ via the recipe as in \cite[Definition 3.3]{ETW}, namely as a tensor product of local complexes corresponding to crossings, depicted in Figure~\ref{fig:resolution}. More
precisely:

\begin{itemize}
\item An $a$-labelled strand is mapped to the corresponding web at homological degree zero; 
\item A positive crossing with labels $a,b$ with $a\ge b$ is mapped to the complex of Figure~\ref{fig:resolution}. The differential
  $d^+_k$ is specified in Figure~\ref{fig:differential} below;
\item A positive crossing with labels $a,b$ and $a< b$ is mapped to the complex obtained by mirroring the webs and the foams along the vertical axis (and swapping the roles of $a$ and $b$);
\item A negative crossing is mapped to the dual complex to the one in Figure~\ref{fig:resolution}. The duality amounts to inverting
  the order of the differential, and multiplying the quantum and homological degrees by $-1$.
\end{itemize}



We can describe the cochain complex $\llbra D\rrbra$ via a hypercube of resolutions. While this description is not needed for the construction,
it gives us a slightly better control of the sign assignment, which is important for discussing group actions. Note that in the standard
approach, as \cite{ETW,RobertWagner}, the sign assignment is hidden in the Koszul sign convention
for the tensor product of cochain complexes corresponding to local resolutions.

Let $\Cr(D)$ be the set crossings of a labelled diagram
$D$. 
For each crossing $i\in\Cr(D)$, denote by $a_i$ and $b_i$ the labels of strands of \(D\) meeting at \(i\).
We set $c_i=\min(a_i,b_i)$.
We define $C_i=\{0,\dots,c_i\}$ if the  crossing is positive, and $C_i=\{-c_i,\dots,0\}$ if the crossing is negative.
Likewise, we set $SC_i=[0,c_i]$ or $SC_i=[-c_i,0]$ depending on the sign of the crossing.
We consider \(SC_{i}\) as a CW-complex with \(0\)-cells being the integral points and \(1\)-cells being the intervals connecting consecutive integral points.
Set $\Cube(D)=\prod_i C_i$ and $\SCube(D)=\prod_i SC_i$ ($\SCube$ for `solid cube') with the product CW-structure.
Observe that~\(\Cube(D)\) can be identified with the \(0\)-skeleton of \(\SCube(D)\).

\begin{definition}
  \begin{itemize}
    \item For $I\in\Cube(D)$, we say that $I'\in\Cube(D)$ is an \emph{immediate successor} of \(I\), if $I$ and $I'$ agree
  on all coordinates but one, and that coordinate of $I'$ is one larger than that of $I$. 
\item A \emph{sign assignment} $\sas$ is an assignment of $\sas(I,I')\in\F_2$, for any pair $I,I'\in\Cube(D)$ such that $I'$ is an immediate successor of $I$. The sign assignment is subject to the following cochain condition.
  If $I_1$ and $I_2$ are immediate
  successors of $I$, $I_1\neq I_2$, and $I_{12}$ is an immediate successor of both $I_1$ and $I_2$,
  then $\sas(I,I_1)+\sas(I,I_2)+\sas(I_1,I_{12})+\sas(I_2,I_{12})=1\in\F_2$.
\item We can think of the sign assignment $\sas$ as a cellular \(1\)-cochain on \(\SCube\), i.e., \(\sas \in C^1_{\text{cell}}(\SCube;\F_2)\) such that $\partial \sas$ is the \(2\)-cochain with constant value $1 \in \F_{2}$, where \(\partial\) denotes
  the coboundary map in \(C^{\ast}_{\text{cell}}(\SCube;\F_2)\).
  \end{itemize}
\end{definition}


\begin{lemma}\label{lem:coboundary}
  For any diagram \(D\) there exists a sign assignment.
  Any two sign assignments \(\sas\) and \(\sas'\) differ by a coboundary: $\sas-\sas'=\partial \sat$.
  Here, $\sat$ is a cellular \(0\)-cochain on \(\SCube(D)\).
  The cochain \(\sat\) is uniquely determined if one fixes its value on $(0,\dots,0)$.
\end{lemma}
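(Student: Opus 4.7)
The plan is to treat all three statements uniformly as cohomological statements for the cubical CW-complex \(\SCube(D)\), which is contractible since it is a product of closed intervals. Under the reformulation built into the definition, a sign assignment is exactly a cellular \(1\)-cochain \(\sas \in C^1_{\text{cell}}(\SCube(D);\F_2)\) whose coboundary is the constant \(2\)-cochain \(\mathbf{1}\); once this is in place, everything will follow formally from the vanishing of \(H^{\ge 1}(\SCube(D);\F_2)\) and the connectedness of \(\SCube(D)\).

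For existence, I would first verify that \(\mathbf{1} \in C^2_{\text{cell}}(\SCube(D);\F_2)\) is a cocycle: every \(3\)-cell is a combinatorial \(3\)-cube with exactly six \(2\)-faces, so \(\partial \mathbf{1}\) evaluates to \(6 \equiv 0 \pmod 2\) on every \(3\)-cell. Contractibility of \(\SCube(D)\) then forces \(\mathbf{1} = \partial \sas\) for some \(1\)-cochain \(\sas\), which is the required sign assignment. If a more hands-on argument is preferred, one can alternatively exhibit and directly verify the explicit Koszul-style formula \(\sas(I,I') = \sum_{j<k} I_j \bmod 2\) whenever \(I'\) differs from \(I\) in the \(k\)-th coordinate.

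For the second claim, given two sign assignments \(\sas, \sas'\), I would observe that \(\partial(\sas - \sas') = \mathbf{1} - \mathbf{1} = 0\), so \(\sas - \sas'\) is a \(1\)-cocycle. Since \(H^1(\SCube(D);\F_2) = 0\), there is a \(0\)-cochain \(\sat\) with \(\partial \sat = \sas - \sas'\). For the final statement, any other \(\sat'\) solving the same equation differs from \(\sat\) by a \(0\)-cocycle; because \(\SCube(D)\) is connected, such cocycles are globally constant \(\F_2\)-valued functions, and prescribing the value of \(\sat\) at \((0,\dots,0)\) eliminates the remaining ambiguity.

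I do not anticipate a genuine obstacle: the whole argument reduces to elementary cohomological properties of a contractible cubical complex. The one place demanding minor care is the initial bookkeeping to confirm that the square condition in the definition of a sign assignment indeed coincides with \(\partial \sas = \mathbf{1}\) under the chosen cellular orientation conventions on \(\SCube(D)\).
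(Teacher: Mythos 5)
Your proof is correct and follows essentially the same route as the paper: realizing that the constant $2$-cochain $\mathbf{1}$ is a cocycle because every $3$-cell of $\SCube(D)$ has an even number of $2$-faces, then invoking contractibility of $\SCube(D)$ to produce $\sas$ with $\partial\sas=\mathbf{1}$, and again to show any two sign assignments differ by $\partial\sat$ with $\sat$ unique up to a constant. The explicit Koszul-style formula you offer as an alternative also works (one can check it satisfies the square condition even when the $I_j$ take values larger than $1$), but it is not needed and is not part of the paper's argument.
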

\begin{proof}
  If \(c \in C^{2}_{\text{cell}}(\SCube(D);\F_{2})\) denotes the constant \(2\)-cochain with value \(1\), then \(\partial c = 0\) (because every \(3\)-dimensional cell of \(\SCube(D)\) has an even number of
  \(2\)-dimensional faces).
  Since \(\SCube(D)\) is contractible, it follows that there exists a \(1\)-cochain \(\sas \in C^{1}_{\text{cell}}(\SCube(D);\F_{2})\) such that \(\partial \sas = c\), hence \(\sas\) is a sign assignment.
  If \(\sas\) and \(\sas'\) are two sign assignments, the difference $\sas-\sas'$ is a cellular \(1\)-cocycle. 
  As $\SCube(D)$ is contractible, $\sas-\sas'$ is a coboundary, i.e., $\sas-\sas'=\partial \sat$.
  Suppose $\sas-\sas'=\partial\sat'$ for some other cellular $0$-cochain.
  Then $\sat-\sat'$ is a cellular \(0\)-cocycle, 
  so $\sat-\sat'$ is constant.
\end{proof}

For $I\in \Cube(D)$ we consider the web $D_I$ obtained by taking the $I(i)$-th
resolution in Figure~\ref{fig:resolution} at the $i$-th crossing of the diagram $D$, where $i$ ranges
through all the crossings of $D$. To each such web, we associate its quantum degree:
\[\saq(I)=\sum_{i\in\Cr(D)}q_{i},\]
where $q_{i}=I(i)-c_i(N-c_i)$ if $i$ is a positive crossing (and $c_i=\min(a_i,b_i)$), and $q_i=-I(i)+c_i(N-c_i)$ if $i$ is
a negative crossing; compare Figure~\ref{fig:resolution}. There is also a homological degree
\[\sah(I)=\sum_{i\in\Cr(D)} I(i).\]

We define the cochain complex $\wt{\llbra D\rrbra}$ by requiring that at homological grading $s$, it is a formal sum of webs $D_I$ where $\sah(I)=s$. The differential is 
defined as follows. Suppose $I\in\Cube(D)$ and $I'$ is an immediate successor of $I$. That is, $I'$ differs from $I$
only at a single crossing.
We let $\delta(I,I')$ be the foam given by Figure~\ref{fig:differential} near the relevant crossing, otherwise it is a product foam.
The differential in the complex $\wt{\llbra D\rrbra}$ is given by
$(-1)^{\sas(I,I')}\delta(I,I')$, where $\sas(I,I')$ is a sign assignment.
The grading $\saq(I)$ induces a quantum grading of $\wt{\llbra D\rrbra}$.
By construction, there is an identification $\wt{\llbra D\rrbra}$ and $\llbra D\rrbra$.

Given a link diagram \(D\) and a sign assignment \(\sas\) on \(\Cube(D)\), we will sometimes write \(\llbra D, \sas \rrbra\) to emphasize the dependence of \(\llbra D \rrbra\) on the sign assignment.

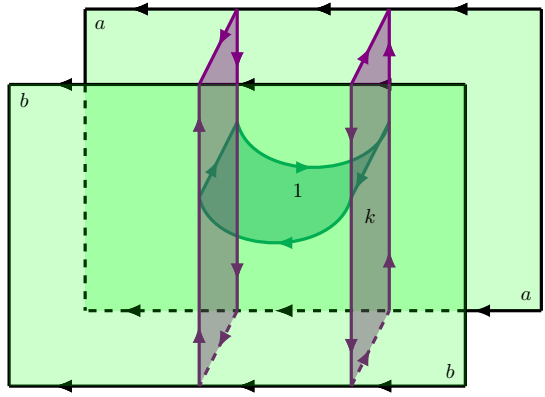
\begin{figure}
  \begin{tikzpicture}[every node/.style={scale=0.7}]
    \fill[green,opacity=0.2] (-3,-2) rectangle (3,2);
    \begin{scope}[very thick]
      \draw[mybegarrow,mymidarrow,myendarrow] (3,2) -- (-3,2);
      \draw[mybegarrow,mymidarrow,myendarrow] (2,1) -- (-4,1);
      \draw[mybegarrow,mymidarrow,myendarrow] (2,-3) -- (-4,-3);
      \draw[myendarrow] (3,-2) -- (2,-2);
      \draw[mymidarrow,myendarrow,dashed] (2,-2) -- (-3,-2);
      \draw(-4,-3) -- (-4,1);
      \draw(2,-3) -- (2,1);
      \draw(3,-2) -- (3,2);
      \draw(-3,2) -- (-3,1);
      \draw[dashed](-3,1) -- (-3,-2);
    \end{scope}
    \begin{scope}[green!60!blue,very thick]
      \fill[green!60!blue,opacity=0.4]  (0.5,-0.5) .. controls ++(-0.2,-0.8) and ++ (0.2,-0.8) .. (-1.5,-0.5) -- (-1,0.5) .. controls ++ (0.2,-0.8) and ++(-0.2,-0.8) .. (1,0.5) -- (0.5,-0.5) .. controls ++(-0.2,-0.8) and ++ (0.2,-0.8) .. (-1.5,-0.5);
      \draw[mymidarrow] (-1.5,-0.5) -- (-1,0.5);
      \draw[mymidarrow] (-1,0.5) .. controls ++ (0.2,-0.8) and ++(-0.2,-0.8) .. (1,0.5);
      \draw[myendarrow] (1,0.5) -- (0.5,-0.5);
      \draw[mymidarrow] (0.5,-0.5) .. controls ++(-0.2,-0.8) and ++ (0.2,-0.8) .. (-1.5,-0.5);
    \end{scope}
    \begin{scope}[blue!50!red,very thick]
      \draw[mybegarrow,myendarrow] (-1.5,-3) -- (-1.5,1);
      \draw[mybegarrow,myendarrow] (-1,2) -- (-1,-2);
      \draw[mybegarrow,myendarrow] (1,-2) -- (1,2);
      \draw[mybegarrow,myendarrow] (0.5,1) -- (0.5,-3);
      \draw[mymidarrow] (-1,2) -- (-1.5,1);
      \draw[mymidarrow] (0.5,1) -- (1,2);
      \draw[dashed,mymidarrow] (-1,-2) -- (-1.5,-3);
      \draw[dashed,mymidarrow] (0.5,-3) -- (1,-2);
      \fill[blue!50!red,opacity=0.4] (-1.5,-3) -- (-1,-2) -- (-1,2) -- (-1.5,1) -- (-1.5,-3);
      \fill[blue!50!red,opacity=0.4] (0.5,-3) -- (1,-2) -- (1,2) -- (0.5,1) -- (0.5,-3);
    \end{scope}
    \fill[green,opacity=0.2] (-4,-3) rectangle (2,1);
    \draw (-2.8,1.8) node {$a$};
    \draw(-3.8,0.8) node {$b$};
    \draw(2.8,-1.8) node {$a$};
    \draw(1.8,-2.8) node {$b$};
    \draw(-0.2,-0.4) node {$1$};
    \draw(0.75,-0.75) node {$k$};
  \end{tikzpicture}
  \caption{The foam that is the differential $d_k^+$ of the complex in Figure~\ref{fig:resolution}. It is decorated by constant
    polynomial equal $1$.}\label{fig:differential}·
\end{figure}

\subsection{Invariance of $\sln$-homology}
We first recall the classical result.
\begin{theorem}\label{thm:invariance_reid}
  Suppose $D$ and $D'$ differ by a single Reidemeister move. Then there is a homotopy equivalence of cochain complexes $\llbra D\rrbra\cong\llbra D'\rrbra$. 
\end{theorem}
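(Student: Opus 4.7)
The plan is to reduce to showing local invariance for each of the three Reidemeister moves separately (with subcases according to strand orientations and the labels $a,b\in\{1,\dots,N\}$). For each move the relevant crossings are confined to a disk, so the bracket splits as a tensor product $\llbra D\rrbra\cong\llbra D_{\mathrm{out}}\rrbra\otimes\llbra D_{\mathrm{loc}}\rrbra$, and a homotopy equivalence of the local factor inside $\KFoam$ propagates to a homotopy equivalence of the whole bracket. The tensor product is taken in $\Kom{\SFoam_N}$, which is well-defined because $\SFoam_N$ is $\S_N$-linear and additive.

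For Reidemeister I, the bracket of the curl is a two-term complex whose webs differ from a single strand by a digon. Using the evaluation functor $\cF$ and the bubble/theta-foam relations from \cite{RobertWagner}, the digon splits as a direct sum of grading-shifted copies of the straight strand. One summand is identified with the other term of the complex through an invertible foam; Gaussian elimination cancels this pair, leaving exactly the straight strand with the correct quantum shift. For Reidemeister II one writes out the $(c+1)\times(c+1)$ grid of resolutions ($c=\min(a,b)$) and applies the MOY digon/square-removal relations to decompose each inner term into direct summands; all but two pair up into invertible squares that cancel under iterated Gaussian elimination, and the surviving term is the two-strand identity bracket. Reidemeister III is then obtained by the Bar-Natan categorified Kauffman-trick: an auxiliary R2 applied to one strand on each side produces a common complex up to foam isotopy, so both sides are homotopy equivalent. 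The concrete chain maps in each case are the ones spelled out in \cite[Section 3]{ETW} and \cite{Queffelec_Rose_foam}.

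The main technical obstacle is the bookkeeping of signs. In the foam literature the differential of $\llbra D\rrbra$ is assembled from local complexes via the Koszul convention, whereas in our setup the signs are encoded by an explicit sign assignment $\sas$ on $\Cube(D)$. Before carrying out the Reidemeister checks, we therefore fix, for each move, a sign assignment on $\Cube(D)$ whose restriction to the sub-cube supported on the crossings being modified agrees with the Koszul sign convention on the corresponding tensor factor. By Lemma~\ref{lem:coboundary}, any two sign assignments differ by a coboundary $\partial\sat$; conjugating by the sign-changing automorphism $(-1)^{\sat}$ shows that the resulting homotopy class of $\llbra D\rrbra$ is independent of this choice, so no generality is lost. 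With these compatible sign assignments in place, the explicit chain maps and homotopies from \cite{ETW,RobertWagner} transfer directly, and the Gaussian eliminations described above produce the required homotopy equivalence $\llbra D\rrbra\simeq\llbra D'\rrbra$.
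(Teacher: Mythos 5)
Your proposal is mathematically correct, and indeed this is the standard route in the $\sln$-foam literature: localize to a disk, split off the tensor factor, apply MOY digon/square removal and Gaussian elimination for R1 and R2, and reduce R3 to R2 via the categorified Kauffman trick. However, the paper deliberately does \emph{not} take this route for its proof of Theorem~\ref{thm:invariance_reid}. The paper cites \cite{ETW,MSV} for the fact that the maps $\phi$, $\Phi$ it writes down are chain homotopy equivalences, and instead spends its effort developing a careful \emph{sign-assignment calculus} on the cube of resolutions: Lemma~\ref{lem:sas} (unique extension of a sign assignment on $\Cube(D)$ to $\Cube(D')$ when one crossing is added, and how coboundaries transport), Corollary~\ref{cor:pos-signs-compatible-sign-assignments} (that $\sad\equiv 0$ is a consistent choice), and Lemma~\ref{lem:sign_2a} (the $(1,-1)$-corner of $\Cube(D')$ inherits exactly $\sas$). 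These are not auxiliary to the non-equivariant statement — as you rightly observe, Lemma~\ref{lem:coboundary} already makes the homotopy type independent of the sign choice — but they are indispensable for Lemma~\ref{lem:equiv_reid}, where one must show that the chain maps are \emph{on the nose} $\Z_m$-equivariant with respect to two explicit, a priori unrelated sign assignments. Your approach of fixing a Koszul-compatible sign assignment and conjugating by $(-1)^{\sat}$ gives invariance of the homotopy class but provides no handle on equivariance, so it cannot be reused for the equivariant argument.

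In short: your proposal proves the stated theorem correctly and more economically, but it would force the equivariant proof in Section~\ref{sub:equiv_reid} to be redone from scratch; the paper's more laborious proof is chosen precisely because the sign-tracking lemmas it extracts along the way carry the whole weight of Theorem~\ref{thm:group_action}. If you intend your argument to serve only Theorem~\ref{thm:invariance_reid} in isolation, it is fine; if you intend it to replace the paper's proof within the paper's own architecture, you would need to supplement it with the contents of Lemma~\ref{lem:sas} and its corollaries.
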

\begin{proof}
  We present some aspects of the proof of this well-known fact, as a warm-up before Lemma~\ref{lem:equiv_reid}, which is significantly harder.
  In our exposition we will focus on choosing signs,
  While the proof for non-periodic links need not address the sign assignment problem, Koszul's sign rule being sufficient, we will show how
  to relate sign assignments on $D$ with sign assignments on $D'$
  in a consistent way. The methods we develop, will be used in
  the proof of Lemma~\ref{lem:equiv_reid}.

  Existing proofs of Theorem~\ref{thm:invariance_reid}
  were described in \cite[Theorem 3.5]{ETW}.
  We will follow \cite[Section 7]{MSV}.
  Even if \cite{MSV} does not deal with $\S_{N}$-equivariant foams, as noted in \cite{ETW} the techniques carry over to the case of $\S_N$-equivariant foams.

  Suppose $D\langle\smallloop\rangle$ is the diagram
  obtained from $D$ via a single Reidemeister move creating a positive crossing (the case of a negative crossing requires only minor changes,
  and it is left to the reader). We let $D\langle\smallcircle\rangle$ and $D\langle\otherdown\rangle$ be partial resolutions
  of $D\langle\smallloop\rangle$.
  We can present \(\llbra D \rrbra\) as the following bicomplex
  \begin{equation}\label{eq:r1_chain}
    0\to\llbra D\langle\smallcircle \rangle\rrbra \xrightarrow{d}\llbra D\langle \otherdown \rangle \rrbra\to 0,
  \end{equation}
  where $d$ is given by a family of foams that are products except near the relevant crossing. Near the crossing, $d$ is given by
  the foam of Figure~\ref{fig:differential}.
  The map between $\llbra D\rrbra$ and
  $\llbra D\langle\smallloop\rangle\rrbra$ is given by
  \begin{equation}\label{eq:r1_map}
    \begin{tikzcd}
      0\ar[r] & \llbra D\rrbra\ar[d,"\phi"]\ar[r] & 0 \ar[d] \\
      0\ar[r] & \llbra D\langle\smallcircle\rangle\rrbra\ar[r] & \llbra D\langle\otherdown\rangle\rrbra \ar[r] & 0.
    \end{tikzcd}
  \end{equation}
  Here $\phi$ is given as follows. For any resolution $D_I$, we build the foam $\phi_I$ which is a product foam
  except at the relevant crossing, where it is given locally by Figure~\ref{fig:r1map}.
  In other words, \(\phi_{I}\) is the union of the identity foam \(D_{I} \times [0,1]\) on \(D_{I}\) and the cup foam.
  The resolution $D_I$ induces the resolutions $D_I\langle\smallcircle\rangle$ and $D_I\langle\otherdown\rangle$.
  The map between $D_I=D_I\langle\smalldown\rangle$ and 
  $D_I\langle\smallcircle\rangle$ is given by $(-1)^{\sad(I)}\phi_I$ for some $\sad(I)\in\F_2$. In \cite{MSV} it is proved that $\phi$
  is a chain homotopy equivalence, however, in their approach sign
  assignments are hidden in the Koszul's sign rule.
  
  The rest
  of the proof (of the Reidemeister one case)
  is devoted to chosing appropriate sign assignments on $\Cube(D)$ and~$\Cube(D\langle\smallloop\rangle)$
  in such a way that setting $\sad(I)=0$ for all $I$, we obtain a chain map.

  Recall that $\Cube(D)$ is
  the generalized cube of resolutions for $D$. As the new crossing is positive, $\Cube(D\langle\smallloop\rangle)=\Cube(D)\times\{0,1\}$
  (if the crossing is negative, we have to take the product with $\{-1,0\}$ instead).
  The next lemma covers both cases and will be extensively used in the future.
  \begin{lemma}\label{lem:sas}
    Suppose $\sas$ is a sign assignment for $D$. Assume that $D'$ is a diagram having one more crossing than $D$.
    Identify \(\Cube(D') \cong \Cube(D) \times \{0,\epsilon\}\), where \(\epsilon\) is the sign of the additional crossing.
    There exists a unique sign assignment $\sas'$ for $D'$ satisfying the following conditions.
    \begin{enumerate}[label=(S-\arabic*)]
    \item For $I,I'\in\Cube(D)$, such that $I'$ is an immediate successor of $I$, we have
      \[\sas'((I,0),(I',0))=\sas(I,I');\]
      \label{item:horizontal}
    \item For $I\in\Cube(D)$,
      \[\sas'((I,0),(I,1))=0\]
      if the new crossing is positive,
      and
      \[\sas'((I,-1),(I,0))=0\]
      if the new crossing is negative.
      \label{item:vertical}
    \end{enumerate}

    Moreover, suppose $\sas_1,\sas_2$ are two sign assignments on $D$ and $\sas_1-\sas_2= \partial \sat$, where \(\sat\) is a cellular \(1\)-cochain on \(\SCube(D)\).
    Denote by \(\sas_{1}'\) and \(\sas_{2}'\) extensions of \(\sas_{1}\) and \(\sas_{2}\), respectively, to sign assignments on \(\SCube(D')\).
    Define the cellular \(0\)-cochain \(\sat'\) on \(\SCube(D')\) by the  $\sat'((I,x))=\sat(I)$ for any \((I,x) \in \Cube(D')\).
    Then, $\sas_1'-\sas_2'= \partial \sat'$.
  \end{lemma}
  \begin{proof}
    We prove the lemma only for a positive crossing, the proof for a negative one is completely analogous.
    The only missing datum for $\sas'$ is $\sas'((I,1),(I',1))$ if $I'$ is an immediate successor of $I$.
    We set
    \begin{equation}
      \label{eq:sas_prim}
      \sas'((I,1),(I',1))=1+\sas(I,I').
    \end{equation}
    We verify that $\sas'$ satisfies cocycle condition. Take $I',I'_1,I'_2,I'_{12} \in \Cube(D')$ such that $I'_1\neq I'_2$ are immediate successors of $I'$ and $I'_{12}$ is an immediate successor of $I'_{1}$ and $I'_{2}$.
    There are three cases to consider.
    \begin{itemize}
    \item[(i)] Assume that the last coordinate of all $I',I'_1,I'_2,I'_{12}$ is zero, that is
      $I'=(I,0)$, $I'_1=(I_1,0)$, $I_2'=(I_2,0)$, $I'_{12}=(I_{12},0)$. By item~\ref{item:horizontal}:
      \[\sas'(I',I'_1)+\sas'(I'_1,I'_{12})+\sas'(I',I'_2)+\sas'(I'_2,I'_{12})=
      \sas(I,I_1)+\sas(I_1,I_{12})+\sas(I,I_2)+\sas(I_2,I_{12}).\]
      The latter is $1$ by the cocycle condition for $\sas$;
    \item[(ii)] Assume that the last coordinate of all $I',I'_1,I'_2,I'_{12}$ is $1$, that is, 
      $I'=(I,1)$, $I'_1=(I_1,1)$, $I_2'=(I_2,1)$, $I'_{12}=(I_{12},1)$. By \eqref{eq:sas_prim}:
      \[\sas'(I',I'_1)+\sas'(I'_1,I'_{12})+\sas'(I',I'_2)+\sas'(I'_2,I'_{12})=
      \sas(I,I_1)+\sas(I_1,I_{12})+\sas(I,I_2)+\sas(I_2,I_{12}).\]
      By the cocycle condition for $\sas$, the latter sum is $1$ as desired;
    \item[(iii)] For $I,I_1\in\Cube(D)$ and $I_1$ an immediate successor of $I$, we have $I'=(I,0)$, $I'_1=(I_1,0)$, $I'_2=(I,1)$, $I'_{12}=(I_1,1)$.
      Then,
      \[\sas(I'_2,I'_{12})=\sas(I',I'_1)+1,\ \sas(I',I'_2)=0,\ \sas(I'_1,I'_{12})=0,\]
      where the first equality follows from~\eqref{eq:sas_prim}, the next two follow from item~\ref{item:vertical}  in the statement of the lemma.
    \end{itemize}
    We have verified that the cocycle condition is satisfied for all three
    cases.
    Uniqueness is forced by (iii): if \eqref{eq:sas_prim} is not satisfied for some $I,I'$, then in (iii) the cocycle condition does not hold.

    We prove now the second part.
    Suppose $\sas_1$ and $\sas_2$ are sign assigments on $D$ such that $\sas_1-\sas_2=\partial\sat$. This means, for $I,I'\in\Cube(D)$
    such that $I'$ is an immediate successor of $I$, we have $\sas_1(I,I')-\sas_2(I,I')=\sat(I)-\sat(I')$. Consider $I'_1,I'_2\in\Cube(D\times\{0,1\})$ such that $I'_2$ is an immediate successor of $I'_1$. We have three cases.
    \begin{itemize}
    \item $I'_2=(I_2,0)$, $I_1'=(I_1,0)$ for $I_1,I_2\in\Cube(D)$ and $I_2$ an immediate successor of $I_1$.
      Then
      \begin{equation}\label{eq:sas_proof}
        \sas'_1(I'_1,I'_2)-\sas'_2(I'_1,I'_2)=\sas_1(I_1,I_2)-\sas_2(I_1,I_2)=\sat(I_1)-\sat(I_2)=\sat'(I_1)-\sat'(I_2).
      \end{equation}
      Here, the first equality follows from~\ref{item:horizontal}, the second is a translation of $\sas_1-\sas_2=\partial\sat$ (this is an assumption of the lemma), and the third follows from the definition of $\sat'$.
    \item $I'_2=(I_2,1)$, $I'_1=(I_1,1)$ for $I_1,I_2\in\Cube(D)$ and $I_2$ an immediate successor of $I_1$.
      Equation~\eqref{eq:sas_proof} holds, but the first equality follows from~\eqref{eq:sas_prim} and not~\ref{item:horizontal} as in the previous item. 
    \item $I'_2=(I,1)$, $I'_1=(I,0)$ for some $I\in\Cube(D)$. We have
      \[\sas'_1(I'_1,I'_2)-\sas'_2(I'_1,I'_2)=0=\sat'(I_1')-\sat'(I_2').\]
      Here the first equality follows from~\ref{item:vertical}, the second results from the definition of $\sat'$.
    \end{itemize}
  \end{proof}

  Lemma~\ref{lem:sas} gives us the following corollary.

  \begin{corollary}\label{cor:pos-signs-compatible-sign-assignments}
    If $\sas'$ is the sign assignment from Lemma~\ref{lem:sas},
    then we can choose \(\sad(I) = 0\), for any \(I \in \Cube(D)\), i.e., the \(I\)-th component of the map \(\phi \colon \llbra D,\sas\rrbra \to \llbra D\langle\smallcircle\rangle,\sas'\rrbra\) is given by $\phi_{I} \colon D_{I} \to D_{(I,0)}$ (there is no sign).
  \end{corollary}
  \begin{proof}
    The fact that $\phi_I$ commutes with the differential up to sign follows from the fact that differential in $\llbra D\rrbra$ 
    is given by local modifications
    of the diagram away from the new crossing. To verify the sign, we use the following routine procedure. Let $I_1\in\Cube(D)$
    and $I_2$ be an immediate successor of $I_1$. We let $I_1'=(I_1,0)$ and $I_2'=(I_2,0)$. The map $\phi_{I_1}$ postcomposed
    with the differential is the composition of the foams $\phi_{I_1}$ and $\delta(I_1',I_2')$ with the sign $(-1)^{\sad(I_1)+\sas'(I_1',I_2')}$.
    The differential postcomposed with $\phi_{I_2}$ is the composition of foams $\delta(I_1,I_2)$ and $\phi_{I_2}$
    multiplied by $(-1)^{\sad(I_2)+\sas(I_1,I_2)}$. Now $\sas(I_1,I_2)=\sas'(I_1',I_2')$ by \ref{item:horizontal}.
  \end{proof}
  Corollary~\ref{cor:pos-signs-compatible-sign-assignments} implies that the maps $\phi_I$ glue to the chain map between $\llbra D\rrbra$ and
  $\llbra D\langle\otherdown\rangle\rrbra$.
  In~\cite{MSV} it is proved that the map \(\phi\) is indeed a chain homotopy equivalence.
%
  \begin{figure}
    \begin{tikzpicture}
      \begin{scope}[xshift=-3cm,scale=0.8]
        \draw[thick] (2,2) arc [start angle=180, delta angle=360,x radius=0.5cm, y radius=0.2cm];
        \draw[thick] (1,0) .. controls ++(-1,0.2) and ++(1,-0.3) .. (-1,0.5);
        \draw[thick] (1,2) .. controls ++(-1,-0.2) and ++(1,-0.3) .. (-1,2);
        \draw[ultra thick, ->] (1.5,0.5) -- node [midway,left, scale=0.8] {$\phi$} (1.5,1.5);
      \end{scope}
      \begin{scope}[xshift=3cm,scale=0.8]
        \fill[blue!20!green!20] (2,2) .. controls ++(0.3,-1.5) and ++(-0.3,-1.5) .. (4,2) arc[start angle=0, delta angle=-180, x radius=1cm, y radius=0.3cm];
        \fill[blue!20!green!20] (1,0) .. controls ++(-1,0.2) and ++(1,-0.3) .. (-1,0.5) -- (-1,2) .. controls ++(1,-0.3) and ++(-1,-0.2) .. (1,2) -- (1,0);
        \draw[fill=blue!10,thick] (2,2) arc [start angle=180, delta angle=360,x radius=1cm, y radius=0.3cm];
        \draw (2,2) .. controls ++(0.3,-1.5) and ++(-0.3,-1.5) .. (4,2);
        \draw[thick] (1,0) .. controls ++(-1,0.2) and ++(1,-0.3) .. (-1,0.5);
        \draw[thick] (1,2) .. controls ++(-1,-0.2) and ++(1,-0.3) .. (-1,2);
        \draw (1,0) -- (1,2);
        \draw (-1,0.5) -- (-1,2);
      \end{scope}
    \end{tikzpicture}
    \caption{Reidemeister one move. To the left: the source and the target of the map $\phi$. To the right: the foam realizing this map (it is a product foam everywhere except near the crossing).}\label{fig:r1map}
  \end{figure}
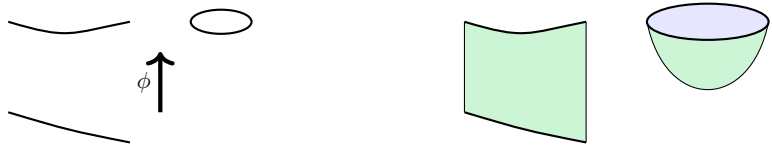

  We will now sketch the proof that the Reidemeister $2a$ move, that is
  \[
    \begin{tikzpicture}
      \path[name path=A] (4,0.2) .. controls ++ (-0.3,-0.35) and ++ (0.3,-0.35) .. (3,0.2);
      \path[name path=B] (4,-0.2) .. controls ++ (-0.3,0.35) and ++ (0.3,0.35) .. (3,-0.2);
      \path[name intersections={of=A and B,by={C,D}}];
      \draw[->] (4,0.2) .. controls ++ (-0.3,-0.35) and ++ (0.3,-0.35) .. (3,0.2);
      \fill[white] (C) circle (0.08);
      \fill[white] (D) circle (0.08);
      \draw[->] (4,-0.2) .. controls ++ (-0.3,0.35) and ++ (0.3,0.35) .. (3,-0.2);
      \draw[->] (1,0.2) .. controls ++ (-0.3,-0.2) and ++ (0.3,-0.2) .. (0,0.2);
      \draw[->] (1,-0.2) .. controls ++ (-0.3,0.2) and ++ (0.3,0.2) .. (0,-0.2);
      \draw (0.5,-0.8) node [scale=0.8] {$D$};
      \draw (3.5,-0.8) node [scale=0.8] {$D'.$};
      \draw[densely dotted] (0.5,0) circle (0.6);
      \draw[densely dotted] (3.5,0) circle (0.6);
      \draw[->] (1.4,0) -- (2.1,0);
    \end{tikzpicture}
  \]
  does not change the chain homotopy type of $\llbra D\rrbra$.
  The left crossing in $D'$ is referred to as the first new crossing, while the crossing to the right of $D'$ is
  the second new crossing. We have $\Cube(D')=\Cube(D)\times\{0,1\}\times\{-1,0\}$. For a given sign assignment $\sas$ of $D$,
  we extend it to the sign assignment $\sas_1$ on $\Cube(D)\times\{0,1\}$ by Lemma~\ref{lem:sas}, and then to $\Cube(D)\times\{0,1\}\times\{-1,0\}$
  by Lemma~\ref{lem:sas} again. We obtain a sign assignment $\sas'$ on $\Cube(D')$. 
  We record the following observation for future use.
  \begin{lemma}\label{lem:sign_2a}
    The sign assignment $\sas'$ on $\Cube(D)\times\{1\}\times\{-1\}$ agrees with $\sas$.
  \end{lemma}
  \begin{proof}
    Let $I,I'\in\Cube(D)$ with $I'$ an immediate successor of $I$. By construction, via \eqref{eq:sas_prim}, $\sas_1((I,1),(I',1))=1+\sas(I,I')$.
    Applying \eqref{eq:sas_prim} again, we obtain $\sas'((I,1,-1),(I',1,-1))=1+\sas_1((I,1),(I',1))=\sas(I,I')$.
  \end{proof}

  We define the cochain map
  \[
    \begin{tikzpicture}
      \draw[->] (1,0.2) .. controls ++ (-0.3,-0.2) and ++ (0.3,-0.2) .. (0,0.2);
      \draw[->] (1,-0.2) .. controls ++ (-0.3,0.2) and ++ (0.3,0.2) .. (0,-0.2);
      \draw[densely dotted] (0.5,0) circle (0.6);
      \draw[->,thick] (1.5,0) -- (4.5,0); \draw (5,0) node {$0$};
      \draw[->,thick] (-3.5,0) -- (-0.5,0); \draw (-4,0) node {$0$};

      \draw[->] (-3.5,4.2) .. controls ++ (-0.3,-0.2) and ++(0.2,0) .. (-3.75,4) -- (-3.85,4) .. controls ++ (-0.2,0) and ++(0.3,0) .. (-4.5,4.2);
      \draw[->] (-3.5,3.8) .. controls ++ (-0.3,0.2) and ++(0.2,0) .. (-3.75,4) -- (-3.85,4) .. controls ++ (-0.2,0) and ++(0.3,0) .. (-4.5,3.8);
      \draw[thick] (-3.75,4) -- (-3.85,4);
      \draw[densely dotted] (-4,4) circle (0.6);

      \draw[->] (0,3.2) .. controls ++ (-0.3,-0.2) and ++ (0.3,-0.2) .. (-1,3.2);
      \draw[->] (0,2.8) .. controls ++ (-0.3,0.2) and ++ (0.3,0.2) .. (-1,2.8);
      \draw[densely dotted] (-0.5,3) circle (0.6);

      \draw[->] (5.5,4.2) .. controls ++ (-0.3,0) and ++(0.2,0) .. (4.85,4) -- (4.75,4) .. controls ++ (-0.2,0) and ++(0.3,-0.2) .. (4.5,4.2);
      \draw[->] (5.5,3.8) .. controls ++ (-0.3,0) and ++(0.2,0) .. (4.85,4) -- (4.75,4) .. controls ++ (-0.2,0) and ++(0.3,0.2) .. (4.5,3.8);
      \draw[thick] (4.85,4) -- (4.75,4);
      \draw[densely dotted] (5,4) circle (0.6);

      \draw[->] (2,5.2) .. controls ++ (-0.2,-0.2) and ++ (0.2,0) .. (1.85,5) -- (1.75,5) .. controls ++ (-0.2,0) and ++(0.2,0) .. (1.5,5.1) .. controls ++ (-0.2,0) and ++(0.2,0) .. (1.25,5) -- (1.15,5) .. controls ++ (-0.2,0) and ++(0.2,-0.2) .. (1,5.2);
      \draw[->] (2,4.8) .. controls ++ (-0.2,0.2) and ++ (0.2,0) .. (1.85,5) -- (1.75,5) .. controls ++ (-0.2,0) and ++(0.2,0) .. (1.5,4.9) .. controls ++ (-0.2,0) and ++(0.2,0) .. (1.25,5) -- (1.15,5) .. controls ++ (-0.2,0) and ++(0.2,0.2) .. (1,4.8);
      \draw[thick] (1.85,5) -- (1.75,5);
      \draw[thick] (1.25,5) -- (1.15,5);
      \draw[densely dotted] (1.5,5) circle (0.6);
      
      \draw[->] (-3.3,3.8) -- (-1.5,3);
      \draw[->] (-3.3,4.2) -- (0.5,5);
      \draw[name path=U,->] (0.5,3) -- (4.2,3.8);
      \draw[->] (2.5,5) -- (4.2,4.3);

      \draw[->] (0.3,0.65) -- node[left,midway,scale=0.8] {I} (-0.5,2.3);
      \draw[name path=V,->] (0.7,0.65) -- node[left,midway,scale=0.8] {$\Phi$} (1.5,4.3);
      \path[name intersections={of=U and V,by={X}}];
      \fill[white] (X) circle (0.1);
      \draw[->] (0.5,3) -- (4.2,3.8);

    \end{tikzpicture}
  \]
  The bottom diagram is a local cochain complex for $\llbra D\rrbra$, the top diagram is a local cochain complex for $\llbra D'\rrbra$.
  The map $I$ is given by the identity foam with sign $+1$. The map $\Phi$ is the foam depicted in Figure~\ref{fig:try_to_draw_it}
  also with sign $+1$. By Lemma~\ref{lem:sign_2a}, $\Phi$ is a cochain map
  between the complex $\llbra D\rrbra$ and the subcomplex of $D'$
  obtained by a $(-1,1)$-resolution of the crossing created in the Reidemeister 2a move.

  We refer to \cite{MSV} for showing that $I\oplus\Phi$ is a chain
  map, that  the description of the inverse map and that $I\oplus\Phi\colon\llbra D\rrbra\to\llbra D'\rrbra$
  is a cochain homotopy equivalence.

  \smallskip
  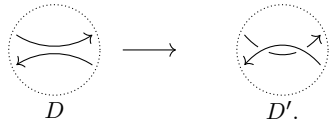
\begin{figure}
    \[
      \begin{tikzpicture}
        \path[name path=A] (4,0.2) .. controls ++ (-0.3,-0.35) and ++ (0.3,-0.35) .. (3,0.2);
        \path[name path=B] (4,-0.2) .. controls ++ (-0.3,0.35) and ++ (0.3,0.35) .. (3,-0.2);
        \path[name intersections={of=A and B,by={C,D}}];
        \draw[->] (3,0.2) .. controls ++ (0.3,-0.35) and ++ (-0.3,-0.35) .. (4,0.2);
        \fill[white] (C) circle (0.08);
        \fill[white] (D) circle (0.08);
        \draw[->] (4,-0.2) .. controls ++ (-0.3,0.35) and ++ (0.3,0.35) .. (3,-0.2);
        \draw[->] (0,0.2) .. controls ++ (0.3,-0.2) and ++ (-0.3,-0.2) .. (1,0.2);
        \draw[->] (1,-0.2) .. controls ++ (-0.3,0.2) and ++ (0.3,0.2) .. (0,-0.2);
        \draw (0.5,-0.8) node [scale=0.8] {$D$};
        \draw (3.5,-0.8) node [scale=0.8] {$D'.$};
        \draw[densely dotted] (0.5,0) circle (0.6);
        \draw[densely dotted] (3.5,0) circle (0.6);
        \draw[->] (1.4,0) -- (2.1,0);
      \end{tikzpicture}
    \]
    \caption{Reidemeister 2b move.}\label{fig:2b}
  \end{figure}
  The description of sign assignments for Reidemeister 2b move, drawn in Figure~\ref{fig:2b}, is the same. For Reidemeister 3, the problem of sign assignments does not appear. In fact, that move preserves the crossings, so the sign assignment on $D$ induces a natural sign
  assignment for $D'$.
\end{proof}
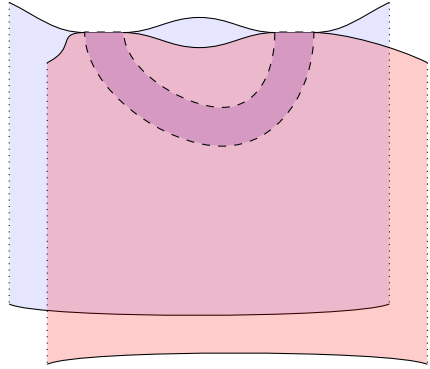
\begin{figure}
  \begin{tikzpicture}
    \fill[blue,opacity=0.1] (2.5,0.4) .. controls ++ (-0.4,-0.2) and ++ (0.4,0) .. (1.5,0) -- (1,0) .. controls ++ (-0.4,0) and ++(0.4,0) .. (0,0.2) .. controls ++ (-0.4,0) and ++(0.4,0) .. (-1,0) -- (-1.5,0) .. controls ++ (-0.4,0) and ++(0.4,-0.2) .. (-2.5,0.4) -- (-2.5,-3.6) .. controls ++ (0.4,-0.2) and ++ (-0.5,-0.2) .. (2.5,-3.6) -- (2.5,0.4);
    \draw (2.5,0.4) .. controls ++ (-0.4,-0.2) and ++ (0.4,0) .. (1.5,0) -- (1,0) .. controls ++ (-0.4,0) and ++(0.4,0) .. (0,0.2) .. controls ++ (-0.4,0) and ++(0.4,0) .. (-1,0) -- (-1.5,0) .. controls ++ (-0.4,0) and ++(0.4,-0.2) .. (-2.5,0.4);
    \draw (-2.5,-3.6) .. controls ++ (0.4,-0.2) and ++ (-0.5,-0.2) .. (2.5,-3.6);
    \draw[dotted] (2.5,0.4) -- (2.5,-3.6);
    \draw[dotted] (-2.5,0.4) -- (-2.5,-3.6);

    \fill[red,opacity=0.2] (3,-0.4) .. controls ++ (-0.4,0.2) and ++ (0.4,0) .. (1.5,0) -- (1,0) .. controls ++ (-0.4,0) and ++(0.4,0) .. (0,-0.2) .. controls ++ (-0.4,0) and ++(0.4,0) .. (-1,0) -- (-1.5,0) .. controls ++ (-0.4,0) and ++(0.4,0.2) .. (-2,-0.4)
    -- (-2,-4.4) .. controls ++ (0.4,0.2) and ++ (-0.5,0.2) .. (3,-4.4) -- cycle;
    \draw (3,-0.4) .. controls ++ (-0.4,0.2) and ++ (0.4,0) .. (1.5,0) -- (1,0) .. controls ++ (-0.4,0) and ++(0.4,0) .. (0,-0.2) .. controls ++ (-0.4,0) and ++(0.4,0) .. (-1,0) -- (-1.5,0) .. controls ++ (-0.4,0) and ++(0.4,0.2) .. (-2,-0.4);
    \draw (-2,-4.4) .. controls ++ (0.4,0.2) and ++ (-0.5,0.2) .. (3,-4.4);
    \draw[dotted] (3,-0.4) -- (3,-4.4);
    \draw[dotted] (-2,-0.4) -- (-2,-4.4);
    \draw[thick,dashed] (1,0) .. controls ++(0,-0.5) and ++(0.5,0) .. (0.3,-1) .. controls ++(-0.5,0) and ++(0,-0.5) .. (-1,0) -- (-1.5,0) .. controls ++(0,-0.75) and ++(-0.75,0) .. (0.3,-1.5) .. controls ++ (0.75,0) and ++ (0,-0.75) .. (1.5,0) -- cycle;
    \fill[blue!40!red!40] (1,0) .. controls ++(0,-0.5) and ++(0.5,0) .. (0.3,-1) .. controls ++(-0.5,0) and ++(0,-0.5) .. (-1,0) -- (-1.5,0) .. controls ++(0,-0.75) and ++(-0.75,0) .. (0.3,-1.5) .. controls ++ (0.75,0) and ++ (0,-0.75) .. (1.5,0) -- cycle;

  \end{tikzpicture}
  \caption{The map $\Phi$ for the Reidemeister $2a$ move in the proof of Theorem~\ref{thm:invariance_reid}. The dashed part is the seam singularity on the foam.}\label{fig:try_to_draw_it}
\end{figure}

The following result, see again \cite[Theorem 3.5]{ETW} and references in the proof therein, is a corollary of Theorem~\ref{thm:invariance_reid}.
\begin{theorem}\label{thm:invariance}
  If $D,D'$ represent the same link, then
  $\llbra D\rrbran\simeq \llbra D'\rrbran$, that is, the complexes in $\SFoam_N$
  corresponding to these two diagrams are chain homotopy equivalent.
\end{theorem}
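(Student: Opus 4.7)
The plan is to derive Theorem~\ref{thm:invariance} from Theorem~\ref{thm:invariance_reid} via the classical Reidemeister theorem, which states that any two diagrams $D$ and $D'$ representing the same oriented link are connected by a finite sequence of planar isotopies and (oriented) Reidemeister moves of types I, IIa, IIb, and III. My strategy is to produce chain homotopy equivalences at each step of such a sequence and then compose.

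First I would handle planar isotopies. A planar isotopy of the diagram does not change the combinatorial type of any resolution, and induces an ambient isotopy of each foam in the complex. By Lemma~\ref{lem:invariant}, isotopic closed foams evaluate identically; more generally, isotopic foams are identified as morphisms in $\SFoam_N$ by construction (see Definition~\ref{def:sfoam}, which quotients by $\ker\cF$). Hence a planar isotopy induces an \emph{isomorphism} of complexes in $\KFoam$, and we may reduce to the case of a finite sequence of Reidemeister moves.

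Next, for a sequence $D = D_0, D_1, \ldots, D_n = D'$ where each $D_{i+1}$ is obtained from $D_i$ by a single Reidemeister move, Theorem~\ref{thm:invariance_reid} produces a chain homotopy equivalence $f_i \colon \llbra D_i \rrbra \xrightarrow{\simeq} \llbra D_{i+1} \rrbra$ in $\KFoam$. Composing these maps, we obtain
\[
f := f_{n-1}\circ f_{n-2}\circ \cdots \circ f_0 \colon \llbra D \rrbra \longrightarrow \llbra D' \rrbra.
\]
Since chain homotopy equivalence in the additive category $\SFoam_N$ is preserved under composition (a standard fact: if $f$ and $g$ are chain homotopy equivalences with homotopy inverses $f'$ and $g'$, then $gf$ is a chain homotopy equivalence with inverse $f'g'$, because $f'g'gf \simeq f'f \simeq \mathrm{id}$ and similarly on the other side), $f$ is itself a chain homotopy equivalence.

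There is essentially no obstacle here beyond appealing to Theorem~\ref{thm:invariance_reid} and to Reidemeister's theorem; the only mild point is to observe that we do not need to track sign assignments globally along the entire sequence: Theorem~\ref{thm:invariance_reid} already asserts invariance in the category $\KFoam$ (i.e., up to chain homotopy), so the choices of sign assignments made locally at each step are absorbed into the equivalences $f_i$ and the composition above is well defined in $\KFoam$. This yields the desired equivalence $\llbra D \rrbran \simeq \llbra D' \rrbran$ in $\SFoam_N$.
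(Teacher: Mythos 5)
Your proof is correct and follows the same approach the paper takes: the paper states Theorem~\ref{thm:invariance} as an immediate corollary of Theorem~\ref{thm:invariance_reid} (with a reference to the relevant literature), which is exactly the decomposition into Reidemeister moves and planar isotopies followed by composition of chain homotopy equivalences that you spell out. Your additional remarks on planar isotopies and on sign assignments being absorbed in $\KFoam$ are accurate and fill in details the paper leaves implicit.
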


\begin{definition}[\(\S_{N}\)-equivariant Khovanov--Rozanski homology]
  Let $L$ be a link with a diagram $D$. The homology of the cochain complex $\cF(\llbra D\rrbran)$ is called the $\S_N$-equivariant Khovanov--Rozanski homology of \(L\).
\end{definition}
By Theorem~\ref{thm:invariance}, the Khovanov--Rozanski homology does not depend, up to isomorphism, on the choice of the diagram.

We conclude this subsection with a general remark.
  \begin{remark}\label{rem:ambiguity-definition-of-maps}
    In Corollary~\ref{cor:pos-signs-compatible-sign-assignments} we showed that the choice $\sad(I)=0$ for all $I\in\Cube(D)$
    leads to a consistent sign assignment. However, the choice $\sad(I)\equiv 1$ will also lead to a chain map. This ambiguity
    is resolved by a choice, for example we might require that $\sad((0,\dots,0))=0$. In a more complex setting, like for changing orbits of crossings, we might need to admit
    more general choices of $\sad(I)$.
  \end{remark}

\section{Specializations}\label{sec:modules}
Fix $N>0$. Recall that we denote by $\S_N$ the ring of symmetric polynomials in $N$ variables with complex coefficients. 

\subsection{Algebraic specialization of modules}
Recall that $\cSym_N$ denotes the category of finitely generated, graded projective $\S_N$-modules. The grading is referred to as the \emph{quantum grading}.
As $\S_N$ is naturally
isomorphic to the ring of polynomials in $N$ variables, Quillen--Suslin theorem guarantees that projective $\S_N$-modules are free.
That is to say,
an object of $\cSym_N$ is a direct sum of finite number of copies of $\S_N$, possibly with a grading shift.
We indicate such shift by $\S_N\{q^a\}$ with $a\in\Z$. 
To be more precise, if we denote by \((\S_{N})^{k}\) the \(k\)-graded part of \(\S_{N}\), then \((\S_{N} \{q^{a}\})^{k} = (\S_{N})^{k-a}\).
A degree-\(k\) morphism $\phi\,\colon\,\S_N\{q^a\}\to\S_N\{q^b\}$, is the same as a map \(\S_{N} \to \S_{N}\) of degree \(k+b-a\), hence it is a multiplication by a homogeneous polynomial of degree \(\frac{k+b-a}{2}\) (recall that
the indeterminates have degree two).

Denote by \(\Sym^{N}(\C)\) the \(N\)-th symmetric product of \(\C\), i.e., \(\Sym^{N}(\C) = \C^{N} / S_{N}\), where the symmetric group~\(S_{N}\) acts on \(\C^{N}\) by permuting coordinates.
In other words, elements of \(\Sym^{N}(\C)\) are unordered \(N\)-tuples of complex numbers.
Let $\Sigma = [z_{1},z_{2},\ldots,z_{N}] \in \Sym^{N}(\C)$.
We say that \(\Sigma\) is \emph{generic} if the elements of \(\Sigma\) are pairwise distinct, and \emph{singular} if \(\Sigma = [0,0,\ldots,0]\).
For a polynomial $P\in\S_N$, we denote by $P(\Sigma)$ complex number resulting from the evaluation of $P$ at $\Sigma$.
As $P$ is symmetric, the value $P(\Sigma)$ is well-defined.
The $N$-tuple $\Sigma$ endows $\C$ with a structure of a left $\S_N$-module, via $P\cdot z=P(\Sigma)z$, for $P\in\S_N$ and $z\in\C$.
We will denote by \(\C_{\Sigma}\) the \(\S_{N}\)-module structure on \(\C\) determined by \(\Sigma\).

\begin{definition}[Specialization functor]\label{def:spec_fun}
  The functor
  \[\ev^\Sigma \colon \cSym_N \to \Vect(\C), \quad \ev^{\Sigma}(M) = M \otimes_{\S_{N}} \C_{\Sigma},\]
  is called the \emph{specialization functor}.
  The functor \(\ev^{\Sigma}\) is called generic or singular, according to whether \(\Sigma\) is generic or singular.
\end{definition}
\begin{remark}
  We note that the specialization functor is usually not exact,
  because $\C_{\Sigma}$ is not a flat $\S_N$-module.
\end{remark}
\subsection{Algebraic specialization of cochain complexes}\label{sub:algebraic_spec_chain}

Suppose $C^*$ is a cochain complex of graded, free $\S_N$-modules. 
We form two cochain complexes.
\begin{definition}[Generic and singular specialization of complexes]\label{def:singular_and_generic}\
  \begin{itemize}
  \item The \emph{singular specialization} $C^*_0$ is the complex obtained by applying the singular specialization functor $\ev^\Sigma$ to $C^*$ with $\Sigma=[0,\dots,0]$;
  \item The \emph{generic specialization} $C^*_{gen}$ is the complex obtained by applying the generic specialization functor $\ev^\Sigma$ to $C^*$ with $\Sigma$ generic.
  \end{itemize}
\end{definition}
Both the singular specialization and the generic specialization are cochain complexes whose underlying vector 
spaces carry a grading. 
For instance,
if
\begin{equation}\label{eq:Ci_complex}C^i=\bigoplus_{j=1}^{n_i} \S_N\{q^{a_{ij}}\},\end{equation}
then $C^i_0=C^i_{gen}=\bigoplus_{j=1}^{n_i}\C\{q^{a_{ij}}\}$. We study how does the differential behave with respect to the grading.
Suppose the cochain complex $C^*$ is written as in \eqref{eq:Ci_complex}.
We can split the differential $d^{i}\colon C^{i}\to C^{i+1}$ into components $d_{i,k\ell}\colon \S_N\{q^{a_{ik}}\}\to\S_N\{q^{a_{i+1,\ell}}\}$,
where $1\le k\le n_i$, $1\le \ell\le n_{i-1}$.
The map $d_{i,k\ell}$, having degree \(0\), is the multiplication by a homogeneous polynomial of degree $\frac{a_{i+1,l}-a_{i,k}}{2}$.
The singular evaluation of a homogeneous polynomial of degree $s = \frac{a_{i+1,l}-a_{i,k}}{2}$ can be non-zero only if $a_{i,k}=a_{i+1,\ell}$. That is to say, the differential $d^i_0$ of the complex $C^*_0$ preserves the grading.

The situation of a cochain complex $C^*_{gen}$ is different, because the evaluation can be non-zero if $a_{i+1,\ell}-a_{i,k}\ge 0$.
That is to say, the differential $d^i_{gen}$ does not decrease the grading.
In particular, $(C^i_{gen},d^i_{gen})$ is a \emph{filtered} cochain complex.

For later use we record the following statement.
\begin{proposition}\label{prop:abstract_lee_gornik}
  There exists a spectral sequence, whose first page is $H^*(C^*_0)$ and whose $E^\infty$-page is $H^*(C^*_{gen})$.
\end{proposition}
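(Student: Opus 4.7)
The plan is to realize $C^{\ast}_{gen}$ as a bounded filtered cochain complex whose associated graded complex is $C^{\ast}_0$, and then invoke the standard spectral sequence of a filtered complex. The key observation is already essentially made in the paragraphs preceding the statement: $d^i_{gen}$ does not decrease the quantum grading, whereas $d^i_0$ preserves it strictly.

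Concretely, I would fix a decomposition $C^i = \bigoplus_{j=1}^{n_i} \S_N\{q^{a_{ij}}\}$ as in~\eqref{eq:Ci_complex}, identifying the underlying vector spaces $V^i := \bigoplus_j \C\{q^{a_{ij}}\}$ of $C^i_0$ and $C^i_{gen}$. Write the differential component-wise as multiplication by homogeneous polynomials $P_{i,k\ell}$ of degree $s_{k\ell} = (a_{i+1,\ell}-a_{ik})/2$, with the convention $P_{i,k\ell}=0$ when $s_{k\ell}<0$. Introduce the decreasing filtration
\[F^p C^i_{gen} := \bigoplus_{a_{ij}\geq 2p} \C\{q^{a_{ij}}\} \subseteq C^i_{gen}.\]
Because $d^i_{gen}$ never lowers the quantum grading, this filtration is preserved by the differential, and because $C^*$ is a bounded complex of finite-rank free modules, the filtration is bounded in every cohomological degree. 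These are exactly the hypotheses for the classical convergent spectral sequence of a filtered cochain complex.

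To identify the $E_1$-page, I would observe that the $E_0$ differential is induced by the grading-preserving components of $d^i_{gen}$. Such a component is non-zero only when $a_{ik}=a_{i+1,\ell}$, i.e., when $P_{i,k\ell}$ is a constant polynomial. For constant polynomials, generic and singular evaluation produce the same scalar, so the grading-preserving part of $d^i_{gen}$ coincides with $d^i_0$. Hence $(E_0,d_0)\cong C^*_0$, giving $E_1\cong H^*(C^*_0)$, while convergence identifies $E_\infty$ with the associated graded of the induced filtration on $H^*(C^*_{gen})$, which as a $\C$-vector space recovers $H^*(C^*_{gen})$.

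The only non-routine step is verifying that $d^i_0$ is literally the grading-preserving part of $d^i_{gen}$, which is immediate from the description of degree-zero morphisms between shifted copies of $\S_N$ as multiplication by scalars. Everything else is a direct application of the standard spectral sequence of a bounded filtered cochain complex, and I do not anticipate any genuine obstacle beyond this bookkeeping.
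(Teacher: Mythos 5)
Your argument is essentially identical to the paper's: both decompose the differential $d^i$ into components $d^{i0}+d^{i1}+\cdots$ by polynomial degree, observe that the quantum-grading filtration on $C^*_{gen}$ is preserved by $d_{gen}$ with associated graded differential $d^{i0}_{gen}=d^{i0}_0=d^i_0$ (the degree-zero, i.e.\ scalar, component), and invoke the classical spectral sequence of a bounded filtered complex. You merely spell out the filtration $F^pC^i_{gen}=\bigoplus_{a_{ij}\ge 2p}\C\{q^{a_{ij}}\}$ explicitly where the paper leaves it implicit.
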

\begin{proof}
  The differentials $d^i\colon C^i\to C^{i+1}$ can be decomposed as a sum $d^{i0}+d^{i1}+\dots$,
  where $d^{is}$ is given by a matrix of homogeneous polynomials of degree $s$.
  After performing a generic specialization, $d^{is}$ becomes the map $d^{is}_{gen}$ increasing the grading by $2s$. That is, $d^{i}_{gen}=d^{i0}_{gen}+d^{i1}_{gen}+\dots$.
  The graded part of $d^{i}_{gen}$ is equal to $d^{i0}_{gen}$.

  Specialization of $d^{is}$ with all variables zero gives the zero map, unless $s=0$. That is, $d^{i}_0=d^{i0}_0$.
  The non-zero map $d^{i0}_0$ is equal to
  $d^{i0}_{gen}$, because a degree zero polynomial is necessarily constant.
  Therefore, the graded part of $d^{i}_{gen}$ is equal to the differential $d^{i}_0$.

  Summarizing, $(C^*_{gen},d^i_{gen})$ is a filtered cochain complex, whose graded part is $d^i_0$. A classical argument  shows the existence of the spectral sequence.
\end{proof}

\subsection{Geometric specialization}\label{sub:specsigma}
We have already discussed the algebraic specialization functor $\ev^\Sigma$.
We can perform the specialization earlier, on the level of the category $\SFoam_N$.
The resulting category, $\SigFoam_N$, will be shown to have a much simpler structure.

\begin{definition}
  Fix \(\Sigma \in \Sym^{N}(\C)\).
  The evaluation $\langle F\rangle_\Sigma$ of a closed foam $F$ is a complex number obtained by the following recipe.
  Recall the evaluation functor \(\cF \colon \SFoam_{N} \to \cSym_{N}\), from Definition~\ref{def:evaluation_2}.
  Since \(\cF(F)\) is a symmetric polynomial in \(N\) variables, we define \(\langle F \rangle_{\Sigma} := (\ev^{\Sigma} \circ \cF)(F)\), i.e., we evaluate the symmetric polynomial~\(\cF(F)\) on \(\Sigma\).
\end{definition}

We can now define the category $\SigFoam_N$.

\begin{definition}[The category $\SigFoam_N$]\label{def:sigfoam}
  The category $\SigFoam_N$ is the category with the same objects as $\SFoam^*_N$, that is, the objects are formal direct sums of webs with formal grading shifts.
  Morphisms in \(\SigFoam_{N}\) are given by
  \[\Hom_{\SigFoam_N}(V,W):=\Hom_{\SFoam^*_N}(V,W)/\cI_\Sigma(V).\]
\end{definition}
Here, $\cI_\Sigma$ is defined by analogy to the construction of Subsection~\ref{sub:catego}. For $G\in\Hom_{\SFoam_N^*}(V,\emptyset)$,
we set $\phi_{G,\Sigma}\colon\wt{F}(V)\to\C$ via $\phi_{G,\Sigma}(F)=\langle G\circ F\rangle_\Sigma$ and we set $\cI_\Sigma(V)=\bigcap\ker\phi_{G,\Sigma}$. Note that $\Hom_{\SigFoam_N}(V,W)$ has a structure of a $\C$-vector space.

The functor $\cF_\Sigma$ descends to a functor from the category $\SigFoam_N$ to the category of vector spaces.
Namely, we set
\[\cF_\Sigma(V)=\Hom_{\SigFoam_N}(\emptyset,V).\]
We will use $\cF_\Sigma$ to define a perturbed $\sln$-homology and to describe the Lee-Gornik spectral sequence. Before that,
we discuss gradings. For this we distinguish two special cases.

We let $\OFoam_N$ be the singular specialization of \(\SFoam_{N}\), i.e., $\OFoam_{N} := \SigFoam_N$ for $\Sigma=[0,\dots,0]$.
We let $\Sigma' \in \Sym^{N}(\C)$ be generic.
We write $\SigPFoam_N$ for the $\SigFoam_N$ category relative to $\Sigma'$.


The category $\OFoam_N$ is graded. Indeed, each foam $F$ of positive degree evaluates to $0$ under $\cF_\Sigma$, hence
all the morphisms in $\OFoam_N$ are given by foams of degree $0$ between $q^kW$ and $q^\ell W'$ with $k=\ell$.

The degree of a foam does not behave well under specializing to other variables than zero. 
A foam $F$ from $W$ to $W'$ induces a morphism $q^kW\to q^\ell W'$ in $\SFoam_N$, where $k-\ell=\deg F$.
Now after specializing to $\SigPFoam_N$, the foam $F$ gives a linear map from $q^kW$ to $q^\ell W'$. The map can be non-trivial
even if $k<\ell$. In particular, there might be non-trivial morphisms
in $\SigPFoam_N$, between $q^kW$ and $q^\ell W'$ as long as $k \leq \ell$. In other words, $\SigPFoam_N$ is a \emph{filtered} category.

\subsection{Geometric versus algebraic specialization}
The next goal is to relate geometric and algebraic specializations. Consider the category $\SFoam_N$.
There is a functor $\cF$ from $\SFoam_N$ to the category $\cSym_N$.
For $\Sigma \in \Sym^{N}(\C)$, there is a projection functor from $\SFoam_N$ to $\SigFoam_N$.
Indeed, $\SigFoam_N$ and $\SFoam_N$ are both quotient categories of $\SFoam^*_N$.
Furthermore, the projection functor \(\SFoam_{N}^{\ast} \to \SigFoam_{N}\) factors through the projection \(\SFoam_{N}^{\ast} \to \SFoam_{N}\).
Indeed, to pass from $\SFoam^*_N$ to $\SFoam_N$ we mod out by foams for which $\langle F\rangle$ is zero,
to pass from $\SFoam^*_N$ to $\SigFoam_N$ we mod out by foams for which $\langle F\rangle_{\Sigma}$ is zero.
Consequently, we have the following diagram of functors.

\[
  \begin{tikzcd}
    \SFoam_N \ar[r,"\cF"]\ar[d] & \cSym_N\ar[d,"\ev^\Sigma"] \\ \SigFoam_N\ar[r,"\cF_\Sigma"] & \Vect_\C.
  \end{tikzcd}
\]
Here, $\Vect_\C$ is the category of filtered vector spaces over $\C$.
\begin{proposition}\label{prop:diagram_commutes}
  The above diagram of functors commutes.
\end{proposition}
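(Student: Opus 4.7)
The plan is to establish commutativity on objects first and derive commutativity on morphisms by naturality. My strategy for objects is to identify both $\ev^\Sigma(\cF(V))$ and $\cF_\Sigma(V)$ as quotients of the free module $\wt{\cF}(V)$ by comparable $\S_N$-submodules, and then show these submodules coincide. Setting $\mathfrak{m}_\Sigma := \ker(\ev^\Sigma\colon \S_N\to\C)$, right-exactness of the tensor product gives the canonical identification
\[
\ev^\Sigma(\cF(V))\;\cong\;\wt{\cF}(V)\big/\bigl(\cI(V)+\mathfrak{m}_\Sigma\cdot\wt{\cF}(V)\bigr),
\]
while $\cF_\Sigma(V) = \wt{\cF}(V)/\cI_\Sigma(V)$ by Definition~\ref{def:sigfoam}.

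First I would verify the easy inclusion $\cI(V)+\mathfrak{m}_\Sigma\cdot\wt{\cF}(V)\subseteq\cI_\Sigma(V)$: elements of $\cI(V)$ already pair to $0\in\S_N$ with every capping foam, and a product $PG$ with $P\in\mathfrak{m}_\Sigma$ satisfies $\langle F'\circ PG\rangle_\Sigma = P(\Sigma)\langle F'\circ G\rangle_\Sigma = 0$ for every $F'$. This produces a natural surjection
\[
\Psi_V\colon \ev^\Sigma(\cF(V))\twoheadrightarrow\cF_\Sigma(V), \qquad [F]\otimes 1\mapsto[F].
\]

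The hard part will be promoting $\Psi_V$ to an isomorphism, for which my plan is a dimension count. Since $\cF(V)$ is finitely generated projective over $\S_N$ (hence free by Quillen--Suslin) of some rank $n(V)$, the left-hand side has $\C$-dimension $n(V)$. On the other hand, the foam evaluation theory of~\cite{RobertWagner} shows $\dim_\C\cF_\Sigma(V) = n(V)$ for every $\Sigma\in\Sym^N(\C)$, independent of whether the specialization is singular, generic, or neither; this is the genuinely nontrivial input, and rests on the MOY-type state decomposition of closed webs surviving specialization. Once this dimension equality is available, a surjection between finite-dimensional $\C$-vector spaces of equal dimension is an isomorphism, so $\Psi_V$ is bijective. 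Commutativity on morphisms is then automatic: any $G\colon V\to W$ acts on both sides via postcomposition $[F]\mapsto[G\circ F]$, so under $\Psi_V$ and $\Psi_W$ the two induced linear maps coincide tautologically, and the family $\{\Psi_V\}$ assembles into a natural isomorphism $\ev^\Sigma\circ\cF\Rightarrow\cF_\Sigma\circ\pi$, where $\pi$ denotes the projection functor $\SFoam_N\to\SigFoam_N$.
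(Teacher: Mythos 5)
Your proof is correct and genuinely different in structure from the paper's, whose entire argument is the one-line citation ``This is the statement of \cite[Proposition 4.1]{RobertWagner}.'' You actually unpack what that commutativity means: you present $\ev^\Sigma(\cF(V))$ and $\cF_\Sigma(V)$ as quotients of the same free module $\wt{\cF}(V)$ by $\cI(V)+\mathfrak{m}_\Sigma\wt{\cF}(V)$ and $\cI_\Sigma(V)$ respectively, verify the easy inclusion to get a natural surjection, and then close the gap by a rank/dimension count. Two points worth noting. First, be aware that the load-bearing step --- $\dim_\C\cF_\Sigma(V)=\rk_{\S_N}\cF(V)$ for \emph{every} $\Sigma$, not just generic or singular ones --- is essentially equivalent to the proposition being proved, so your route does not make the statement more elementary; it replaces one citation to Robert--Wagner with another (to their explicit $\Sigma$-independent spanning/basis of $\cF_\Sigma(V)$ for closed webs). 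It would strengthen the write-up to point at that result precisely rather than invoking ``MOY-type state decomposition'' informally. Second, the reformulation you give has a concrete payoff that the bare citation does not: it exhibits the comparison map $\Psi_V$ explicitly as $[F]\otimes 1\mapsto[F]$, from which naturality in $V$ is immediate, and it makes transparent that commutativity fails exactly when a nontrivial relation in $\cI_\Sigma(V)$ does not lift to $\cI(V)+\mathfrak{m}_\Sigma\wt{\cF}(V)$ --- a useful diagnostic when generalizing to other coefficient rings where Quillen--Suslin or the dimension count might fail.
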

\begin{proof}
  This is the statement of \cite[Proposition 4.1]{RobertWagner}.
\end{proof}
We now define the two main cohomology spaces (over $\C$) associated with $L$.
\begin{definition}
  The \emph{Khovanov--Rozansky $\sln$-homology of $L$} (at homological grading $k$ and quantum grading
  $r$) is defined $\KR^{k,r} := H^k(\ev^\Sigma\circ\cF(\llbra D\rrbra))$ with quantum grading $r$, where $D$ is a diagram representing $L$ and \(\Sigma\) is singular.

  The cohomology space $\Lee_{\Sigma}^{k}(L)$, called the \emph{Lee $\sln$-homology} of $L$ at homological grading $k$
  is the space $H^k(\cF_\Sigma(\llbra D\rrbran))$, for $\Sigma$ generic.
\end{definition}

We remark that while $\Lee_N^{k}(L)$ is not graded, it has a $\Z$-filtration.
It makes sense to speak of the associated graded vector space $\Gr^r\Lee_N^{k}(L)$.

The following result, generalizing the statement of Eu Son Lee \cite{Lee} was first proved by Gornik \cite[Theorem 1]{Gornik}.
We refer to \cite{LobbLewark} for a detailed study of this spectral sequence.
\begin{theorem}[Lee-Gornik spectral sequence]\label{thm:leegornik}
  Let $D$ be a link diagram and fix a generic \(\Sigma \in \Sym^{N}(\C)\). 
  There is a spectral sequence whose first page is $\KR_N^{k,r}(L)$ abuting to $\Lee_{\Sigma}^{k}(L)$.
\end{theorem}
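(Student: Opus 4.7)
The plan is to reduce the theorem to the abstract spectral sequence of Proposition~\ref{prop:abstract_lee_gornik}. Let $D$ be a diagram of $L$. The bracket complex $\llbra D \rrbra \in \KFoam$ is bounded, and applying the evaluation functor $\cF \colon \SFoam_N \to \cSym_N$ produces a bounded cochain complex $C^\ast := \cF(\llbra D \rrbra)$ of finitely generated graded projective (hence free, by Quillen--Suslin) $\S_N$-modules. This is precisely the type of complex to which Proposition~\ref{prop:abstract_lee_gornik} applies, so we obtain a spectral sequence whose first page is $H^\ast(C^\ast_0)$ and whose $E^\infty$-page is $H^\ast(C^\ast_{gen})$.

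Next, I would identify the two pages in terms of link invariants. By Definition~\ref{def:singular_and_generic}, the singular specialization $C^\ast_0$ is obtained by applying $\ev^{\Sigma}$ with $\Sigma = [0,\dots,0]$. Thus $C^\ast_0 = \ev^{\Sigma}\circ\cF(\llbra D \rrbra)$, whose cohomology in bidegree $(k,r)$ is, by definition, the Khovanov--Rozansky $\sln$-homology $\KR_N^{k,r}(L)$. For the $E^\infty$-page, the generic specialization $C^\ast_{gen}$ uses a generic $\Sigma$, and here I invoke Proposition~\ref{prop:diagram_commutes} to identify $\ev^{\Sigma}\circ\cF(\llbra D \rrbra)$ with $\cF_{\Sigma}(\llbra D \rrbra)$. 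Its cohomology in homological degree $k$ is $\Lee_{\Sigma}^k(L)$ by definition. The grading on the first page is the quantum grading, and on the limiting page this becomes a filtration, consistent with the discussion following Proposition~\ref{prop:abstract_lee_gornik}.

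The last point is to confirm that everything is well-defined. The cochain complex $\llbra D \rrbra$ depends on the diagram, but by Theorem~\ref{thm:invariance} any two diagrams yield chain homotopy equivalent complexes in $\SFoam_N$; the functors $\cF$ and $\ev^\Sigma$ are additive, so they preserve chain homotopy equivalences, and the induced spectral sequences agree from the $E_1$-page onward. Boundedness of $\llbra D \rrbra$, together with the fact that in each homological degree the complex consists of finitely many shifted webs and hence a finitely generated graded free module, guarantees strong convergence of the filtration spectral sequence.

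The only part that requires a bit of care, rather than being a formal consequence, is checking that the filtration on $C^\ast_{gen}$ from Subsection~\ref{sub:algebraic_spec_chain} really coincides with the natural quantum filtration on $\cF_\Sigma(\llbra D \rrbra)$ discussed in Subsection~\ref{sub:specsigma}; this is the main point at which one must unfold the definitions. Both filtrations come from the quantum grading on the underlying shifted webs, with the differentials respecting (but possibly increasing) this grading because the foam evaluations at generic $\Sigma$ include polynomial contributions of all non-negative degrees. Once this identification is made, the conclusion of the theorem is immediate from Proposition~\ref{prop:abstract_lee_gornik}.
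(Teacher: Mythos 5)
Your proof is correct and follows essentially the same route as the paper: set $C^\ast = \cF(\llbra D\rrbra)$, invoke Proposition~\ref{prop:abstract_lee_gornik}, and identify the $E_1$- and $E_\infty$-pages with $\KR_N$ and $\Lee_\Sigma$ via the specializations of Subsection~\ref{sub:algebraic_spec_chain}. The only difference is that you explicitly invoke Proposition~\ref{prop:diagram_commutes} to equate the geometric and algebraic specializations (which the paper leaves implicit), and you add a well-definedness remark; both are sound and add clarity but do not change the argument.
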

\begin{proof}
  Consider the cochain complex $C^*=\cF(\llbra D\rrbran)$ over $\S_N$. The cochain complexes $C^*_0=\cF_{0}(\llbra D\rrbran)$ and
  $C^*_{gen}=\cF_{\Sigma}(\llbra D\rrbran)$ are the specializations of $C^*$ in the sense of Subsection~\ref{sub:algebraic_spec_chain}. 
  The statement follows from Proposition~\ref{prop:abstract_lee_gornik}.
\end{proof}

\subsection{Mirrors}
Suppose $L$ is a link and $L'$ is its mirror.
It is well-known that Khovanov homologies of $L$ and of $L'$ are related. We now describe
an analogous result for $\sln$-homology. The statement should be well-known to the experts, however, we could not find a precise statement
in the literature. To fix the terminology, we will assume that if $D$ is a diagram for $L$, the diagram $D'$ for $L'$
is obtained by changing all crossings of $D$.

\begin{proposition}\label{prop:mirror}
  We have an isomorphism $\KR^{k,r}_N(L)\cong\KR^{-k,-r}_N(L')$.
\end{proposition}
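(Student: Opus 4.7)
The plan is to recognize $\llbra D'\rrbran$ as obtained from $\llbra D\rrbran$ by a global duality operation, so that after evaluation and singular specialization, $\KR^{k,r}_N(L')$ becomes the bigraded $\C$-linear dual of $\KR^{-k,-r}_N(L)$, hence isomorphic to it as a finite-dimensional vector space.

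First I would compare the cubes of resolutions vertex by vertex. The two resolutions of a positive crossing in $D$ coincide, as labelled webs, with the two resolutions of the corresponding negative crossing in $D'$; only the homological and quantum shifts and the direction of the local differential reverse sign, since by the recipe of Subsection~\ref{sub:sym_eq} the bracket of a negative crossing is defined as the dual of the bracket of a positive crossing. Thus there is a canonical identification $\Cube(D) \cong \Cube(D')$ on the level of webs, with the bigradings $(\sah, \saq)$ on $D$ matched with $(-\sah, -\saq)$ on $D'$, and with every cube edge reversed.

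Second, I would lift this to a comparison of foam complexes by reflecting foams. The reflection $F \mapsto F^{\vee}$ through the horizontal plane $\R^{2} \times \{1/2\}$ sends a foam $W_{0} \to W_{1}$ to a foam $W_{1} \to W_{0}$ and defines an anti-equivalence of $\SFoam_N$ with itself (isotopy invariance and invariance of the Robert--Wagner evaluation under this reflection are standard; see Lemma~\ref{lem:invariant} and the symmetry of \eqref{eq:eval_color}). Under this reflection, the local foam differentials of Figure~\ref{fig:differential} at a positive crossing are sent to the local differentials of the dual complex at the corresponding negative crossing. Applied crossing-wise, this identifies $\llbra D'\rrbran$ with the reflected total complex of $\llbra D\rrbran$. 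Applying $\cF$ and then the singular specialization $\ev^{[0,\dots,0]}$ converts this reflection into $\C$-linear duality of bigraded, finite-dimensional chain complexes, with both gradings negated. Standard duality over $\C$ then yields $\KR^{k,r}_{N}(L') \cong \KR^{-k,-r}_{N}(L)^{*}$, and finite-dimensionality gives the stated isomorphism of vector spaces.

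The main obstacle is the sign bookkeeping in the second step: a sign assignment $\sas$ on $\Cube(D)$, transported by the identification above, need not be literally a sign assignment on $\Cube(D')$ (the convention of Figure~\ref{fig:resolution} hides some choices inside the duality of local complexes). However, by Lemma~\ref{lem:coboundary} any two sign assignments differ by a coboundary $\partial \sat$, and adjusting by such a coboundary corresponds to rescaling the generators of the resulting complex by signs determined by $\sat$, hence to an isomorphism of chain complexes. One therefore checks that the transported sign assignment is cohomologous to a valid sign assignment on $\Cube(D')$ and absorbs the discrepancy into this explicit isomorphism; this is the only delicate point, and is in the same spirit as the sign-tracking carried out in Lemma~\ref{lem:sas} and Corollary~\ref{cor:pos-signs-compatible-sign-assignments}.
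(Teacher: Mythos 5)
Your plan is essentially the paper's: identify the mirror complex with the $\C$-linear dual of the original complex (with both gradings negated) after singular specialization, then use that taking duals commutes with cohomology over a field and that finite-dimensional vector spaces are isomorphic to their duals. The paper carries this out one step later in the pipeline than you do: it applies $\cF_0$ first and then observes, picking bases, that the mirror differentials are transposes, whereas you package the same fact upstream as a contravariant reflection functor on $\SFoam_N$ and only then apply $\cF_0$. The two formulations rest on the same underlying fact, namely that the reflected foam is sent by $\cF_0$ to the adjoint of the original map with respect to the natural pairing $(G,G') \mapsto \langle G^\vee \circ G'\rangle_0$, which is what makes ``reflection becomes transpose'' true.

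Two small points in your write-up deserve adjustment. First, you invoke Lemma~\ref{lem:invariant} and equation~\eqref{eq:eval_color} to justify that the Robert--Wagner evaluation is reflection-invariant, but Lemma~\ref{lem:invariant} only gives isotopy invariance, and a reflection through $\R^2\times\{1/2\}$ is not an ambient isotopy; reflection invariance is true and does follow from the shape of the Robert--Wagner formula, but it needs its own (short) argument rather than a citation to that lemma. Second, your ``main obstacle'' about sign bookkeeping is actually a non-issue here: under the correspondence $I\mapsto I'=(-i_1,\dots,-i_n)$ the $2$-cells of $\SCube(D)$ and $\SCube(D')$ are matched bijectively with their boundaries preserved, so the transported $1$-cochain $\sas'((I')^\flat,(J')^\flat):=\sas(J,I)$ already satisfies the cocycle condition and is a legitimate sign assignment on $\Cube(D')$; one does not need to invoke Lemma~\ref{lem:coboundary} to repair it, only to compare it with some other preferred choice, which in either case only changes the complex by an isomorphism. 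Neither point undermines the argument, which is correct and equivalent to the paper's.
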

\begin{proof}
  Let $D$ be a diagram representing $L$. Assume it has $n$ crossings,
  which we enumerate $\Cr(D)=\{1,\dots,n\}$.
  Consider the generalized cube of resolutions $\Cube(D)$ for $L$ with a given sign assignment; see Subsection~\ref{sub:sym_eq}. 
  To each vertex $I$ we associate
  $\cF_0(D_I)$, where $D_I$ is the corresponding resolution of $D$. The differentials are induced from
  differentials in 
  Figure~\ref{fig:differential} via the functor $\cF_0$.

If $D'$ is the diagram
  of the mirror, there is a correspondence of crossings of $D$ and $D'$.
  For $I\in\Cube(D)$, $I=(i_1,\dots,i_n)$, denote by $I'$ the `dual' resolution $(-i_1,\dots,-i_n)\in\Cube(D')$.  The webs $D'_{I'}$ and $D_I$ are isomorphic by construction, but the associated homological and quantum gradings
  are inverted. This is due the fact that the resolution of the negative crossing is dual to the resolution of the positive
  crossing, see Subsection~\ref{sub:sym_eq}. 
  That is, there exists a map $\iota\colon\cF_0(\llbra D\rrbra)\to\cF_0(\llbra D'\rrbra)$ of $\S_N$-modules given by
  the identity $\iota(\cF_0(D_I))=\cF_0(D'_{I'})$: we say `identity' because the resolutions $D_I$ and $D'_{I'}$ are the same.
  In particular, $\iota$ is canonically defined.

  The differentials in the mirror complex are inverted. That is, if a differential from $\cF_0(D_{I_1})$ to $\cF_0(D_{I_2})$
  is given a matrix in some basis, the differential from $\cF_0(D'_{I'_2})$ to $\cF_0(D'_{I'_1})$ is given by the transpose matrix.

  All the above means that, with a fixed choice of bases of $\cF_0(D_I)$, we have an identification of cochain complexes
  \[\cF_0(\llbra D'\rrbra)=\Hom_{\C}(\cF_0\llbra D\rrbra,\C)\]
  with underlying gradings reversed. As $\C$ is a field, we obtain
  \[H^{k,r}(\llbra D'\rrbra)\cong H^{-k,-r}(\llbra D\rrbra).\]
  This is precisely the statement of the theorem.
\end{proof}
\begin{remark}
  The above proof does not immediately generalize to show the relations between $\sln$-homology over $\S_N$ of a link with its mirror.
  While it is true that \(\cF(\llbra D'\rrbra)\) is the dual cochain complex of \(\cF(\llbra D\rrbra)\), the Universal Coefficient Spectral Sequence shows that in general it is not true that \(H^{k,r}(\cF(\llbra D\rrbra))
  \not\cong H^{-k,-r}(\cF(\llbra D\rrbra))\).
\end{remark}


\subsection{Computation of Lee-Gornik homology}\label{sub:properties_of_lee}
Computation of Lee-Gornik homology relies on the same principle as computation of Bar-Natan's perturbation of Khovanov
homology \cite{Bar_Morrison}. A throughout discussion of the $\sln$ case is done in \cite{RoseWedrich}.

Let $f$ be a facet of a foam with label $a$. We consider the algebra $\cA_f$ of all possible decorations of $f$ (recall that a decoration assigns to $f$ a symmetric polynomial in $a$ variables) modulo all decorations that make $F$ be a zero map in $\SigPFoam_N$. The algebra structure is given by addition and multiplication of polynomials
associated to the face. This is called
the \emph{algebra of decorations}.
\begin{theorem}[see \expandafter{\cite[Lemma 4.2]{RoseWedrich}, \cite[Lemma 2.28]{ETW}}]\label{thm:decoration_face}
  The algebra of decorations $\cA_f$ is a direct sum of one dimensional summands indexed by subsets $A\subset\Sigma$ of cardinality $a$.
  There is a generator of each summand, $\one_A$, which is an idempotent in $\cA_f$.
\end{theorem}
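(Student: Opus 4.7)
The plan is to realise $\cA_f$ as an algebra of functions on the finite set of $a$-subsets of $\Sigma$, and then invoke Lagrange interpolation to produce the idempotent generators $\one_A$.

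First, observe that by definition a decoration of $f$ is a symmetric polynomial in $a$ variables, so there is a canonical surjection $\Sym[x_1,\ldots,x_a] \twoheadrightarrow \cA_f$. I would introduce the ring homomorphism
\[\Phi \colon \Sym[x_1,\ldots,x_a] \to \C^{\binom{N}{a}}, \qquad \Phi(P) = (P(A))_{A},\]
where $A$ ranges over the $a$-element subsets of $\Sigma$ (a symmetric polynomial may be unambiguously evaluated on such a multiset). The main claim is that $\Phi$ descends to an isomorphism $\cA_f \xrightarrow{\sim} \C^{\binom{N}{a}}$.

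The first inclusion $\ker\Phi \subset \ker(\Sym[x_1,\ldots,x_a] \twoheadrightarrow \cA_f)$ follows from the Robert--Wagner evaluation formula \eqref{eq:eval_color} together with Proposition~\ref{prop:diagram_commutes}: after specialising at a generic $\Sigma$, any closed foam containing~$f$ evaluates to a sum over colourings of products of local contributions, and the dependence on a decoration $P$ placed on $f$ enters only through the factor $P(c(f))$, where $c(f)\subset\Sigma$ is the colour set of $f$ under $c$. Hence if $P(A)=0$ for every $a$-subset $A$ of $\Sigma$, then $P$ kills every colour-summand and therefore represents the zero morphism in $\SigPFoam_N$.

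The reverse inclusion is the step I expect to be the main obstacle: if $P$ represents the zero element of $\cA_f$, I must show $P(A)=0$ for every $A$. For this I would construct, for each $A$, a closed extension of the local neighbourhood of $f$ whose $\Sigma$-evaluation isolates the colouring that assigns $A$ to $f$. The standard device is a theta-type cap or a small disk decoration that, against a fixed colouring, evaluates to a non-zero multiple of $P(A)$; this is exactly what is worked out in \cite[Lemma 4.2]{RoseWedrich} and \cite[Lemma 2.28]{ETW}, whose construction I would adapt.

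Having both inclusions, $\cA_f \cong \operatorname{Im}(\Phi)$, and it remains to show that $\Phi$ is surjective. For this I would invoke a Lagrange interpolation argument. Since the coordinates of~$\Sigma$ are pairwise distinct, distinct $a$-subsets $A$ produce distinct tuples $(e_1(A),\ldots,e_a(A)) \in \C^a$ of elementary symmetric polynomials, giving $\binom{N}{a}$ distinct points in $\C^{a}$. Ordinary multi-variable interpolation then yields, for every $A$, a symmetric polynomial $\one_A \in \Sym[x_1,\ldots,x_a]$ with $\one_A(A)=1$ and $\one_A(B)=0$ for $B\neq A$. Their images in $\cA_f$ form a complete orthogonal system of idempotents, each generating a one-dimensional summand, which establishes the claimed decomposition.
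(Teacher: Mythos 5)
The paper itself does not prove Theorem~\ref{thm:decoration_face}; it cites \cite[Lemma 4.2]{RoseWedrich} and \cite[Lemma 2.28]{ETW}, so there is no in-text proof against which to compare. Your sketch is nonetheless a faithful reconstruction of the argument in those references: it correctly factorizes the problem into (i) showing via the Robert--Wagner colouring expansion~\eqref{eq:eval_color} that a decoration on $f$ acts only through its values $P(A)$ at $a$-element subsets of the generic $\Sigma$, (ii) the converse detection step, for which closing caps isolating a given colouring must be exhibited, and (iii) Lagrange interpolation in the elementary symmetric coordinates to produce the orthogonal idempotents $\one_A$. You are candid that step (ii) is the genuine geometric content and is precisely what the cited lemmas supply, which is an honest and appropriate deferral; the same step is the one the authors of the present paper themselves outsource. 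The interpolation argument in step (iii) is sound: since the entries of $\Sigma$ are pairwise distinct, distinct $a$-subsets give distinct points $(e_1(A),\dots,e_a(A))\in\C^a$, and interpolating as a polynomial in $e_1,\dots,e_a$ produces the required symmetric $\one_A$. I see no gap in the parts you actually argue.
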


Suppose now $W$ is a web and $F$ is the identity foam from $W$ to $W$. Consider the algebra $\cA_F$ of all possible decorations of $F$
modulo decoration evaluating to zero under $\cF_\Sigma$. Formally,
$\cA_F$ is a quotient of the
direct sum of algebras $\cA_f$ associated to faces of $F$.
\begin{theorem}[see \expandafter{\cite[Lemma 3.10]{ETW}}]\label{thm:decoration_foam}
  The algebra $\cA_F$ is the direct sum of one-dimensional summands. The summands are in bijection with colorings of all facets
  by subsets of $\Sigma$ as in Theorem~\ref{thm:decoration_face} satisfying the admissibility condition of
  Definition~\ref{def:coloring_foam}.
\end{theorem}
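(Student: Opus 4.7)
The plan is to build $\cA_F$ in two stages: first assemble the facet-wise decoration algebras supplied by Theorem~\ref{thm:decoration_face}, and then impose the seam relations coming from the evaluation functor.

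Concretely, since $F$ is the identity foam, a decoration of $F$ amounts to an assignment of a symmetric polynomial $P_f$ to each facet $f$, and these decorations multiply facet-by-facet. Thus the natural surjection
\[
\pi\colon\bigotimes_{f}\cA_f\twoheadrightarrow \cA_F
\]
is an algebra map. By Theorem~\ref{thm:decoration_face}, the left hand side decomposes as a direct sum of one-dimensional summands indexed by tuples $(A_f)_f$, where $A_f\subset\Sigma$ has cardinality equal to the label of $f$; the generator of the summand indexed by $(A_f)_f$ is $\prod_f \one_{A_f}$. So it suffices to show that $\pi(\prod_f \one_{A_f})=0$ unless $(A_f)_f$ is admissible in the sense of Definition~\ref{def:coloring_foam}, and that the admissible ones map to linearly independent nonzero elements of $\cA_F$.

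For the vanishing step I would use the Robert--Wagner formula \eqref{eq:eval_color}: for any closed foam $G$ obtained by capping $F$ decorated by $\prod_f \one_{A_f}$, the evaluation $\langle G\rangle_\Sigma$ decomposes as $\sum_c \langle G,c\rangle_\Sigma$, a sum over colorings~$c$ of the full closed foam, which are admissible at every seam. The effect of the idempotent $\one_{A_f}$ on the facet~$f$ is to project this sum onto the sub-sum of colorings with $c(f)=A_f$, as can be verified locally from the construction of~$\one_A$ in Theorem~\ref{thm:decoration_face}. If the tuple $(A_f)_f$ is inadmissible at some seam meeting facets $f_1,f_2,f_3$ with labels $a,b,a+b$ (i.e.\ $A_{f_3}\neq A_{f_1}\cup A_{f_2}$), then no admissible coloring of $G$ restricts on $F$ to $(A_f)_f$, so every term in the sum is already zero; hence $\prod_f\one_{A_f}\in\cI_\Sigma$ by definition, and $\pi(\prod_f \one_{A_f})=0$.

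For the independence and nonvanishing of the admissible classes, given an admissible coloring $c$ of $F$, I would produce an explicit capping foam $G_c$ whose colored evaluation $\langle G_c,c'\rangle_\Sigma$ is nonzero exactly when $c'$ restricts to $c$, e.g.\ by decorating the cap by products of Schur polynomials tuned to distinguish the subsets $A_f$ of $\Sigma$. Since $\Sigma$ is generic, the corresponding Vandermonde-type values are nonzero, so different admissible colorings are separated and each is nonzero in $\cA_F$. Combining this with the vanishing of inadmissible tuples proves that $\cA_F=\bigoplus_c \C\cdot\pi(\prod_f\one_{c(f)})$ as desired. The main technical obstacle is the second step: giving a clean description of how the facet idempotents $\one_A$ act on colored evaluations near a seam, so that the projection interpretation above is justified; this amounts to a careful local computation of the colored evaluations of the elementary seam foams, which is already essentially contained in \cite[Lemma~3.10]{ETW} and the proofs of \cite[Lemma~4.2]{RoseWedrich}.
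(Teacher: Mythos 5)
The paper does not prove this statement; it is quoted directly from \cite[Lemma 3.10]{ETW}, so there is no in-paper argument to compare your sketch against. With that caveat, your proposal is a plausible and essentially correct reconstruction of the standard argument: reduce to the tensor product of facet decoration algebras, decompose the tensor product via the idempotents of Theorem~\ref{thm:decoration_face}, kill the inadmissible tuples by unwinding the Robert--Wagner evaluation formula at seams, and separate the admissible tuples by explicit cappings using genericity of $\Sigma$. You have also correctly isolated the real technical content --- the local verification that the facet idempotent $\one_{A}$ acts on colored evaluations as projection onto colorings with $c(f)=A$ --- and correctly located it in the references.

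Two small points worth flagging. First, the paragraph preceding the theorem in the paper calls $\cA_F$ a quotient of the ``direct sum'' of the $\cA_f$; what is meant algebraically is a quotient of the \emph{tensor} product $\bigotimes_f\cA_f$ (composing decorated identity foams multiplies decorations facet-by-facet, and the relation making evaluation multilinear in the facet decorations identifies the free algebra of decorations with a tensor product). Your use of $\bigotimes_f\cA_f$ is the correct reading. Second, your separation step (constructing cap foams $G_c$) is somewhat more effortful than necessary: once the vanishing of inadmissible tuples is in place, the count of admissible colorings bounds $\dim\cA_F$ from above, and the decomposition of the specialized web $W$ into decorated summands in $\KO$ (cf.\ Definition~\ref{def:KO} and the example following it) already shows these summands are nonzero and pairwise orthogonal, which gives the matching lower bound without constructing a custom separating capping foam for each coloring. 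Either route is fine; the dimension count via the Karoubi envelope is what the cited references effectively do.
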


Recall, after~\cite[Section 3.2]{ETW} that if $\cC$ is a category, then the \emph{Karoubi envelope} of $\cC$ is the category
obtained by formally splitting all idempotents of $\cC$. In more detail, the category $\Kar(\cC)$ has as objects pairs $(O,e)$,
where $O$ is an object in $\cC$ and $e\colon O\to O$ is an idempotent. A morphism between $(O,e)$ and $(O',e')$ is a map
$f\in\Mor_{\cC}(O,O')$ such that $f\circ e=e'\circ f$.

Consider the category $\SigPFoam_N$. Let $W$ be a web and let $F$ be the identity foam. A decoration of the identity foam by
subsets of $\Sigma'$ as in Theorem~\ref{thm:decoration_foam} induces a decoration of $W$ by elements of $\Sigma'$ and vice versa.
In particular, a decoration of $W$ by elements of $\Sigma'$ corresponds to an idempotent in $\SigPFoam_N$. A decorated web can be regarded as an object in the Karoubi envelope of $\SigPFoam_N$; compare \cite[Section 4.2]{RoseWedrich}.
\begin{definition}[Category $\KO$]\label{def:KO}
The category $\KO$ is the full subcategory of the Karoubi envelope of $\SigPFoam_N$  containing all objects of the form \((W,F_{W})\), where \(W\) is an \(N\)-web and \(F_{W}\) is an identity foam colored by
subsets of \(\Sigma'\).
\end{definition}
\begin{example}[see \expandafter{\cite[Corollary 3.19]{RoseWedrich}}]
  Let $W$ be a web.
  In $\KO$ we have the following relation:
  \[W=\sum_{\cD} (W,\cD),\]
  where the sum is over all admissible decorations $\cD$ of $W$.
\end{example}

The purpose of introducing the category $\KO$ is the following result.

\begin{theorem}[see \expandafter{\cite[Lemma 3.13]{ETW}, and \cite[Lemma 5.9]{RobertWagner}}]\label{thm:trivial_diff}
  Let $D$ be the diagram of a link and let $\llbra D\rrbra_{\Sigma'}$ be the complex associated to $D$ in $\SigPFoam_N$ (i.e. the bracket $\llbra D\rrbra$ after geometric specialization in $\SigPFoam_N$). In $\KO$ the
  complex is isomorphic to the complex with trivial differentials. We have locally:
  \begin{equation}\label{eq:local_Sigma}
    \raisebox{-2em}{
    \begin{tikzpicture}
      \draw[->] (0.3,-0.5) -- (-0.3,0.5);
      \fill[white] (0,0) circle (0.1);
      \draw[->] (-0.3,-0.5) -- (0.3,0.5); 
      \draw(0.3,-0.6) node[scale=0.7]{\ensuremath{b}};
      \draw(-0.3,-0.6) node[scale=0.7]{\ensuremath{a}};
      \draw(-0.4,-0.5) -- (-0.48,-0.5) -- (-0.48,0.5) -- (-0.4,0.5);
      \draw(-0.44,-0.5) -- (-0.44,0.5);
      \draw(0.4,-0.5) -- (0.48,-0.5) -- (0.48,0.5) -- (0.4,0.5);
      \draw(0.44,-0.5) -- (0.44,0.5);
      \draw(0.6,0.52) node [scale=0.7]{\ensuremath{\Sigma}};
  \end{tikzpicture}}
    \cong
    \bigoplus_{\substack{A,B\subset\Sigma\\ |A|=a\\|B|=b}}
    \raisebox{-2em}{
    \begin{tikzpicture}
      \draw[->] (0.3,-0.5) -- (-0.3,0.5);
      \fill[white] (0,0) circle (0.1);
      \draw[->] (-0.3,-0.5) -- (0.3,0.5); 
      \draw(0.3,-0.6) node[scale=0.7]{\ensuremath{B}};
      \draw(-0.3,-0.6) node[scale=0.7]{\ensuremath{A}};
  \end{tikzpicture}}
  \cong
  \bigoplus_{k=0,\dots,b}
  \bigoplus_{\substack{A,B\subset\Sigma\\ |B\setminus A|=k}}t^k
  \raisebox{-2em}{\begin{tikzpicture}
      \begin{scope}[very thick]

    \draw[mymidarrow,purple] (0.5,-0.3) -- node[midway, above,scale=0.6] {$B\setminus A$} (-0.5,-0.15);
    \draw[mymidarrow,purple] (-0.5,0.15) -- node[midway, above,scale=0.6] {$A\setminus B$} (0.5,0.3);
      \draw[myendarrow,mybegarrow,mymidarrow] (0.5,-0.5) -- (0.5,0.5);
      \draw[myendarrow,mybegarrow,mymidarrow] (-0.5,-0.5) -- (-0.5,0.5);
      \draw(0.5,-0.6) node[scale=0.7]{\ensuremath{B}};
      \draw(-0.5,-0.6) node[scale=0.7]{\ensuremath{A}};
      \draw(0.5,0.6) node[scale=0.7]{\ensuremath{A}};
      \draw(-0.5,0.6) node[scale=0.7]{\ensuremath{B}};
  \end{scope}
  \end{tikzpicture}}
  \end{equation}
\end{theorem}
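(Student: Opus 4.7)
The plan is to reduce the global assertion to a local check at each crossing, using the idempotent decomposition that becomes available in $\KO$. First, for any web $W$ appearing in $\llbra D\rrbra_{\Sigma'}$, Theorem~\ref{thm:decoration_foam} applied to the identity foam on $W$ provides a decomposition of $\one_W$ as a sum of orthogonal idempotents $\one_{\cD}$ indexed by admissible colorings $\cD$ of $W$ by subsets of $\Sigma'$. Passing to the Karoubi envelope $\KO$ splits each $W$ into the corresponding direct sum $\bigoplus_{\cD}(W,\cD)$.

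Next, I analyze how differentials interact with this splitting. A differential $F \colon W \to W'$ in $\llbra D\rrbra_{\Sigma'}$ becomes, after pre- and post-composition with idempotents $\one_{\cD}$ and $\one_{\cD'}$, a morphism $(W,\cD)\to (W',\cD')$ in $\KO$. By the compatibility of colorings at seams (Definition~\ref{def:coloring_foam}), together with Theorem~\ref{thm:decoration_foam} applied to $F$, this composition evaluates to zero under $\cF_{\Sigma'}$ unless $\cD$ and $\cD'$ extend to a common admissible coloring of $F$; in particular, the induced colorings on the boundary endpoints of $F$ must agree. Hence the entire complex $\llbra D\rrbra_{\Sigma'}$ decomposes in $\KO$ as a direct sum over boundary colorings, and the internal differential preserves this decomposition.

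It therefore suffices to verify the local isomorphism \eqref{eq:local_Sigma} at a single positive crossing with strand labels $a,b$ (the negative case is dual). The first isomorphism follows by fixing colorings $A,B\subset\Sigma'$ on the two outgoing strands and restricting the standard resolution complex of Figure~\ref{fig:resolution} to the sector with these boundary colors. An explicit inspection of the differentials of Figure~\ref{fig:differential}, using Theorem~\ref{thm:decoration_face} to identify which foam decorations survive evaluation, shows that consecutive terms in the sector are joined by invertible foams except at a single term, which is then identified with the colored identity pair $(A,B)$. Summing over all $(A,B)$ with $|A|=a$, $|B|=b$ yields the first isomorphism. For the second isomorphism, each colored identity pair $(A,B)$ is rewritten using a further idempotent decomposition stratified by the number of swapped colors $k=|B\setminus A|=|A\setminus B|$, which accounts for the $t^k$ homological shifts arising from the local models of the saddle webs in \eqref{eq:local_Sigma}.

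The main technical obstacle is the second isomorphism: one must verify that the local saddle webs on the right-hand side of \eqref{eq:local_Sigma}, after idempotent splitting in $\KO$, indeed reassemble into the colored identity pair on the left with the stated homological shift. This amounts to a foam-evaluation computation comparing the two candidate isomorphisms via the formula \eqref{eq:eval_color}, checking that the pairings produce mutually inverse maps on each $(A,B,k)$-summand. This computation is carried out in detail in \cite[Lemma 3.13]{ETW} and \cite[Lemma 5.9]{RobertWagner}, to which we refer for the remaining verifications. Since every crossing of $D$ can be simplified independently and the resulting local objects have vanishing differential, the full complex $\llbra D\rrbra_{\Sigma'}$ becomes isomorphic in $\KO$ to a complex with trivial differentials.
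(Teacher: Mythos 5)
The paper itself does not prove this theorem; it is cited directly from \cite[Lemma 3.13]{ETW} and \cite[Lemma 5.9]{RobertWagner}, so there is no internal proof to compare against. Your reconstruction captures the correct overall strategy --- decompose by idempotents via Theorems~\ref{thm:decoration_face} and \ref{thm:decoration_foam}, observe that morphisms respect boundary colorings, reduce to the local picture at a single crossing, and then simplify --- and it is appropriate to defer the foam-evaluation computations to the cited sources.

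However, you have swapped the roles of the two isomorphisms in \eqref{eq:local_Sigma}. The first~$\cong$ is \emph{only} the idempotent decomposition: the right-hand side is still a crossing, just with its boundary strands colored by fixed subsets $A,B$. No collapse has happened yet at that stage, so your claim that "an explicit inspection of the differentials... shows that consecutive terms... are joined by invertible foams" belongs to the justification of the \emph{second}~$\cong$, not the first. Conversely, the second~$\cong$ is \emph{not} "a further idempotent decomposition stratified by $k$": once $A,B$ are fixed, the value $k=|B\setminus A|$ is determined, so the outer sum over $k$ is pure bookkeeping. What the second isomorphism actually asserts is that, for fixed $(A,B)$, the colored resolution complex of the crossing (a truncation of Figure~\ref{fig:resolution}) becomes acyclic away from a single term after passing to $\KO$, because all but one of the local differentials of Figure~\ref{fig:differential} become invertible morphisms on the colored pieces; Gaussian elimination then contracts the complex to the web in homological degree $k$, with the shift $t^k$. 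This misattribution does not affect the validity of your deferral to the references, but it would mislead a reader into thinking the hard Gaussian-elimination step is absorbed into the mere idempotent splitting.
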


Note that $\cF_{\Sigma'}$ applied to $\llbra D\rrbra_{\Sigma'}$
yields the $\sln$ Lee homology by Proposition~\ref{prop:diagram_commutes}. Therefore, Theorem~\ref{thm:trivial_diff} allows us to compute
Lee homology for labelled links. In fact,
the group $\Lee_N(L)$ was computed by Gornik \cite[Theorem 2]{Gornik}.
\begin{proposition}\label{prop:computingLee}
  Let $L$ be a link. Then $\Lee_N(L)$ is isomorphic to the vector space over $\C$ on $N^{\#L}$ generators.
  Moreover, to each map $\psi\colon \{\textrm{components of L}\}\to\{1,\dots,N\}$, we can assign a class $\sal_\psi\in\Lee_N(L)$
  of homological degree
  \[\deg(\sal_\psi)=\sum_{\substack{a,b=1,\dots,n\\ a\neq b}} \lk(\psi^{-1}(a),\psi^{-1}(b)).\]
  The classes $\sal_\psi$ generate $\Lee_N(L)$.
\end{proposition}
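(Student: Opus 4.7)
The plan is to exploit Theorem~\ref{thm:trivial_diff} inside the Karoubi envelope $\KO$. Fix a diagram $D$ of $L$. Since all components carry the label~$1$, applying Theorem~\ref{thm:trivial_diff} simultaneously at every crossing expresses $\llbra D\rrbra_{\Sigma'}$ in $\KO$ as a direct sum of decorated webs with trivial differential, indexed by pairs $(A_c,B_c)$ of singleton subsets of $\Sigma'$ at each crossing $c$, together with a homological shift $t^{k_c}$, where $k_c=|B_c\setminus A_c|$ for positive crossings and $k_c=-|B_c\setminus A_c|$ for negative crossings (by the duality defining the negative-crossing complex in Subsection~\ref{sub:sym_eq}).

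The next step is to observe that only globally consistent tuples contribute. The local decomposition preserves the colour along each strand through the crossing, so gluing along arcs forces the colour of every arc to be determined by a single element of $\Sigma'$ attached to the link component containing it. Consistent tuples are therefore in bijection with maps $\psi\colon\pi_{0}(L)\to\Sigma'$, giving $N^{\#L}$ admissible colourings; inconsistent tuples vanish in $\KO$ because no admissible decoration of the glued web exists, by Theorem~\ref{thm:decoration_foam}.

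For a fixed $\psi$, the total homological shift equals $\sum_{c}\varepsilon(c)\chi_{c}(\psi)$, where $\varepsilon(c)\in\{\pm1\}$ is the sign of $c$ and $\chi_{c}(\psi)\in\{0,1\}$ is the indicator that the two components meeting at $c$ carry distinct colours. Grouping crossings by the unordered colour pair they involve, and using $n_{+}^{a,b}-n_{-}^{a,b}=2\lk(\psi^{-1}(a),\psi^{-1}(b))$, yields
\[
\sum_{c}\varepsilon(c)\chi_{c}(\psi)=\sum_{\substack{\{a,b\}\\a\neq b}}2\lk(\psi^{-1}(a),\psi^{-1}(b))=\sum_{a\neq b}\lk(\psi^{-1}(a),\psi^{-1}(b)).
\]
By Theorem~\ref{thm:decoration_foam}, each admissible decorated web contributes one copy of $\C$ under $\cF_{\Sigma'}$, so the Lee cochain complex has zero differentials and one generator $\sal_{\psi}$ of the stated degree for each~$\psi$, giving the claimed description of $\Lee_{N}(L)$.

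The main technical obstacle is the careful bookkeeping of homological shifts at negative crossings (where the $t$-grading is reversed) and the verification that tuples of local colourings failing to glue consistently contribute zero in $\KO$. Both points are handled by combining Theorem~\ref{thm:trivial_diff} with Theorem~\ref{thm:decoration_foam} and using Proposition~\ref{prop:diagram_commutes} to pass freely between $\SigPFoam_{N}$ and $\Vect_{\C}$.
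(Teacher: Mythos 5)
Your proof is correct and fleshes out the approach the paper itself only sketches: the paper derives the dimension $N^{\#L}$ from the first isomorphism of Theorem~\ref{thm:trivial_diff} but explicitly defers the homological-degree computation to Gornik, stating that it ``requires non-trivial arguments.'' You supply that argument via the local shifts $t^{\pm|B_c\setminus A_c|}$ together with the observation that admissible decorations must be constant along each component, and the aggregation to $\sum_{a\neq b}\lk(\psi^{-1}(a),\psi^{-1}(b))$ is correct.
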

We remark that the isomorphism of $\Lee_N(L)$ with $\C^{N^{\#L}}$ is an immediate consequence of the first equation in
\eqref{eq:local_Sigma}. Computation of homological gradings requires non-trivial arguments.

\section{$\sln$-homology for periodic links}\label{sec:periodic}
In this section we let $G=\Z_m$ be the group acting on $\R^2\times\R$ by rotation about the axis $(0,0)\times \R$.
Assume $L$ is a link in $\R^3$ which is preserved by $\Z_{m}$ and disjoint from the rotation axis. Let $D$ be a periodic link diagram for $L$.

\subsection{Group actions on $\llbra D\rrbra$}
Before proceeding to the main part of this section, let us make some preparations.
For any finite group \(G\), let \(BG\) denote the category with a single object \(\ast\) and \(\Hom_{BG}(\ast,\ast) = G\).
Suppose $\cB$ is an additive category, functors \(X \colon BG \to \cB\) correspond bijectively to objects of \(\cB\) equipped with an action of \(G\).
We refer to such functors as $G$-objects.
We will denote by \(\cB[G] := \operatorname{Fun}(BG,\cB)\) the category of functors \(X \colon BG \to \cB\).
Equivalently, \(\cB[G]\) is the category of \(G\)-objects in \(\cB\).
Morphisms in \(\cB[G]\) are natural transformation of functors \(X \colon BG \to \cB\), which translate into \(G\)-equivariant maps of \(G\)-objects.
In particular, taking \(\cB = \cSym_{N}\) and \(G = \Z_{m}\), we obtain the category \(\cSym_{N}[\Z_{m}]\) of \(\S_{N}[\Z_{m}]\)-modules, which are free as \(\S_{N}\)-modules.
Similarly, we can consider the category \(\SFoam_{N}[\Z_{m}]\) of \(N\)-foams equipped with an action of \(\Z_{m}\).
In the same manner, we obtain categories
$\Kom{\SFoam_{N}}[\Z_{m}]$
and
$\Kom{\cSym_{N}}[\Z_{m}]$.

To construct the $\Z_{m}$-equivariant $\sln$-homology of a periodic link, we need to prove three things, namely:
\begin{itemize}
\item Existence of the action of $\Z_{m}$ on $\llbra D\rrbra$ (Proposition~\ref{prop:group_action_on_D});
\item Equivariance of the evaluation functor $\cF$, implying the existence of a $\Z_{m}$-action on $\cF(\llbra D\rrbra)$ (Proposition~\ref{prop:functoriality});
\item Independence of the action on the choice of a periodic diagram (Theorem~\ref{thm:group_action}).
\end{itemize}
  
We begin with the first part.
\begin{proposition}\label{prop:group_action_on_D}
  Suppose $D$ is a periodic link diagram. Then, there is an action of $\Z_{m}$ on $\llbra D\rrbra$ induced by rotating resolution diagrams of \(D\).
  In other words, \(\llbra D \rrbra\) becomes an object in \(\Kom{\SFoam_{N}}[\Z_{m}]\).
\end{proposition}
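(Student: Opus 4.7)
The plan is to let the generator $\sigma$ (rotation by $2\pi/m$) act on each resolution summand $D_I$ of $\llbra D\rrbra$ by the canonical web isomorphism $\sigma_I\colon D_I\to D_{\sigma\cdot I}$ induced by the rotation, corrected by a sign that repairs the failure of the chosen sign assignment to be $\sigma$-equivariant. Since $\sigma$ permutes $\Cr(D)$, the induced action on $\Cube(D)$ by permutation of coordinates fixes the vertex $(0,\dots,0)$, and the elementary foams of Figure~\ref{fig:differential} intertwine $\sigma_I$: one has
\[\sigma_{I'}\circ\delta(I,I')=\delta(\sigma\cdot I,\sigma\cdot I')\circ\sigma_I\]
for every immediate successor pair $(I,I')$, simply because $\delta(I,I')$ is local near one crossing while $\sigma$ merely relabels crossings within the same orbit.

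To produce the sign correction, I first observe that $\sigma^{\ast}\sas$ is again a sign assignment: the cocycle defect of any sign assignment is the constant $2$-cochain $1$, which is $\sigma$-invariant. Lemma~\ref{lem:coboundary} then supplies a cellular $0$-cochain $\sat$ on $\SCube(D)$ with $\sigma^{\ast}\sas-\sas=\partial\sat$, and I would pin it down by the normalization $\sat((0,\dots,0))=0$. Define $\sigma$ on $\llbra D\rrbra$ to act on the $I$-th summand by $(-1)^{\sat(I)}\sigma_I$. Commutation with the differential then reduces, modulo the geometric identity above, to
\[\sas(\sigma\cdot I,\sigma\cdot I')+\sat(I)=\sas(I,I')+\sat(I')\pmod 2,\]
which is exactly the relation $\partial\sat=\sigma^{\ast}\sas-\sas$.

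The main obstacle, and the step that replaces Koszul-sign bookkeeping by genuinely new input, is to verify that this formula actually defines a $\Z_m$-action, i.e.\ that $\sigma^m$ acts as the identity rather than as $\pm\operatorname{id}$. Iterating the definition, $\sigma^m$ acts on $D_I$ as multiplication by $(-1)^{\sat_m(I)}$ with $\sat_m(I):=\sum_{j=0}^{m-1}\sat(\sigma^j\cdot I)$. A telescoping computation gives
\[\partial\sat_m=\sum_{j=0}^{m-1}\sigma^{\ast j}\partial\sat=\sigma^{\ast m}\sas-\sas=0,\]
so $\sat_m$ is a cellular $0$-cocycle on the contractible complex $\SCube(D)$, hence a constant. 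Evaluating this constant at the $\sigma$-fixed vertex $(0,\dots,0)$ yields $\sat_m((0,\dots,0))=m\cdot\sat((0,\dots,0))=0\pmod 2$ by the normalization (the case $m$ even is automatic, the case $m$ odd uses $\sat((0,\dots,0))=0$). Therefore $\sigma^m=\operatorname{id}$ on the nose, and the assignment extends to a functor $B\Z_m\to\SFoam_N$ at each homological degree that intertwines the differentials, exhibiting $\llbra D\rrbra$ as an object of $\Kom{\SFoam_N}[\Z_m]$.
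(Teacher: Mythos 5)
Your proof follows the paper's own argument essentially verbatim: represent the rotation by the trace-of-rotation foam, correct by a sign $(-1)^{\sat(I)}$ for a $0$-cochain $\sat$ with $\partial\sat$ the defect of $\sas$ from equivariance, normalize $\sat(0,\dots,0)=0$, and verify $\cG_g^m=\operatorname{id}$ by observing that the telescoping sum $\sum_j \sat(g^j I)$ is a $0$-cocycle on the contractible $\SCube(D)$, hence the constant $m\cdot\sat(0,\dots,0)=0$. The only differences are notational (you write $\sigma^{\ast}\sas$ where the paper writes $g\sas$, and you make the parity distinction for odd versus even $m$ explicit, which the paper defers to Remark~\ref{remark:sign-ambiguity-gp-action}).
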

\begin{proof}
  The proof resembles the argument of~\cite{Politarczyk-Khovanov}.
  Fix a generator \(g \in \Z_{m}\) and let \(\rho_{g} \colon D \to D\) be a cobordism realizing the rotation by \(g\).
  We choose specific \(\rho_{g}\), which is the trace of a path of rotations of \(\R^{3}\), where the rotation angle increases linearly from~\(0\) to~\(\frac{2 \pi}{m}\).
  To be more precise, we consider the map \(r
  \colon \R^{3} \times I \to \R\) with \(r(-,t)\) being the rotation of the plane by \(\frac{2 \pi t}{m}\). Then \(\rho_{g} = r(D \times I)\).
  As $\Z_{m}$ acts on $D$ and permutes cyclically crossings of $D$, there is an induced action of $\Z_{m}$
  on $\Cube(D)$, which we denote by $(g,I)\mapsto gI$.
  The action of \(\Z_{m}\) on \(\Cube(D)\) extends to an action on \(\SCube(D)\).
  For a given diagram, $gD_I=D_{gI}$, where $g$ acts on $D_I$ by rotation.

  Consider a sign assignment $\sas$ on $D$. 
  For a fixed generator \(g \in \Z_{m}\), we define the sign assignment $g\sas$ via the following formula.
  \begin{equation}\label{eq:gsas_def}
    g\sas(gI,gI'):=\sas(I,I').
  \end{equation}
  The sign assignment $g\sas$ need not be equal to $\sas$, but we know that 
  \begin{equation}\label{eq:gsas}
    g\sas - \sas = \partial \sat
  \end{equation}
  for some $0$-cochain $\sat$ on \(\SCube(D)\).
  By Lemma~\ref{lem:sas}, upon adding a constant cochain,
  if necessary, we may and will assume that $\sat$ vanishes on the element $(0,0,\dots,0)$ in \(\Cube(D)\). 
  Define \(\cG_{g} := \llbra \rho_{g}, \sat \rrbra\).
  To be more precise, for any \(I\) in \(\Cube(D)\), the \(I\)-th component of the map \(\cG_{g}\) is defined as
  \[\cG_{g,I} = (-1)^{\sat(I)} \rho_{g,I}.\]
  
  We argue that \(\cG_{g}\) is a cochain map, i.e., if $d$ is the differential on \(\llbra D \rrbra\), then $d \cG_{g}= \cG_{g} d$.
  To see this, take resolutions $I,I'\in\Cube(D)$ such that $I'$ is an immediate successor of $I$.
  Let $\delta_{I,I'}$ be the foam giving the component of the differential from $I$ to $I'$ as in Figure~\ref{fig:differential} (up to sign determined by $\sas(I,I')$). 
  We have the following diagram.
  \[
    \begin{tikzcd}[column sep=2cm]
      D_I\ar[r,"(-1)^{\sat(I)} \rho_{g,I}"]\ar[d,"(-1)^{\sas(I,I')}\delta_{I,I'}"'] & D_{gI}\ar[d,"(-1)^{\sas(gI,gI')}g\delta_{I,I'}"]\\
      D_{I'}\ar[r,"(-1)^{\sat(I')}\rho_{g,I'}"] & D_{gI'}.
    \end{tikzcd}
  \]
  The vertical maps are the components of the differential in $\llbra D\rrbra$ from $D_I$ to $D_{I'}$ (the left vertical map) and from $D_{gI}$ to $D_{gI'}$ (the right vertical map). The horizontal maps
  are given by~$\cG_{g,I}$ and~$\cG_{g,I'}$, respectively.

  The foams $\rho_{g,I'}\circ \delta_{I,I'}$ and $g\delta_{I,I'}\circ\rho_{g,I} = \delta_{gI,gI'} \circ \rho_{g,I}$ are isotopic.
  By the definition of $\sat$, the diagram commutes, hence $\phi$ and $g\phi$ have the same contribution to the differential.
  That is, $g$ commutes with the differential, so $\cG_g$ is a chain map in $\KFoam$.

  It remains to prove that  \((\cG_{g})^{m} = \operatorname{id}\).
  Note that $\cG_{g^2}(D_I)=(-1)^{\sat(I)+\sat(gI)} \rho_{g,gI} \circ \rho_{g,I}$ and by induction
  \[\cG_{g^m}(D_I)=(-1)^{\sat(I)+\dots+\sat(g^{m-1}I)} \rho_{g,g^{m-1}I} \circ \cdots \circ \rho_{g,gI} \circ \rho_{g,I} = (-1)^{\sat(I)+\dots+\sat(g^{m-1}I)} \operatorname{id}_{D_{I}}.\]
  Set $\wt{\sat}(I)=\sat(I)+\dots+\sat(g^{m-1}I)$.
  By \eqref{eq:gsas}, we have $\partial\wt{\sat}=0$.
  Therefore, \(\wt{\sat}\) is a \(0\)-cocycle, hence, $\wt{\sat}$ is constant.
  As $(0,0,\dots,0)$ is fixed by the group action, the value of $\wt{\sat}$ at $(0,\dots,0)$ must be zero.
  Hence, $\wt{\sat}$ is a zero cochain.
  This shows that $(\cG_{g})^{m} = \cG_{g^m} = \operatorname{id}$, that is, $\cG$ defines a $\Z_m$ action on $\llbra D\rrbra$.
\end{proof}

\begin{remark}\label{remark:sign-ambiguity-gp-action}
  As pointed out in Lemma~\ref{lem:sas}, the choice of the \(0\)-cochain \(\sat\) on \(\SCube(D)\) satisfying \(g \sas - \sas = \partial \sat\) is not unique.
  By Remark~\ref{rem:ambiguity-definition-of-maps}, choosing different \(0\)-cochain \(\sat'\), gives
  \[\llbra \rho_{g} , \sat \rrbra = - \llbra \rho_{g} , \sat' \rrbra.\]
  For odd $m$, setting $\sat(0,\dots,0)=1$ implies that $\wt{\sat}(I)=1$, which means that $(\cG_g)^m$ is
  not the identity, so the choice $\sat(0,\dots,0)=0$ is the only possible.
  On the other hand, for even $m$, there are two choices potentially leading to different group actions.
  Indeed, if \(m\) is even, multiplying \(\mathcal{F}(\cG_{g})\) by a sign, exchanges the \((\pm1)\)-eigenspaces of \(\mathcal{F}(\cG_{g})\).
  Consequently, the condition that fixes the value of \(\sat\) on \((0,0,\ldots,0) \in \Cube(D)\) is necessary to obtain a well-defined group action.
  This issue will reappear in the proof of Lemma~\ref{lem:equivariant_skein_relation}.
\end{remark}

The second step of the construction, is to show that the evaluation functor \(\cF \colon \SFoam_{N} \to \cSym_{N}\) descends to a \(\Z_{m}\)-equivariant evaluation functor
\[\cF_{\Z_{m}} \colon \SFoam_{N}[\Z_{m}] \to \cSym_{N}[\Z_{m}].\]

\begin{proposition}\label{prop:functoriality}
  The evaluation functor $\cF$ from Definition~\ref{def:evaluation_2} extends to a $\Z_{m}$-equivariant evaluation functors
  \begin{align*}
    \cF_{\Z_{m}} &\colon \SFoam_{N}[\Z_{m}] \to \cSym_{N}[\Z_{m}], \\
    \cF_{\Z_{m}} &\colon \Kom{\SFoam_{N}}[\Z_{m}] \to \Kom{\cSym_{N}}[\Z_{m}].
  \end{align*}
\end{proposition}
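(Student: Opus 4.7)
The plan is to observe that this proposition is essentially formal, following directly from functoriality of $\cF$ together with the identification \(\cB[\Z_m] = \operatorname{Fun}(B\Z_m, \cB)\) recorded just before the proposition. The basic categorical fact I would invoke is that any functor $H \colon \cA \to \cB$ induces, by post-composition, a functor
\[H_{\ast} \colon \operatorname{Fun}(B\Z_m,\cA) \longrightarrow \operatorname{Fun}(B\Z_m,\cB), \qquad (X \colon B\Z_m \to \cA) \longmapsto H \circ X,\]
and that this assignment is itself functorial in the sense that natural transformations $\alpha \colon X \Rightarrow Y$ are sent to $H \alpha \colon H \circ X \Rightarrow H \circ Y$. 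I would take $H = \cF \colon \SFoam_N \to \cSym_N$ and define $\cF_{\Z_m}$ to be the resulting post-composition functor.

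Next I would unpack this definition to make sure it matches the intuitive meaning. A $\Z_m$-object in $\SFoam_N$ amounts to an object $W$ of $\SFoam_N$ together with automorphisms $\{\phi_g \colon W \to W\}_{g \in \Z_m}$ satisfying $\phi_{gh} = \phi_g \circ \phi_h$ and $\phi_e = \operatorname{id}_W$. Applying $\cF$ and using the fact that it is a functor gives automorphisms $\{\cF(\phi_g)\}_{g \in \Z_m}$ of $\cF(W)$ with the analogous identities, so $\cF(W)$ acquires a canonical structure of a $\Z_m$-object in $\cSym_N$. Similarly, if $\psi \colon (W,\phi_g) \to (W',\phi'_g)$ is an equivariant map, i.e.\ $\psi \circ \phi_g = \phi'_g \circ \psi$ for every $g$, then applying $\cF$ preserves this relation, so $\cF(\psi)$ is a morphism in $\cSym_N[\Z_m]$. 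This gives the first of the two claimed functors.

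For the second statement, I would use that $\cF \colon \SFoam_N \to \cSym_N$ is additive and therefore extends termwise to a functor $\cF \colon \Kom{\SFoam_N} \to \Kom{\cSym_N}$ sending cochain maps to cochain maps, as recorded immediately before Definition~\ref{def:sfoam} and its surrounding discussion. The same post-composition recipe, now with $H$ equal to this extended $\cF$, produces $\cF_{\Z_m} \colon \Kom{\SFoam_N}[\Z_m] \to \Kom{\cSym_N}[\Z_m]$.

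There is really no obstacle here; the content of the proposition is just that $\cF$ is genuinely a functor (not merely a functor up to homotopy, or up to a sign ambiguity), so that the group relations lift on the nose. The one point worth flagging is that this relies on the action constructed in Proposition~\ref{prop:group_action_on_D} being a strict action, i.e., $(\cG_g)^m = \operatorname{id}$ as a cochain map rather than merely up to chain homotopy — a property that was established there precisely by the careful choice of the $0$-cochain $\sat$ on $\SCube(D)$ in Remark~\ref{remark:sign-ambiguity-gp-action}. Given that strictness, no additional verification is required.
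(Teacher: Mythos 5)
Your argument is exactly the paper's: both observe that post-composition with the additive functor $\cF$ induces a functor on the functor categories $\operatorname{Fun}(B\Z_m,-)$, and then apply this twice, once for $\SFoam_N$ and once for $\Kom{\SFoam_N}$. The extra remark about strictness of the group action is a reasonable sanity check but is not part of the proposition's content, which is purely formal.
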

\begin{proof}
  Observe that if we are given an additive functor \(F \colon \cB \to \cC\) between additive categories \(\cB\) and \(\cC\), there is an induced functor
  \[F \circ (-) \colon \cB[G] \to \cC[G], \quad (X \colon BG \to \cB) \mapsto (F \circ X \colon BG \to \cC).\]
  Taking \(\cB = \SFoam_{N}\) and \(\cC = \cSym_{N}\) and \(F = \cF\) the evaluation functor, we obtain the \(\Z_{m}\)-equivariant evaluation functor
  \[\cF_{\Z_{m}} = \cF \circ (-) \colon \SFoam_{N}[\Z_{m}] \to \cSym_{N}[\Z_{m}].\]
  Similarly, taking \(\cB = \Kom{\SFoam_{N}[\Z_{m}]} = \Kom{\SFoam_{N}}[\Z_{m}]\) and \(\cC = \Kom{\cSym_{N}[\Z_{m}]} = \Kom{\cSym_{N}}[\Z_{m}]\), we obtain the second functor.
\end{proof}

The most difficult part is the invariance of our construction under equivariant isotopies of periodic links.
Recall from~\cite{Politarczyk-Khovanov} that every \(\Z_{m}\)-equivariant isotopy of periodic links can be realized as a sequence of \emph{equivariant Reidemeister moves}, i.e., equivariant analogs of ordinary
Reidemeister moves.
If \(L\) is an \(m\)-periodic link and \(D_{1}\), \(D_{2}\) are two \(m\)-periodic diagrams representing \(L\), we say that \(D_{1}\) and \(D_{2}\) are \emph{\(\Z_{m}\)-equivalent periodic diagrams}, if they are equivariantly
isotopic, i.e., they can be connected by a sequence of equivariant Reidemeister moves.

\begin{remark}\label{rem:inequivalent-periodic-diagrams}
  Observe that given an \(m\)-periodic link and two \(m\)-periodic diagrams of \(L\), the given periodic diagrams may not be equivalent in general.
  Indeed, consider the torus link \(T(2,4)\).
  Taking two different representations of \(T(2,4)\) as a braid closure, we obtain two \(2\)-periodic diagrams of \(T(2,4)\), such that for one of the diagrams the symmetry exchanges components of the link and
  for the other diagram it preserves components.
  Obviously, these \(2\)-periodic diagrams of \(T(2,4)\) cannot be equivalent.
\end{remark}

\begin{theorem}\label{thm:group_action}
  If $D$ and $D'$ are \(\Z_{m}\)-equivalent \(m\)-periodic link diagrams, then there is a chain homotopy equivalence between $\llbra D\rrbra$ and $\llbra D'\rrbra$ in the category $\Kom{\SFoam_N}$
  and an induced quasi-isomorphism between~$\cF(\llbra D\rrbra)$ and~$\cF(\llbra D'\rrbra)$ in~\(\Kom{\cSym_{N}}[\Z_{m}]\).
\end{theorem}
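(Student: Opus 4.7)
The plan is to reduce to the case of a single equivariant Reidemeister move and then refine the proof of Theorem~\ref{thm:invariance_reid} so that the chain homotopy equivalence is $\Z_m$-equivariant at the foam level; applying the equivariant evaluation functor of Proposition~\ref{prop:functoriality} will then yield the required quasi-isomorphism in $\Kom{\cSym_N}[\Z_m]$. Since any $\Z_m$-equivariant isotopy decomposes as a finite sequence of equivariant Reidemeister moves, and since chain homotopy equivalences and equivariant quasi-isomorphisms compose, it suffices to assume $D'$ is obtained from $D$ by a single such move. Denote by $c_1, \ldots, c_k$ the set of new crossings created by the move; it is either empty (for equivariant R3) or consists of one or two complete $\Z_m$-orbits.

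Starting from a sign assignment $\sas$ on $\Cube(D)$ together with the $0$-cochain $\sat$ of Proposition~\ref{prop:group_action_on_D} satisfying $g\sas - \sas = \partial \sat$ and $\sat(0,\ldots,0)=0$, I would iterate Lemma~\ref{lem:sas} at $c_1, \ldots, c_k$ in a fixed ordering to produce a canonical sign assignment $\sas'$ on $\Cube(D')$. The corresponding $k$-fold application of the local chain map of Corollary~\ref{cor:pos-signs-compatible-sign-assignments} (and its R2 analog, as in the proof of Theorem~\ref{thm:invariance_reid}) then assembles into a chain homotopy equivalence $\phi : \llbra D, \sas \rrbra \to \llbra D', \sas' \rrbra$ in $\Kom{\SFoam_N}$. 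The remaining, and most substantial, task is to upgrade $\phi$ into a $\Z_m$-equivariant morphism.

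The central technical point, and the main obstacle, is the sign bookkeeping needed for equivariance. The generator $g \in \Z_m$ permutes the new coordinates of $\Cube(D') \cong \Cube(D) \times C_1 \times \cdots \times C_k$ cyclically within each orbit, whereas the iterative construction of $\sas'$ uses a specific ordering of these coordinates; therefore $g\sas'$ and $\sas'$ generally differ by a nontrivial coboundary. Unraveling this discrepancy by repeated application of the second part of Lemma~\ref{lem:sas} should produce an explicit $0$-cochain $\sat'$ on $\Cube(D')$ with $g \sas' - \sas' = \partial \sat'$, restricting to $\sat$ on the embedded subcube $\Cube(D) \times \{(0, \ldots, 0)\}$ and vanishing at $(0, \ldots, 0) \in \Cube(D')$. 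This places us in the hypotheses of Proposition~\ref{prop:group_action_on_D} applied to $D'$, and moreover ensures that the local Reidemeister foams at the orbit-related crossings $c_1, \ldots, c_k$ used to define $\phi$, being geometric rotations of one another, intertwine the two group actions on the nose. In the R3 case this bookkeeping is vacuous because the move preserves the set of crossings, and $\sas,\sat$ transfer directly.

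Once $\phi$ is verified to commute with the $\Z_m$-actions, applying the equivariant evaluation functor $\cF_{\Z_m}$ of Proposition~\ref{prop:functoriality} sends $\phi$ to a $\Z_m$-equivariant morphism $\cF(\llbra D \rrbra) \to \cF(\llbra D' \rrbra)$ in $\Kom{\cSym_N}[\Z_m]$; since $\cF(\phi)$ is a quasi-isomorphism by Theorem~\ref{thm:invariance_reid} and Theorem~\ref{thm:invariance}, this is the desired equivariant quasi-isomorphism.
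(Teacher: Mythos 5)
Your overall strategy---reduce to a single equivariant Reidemeister move, extend the sign assignment by iterating Lemma~\ref{lem:sas}, produce a $0$-cochain $\sat'$ controlling the difference $g\sas'-\sas'$, and then argue that the standard Reidemeister chain map commutes with the two $\Z_m$-actions on the nose---is exactly the paper's strategy (Lemma~\ref{lem:equiv_reid} and Subsection~\ref{sub:equiv_reid}). However, there is a real gap between what you claim and what you verify, precisely at what you yourself call ``the central technical point.''

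First, the second part of Lemma~\ref{lem:sas} does not do what you want. It compares two sign assignments $\sas_1,\sas_2$ on the \emph{same} cube $\Cube(D)$ that already differ by a coboundary, and tracks the coboundary through the extension to $\Cube(D')$ with a \emph{fixed} ordering of the new coordinates. The equivariant issue is different: $g\sas'$ is the extension of $g\sas$ built with respect to the \emph{cyclically permuted} ordering of the new crossings, not the original ordering. Comparing extensions with respect to two different orderings requires a fresh computation, which the paper carries out explicitly (Lemma~\ref{lem:sat_on_equiv}, Lemma~\ref{lem:sat_general_m}, Lemma~\ref{lem:sat_reid2}). The resulting cochain $\sat'$ is not simply a pullback of $\sat$; it picks up cross terms in the new coordinates, e.g.\ $\sat'(I,x_1,\dots,x_m)=\sat(I)+x_1(x_2+\dots+x_m)$ for equivariant R1 and $\sat'(I,x_1,\dots,x_4)=\sat(I)+(x_1+x_2)(x_3+x_4)$ for equivariant R2a, and establishing these formulas is the bulk of the work.

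Second, for R2 (and similarly for R2b), the Reidemeister chain map $\Phi$ does not land in the zero-resolution subcube $\Cube(D)\times\{(0,\dots,0)\}$: its $I$-th component is a sum $\phi_{I,1}+\phi_{I,2}+\phi_{I,3}+\phi_{I,4}$ with targets at four distinct resolutions $I'_1,\dots,I'_4$. The property you assert---that $\sat'$ restricts to $\sat$ on $\Cube(D)\times\{(0,\dots,0)\}$---is true and can be deduced abstractly, but it only handles $I'_1$. To verify equivariance of $\Phi$ you need to know the value of $\sat'$ at the other three resolutions as well; that the cross-term $(x_1+x_2)(x_3+x_4)$ vanishes at all four $I'_k$ is a concrete fact about the formula, not a consequence of the boundary normalization alone. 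Without the explicit $\sat'$ (or an equivalent argument), the claim that the Reidemeister foams ``intertwine the two group actions on the nose'' establishes only commutativity up to sign, and the sign check is precisely what remains open.

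So your proposal correctly identifies the shape of the argument and the bookkeeping problem, but the ``should produce'' in the middle paragraph conceals the load-bearing computation. The gap is filled by the explicit determination of $\sat'$ for each type of equivariant Reidemeister move and by checking its values at all resolutions in the image of the chain map, as done case-by-case in the paper.
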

\begin{proof}
  Any two periodic diagrams $D,D'$ of the same periodic link $L$ are related by a sequence of equivariant Reidemeister moves; 
  see \cite[Proposition 2.6]{Politarczyk-Khovanov}.
  Therefore, to prove Theorem~\ref{thm:group_action}, we need to prove the following key lemma.
 \begin{lemma}\label{lem:equiv_reid}
   Suppose $D'$ is a diagram of $L$ obtained from $D$ by a single equivariant Reidemeister move.
   Then, the equivariant Reidemeister move induces a chain homotopy equivalence $\Phi\colon\llbra D\rrbra\to\llbra D'\rrbra$ in \(\Kom{\SFoam_{N}}\) and an induced quasi-isomorphism $\cF(\Phi)\colon \cF (\llbra
   D\rrbra)\to \cF (\llbra D'\rrbra) $
   in the category \(\Kom{\cSym_{N}}[\Z_{m}]\).
 \end{lemma}
 We postpone the proof of Lemma~\ref{lem:equiv_reid} to Subsection~\ref{sub:equiv_reid}. We will now deduce
 Theorem~\ref{thm:group_action} from Lemma~\ref{lem:equiv_reid}.
 Suppose $D$ and $D'$ are two link diagrams of the same periodic link. As they are connected by a sequence of equivariant
 Reidemeister moves, there is a map $\Phi\colon \llbra D\rrbra \to \llbra D'\rrbra$ obtained as a composition
 of individual maps provided by Lemma~\ref{lem:equiv_reid}.
\end{proof}

\subsection{Equivariant $\sln$-homology}
Suppose $L$ is an $m$-periodic link and choose a periodic link diagram $D$ for it. The cochain complex $C^*(D) :=\cF(\llbra D\rrbra)$ is a graded cochain complex of~$\S_N[\Z_{m}]$-modules.
By Theorem~\ref{thm:group_action}, the $\S_N[\Z_{m}]$-quasi-isomorphism class of $C^*(D)$ depends only on the equivalence class of~\(D\).
Denote by $\KR^*_{\S_N[\Z_{m}]}(L)$ the cohomology of $C^*(D)$ as a $\S_N[\Z_{m}]$-module.

\begin{proposition}
  The $\S_N[\Z_{m}]$-module structure on $\KR^*_{\S_N[\Z_{m}]}(L)$ descends via the evaluation functor to a well-defined $\C[\Z_{m}]$-module structure on $\KR_N^*(L)$ and $\Lee_N^*(L)$.
  The Lee--Gornik spectral sequence exists in the category of finitely generated $\C[\Z_{m}]$-modules.
\end{proposition}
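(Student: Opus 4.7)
The plan is to combine Proposition~\ref{prop:functoriality} with the algebraic specialization functors of Subsection~\ref{sub:algebraic_spec_chain}. Any additive, $\S_N$-linear functor $F\colon\cSym_N\to\Vect_\C$ extends, by the same recipe $X\mapsto F\circ X$ used in the proof of Proposition~\ref{prop:functoriality}, to an equivariant functor $\cSym_N[\Z_m]\to\Vect_\C[\Z_m]$ between categories of $\Z_m$-objects, and then to cochain complexes $\Kom{\cSym_N}[\Z_m]\to\Kom{\Vect_\C}[\Z_m]$. Applied to the specialization functor $\ev^\Sigma$, this produces equivariant functors $\ev^\Sigma_{\Z_m}$ whose targets are the categories of cochain complexes of finitely generated $\C[\Z_m]$-modules. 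Since $C^*(D):=\cF(\llbra D\rrbra)$ is an object of $\Kom{\cSym_N}[\Z_m]$ by Propositions~\ref{prop:group_action_on_D} and~\ref{prop:functoriality}, its images under the singular ($\Sigma=[0,\ldots,0]$) and generic specializations supply cochain complexes of $\C[\Z_m]$-modules whose cohomologies are $\KR^*_N(L)$ and $\Lee^*_N(L)$, respectively.

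For well-definedness as $\C[\Z_m]$-modules, I would invoke Theorem~\ref{thm:group_action}: if $D$ and $D'$ are $\Z_m$-equivalent periodic diagrams, the theorem supplies a quasi-isomorphism $C^*(D)\to C^*(D')$ in $\Kom{\cSym_N}[\Z_m]$. Applying $\ev^\Sigma_{\Z_m}$ to this equivariant quasi-isomorphism yields a $\C[\Z_m]$-linear quasi-isomorphism of the specialized complexes, and passing to cohomology preserves the $\C[\Z_m]$-module structure. Consequently, both $\KR^*_N(L)$ and $\Lee^*_N(L)$ are well-defined finitely generated $\C[\Z_m]$-modules up to isomorphism.

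For the equivariant Lee--Gornik spectral sequence, I would revisit the proof of Theorem~\ref{thm:leegornik}, i.e., the abstract construction of Proposition~\ref{prop:abstract_lee_gornik}, in the equivariant setting. The crucial point is that the $\Z_m$-action constructed in Proposition~\ref{prop:group_action_on_D} uses rotation foams $\rho_{g,I}$ decorated by the constant polynomial $1$, so the maps $\cG_g$ are $\S_N$-linear and preserve the quantum grading. Therefore the descending filtration on $C^*_{gen}=\ev^{\Sigma}\circ\cF(\llbra D\rrbra)$ (for $\Sigma$ generic) by quantum degree is $\Z_m$-stable, and in the decomposition $d^i_{gen}=d^{i0}_{gen}+d^{i1}_{gen}+\cdots$ each component commutes with the action. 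Hence every page of the associated spectral sequence is a complex of finitely generated $\C[\Z_m]$-modules with $\Z_m$-equivariant differentials, and the $E_1$-page equals $\KR^*_N(L)$ while the $E_\infty$-page equals the graded $\Lee^*_N(L)$, all as $\C[\Z_m]$-modules. The only point that requires genuine verification, rather than abstract functoriality, is that the specific $\cG_g$ built in Proposition~\ref{prop:group_action_on_D} is $\S_N$-linear and quantum-degree-preserving; this is visible from the explicit formula $\cG_{g,I}=(-1)^{\sat(I)}\rho_{g,I}$, so the main obstacle is really just careful bookkeeping rather than new input.
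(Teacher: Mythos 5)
Your first and third paragraphs track the paper's reasoning closely: the extension of $\ev^{\Sigma}$ to $G$-objects and the $\Z_m$-equivariance of the filtration/decomposition $d^i_{gen}=d^{i0}_{gen}+d^{i1}_{gen}+\cdots$ are exactly the ingredients the paper uses, and your observation that $\cG_g$ preserves quantum degree because $\rho_{g,I}$ is a degree-zero foam decorated by $1$ is the right justification.

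There is, however, a genuine gap in the well-definedness step. You write that applying $\ev^{\Sigma}_{\Z_m}$ to the equivariant quasi-isomorphism from Theorem~\ref{thm:group_action} ``yields a $\C[\Z_m]$-linear quasi-isomorphism.'' That inference is not licensed: $\ev^{\Sigma}$ is not exact (the paper remarks this explicitly after Definition~\ref{def:spec_fun}, since $\C_{\Sigma}$ is not flat over $\S_N$), and non-exact functors do not preserve quasi-isomorphisms. The correct route, which the paper takes, is to use the \emph{non-equivariant} chain homotopy equivalence also supplied by Theorem~\ref{thm:group_action} (the map in $\Kom{\SFoam_N}$, pushed through $\cF$). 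Chain homotopy equivalences are preserved by \emph{any} additive functor, so $\ev^{\Sigma}(h)$ is still a chain homotopy equivalence over $\C$, and since $\ev^{\Sigma}$ commutes with the $\Z_m$-action, $\ev^{\Sigma}(h)$ is simultaneously $\Z_m$-equivariant. From these two facts one gets the $\C[\Z_m]$-linear isomorphism on cohomology directly (an equivariant map that is a non-equivariant isomorphism on cohomology is an equivariant isomorphism on cohomology), and, if one wants to upgrade $\ev^{\Sigma}(h)$ to a chain homotopy equivalence \emph{over $\C[\Z_m]$}, one invokes semisimplicity of $\C[\Z_m]$ as the paper does. In short: replace ``quasi-isomorphism goes through $\ev^{\Sigma}$'' by ``chain homotopy equivalence goes through $\ev^{\Sigma}$, and equivariance is preserved separately.''
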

\begin{proof}
  Suppose $D$ and $D'$ are two equivalent $m$-periodic link diagrams of $L$.
  We know that $D$ and $D'$ are related by a sequence of equivariant Reidemeister moves, in particular $D$ and $D'$
  are related by a sequence of ordinary Reidemeister moves.
  That is, there exists a chain homotopy equivalence $h\colon C^*(D)\to C^*(D')$ in the category \(\Kom{\Sym_{N}[\Z_{m}]}\).
  By Theorem~\ref{thm:group_action}, the map induced by \(h\) on cohomology of $C^*(D)$ and $C^*(D)$, is a $\S_N[\Z_{m}]$-linear isomorphism.

  Choose $\Sigma \in \Sym^{N}(\C)$ and consider the cochain complexes $C^*_\Sigma(D)=\ev^\Sigma(C^*(D))$ and $C^*_\Sigma(D')=\ev^\Sigma(C^*(D'))$.
  The map $h_\Sigma=\ev^\Sigma(h)$ is a chain homotopy equivalence in the category of complexes of $\C$-modules. 
  In particular $h_\Sigma$ induces an isomorphism on cohomology of $C^*_\Sigma(D)$ and $C^*_\Sigma(D')$.

  Since the evaluation $\ev^\Sigma$ commutes with the $\Z_{m}$-action, the map $h_\Sigma$ is $\Z_{m}$-equivariant.
  Consequently, the map induced by \(h_{\Sigma}\) on cohomology is also \(\Z_{m}\)-equivariant.
Since \(\C[\Z_{m}]\) is a semi-simple algebra, we can use Schur's Lemma to conclude that every cochain complex in $\C[\Z_m]$ splits.
In particular, it follows that \(h_{\Sigma}\) is a chain homotopy equivalence in $\C[\Z_m]$.

  In particular, choosing \(\Sigma\) to be singular or generic, we recover that $\KR^*_N(L)$ and $\Lee_N^*(L)$ have a well-defined structure of $\C[\Z_{m}]$-modules.

  We now want to prove that the spectral sequence from $\KR^*_N(L)$ to $\Lee_N^*(L)$ exists in the category of $\C[\Z_m]$-modules.
  Observe that the evaluation map commutes with the $\Z_m$-action.
  Following the proof of Proposition~\ref{prop:abstract_lee_gornik},
  we decompose $d^i_{gen}=d^i_0+d^i_1+\dots$. The decomposition is $\Z_{m}$-equivariant, because the terms $d^i_j$ come from
  the specialization of degree $j$ terms in the differential in $\cF(\llbra D\rrbra)$, and the $\Z_{m}$-action preserves the quantum degree.
  Consequently, the spectral sequence can be defined over the ring $\C[\Z_{m}]$.
\end{proof}

We now pass to the definition of \(\Z_{m}\)-equivariant Khovanov--Rozansky homology.

\begin{definition}
  Let $L$ be an $m$-periodic link. The \emph{\(\Z_{m}\)-equivariant Khovanov--Rozansky $\sln$-homology} $\EKR^{k,r}_N$
  is the group $\KR^{k,r}_N$ with the $\C[\Z_{m}]$-module structure coming from the action of \(\Z_{m}\) on $\KR^*_N(L)$.
  Likewise, the \emph{\(\Z_{m}\)-equivariant Lee $\sln$-homology} $\ELee^{k}_N$ is the group $\Lee^k$ with the $\C[\Z_{m}]$-structure
  coming from the action of $\Z_{m}$ on $\Lee^*_N(L)$.
\end{definition}

Next result relies on the convention we adopted that the mirror link diagram is obtained by switching all the crossings (`the mirror is parallel to the plane'). The other convention (`the mirror is perpendicular to the plane') leads to a similar result, but the action of $\Z_m$ is
inverted.

\begin{proposition}\label{prop:e_mirror}
  Suppose $L$ is an $m$-periodic link with periodic diagram $D$. Let $L'$ be the mirror. Then, for any $k,r$ there is a map
  of $\C[\Z_m]$-modules
  \[\EKR^{k,r}_N(L)\cong \EKR^{-k,-r}_N(L').\]
\end{proposition}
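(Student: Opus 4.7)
The plan is to refine the proof of Proposition~\ref{prop:mirror} by tracking the $\Z_m$-action at each step. If $D$ is an $m$-periodic diagram for $L$, then the mirror diagram $D'$ (obtained by switching every crossing) inherits the same rotational symmetry: the mirror lies parallel to the diagram plane and therefore commutes with the rotation about the perpendicular axis. Hence $D'$ is an $m$-periodic diagram for $L'$ with the \emph{same} induced $\Z_m$-action on $\Cr(D)=\Cr(D')$.

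I would first transport all the auxiliary sign data from $D$ to $D'$ using the $\Z_m$-equivariant identification $\Cube(D)\xrightarrow{\sim}\Cube(D')$, $I\mapsto -I$, already used in the proof of Proposition~\ref{prop:mirror}. A sign assignment $\sas$ on $\Cube(D)$ pulls back to a sign assignment $\sas'$ on $\Cube(D')$, and the $0$-cochain $\sat$ from the proof of Proposition~\ref{prop:group_action_on_D}, satisfying $g\sas-\sas=\partial\sat$ and normalized by $\sat(0,\dots,0)=0$, pulls back to a cochain $\sat'$ with the analogous properties for $\sas'$. With these choices, the local chain maps $\cG_{g,I}$ defining the $\Z_m$-action on $\cF_0(\llbra D\rrbra)$ and on $\cF_0(\llbra D'\rrbra)$ are given by identical signed foams at each vertex: the webs $D_I$ and $D'_{-I}$ coincide, the rotations $\rho_{g,I}$ agree on them, and by construction the sign prefactors $(-1)^{\sat(I)}$ and $(-1)^{\sat'(-I)}$ are equal.

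I would then invoke the identification $\iota\colon\cF_0(\llbra D'\rrbra)\xrightarrow{\sim}\cF_0(\llbra D\rrbra)^\vee$ from the proof of Proposition~\ref{prop:mirror}, understood as a \emph{naive} dualization: at each vertex $I$ it is the identity on $\cF_0(D_I)=\cF_0(D'_{-I})$ in the canonical foam basis, with homological and quantum gradings inverted, and differentials replaced by their transposes. By the previous paragraph, $\iota$ intertwines the $\Z_m$-actions summand by summand, so $\iota$ is an isomorphism of the underlying $\C[\Z_m]$-module cochain complexes. Passing to cohomology and using that taking cohomology commutes with the $\Z_m$-action gives the desired isomorphism $\EKR^{k,r}_N(L)\cong\EKR^{-k,-r}_N(L')$ of $\C[\Z_m]$-modules.

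The main subtle point lies in the identification step. What $\iota$ delivers is the \emph{naive} matrix-transpose duality with respect to the canonical foam basis, and not the categorical contragredient duality of $\C[\Z_m]$-modules, which on a cyclic group would invert the action. The naive duality turns out to be $\Z_m$-equivariant precisely because, under the chosen mirror convention, the rotation foams $\rho_{g,I}$ literally coincide on the two sides; this is the content of the remark distinguishing the `mirror parallel to the plane' and `mirror perpendicular to the plane' conventions, as in the latter the mirror would conjugate the rotation to its inverse and one would instead obtain the isomorphism with the inverted $\Z_m$-action.
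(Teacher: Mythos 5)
Your approach tracks the paper's quite closely: both proofs carry the non-equivariant identification of Proposition~\ref{prop:mirror} over to the equivariant setting by verifying that the basis-wise identification $\iota$ is compatible with the $\Z_m$-actions, and your sign bookkeeping (checking $\sat'(I')=\sat(I)$ under $I\mapsto -I$, and that the rotation foams coincide) is correct and even a bit more explicit than what the paper writes. The observation in your final paragraph about the distinction between the naive transpose and the contragredient duality, and its relation to the two mirror conventions, is also the right thing to flag.

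However, the last sentence of your third paragraph contains a genuine gap. After $\iota$, you have an isomorphism of cochain complexes of $\C[\Z_m]$-modules $\cF_0(\llbra D'\rrbra)\cong \cF_0(\llbra D\rrbra)^\vee$, where the right-hand side has the same underlying graded $\C[\Z_m]$-modules as $\cF_0(\llbra D\rrbra)$ but the \emph{transposed} differential. Passing to cohomology therefore gives $H^{-k}(\cF_0(\llbra D'\rrbra))\cong H^{-k}(\cF_0(\llbra D\rrbra)^\vee)$, not $H^{k}(\cF_0(\llbra D\rrbra))$; the phrase ``taking cohomology commutes with the $\Z_m$-action'' does not bridge this, because the issue is not commutation with the group action but the relation between the cohomology of a complex and of its dual. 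This is exactly where the paper invokes a further argument: it identifies $\cF_0(\llbra D'\rrbra)$ with $\Hom_{\C[\Z_m]}(\cF_0(\llbra D\rrbra),\C[\Z_m])$ as a cochain complex, uses that $\Hom_{\C[\Z_m]}(-,\C[\Z_m])$ is exact because $\C[\Z_m]$ is semisimple, so it commutes with taking cohomology, and then uses that $\Hom_{\C[\Z_m]}(M,\C[\Z_m])$ is (non-canonically) isomorphic to $M$ for any finite-dimensional $\C[\Z_m]$-module $M$. The choice of the $\C[\Z_m]$-valued $\Hom$ rather than the $\C$-valued dual is crucial here: the $\C$-linear dual with the contragredient action sends $\C_\xi$ to $\C_{\bar\xi}$ and would produce the conjugate module, whereas $\Hom_{\C[\Z_m]}(\C_\xi,\C[\Z_m])\cong\C_\xi$ preserves the isotypical decomposition. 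Your proof should replace the vague final sentence by this identification and the exactness/self-duality argument.
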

\begin{proof}
  The isomorphism at the level of vector spaces over $\C$ was established in Proposition~\ref{prop:mirror}. We need to argue that this isomorphism commutes with the group action.
  We use the notation of the proof of Proposition~\ref{prop:mirror}.
  The group $\Z_{m}$ acts on $\Cube(D)$ and on $\Cube(D')$ in the same manner, that is, $(gI)'=gI'$.
  Moreover, mirroring a resolution commutes with rotating, that is, $gD_I=D_{gI}$ and $gD'_{I'}=D'_{gI'}$.
  With this identification, it is easy to see that the map $\iota\colon\cF_0(\llbra D\rrbra)\to\cF_0(\llbra D'\rrbra)$ given by $\iota(\cF_0(D_I))=\cF_0(D'_{I'})$ commutes with the group action.

  Choose a basis of $\cF_0(\llbra D\rrbra)$ as $\C[\Z_{m}]$-module.
  Then, $\cF_0(\llbra D'\rrbra)$ has the same basis.
  Using this basis, we can write a $\C[\Z_m]$-isomorphism of modules.
  \begin{equation}\label{eq:oclf}
    \cF_0(\llbra D'\rrbra)\xrightarrow{\cong}\Hom_{\C[\Z_{m}]}(\cF_0(\llbra D'\rrbra),\C[\Z_{m}]).
  \end{equation}
  This isomorphism maps a basis to the dual basis.

  Note that with a choice of a basis, the differential on $\cF_0(\llbra D'\rrbra)$ is the transpose of the differential
  on $\cF_0(\llbra D\rrbra)$. Therefore, the identity \eqref{eq:oclf} is in fact an isomorphism of cochain complexes.
  The group ring $\C[\Z_{m}]$ is semisimple, so $\Hom_{\C[\Z_{m}]}(\cdot,\C[\Z_{m}])$ is exact. It follows that 
  $H^*(\cF_0(\llbra D'\rrbra))=\Hom(H^*(\cF_0(\llbra D\rrbra),\C[\Z_{m}])$. The latter group is (non-canonically)
  isomorphic to $H^*(\cF_0(\llbra D\rrbra))$.
\end{proof}


\subsection{Decomposition of $\sln$-homology}
\label{sec:decomp-sln-homol}

This subsection is based on \cite{Politarczyk-Jones,Politarczyk-Khovanov}, 
where an analogous decomposition is constructed for Khovanov homology (that is, for $N=2$). The present case does not pose significantly more
problems. We present a decomposition of $\EKR_N^{k,r}$ and $\ELee_N^k$ into simple $\C[\Z_{m}]$-modules.
This leads to a third grading of $\EKR_N^{k,r}$ and the second grading of $\ELee_N^k$.

Fix a generator \(g \in \Z_{m}\).
The group algebra $\C[\Z_{m}]$ is semisimple, and we have a decomposition
\[\C[\Z_{m}] = \bigoplus_{j=0}^{m-1} \C_{\xi_{m}^{j}},\]
where \(\C_{\xi_{m}^{j}}\) denotes the \(\xi_{m}^{j}\)-eigenspace of \(\C[\Z_{m}]\), for \(\xi_{m} = \exp \left( \frac{2 \pi \sqrt{-1}}{m} \right)\) and \(j=0,1,\ldots,m-1\).
The above decomposition is obtained by writing
\begin{equation}
  \label{eq:idempotent-sum}
  1 = \sum_{j=0}^{m-1}e_{j},
\end{equation}
where \(e_{0},e_{1}\ldots,e_{m-1}\) are pairwise orthogonal idempotents, i.e., \(e_{j} e_{k} = \delta_{kl} e_{j}\), for \(0 \leq j,k \leq m -1\).
Furthermore, for any \(j=0,1,\ldots,m-1\), we have \(g \cdot e_{j} = \xi_{m}^{j} e_{j}\).

In particular, we can present any $\C[\Z_{m}]$-module $M$ as a direct sum over different representations of $\C[\Z_{m}]$.
More precisely,
\begin{equation}
  \label{eq:eigenspace-decomposition-G-modules}
  M = \bigoplus_{i=0}^{m-1} M_{\xi_{m}^{i}}, \quad \text{where } M_{\xi_{m}^{i}} := e_{i} M
\end{equation}
is the \(\xi_{m}^{i}\)-eigenspace of \(M\), for \(i=0,1,\ldots,m-1\).
Since the idempotents \(e_{0},e_{1},\ldots,e_{m-1}\) are pairwise orthogonal, it follows easily that for any finitely generated \(\C[\Z_{m}]\)-modules \(M\) and \(N\), we have
\begin{equation}
  \label{eq:schurs-lemma}
  \Hom_{\C[\Z_{m}]}(M_{\xi_{m}^{j}},N_{\xi_{m}^{k}}) = 0,
\end{equation}
unless \(j=k\).

\begin{remark}
  Observe that since we work over complex symmetric polynomials, the formula~\eqref{eq:idempotent-sum} holds also in \(\S_{N}[\Z_{m}]\).
Consequently, the group ring \(\S_{N}[\Z_{m}]\) admits a decomposition into eigenspaces of the action of the chosen generator~\(g\):
\[\S_{N}[\Z_{m}] = \bigoplus_{j=0}^{m-1} \S_{N,\xi_{m}^{j}}, \quad S_{N,\xi_{m}^{j}} := e_{j} \S_{N}[\Z_{m}],\]
where \(\S_{N,\xi_{m}^{j}}\) are ideals consisting of \(\xi_{m}^{j}\)-eigenvectors of the action of \(g\), for \(j=0,1,\ldots,m-1\).
Consequently, any finitely generated \(\S_{N}[\Z_{m}]\)-module \(M\) admits a decomposition into eigenspaces of \(g\)
\begin{equation}\tag{\ref{eq:eigenspace-decomposition-G-modules}'}M = \bigoplus_{j=0}^{m-1} M_{\xi_{m}^{j}}, \quad M_{\xi_{m}^{j}} := e_{j} M.\end{equation}
Furthermore, in analogy to~\eqref{eq:schurs-lemma}, we deduce that if \(M,N\) are finitely generated \(\S_{N}[\Z_{m}]\)-modules, then
\begin{equation}\tag{\ref{eq:schurs-lemma}'}\Hom_{\S_{N}[\Z_{m}]}(M_{\xi_{m}^{j}}, N_{\xi_{m}^{k}}) = 0,\end{equation}
unless \(j=k\).
  Consequently, if \(D\) is an \(m\)-periodic link diagram, the \(\S_{N}\)-equivariant Khovanov-Rozansky homology of \(D\) admits a decomposition into eigenspaces of~\(g\)
  \[H^{k,r}\left( \cF(\llbra D \rrbra) \right) = \bigoplus_{j=0}^{m-1} H^{k,r}_{\xi_{m}^{j}}(\cF \left( \llbra D \rrbra \right)).\] In fact, there is a decomposition on the level of the cochain complex,
  \[\cF \left( \llbra D \rrbra ) \right) = \bigoplus_{j=0}^{m-1} \cF_{\xi_{m}^{j}} \left( \llbra D \rrbra \right)\]
  and, for \(j=0,1,\ldots,m-1\), \(H^{\ast,\ast}\left( \cF_{\xi_{m}^{j}} \left( \llbra D \rrbra \right) \right) = H^{k,r}_{\xi_{m}^{j}}(\cF \left( \llbra D \rrbra \right))\).
\end{remark}
From decomposition~(\ref{eq:eigenspace-decomposition-G-modules}'), we create the following splitting
\[M = \bigoplus_{d \mid m} M_{d},\]
where, for any \(d\) dividing \(m\), we define
\[M_{d} = \bigoplus_{\stackrel{0 \leq i < m}{\gcd(i,m)=m/d}} M_{\xi_{m}^{i}} = \bigoplus_{\stackrel{0 \leq i < d}{\gcd(i,d)=1}} M_{\xi_{d}^{i}}.\]
Accordingly, we write
\begin{equation}\label{eq:third_grading}
  \EKR_N^{k,r}(L)=\bigoplus_{d|m}\EKR_N^{k,r,d}(L),
\end{equation}
where, for every \(d\) dividing \(m\), we define
\begin{equation}
  \label{eq:def-equivariant-homology}
  \EKR_{N}^{\ast,\ast,d}(L) := \Hom_{\C[\Z_{m}]}(\C[\Z_{m}]_{d},\EKR_{N}^{\ast,\ast}(L)) \cong H^{\ast,\ast}\left( \Hom_{\C[\Z_{m}]} (\C[\Z_{m}]_{d},C_{0}^{\ast,\ast}(L)) \right).
\end{equation}
Observe that in~\cite{Politarczyk-Khovanov}, the decomposition of the equivariant Khovanov homology functors were defined as certain Ext functors.
However, \(\C[\Z_{m}]\) is a semisimple algebra, hence \(\Ext^{i}_{\C[\Z_{m}]}(M,N) = 0\), for \(i>0\) and any \(\C[\Z_{m}]\)-modules \(M, N\).
It follows that the \(\Hom\) functor is exact with respect to both variables.

We now perform an analogous operation on Lee homology. The group $\Lee^k_N(L)$ depends only on the linking numbers
of components of $L$; compare Proposition~\ref{prop:computingLee}. 
The module $\ELee^k_N$ depends on how the symmetry group acts on the components of $L$.
We have the decomposition.
\[\ELee_N^{k}(L)=\bigoplus_{d|m}\ELee_N^{k,d}(L).\]
To study $\ELee_N^{k,d}(L)$, recall from Proposition~\ref{prop:computingLee} that $\Lee_N(L)$ was generated by classes $\sal_\psi$
for any coloring $\psi$ of components of $L$ by elements from $\{1,\dots,N\}$. 
The $\Z_{m}$-action on $S^3$ preserving $L$, acts on the components of $L$. 
In particular, there is an action of $\Z_{m}$ on the set of all colorings of components of $L$.
We denote this action $(g,\psi)\mapsto g\psi$, $g \in \Z_{m}$.
An \emph{order} of a coloring $\psi$, $\sao(\psi)$, is the minimal
number $k\ge 1$, such that $g^k\psi=\psi$ for all $g\in \Z_{m}$. It is clear from the definition of the decomposition
of $\ELee_N$, that
$\sal_\psi\in\ELee_N^{d,\sao(\psi)}$, where $d=\deg(\sal_\psi)$. In particular, if $\Z_{m}$ fixes components of $L$,
we have $\sao(\psi)=1$ for all $\psi$. This leads to the following observation, which we record for future use.
\begin{lemma}\label{lem:trivialLee}
  Suppose the group $\Z_{m}$ acts trivially on the components of $L$, that is, rotation maps components to themselves. Then
  $\Lee_N^{k,d}(L)$ is trivial unless $d=1$.
\end{lemma}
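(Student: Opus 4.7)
The plan is to reduce the statement to the fact that the generators of $\Lee_N(L)$ described in Proposition~\ref{prop:computingLee} are eigenvectors of the $\Z_m$-action, and that the eigenvalues are controlled by the orbit structure $\sao(\psi)$ of colorings $\psi$.

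First, I would recall the structure of $\Lee_N(L)$. By Proposition~\ref{prop:computingLee} (and the underlying computation using Theorem~\ref{thm:trivial_diff}), the vector space $\Lee_N(L)$ has a basis indexed by colorings $\psi\colon \{\text{components of } L\} \to \{1,\dots,N\}$, with generators $\sal_\psi$. The explicit description via colored webs shows that the $\Z_m$-action (which at the level of foams is realized by rotations, as in Proposition~\ref{prop:group_action_on_D}) sends $\sal_\psi$ to $\sal_{g\psi}$, up to a scalar arising from a possible sign correction that lies in the group of $m$-th roots of unity. The crucial point is that the action on the index set is exactly the permutation action of $\Z_m$ on the set of colorings induced by its permutation of components of $L$.

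Second, I would use the hypothesis that $\Z_m$ fixes components of $L$. Then the induced action on the set of colorings is trivial: $g\psi = \psi$ for every generator $g \in \Z_m$ and every coloring $\psi$. Hence $\sao(\psi)=1$ for all $\psi$. In particular, each generator $\sal_\psi$ spans a one-dimensional $\Z_m$-subrepresentation isomorphic to the trivial representation, so $\sal_\psi$ lies in the $1$-eigenspace $\Lee_N(L)_{\xi_m^0}$.

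Third, I would translate this into the decomposition $\ELee_N^{k}(L) = \bigoplus_{d \mid m} \ELee_N^{k,d}(L)$. Recall from \eqref{eq:eigenspace-decomposition-G-modules} and the definition of $M_d$ that $\ELee_N^{k,1}(L)$ is exactly the $\xi_m^0$-eigenspace of the $\Z_m$-action. Since the $\sal_\psi$ span $\Lee_N(L)$ and all lie in this eigenspace, we conclude that $\ELee_N^{k,d}(L)=0$ for every $d \neq 1$, which is the content of the lemma.

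The only potentially subtle step is the first one: verifying that the naive identification of $\sal_\psi$ with the coloring $\psi$ intertwines the $\Z_m$-action on $\Lee_N(L)$ (coming from Proposition~\ref{prop:group_action_on_D}) with the permutation action on colorings. This is a computation using the local model \eqref{eq:local_Sigma} in Theorem~\ref{thm:trivial_diff} and the naturality of the decorated-web decomposition under the rotation cobordism $\rho_g$; the rotation just carries a colored strand of $L$ labeled by $\psi$ to a colored strand of $L$ labeled by $g\psi$, and the sign cochain $\sat$ from the proof of Proposition~\ref{prop:group_action_on_D} contributes a scalar that does not change which eigenspace $\sal_\psi$ belongs to. Once this is in place, the rest of the argument is immediate.
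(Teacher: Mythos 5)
Your argument mirrors the paper's own (very brief, inline) reasoning preceding the lemma: identify $\Lee_N(L)$ with the basis $\{\sal_\psi\}$ from Proposition~\ref{prop:computingLee}, note that a trivial action on components forces $\sao(\psi)=1$ for every coloring $\psi$, and conclude that each $\sal_\psi$ lies in the $d=1$ piece of the eigenspace decomposition. The one wobble in your write-up is the phrase ``contributes a scalar that does not change which eigenspace $\sal_\psi$ belongs to'': since $\sal_\psi$ spans a one-dimensional $\Z_m$-subrepresentation, its eigenspace \emph{is} determined by that scalar, so what you actually need is that the scalar equals $1$ (which follows from the normalization $\sat(0,\ldots,0)=0$ and the geometric naturality of the $\rho_g$ cobordism on the Karoubi summand picking out $\psi$). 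The paper treats this point at the same level of brevity (``It is clear from the definition of the decomposition\ldots''), so overall your proposal is a faithful rendering of the paper's proof.
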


\subsection{Proof of Lemma~\ref{lem:equiv_reid}}\label{sub:equiv_reid}
For the reader's convenience we recall the statement of Lemma~\ref{lem:equiv_reid}.
 \begin{myLemma}
   Suppose $D'$ is a diagram of $L$ obtained from $D$ by a single equivariant Reidemeister move.
   Then, the equivariant Reidemeister move induces a chain homotopy equivalence $\Phi\colon\llbra D\rrbra\to\llbra D'\rrbra$ in \(\Kom{\SFoam_{N}}\) and an induced quasi-isomorphism $\cF(\Phi)\colon \cF (\llbra
   D\rrbra)\to \cF (\llbra D'\rrbra) $
   in the category \(\Kom{\cSym_{N}}[\Z_{m}]\).
 \end{myLemma}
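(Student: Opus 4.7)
The plan is to reduce the equivariant Reidemeister move to $m$ ordinary Reidemeister moves performed sequentially at the $m$ orbit positions, apply Theorem~\ref{thm:invariance_reid} iteratively to obtain a chain homotopy equivalence $\Phi\colon\llbra D\rrbra\to\llbra D'\rrbra$, and then verify that $\Phi$ intertwines the actions $\cG_g$ and $\cG_g'$ of Proposition~\ref{prop:group_action_on_D}. The key technical issue, as in the proof of Theorem~\ref{thm:invariance_reid}, is a careful bookkeeping of sign assignments; the additional difficulty here is that the sign-mediating cochain $\sat'$ on $\SCube(D')$ must be compatible with the one on $\SCube(D)$.

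Concretely, I would fix a sign assignment $\sas$ on $\Cube(D)$ and a cochain $\sat$ with $g\sas-\sas=\partial\sat$ and $\sat(0,\dots,0)=0$. For the equivariant Reidemeister I move, $D'$ has $n+m$ crossings and $\Cube(D')\cong\Cube(D)\times\{0,1\}^m$ (respectively $\{-1,0\}^m$). Applying Lemma~\ref{lem:sas} once for each new crossing, I extend $\sas$ to a sign assignment $\sas'$ on $\Cube(D')$. The second part of Lemma~\ref{lem:sas}, applied iteratively, gives $g\sas'-\sas'=\partial\sat'$ where $\sat'(I,x_1,\dots,x_m)=\sat(I)$; in particular $\sat'(0,\dots,0)=0$, fixing the action $\cG_g'$ on $\llbra D'\rrbra$. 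Iterating the Reidemeister~I argument of Theorem~\ref{thm:invariance_reid} via Corollary~\ref{cor:pos-signs-compatible-sign-assignments}, the component $\Phi_I\colon D_I\to D'_{(I,0,\dots,0)}$ is the product foam that is the cup of Figure~\ref{fig:r1map} at each of the $m$ new orbit positions and identity elsewhere, with trivial sign. The Reidemeister~IIa and IIb cases are analogous; the sign-extension uses Lemma~\ref{lem:sign_2a} and its IIb counterpart to place the sum $I\oplus\Phi$ on the canonical sub-complex obtained by resolving each of the $m$ orbit pairs of new crossings to $(-1,1)$. The Reidemeister~III move does not change the set of crossings, so $\sas$ transfers directly to $\Cube(D')$ and equivariance of $\Phi$ is automatic.

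The hard part is verifying equivariance $\cG_g'\circ\Phi=\Phi\circ\cG_g$. At the level of components, since $g$ cyclically permutes the extra coordinates and fixes $(0,\dots,0)$, one has $g\cdot(I,0,\dots,0)=(gI,0,\dots,0)$, so the identity to check reduces to
\[(-1)^{\sat'(I,0,\dots,0)}\,\rho'_{g,(I,0,\dots,0)}\circ\Phi_I \;=\; (-1)^{\sat(I)}\,\Phi_{gI}\circ\rho_{g,I}.\]
By construction $\sat'(I,0,\dots,0)=\sat(I)$, so the signs cancel, and the underlying geometric equality $\rho'_{g,(I,0,\dots,0)}\circ\Phi_I=\Phi_{gI}\circ\rho_{g,I}$ holds because both sides describe the same foam: the union of the rotation cobordism with the $m$ symmetrically placed cup foams, which is manifestly invariant under rotating the orbit of cups. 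Passing through the evaluation functor via Proposition~\ref{prop:functoriality}, $\cF(\Phi)$ is a $\Z_m$-equivariant quasi-isomorphism in $\Kom{\cSym_N}[\Z_m]$.

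The main obstacle is therefore not the chain-level invariance (which follows from the non-equivariant theorem) but the bookkeeping that guarantees $\sat'$ restricts to $\sat$ on $\Cube(D)\times\{(0,\dots,0)\}$ so that the signs in $\cG_g$ and $\cG_g'$ are compatible. This is ensured by always extending sign assignments via Lemma~\ref{lem:sas} starting from the sub-cube $\Cube(D)\times\{\boldsymbol{0}\}$, and relies crucially on the uniqueness statement of Lemma~\ref{lem:sas} together with the vanishing condition at $(0,\dots,0)$ of Remark~\ref{remark:sign-ambiguity-gp-action}.
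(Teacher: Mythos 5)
Your overall strategy mirrors the paper's: reduce to $m$ ordinary Reidemeister moves, extend the sign assignment via Lemma~\ref{lem:sas}, and check that the mediating $0$-cochain $\sat'$ restricts correctly so that $\Phi$ is $\Z_m$-equivariant. However, your key computational claim is incorrect. You assert that iterating the second part of Lemma~\ref{lem:sas} yields $g\sas'-\sas'=\partial\sat'$ with $\sat'(I,x_1,\dots,x_m)=\sat(I)$. The second part of Lemma~\ref{lem:sas} compares the extensions $\sas_1',\sas_2'$ of two sign assignments $\sas_1,\sas_2$ on the same base cube, added in the same order. But $g\sas'$ is the action of $g$ on the extension $\sas'$, not the extension $(g\sas)'$ of $g\sas$; since $g$ cyclically permutes the $m$ new crossings, it permutes the order in which the extra coordinates were adjoined, and these two sign assignments genuinely differ. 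For instance, already for $m=2$, $(g\sas)'\bigl((I,1,0),(I,1,1)\bigr)=0$ by item~\ref{item:vertical}, whereas $g\sas'\bigl((I,1,0),(I,1,1)\bigr)=\sas'\bigl((g^{-1}I,0,1),(g^{-1}I,1,1)\bigr)=1$. Consequently the correct $\sat'$ carries a nontrivial quadratic correction: the paper's Lemma~\ref{lem:sat_general_m} gives
\[\sat'(I,x_1,\dots,x_m)=\sat(I)+x_1(x_2+\dots+x_m),\]
and Lemma~\ref{lem:sat_reid2} gives $\sat'(I,x_1,\dots,x_4)=\sat(I)+(x_1+x_2)(x_3+x_4)$ for R2a. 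These computations (via Lemma~\ref{lem:reid1_sas}) are precisely the technical core of the paper's proof, which the paper flags in the introduction as the main difficulty; your formula short-circuits them incorrectly.

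Your final conclusion is nonetheless correct, essentially by luck: the equivariance check only needs $\sat'$ at the vertices in the image of $\Phi$, which are $(I,\boldsymbol{0})$ for R1 and the vertices $I'_k$ for R2a, and there the quadratic correction vanishes (for R2a because $1+(-1)\equiv 0$ in $\F_2$), so both formulas give $\sat'=\sat(I)$. If one only wants $\sat'(I,\boldsymbol{0})=\sat(I)$, there is a cleaner route than your flawed one: by item~\ref{item:horizontal}, both $\sas'$ and $g\sas'$ restrict on $\Cube(D)\times\{\boldsymbol{0}\}$ to $\sas$ and $g\sas$ respectively, so $\partial\sat'$ agrees with $\partial\sat$ along that subcube, and the normalization at the origin forces $\sat'(I,\boldsymbol{0})=\sat(I)$. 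But as stated your derivation is invalid, and the R2a/R2b cases, where $\Phi$ has several components landing at distinct vertices of the extended cube, need explicit verification (the paper's Step~3 and Lemma~\ref{lem:sat_reid2}) rather than the word ``analogous.''
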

 Throughout the proof, we let $g$ be a fixed generator of $\Z_{m}$ acting on $\R^2$ (and hence, on resolutions) by the rotation by $2\pi /m$.
   Without loss of generality we may and will assume that $D'$ has no fewer crossings than $D$.
   The map $\Phi$ is given by a family of foams $\Phi_{I,J}$ for $(I,J)\in \cA\subset\Cube(D)\times \Cube(D')$ and signs
   $\sad(I,J)$, so that the component of $\Phi$ from $D_I$ to $D'_J$ is $(-1)^{\sad(I,J)}\Phi_{I,J}$. 
   The foams $\Phi_{I,J}$ are constructed as concatenations of foams corresponding to individual (non-equivariant) Reidemeister
   moves, whose composition
   is the equivariant Reidemeister move we are considering.

   Two things are to be proved.
   \begin{itemize}
     \item \emph{(geometry).} The group $\Z_{m}$ acts on $\Cube(D)$ and on $\Cube(D')$ via permutations of crossings. We need to show
       that we can construct foams $\Phi_{I,J}$ in such a way $\Phi_{gI,gJ}$ is isotopic to $g\Phi_{I,J}$ as a foam from
       $gD_I$ to $gD'_J$;
     \item \emph{(algebra).} We have to show that a sign assignment on $D$ induces a compatible sign assignment $\sas'$
       on $D'$ such that the maps \(\Phi_{I,J}\), for \((I,J) \in \cA\), together with appropriate signs combine into a
       \(\Z_{m}\)-equivariant chain map.
       To be more precise, we will show that there is a sign assignment \(\sas'\) on \(\SCube(D')\) such that the following diagram is commutative
       \begin{equation}\label{eq:algebra_part}\begin{tikzcd}[column sep=2cm]
	 D_{I} \ar[r, "{(-1)^{\sat(I)} \rho_{g,I}}"] \ar[d, "(-1)^{\sad(I,J)}\Phi_{I,J}"'] & D_{gI} \ar[d, "(-1)^{\sad(gI,gJ)}\Phi_{gI,gJ}"] \\
         D'_{J} \ar[r, "{(-1)^{\sat'(J)} \rho'_{g,J}}"] & D'_{gJ},
       \end{tikzcd}\end{equation}
       where $\sat$ is the $0$-cochain on $\SCube(D)$ defined by the condition $\partial \sat=\sas-g\sas$, $\sat(0,\dots,0)=0$ and $\sat'$
       is defined as an analogous $0$-cochain on $\SCube(D')$.
   \end{itemize}
   To emphasize, we included the signs $\sad(I,J)$ in~\eqref{eq:algebra_part}. However, we will show that $\sad(I,J)=0$
   is a compatible choice, exactly as in the non-equivariant setting.
    The plan of the proof is as follows.
   \begin{itemize}
     \item We give full details of the case of a positive Reidemeister~1 move and $\Z_{m}=\Z_2$;
     \item We show the algebra part of a positive Reidemeister~1 move and $m>2;$
     \item We discuss the algebra part of a Reidemeiser $2a$ move and $\Z_{m}=\Z_2$;
   \end{itemize}
   The geometry part is similar in all moves, so the first item of the list can be easily generalized to all other cases. 
   As for the algebra part, the cases of Reidemeister~1 and Reidemeister~2 moves differ. 
   The case of a Reidemeister~2b move, or a negative Reidemeister~1 move are direct generalizations
   of the cases we discuss. The case of Reidemeister 3 move is the easiest,
   because the number of crossings does not change. The sign assignment on $D$ gives a direct sign assignment of $D'$. Therefore, even
   if the invariance of $\llbra D\rrbra$ on Reidemeister~3 move has the most complex proof, see \cite{MSV}, it is the easiest to deal with in the equivariant case.

   \bigskip
   \textbf{Step 1. Positive Reidemeister~1 move and $\Z_{m}=\Z_2$.}
   We will use the following notation. The diagram $D'$ is the diagram $D$ with two Reidemeister 1 moves applied to it.
   We denote it by $D'=D\langle\smallloop\smallloop\!\rangle$.
   We write $D=D\langle\smalldown\,\smalldown\rangle$ and the diagram $D$ with one of the two Reidemeister moves applied is denoted by $D\langle\smalldown\,\smallloop\rangle$, respectively $D\langle\smallloop\,\smalldown\rangle$.


   The cochain complexes for $D\langle\smalldown\,\smallloop\rangle$ and $D\langle\smallloop\,\smalldown\rangle$, respectively, are
   \begin{align*}
     \llbra D\langle\smalldown\,\smallloop\rangle\rrbra &= \big\{0 \to \llbra D\langle\smallcircle\,\smalldown\rangle\rrbra\xrightarrow{d}
                                              \llbra D\langle\otherdown\,\smalldown\rangle\rrbra\to 0\big\},\\
     \llbra D\langle\smallloop\,\smalldown\rangle\rrbra &= \big\{0 \to \llbra D\langle\smalldown\,\smallcircle\rangle\rrbra\xrightarrow{d}
                                              \llbra D\langle\smalldown\,\otherdown\rangle\rrbra\to 0\big\}.
   \end{align*}
   The differentials in the above complexes are given by the family of foams that are products except at the relevant crossings: near the crossing where the Reidemeister move occurs, the foams are as in
   Figure~\ref{fig:resolution}.
   Let us identify
   \begin{equation}
     \label{eq:identification-of-cubes-equiiv-reidmeister-1}
     \Cube(D \langle \smallloop \smallloop \rangle) \cong \Cube(D \langle \smalldown \smalldown \rangle) \times \{0,1\}^{2}
   \end{equation}
   in such a way that
   \begin{align*}
     \Cube(D \langle \smalldown \smallloop \rangle) &= \bigcup_{x=0,1} \Cube(D \langle \smalldown \smalldown \rangle) \times \{(0,x)\} \subset \Cube(D \langle \smallloop \smallloop \rangle), \\
     \Cube(D \langle \smallloop \smalldown \rangle) &= \bigcup_{x=0,1} \Cube(D \langle \smalldown \smalldown \rangle) \times \{(x,0)\} \subset \Cube(D \langle \smallloop \smallloop \rangle).
   \end{align*}

   The cochain complex $\llbra D\langle\smallloop\smallloop\rangle \rrbra$ can be written as a bicomplex of partial resolutions.
   \[
     \begin{tikzcd}
       && \llbra D\langle\otherdown\smallcircle\rangle\rrbra\ar[rd] & \\
       0\ar[r] & \llbra D\langle\smallcircle\smallcircle\rangle\rrbra\ar[ru]\ar[rd] && 
       \llbra D\langle\otherdown\otherdown\rangle\rrbra\ar[r] & 0 \\
       && \llbra D\langle\smallcircle\otherdown\rangle\rrbra\ar[ru] & 
     \end{tikzcd}
   \]
   The chain homotopy equivalence between the above bicomplex and $\llbra D\langle\smalldown\smalldown\rangle \rrbra$
   is given by explicit maps as in the proof of Theorem~\ref{thm:invariance_reid}. More precisely,
   consider the maps
   \begin{align*}
     \phi^1&\colon\llbra D\langle\smalldown\smalldown\rangle \rrbra\to \llbra D\langle\smalldown\smallloop\rangle \rrbra
	   &\phi^2&\colon\llbra D\langle\smalldown\smallloop\rangle \rrbra\to \llbra D\langle\smallloop\smallloop\rangle \rrbra\\ 
     \phi^3&\colon\llbra D\langle\smalldown\smalldown\rangle \rrbra\to \llbra D\langle\smallloop\smalldown\rangle \rrbra 
	   &\phi^4&\colon\llbra D\langle\smallloop\smalldown\rangle \rrbra\to \llbra D\langle\smallloop\smallloop\rangle \rrbra 
   \end{align*}
   given by foams realizing the corresponding (non-equivariant) Reidemeister~1 move. 
   These maps lead to the following diagram in~\(\KFoam\)
\begin{equation}\label{eq:if_theres_a_diagram_it_commutes}
  \begin{tikzcd}
  & \llbra D\langle\smalldown\smalldown\rangle \rrbra \ar[ld,"\phi^1"] \ar[rd,"\phi^3"] \arrow[out=120,in=60,loop,<->,squiggly,blue,"{\cG_{g}}"]\\
    \llbra D\langle\smalldown\smallloop\rangle \rrbra \ar[squiggly,blue,<->,rr,"{\cG_{g}}"] \ar[rd,"\phi^2"]& &
    \llbra D\langle\smallloop\smalldown\rangle \rrbra \ar[ld,"\phi^4"]\\
  & \llbra D\langle\smallloop\smallloop\rangle \rrbra \arrow[out=120,in=60,loop below,<->,distance=3em,squiggly,blue,"{\cG_{g}}"],
  \end{tikzcd}
\end{equation}
   where 
   the squiggly arrows denote the action of the generator $g \in \Z_2$.

   Let us deal with the geometric part first.
   
   \begin{lemma}\label{lem:if_theres_a_diagram_it_commutes}
     For any \(I \in \Cube(D)\), the following diagram is commutative in \(\SFoam_{N}\).
     \begin{equation}\label{eq:geometric_commutation}
       \begin{tikzcd}
         D\langle\smalldown\smalldown\rangle_{I} \ar[d, "{\phi^{1}_{I}}"] \ar[r,squiggly,blue,"{\rho_{g,I}}"] & D\langle\smalldown\smalldown\rangle_{gI} \ar[d, "{\phi^{3}_{I}}"] \\
         D\langle\smalldown\smallloop\rangle_{(I,0)} \ar[d, "{\phi^{2}_{(I,0)}}"] \ar[r,squiggly,blue,"{\rho_{g,(I,0)}}"] & D\langle\smallloop\smalldown\rangle_{(gI,0)} \ar[d, "{\phi^{4}_{(I,0)}}"] \\
         D\langle\smallloop\smallloop\rangle_{(I,0,0)} \ar[r,squiggly,blue,"{\rho_{g,(I,0,0)}}"] & D\langle\smallloop\smallloop\rangle_{(gI,0,0)}.
       \end{tikzcd}
     \end{equation}
     In the above diagram, by \(\rho_{g,I}\) and \(\rho_{g,(I,0,0)}\) we denote, as in the proof of Proposition~\ref{prop:group_action_on_D}, the foams realizing the rotation by \(g\) of the appropriate
     resolution diagrams.
     We are also using the identification~\eqref{eq:identification-of-cubes-equiiv-reidmeister-1} of cubes of resolutions
     of the relevant diagrams.
   \end{lemma}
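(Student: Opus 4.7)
The plan is to establish the commutativity of~\eqref{eq:geometric_commutation} directly at the level of foams by exploiting the symmetry of the equivariant Reidemeister~1 move and then invoking the fact that isotopic foams are identified as morphisms in~\(\SFoam_N\) (see Definition~\ref{def:sfoam}). The key geometric input is that an equivariant Reidemeister~1 move, by definition, performs two local R1 moves at two disks that are interchanged by the generator~\(g\in\Z_2\). Thus the two small loops introduced in passing from~\(D\langle\smalldown\smalldown\rangle\) to~\(D\langle\smallloop\smallloop\rangle\) are geometrically related by the rotation~\(g\), and the cup foams realizing them can (and will) be chosen to be images of one another under the same rotation of~\(\R^2\times[0,1]\) used to define~\(\rho_g\).

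With this setup, I would first treat the top square. By our choice of the cup foams, \(\phi^3_{I}\) is literally obtained from \(\phi^1_{I}\) by applying the rotation by~\(g\) to the ambient~\(\R^2\times[0,1]\); away from the small cylinder where the cup is attached, both are product foams on the corresponding resolution diagrams. Consequently, stacking \(\phi^1_{I}\) under \(\rho_{g,(I,0)}\) gives a foam that first creates the cup in the original disk and then carries everything around by the rotation, whereas stacking \(\rho_{g,I}\) under \(\phi^3_{I}\) gives the foam that first rotates the resolution and then creates the cup in the rotated disk. A straightforward isotopy (sliding the cup attachment along the trace of the rotation) identifies these two foams, so they are equal in~\(\SFoam_N\). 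The middle square is handled by exactly the same argument, with the roles of the two loops swapped: the cup added by~\(\phi^2\) sits in the disk that is the \(g\)-image of the disk used by~\(\phi^4\), so~\(\phi^4_{(I,0)}\) is the rotation of~\(\phi^2_{(I,0)}\), and the commutativity up to isotopy is analogous.

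The bottom square is then purely formal: its two compositions are the outer legs of the top two squares, hence also agree up to isotopy by concatenation of the isotopies already constructed, or alternatively one observes directly that both give the same rotation foam on~\(D\langle\smallloop\smallloop\rangle\). The main point to be careful about is the choice of the cup foams, i.e.\ choosing from the outset the local Reidemeister~1 foams at the two symmetric disks so that they are literally rotations of one another; once this choice is fixed, no nontrivial combinatorics remains and all three squares commute on the nose up to an ambient isotopy that rests on the trace of the linear rotation~\(r\) introduced in Proposition~\ref{prop:group_action_on_D}. This is not so much an obstacle as a bookkeeping step, but it is the unique place in the proof where the symmetry of the equivariant move, rather than the invariance of~\(\llbra D\rrbra\) under individual R1 moves, is genuinely used.
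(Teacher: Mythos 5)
Your proof is correct and follows essentially the same route as the paper: both arguments reduce commutativity of each square to the observation that composing the cup foam with the trace of the rotation, versus composing the trace of the rotation with the cup foam at the rotated position, produces foams that differ by an ambient isotopy rel boundary (sliding the cup attachment along the rotation), and isotopic foams are already identified in $\SFoam_N$ by Lemma~\ref{lem:invariant}. The only cosmetic remark is that the diagram contains just two squares, so your ``bottom square'' (the outer rectangle) commutes automatically once the two inner squares do; stating it separately is harmless but redundant.
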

   \begin{proof}[Proof of Lemma~\ref{lem:if_theres_a_diagram_it_commutes}]
     The statement follows from the construction of the foams $\rho_g$. Recall that these are traces of the rotation by $2\pi /m$.
     That is to say, for given $I\in \Cube(D)$, the composition $\rho_{g,(I,0,0)}\circ \phi^1_I$ takes the foam $\phi^1_I$ (which is the
     product foam except near the crossing), and then it twists its upper part by $2\pi /m$. On the other hand,
     the composition $\phi^3_I\circ\rho_g$ take the foam $\phi^1_I$ and twists its lower part. It is clear that these two
     foams are isotopic rel boundary. This shows commutativity of the top square. The commutativity of the bottom square is proved
     in a similar way.
   \end{proof}

To complete the proof of Lemma~\ref{lem:equiv_reid}, we need to deal with the algebraic part.
Fix a sign assignment $\sas$ on $D$ and let $g$ be the generator of $\Z_2$.
By Lemma~\ref{lem:coboundary}, $g\sas=\sas+\partial\sat$
for some \(0\)-cochain $\sat$ on \(\SCube(D)\).
As explained in~Remark~\ref{remark:sign-ambiguity-gp-action}, the requirement $\sat((0,\dots,0))=0$, which we henceforth impose, determines $\sat$ uniquely.
Let $\sas_1$ be the sign assignment on $D\langle\smallloop\smalldown\rangle$ induced from $D$ via Lemma~\ref{lem:sas}.
Write $\sas_2=g\sas_1$ for the sign assignment on $D\langle\smalldown\smallloop\rangle=gD\langle\smallloop\smalldown\rangle$.

On $D\langle\smallloop\smallloop\rangle$, we can construct a sign assignment by starting
from $\sas_1$ on $D\langle\smallloop\smalldown\rangle$ and extending it by Lemma~\ref{lem:sas}. Call this sign assignment
$\sas_3$. There is another option, namely we start with $\sas_2=g\sas_1$ on
$D\langle\smalldown\smallloop\rangle=gD\langle\smallloop\smalldown\rangle$ and use Lemma~\ref{lem:sas} in this case, to
obtain a sign assignment on $D\langle\smallloop\smallloop\rangle$. Call the latter sign assignment $\sas_4$.
We know that there exists a \(0\)-cochain $\sat'$ on $\SCube(D\langle\smallloop\smallloop\rangle)$ such that $\sas_3(I,I')-\sas_4(I,I')=\sat'(I)-\sat'(I')$, for any \(I,I' \in \Cube(D\langle\smallloop\smallloop\rangle)\) such that \(I'\) is an immediate successor of \(I\).
\begin{lemma}\label{lem:sat_on_equiv}\
  \begin{itemize}
    \item[(a)] We have $g\sas_3=\sas_4$.
    \item[(b)] if $\sat$ is a \(0\)-cochain on $\SCube(D)$ such that $\sas-g\sas=\partial\sat$, then the \(0\)-cochain \(\sat'\) on \(\SCube(D\langle\smallloop\smallloop\rangle)\) defined by
      \[\sat'((I,x,y))=xy+\sat(I)\in\F_2,\]
      satisfies \(\sas_{3} - \sas_{4} = \partial \sat'\).
  \end{itemize}
\end{lemma}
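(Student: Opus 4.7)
The plan is to use the uniqueness clause of Lemma~\ref{lem:sas} for (a) and then check (b) edge-by-edge. Throughout I identify $\Cube(D\langle\smallloop\smallloop\rangle) = \Cube(D)\times\{0,1\}^{2}$ and note that the $\Z_{2}$-action is $g(I,y_{1},y_{2}) = (gI,y_{2},y_{1})$, since the generator swaps the two new Reidemeister~1 crossings.

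For part (a), recall that $\sas_{4}$ is the unique sign assignment on $\Cube(D\langle\smallloop\smallloop\rangle)$ which restricts to $\sas_{2}$ on $\Cube(D)\times\{0\}\times\{0,1\}$ and satisfies $\sas_{4}((I,0,y_{2}),(I,1,y_{2}))=0$. I will verify that $g\sas_{3}$ satisfies these two properties. The vertical property is immediate from
\[g\sas_{3}((I,0,y_{2}),(I,1,y_{2})) = \sas_{3}((gI,y_{2},0),(gI,y_{2},1)),\]
since the right-hand edge is vertical for the extension defining $\sas_{3}$, hence evaluates to $0$ by item~\ref{item:vertical}. For the restriction property, given immediate successors $(I,0,y_{2}),(I',0,y_{2}')$, one computes
\[g\sas_{3}((I,0,y_{2}),(I',0,y_{2}')) = \sas_{3}((gI,y_{2},0),(gI',y_{2}',0)) = \sas_{1}((gI,y_{2}),(gI',y_{2}')),\]
using item~\ref{item:horizontal} for $\sas_{3}$, and the right-hand side equals $\sas_{2}((I,0,y_{2}),(I',0,y_{2}'))$ by the definition $\sas_{2}=g\sas_{1}$ and the fact that $g$ has order two. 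Uniqueness then forces $g\sas_{3}=\sas_{4}$.

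For part (b), I compute both $\sas_{3}$ and $\sas_{4}$ on the three types of immediate-successor edges in $\Cube(D)\times\{0,1\}^{2}$ and compare with
\[\partial\sat'((I,y_{1},y_{2}),(I',y_{1}',y_{2}')) = \sat(I)-\sat(I') + y_{1}y_{2}-y_{1}'y_{2}'.\]
On an $I$-edge with $(y_{1},y_{2})$ fixed, iterating \eqref{eq:sas_prim} shows that $\sas_{3}=\sas(I,I')$ when $y_{1}+y_{2}$ is even and $\sas_{3}=1+\sas(I,I')$ otherwise, while $\sas_{4}$ satisfies the same formula with $\sas(gI,gI')$ in place of $\sas(I,I')$; the parity corrections cancel in the difference, leaving $\sas(I,I')-\sas(gI,gI')=\sat(I)-\sat(I')$, which is precisely $\partial\sat'$ on such an edge. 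On a $y_{1}$-edge $((I,0,y_{2}),(I,1,y_{2}))$, item~\ref{item:vertical} for $\sas_{4}$ gives $\sas_{4}=0$, while $\sas_{3}=0$ for $y_{2}=0$ (a base edge with value $\sas_{1}((I,0),(I,1))=0$) and $\sas_{3}=1$ for $y_{2}=1$ (from \eqref{eq:sas_prim}); the answer matches $\partial\sat'=y_{2}$. The $y_{2}$-edge case is symmetric, with the roles of the two new coordinates interchanged.

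The main nuisance is purely bookkeeping: the two new coordinates play opposite roles in the construction of $\sas_{3}$ (base versus extension) than they do in $\sas_{4}$, and one must apply \eqref{eq:sas_prim} in the correct slice each time. Once this is organized, every equality collapses either to the identity $\sas-g\sas=\partial\sat$ on the base cube, or to the trivial cancellation $1+1=0$ in $\F_{2}$ coming from the double application of \eqref{eq:sas_prim}.
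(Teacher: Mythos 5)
Your proof is correct, and for part~(b) it is essentially the paper's edge-by-edge verification, just organized slightly differently. The genuine departure is in part~(a). The paper derives closed formulas \eqref{eq:sas_3} and \eqref{eq:sas_4} for $\sas_{3}$ and $\sas_{4}$ on all three types of immediate-successor edges and then reads off $g\sas_{3}=\sas_{4}$ by direct comparison of the two formulas, whereas you bypass the explicit formulas by invoking the uniqueness clause of Lemma~\ref{lem:sas}: $\sas_{4}$ is the unique sign assignment whose restriction to the $x=0$ slice is $\sas_{2}$ and which vanishes on the new vertical edges, and you check that $g\sas_{3}$ satisfies both conditions. (Implicitly you also use that $g\sas_{3}$ is itself a sign assignment, which holds because $g$ acts by a cellular automorphism of $\SCube(D\langle\smallloop\smallloop\rangle)$, so $\partial(g\sas_{3})=g^{\ast}\partial\sas_{3}$ is still the constant $2$-cochain~$1$.) The uniqueness route spares you the computation of $\sas_{3}$ and $\sas_{4}$ on the $(1,1)$-slice; those values resurface anyway in part~(b), so the two proofs have comparable total length, but your version makes the structural reason for $g\sas_{3}=\sas_{4}$ more transparent. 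One small notational slip: $\sas_{2}$ lives on $\Cube(D\langle\smalldown\smallloop\rangle)\cong\Cube(D)\times\{0,1\}$, so $\sas_{2}((I,0,y_{2}),(I',0,y_{2}'))$ should read $\sas_{2}((I,y_{2}),(I',y_{2}'))$; under the identification with the $x=0$ slice of $\Cube(D\langle\smallloop\smallloop\rangle)$ this is harmless.
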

\begin{proof}[Proof of Lemma~\ref{lem:sat_on_equiv}]
  Let $I'_1,I'_2\in\Cube(D\langle\smallloop\smallloop\rangle)$ be such that $I_2'$ is an immediate successor of $I_1'$. Write $I_k'=(I_k,x_k,y_k)$,
  $k=1,2$ with $I_k\in\Cube(D)$, $x_k,y_k\in\{0,1\}$. We have 
  \begin{equation}\label{eq:gik}
    gI_k'=(gI_k,y_k,x_k),
  \end{equation}
  because the group action switches
  the last two crossings.
  By construction of $\sas_3$ via Lemma~\ref{lem:sas}, we have:
  \[
    \sas_3((I_1,x_1,y_1),(I_2,x_2,y_2))=\begin{cases} 0 & \textrm{if } y_2=y_1+1\\
      \sas_1((I_1,x_1),(I_2,x_2))+y_1 & \textrm{if } y_1=y_2.
    \end{cases}
  \]
    Analogously
  \[
    \sas_4((I_1,x_1,y_1),(I_2,x_2,y_2))=\begin{cases} 0 & \textrm{if } x_2=x_1+1\\
      \sas_2((I_1,y_1),(I_2,y_2))+x_1 & \textrm{if } x_1=x_2.
    \end{cases}
  \]
  Repeating this procedure, we obtain
  \begin{equation}\label{eq:sas_3}
    \sas_3((I_1,x_1,y_1),(I_2,x_2,y_2))=
    \begin{cases}
      0 & y_2=y_1+1\\
      y_1 & y_1=y_2\textrm{ and } x_2=x_1+1\\
      \sas(I_1,I_2) + x_1 + y_1 & y_1=y_2 \textrm{ and } x_1=x_2. 
  \end{cases}\end{equation}
  and
  \begin{equation}\label{eq:sas_4}
    \sas_4((I_1,x_1,y_1),(I_2,x_2,y_2))=
    \begin{cases}
      0 & x_2=x_1+1\\
      x_1 & x_1=x_2\textrm{ and } y_2=y_1+1\\
      g\sas(I_1,I_2) + x_1 + y_1 & y_1=y_2 \textrm{ and } x_1=x_2. 
    \end{cases}
  \end{equation}
  Equation~\eqref{eq:gik} together with \eqref{eq:sas_3} and~\eqref{eq:sas_4}
  implies that $g\sas_3=\sas_4$, proving (a). Next, observe that
  \begin{equation}\label{eq:sas_diff}
    \begin{split}
    \sas_3((I_1,x_1,y_1),(I_2,x_2,y_2))-\sas_4((I_1,x_1,y_1),(I_2,x_2,y_2))=\\
    =\begin{cases}
      x_1& x_1=x_2 \textrm{ and } y_2=y_1+1\\
      y_1& y_1=y_2 \textrm{ and } x_2=x_1+1\\
      \sas(I_1,I_2)-g\sas(I_1,I_2) & x_1=x_2,\ y_1=y_2.
    \end{cases}
  \end{split}
  \end{equation}
  Notice that $I_2'$ is an immediate successor of $I_1'$, hence we do not consider the case when simultaneously $y_2=y_1+1$ and $x_2=x_1+1$ in
  \eqref{eq:sas_diff}.

  On the other hand, with the definition of $\sat'(I,x,y)=\sat(I)+xy$ we have:
  \begin{equation}\label{eq:sat_diff}
    \sat'(I_1,x_1,y_1)-\sat'(I_2,x_2,y_2)=x_1y_1+x_2y_2+\sat'(I_1)-\sat'(I_2).
  \end{equation}
  There are two cases.
  \begin{itemize}
    \item If $I_1=I_2$, then \eqref{eq:sas_diff} is $1$ if and only if $(x_1,y_1)=(1,1)$ or $(x_2,y_2)=(1,1)$. 
      These two inequalities cannot simultaneously hold, for otherwise $(I_1,x_1,y_1)=(I_2,x_2,y_2)$. But then, the right-hand side of \eqref{eq:sat_diff} is $1$.
      If none of the pair $(x_1,y_1)$ or $(x_2,y_2)$ is $(1,1)$, the right-hand sides of both \eqref{eq:sas_diff} and~\eqref{eq:sat_diff}
      are $1$.
    \item If $I_1\neq I_2$, then necessarily $x_1=x_2$ and $y_1=y_2$, hence $\sat'(I_1,x_1,y_1)-\sat'(I_2,x_2,y_2)=\sat(I_1)-\sat(I_2)$.
      On the other hand,
      using \eqref{eq:sas_3} and~\eqref{eq:sas_4}, we obtain that 
      \[\sas_3((I_1,x_1,y_1),(I_2,x_2,y_2))-\sas_4((I_1,x_1,y_1),(I_2,x_2,y_2))=\sas(I_1,I_2)-g\sas(I_1,I_2).\]
      The latter is equal to $\sat(I_1)-\sat(I_2)$ by the definition of $\sat$.
  \end{itemize}
\end{proof}
Continuing the proof of the algebra part in Step~1, we consider the commutative diagrams \eqref{eq:geometric_commutation}
for all $I\in\Cr(D)$. As $g\sas_3=\sas_4$, we obtain the following diagram in $\Kom{\SFoam_N}$.
\begin{equation}\label{eq:algebraic_commutation}
  \begin{tikzcd}[column sep=2cm]
    \llbra D\langle\smalldown\smalldown\rangle, \sas \rrbra \ar[d,"\phi^1"]\ar[r,blue,squiggly,"{\llbra \rho_{g}, \sat \rrbra}" above] 
    & \llbra D\langle\smalldown\smalldown\rangle, \sas \rrbra \ar[d, "\phi^{3}"] \\
    \llbra D\langle\smalldown\smallloop\rangle, \sas_{1} \rrbra \ar[d, "\phi^{2}"] & 
    \llbra D\langle\smallloop\smalldown\rangle, \sas_{2} \rrbra \ar[d, "\phi^{4}"]  \\
    \llbra D\langle\smallloop\smallloop\rangle, \sas_{3} \rrbra \ar[r,blue,squiggly,"{\llbra \rho_{g}, \sat' \rrbra}" above]
    & \llbra D\langle\smallloop\smallloop\rangle, \sas_{4} \rrbra.
  \end{tikzcd}
\end{equation}
We claim that the diagram is commutative. First, commutativity up to sign is a direct consequence of Lemma~\ref{lem:if_theres_a_diagram_it_commutes}. In fact, for any individual $I\in\Cr(D)$, the diagram \eqref{eq:algebraic_commutation} becomes the diagram \eqref{eq:geometric_commutation}
up to sign. To check the sign, note that the sign accumulated by going from $D_I$ first through $\phi^2\circ\phi^1$ and then
through $(\rho_g,\sat')$ is $(-1)^{\sat'((I,0,0))}$. Going first through $\llbra \rho_g,\sat \rrbra$ and then by $\phi^4\circ\phi^3$
gives the sign of $(-1)^{\sat(I)}$. Now, by construction, $\sat'((I,0,0))=\sat(I)$. That is, \eqref{eq:algebraic_commutation} commutes.

Commutativity of \eqref{eq:algebraic_commutation} together with the following result, will conclude the proof of Step~1.
\begin{lemma}\label{lem:equal_as_maps}
  The compositions $\phi^4\circ\phi^3$ and $\phi^2\circ\phi^1$ are equal as maps in $\Kom{\Foam_N}$. 
\end{lemma}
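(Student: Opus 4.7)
The plan is to show that $\phi^2\circ\phi^1$ and $\phi^4\circ\phi^3$, viewed as maps in $\Kom{\Foam_N}$, agree first on the level of the underlying foams and then on the level of signs. Recall that the two Reidemeister~1 curls created to pass from $D\langle\smalldown\smalldown\rangle$ to $D\langle\smallloop\smallloop\rangle$ occupy symmetric positions under the $\Z_2$-action and can therefore be realized inside disjoint disks $U_1,U_2\subset\R^2$ with $gU_1=U_2$. Consequently, for every $I\in\Cube(D)$, each of the component foams $\phi^1_I,\phi^2_{(I,0)},\phi^3_I,\phi^4_{(I,0)}$ equals the product foam outside a thin slab over $U_1$ or $U_2$, and is the cup foam of Figure~\ref{fig:r1map} inside that slab.

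The first step is to check that $(\phi^2\circ\phi^1)_I$ and $(\phi^4\circ\phi^3)_I$ are isotopic rel boundary for every $I\in\Cube(D)$. Both compositions are a product foam outside $U_1\cup U_2$, and inside each slab over $U_i$ they insert a cup foam. Because $U_1$ and $U_2$ are disjoint, the two cup insertions commute; an explicit isotopy is given by sliding the cup over $U_2$ downwards (resp.\ the cup over $U_1$ upwards) in the vertical direction. This identifies the underlying foams of the two compositions.

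The second step is to verify the signs. By Corollary~\ref{cor:pos-signs-compatible-sign-assignments}, with the sign assignments $\sas_1,\sas_2,\sas_3,\sas_4$ constructed via Lemma~\ref{lem:sas} out of a fixed sign assignment $\sas$ on $D$, each of $\phi^1,\phi^2,\phi^3,\phi^4$ is realized with the trivial auxiliary cochain $\sad\equiv 0$. Hence every component of both $\phi^2\circ\phi^1$ and $\phi^4\circ\phi^3$ carries sign $+1$, and the equality $\phi^2\circ\phi^1=\phi^4\circ\phi^3$ holds already on the nose in $\Kom{\Foam_N}$, not merely up to sign.

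I expect the only subtlety to be bookkeeping: making sure that the two iterated applications of Lemma~\ref{lem:sas} (first $\sas\rightsquigarrow\sas_1\rightsquigarrow\sas_3$ for the path through $\phi^1,\phi^2$, and then $\sas\rightsquigarrow\sas_2\rightsquigarrow\sas_4$ for the path through $\phi^3,\phi^4$) are compatible with the choice $\sad\equiv 0$ at each intermediate stage. The uniqueness part of Lemma~\ref{lem:sas}, together with the fact that condition~\ref{item:vertical} is satisfied in both orders of crossing creation, ensures that no sign ambiguity is introduced along either path, so the identification of the underlying foams from the first step lifts to an equality of signed chain maps.
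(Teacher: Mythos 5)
Your proof is correct and follows the paper's approach: the core of the argument, namely that $\phi^4_I\circ\phi^3_I$ and $\phi^2_I\circ\phi^1_I$ are isotopic rel boundary because the two Reidemeister~1 cups are supported in disjoint disks, is exactly what the paper does, with the paper invoking Lemma~\ref{lem:invariant} to pass from the isotopy to equality of morphisms in $\SFoam_N$ (a citation you should add, since an isotopy of foams does not give literal equality). Your Step~2 is superfluous---the $\phi^i$ are by construction single foams carrying no scalar, so once the foams are identified there is nothing further to check---and it is also slightly off in attributing $\sas_2$ to Lemma~\ref{lem:sas}, since the paper defines $\sas_2 = g\sas_1$, but neither point affects the validity of your argument.
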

\begin{proof}
  Choose $I\in\Cr(D)$. The map $\phi^4_I\circ\phi^3_I$ is given by composing foams realizing the Reidemeister move for the first crossing
  and then for the second crossing, i.e. $\smalldown\smalldown\to\smallloop\smalldown\to\smallloop\smallloop$. The map
  $\phi^2_I\circ\phi^1_I$ is given by composing foams realizing the Reidemeister move for the second crossing and then for the first crossing,
  i.e. $\smalldown\smalldown\to\smalldown\smallloop\to\smalldown\smalldown$. All the foams $\phi^1_I,\dots,\phi^4_I$ are products
  except at relevant crossings. Hence, $\phi^4_I\circ\phi^3_I$ and $\phi^2_I\circ\phi^1_I$ are isotopic rel boundary as foams.
  By Lemma~\ref{lem:invariant}, these two compositions give the same element as a morphism in $\Foam_N$.
\end{proof}
Let $\Phi$ denote the composition $\phi^4\circ\phi^3=\phi^2\circ\phi^1$.
Then $\Phi$ is induced by
a composition of individual, non-equivariant Reidemeister moves.
In particular, $\Phi$ is a chain homotopy equivalence.
The horizontal maps in \eqref{eq:algebraic_commutation} are group actions on $\llbra D\rrbra$ and $\llbra D\langle\smalldown\smalldown\rangle\rrbra$.
Commutativity of \eqref{eq:algebraic_commutation}
means that $\Phi$ commutes with the group action. This proves the first part of Lemma~\ref{lem:equiv_reid} for Step~1.

To prove the second part, we apply the evaluation functor $\cF$ from the category $\Kom{\SFoam_N}$ to the abelian
category $\Kom{\cSym_N}$. The map $\cF(\Phi)\colon \cF(\llbra D\rrbra)\to \cF(\llbra D'\rrbra)$ is a chain homotopy equivalence.
In particular, it is a quasi-isomorphism in $\Kom{\cSym_N}$.

By Proposition~\ref{prop:functoriality}, $\cF(\llbra D\rrbra)$ and $\cF(\llbra D'\rrbra)$ 
carry an action of $\Z_{m}$ and, by virtue of Lemma~\ref{lem:equal_as_maps}, $\cF(\Phi)$ commutes with the group action. A $\Z_{m}$-equivariant
quasi-isomorphism is a quasi-isomorphism in the $\Kom{\cSym_N[\Z_{m}]}$. The proof of Step~1 is finished.

\medskip
\textbf{Step 2. Positive Reidemeister one move, $\Z_m$ action for general $m$.} 
This step is notationally more cumbersome than the previous one, but the idea is the same.
Let $D$ be a periodic link diagram and $D'$ be the link obtained by applying an equivariant Reidemeister one move.
Once more, identify \(\Cube(D') \cong \Cube(D) \times \{0,1\}^{m}\) in such a way that for any \(I \in \Cube(D)\) and any \(x_{1},x_{2},\ldots,x_{m} \in \{0,1\}\), we have
\[g(I,x_{1},x_{2},\ldots,x_{m}) = (gI,x_{2},x_{2},\ldots,x_{m},x_1).\]
Define the map $\phi^A_I$ is the composition of maps $\phi_I^m\circ\phi_I^{m-1}\circ\dots\circ\phi_I^1$, where $\phi_I^i$ is the foam of Figure~\ref{fig:r1map} realizing
the $i$-th Reidemeister move. Define also, $\phi_I^B=g\phi_I=\phi_I^{m-1}\circ\phi_I^{m-1}\circ\dots\circ\phi_I^1\circ\phi_I^m$.
For $I\in\Cr(D)$ and a generator $g$ of $\Z_m$, we consider the following diagram.
\begin{equation}\label{eq:geometric_commutation_m}
  \begin{tikzcd}[column sep=3cm]
    D\langle\smalldown\dots\smalldown\rangle_{I} \ar[d, "{\phi^{A}_{I}}"] \ar[r,squiggly,blue,"{\rho_{g,I}}"] & D\langle\smalldown\dots\smalldown\rangle_{gI} \ar[d, "{\phi^{B}_{I}}"] \\
    D\langle\smallloop\dots\smallloop\rangle_{(I,0,\dots,0)} \ar[r,squiggly,blue,"{\rho_{g,(I,0,\dots,0)}}"] & D\langle\smallloop\dots\smallloop\rangle_{(gI,0,\dots,0)},
  \end{tikzcd}
\end{equation}
which is a generalization of the diagram \eqref{eq:geometric_commutation}. 
The same arguments as in the proof of Lemma~\ref{lem:if_theres_a_diagram_it_commutes} show that \eqref{eq:geometric_commutation_m} commutes. This is the geometry part of Step~2.

We pass to the algebra part.
Choose $\sas$ to be the sign assignment for $D$ and let $\sat$ be such that $\sas-g\sas=\partial\sat$.
Let $\sas'$ be the sign assignment on $\Cube(D')$ obtained via successive application of Lemma~\ref{lem:sas}.
\begin{lemma}\label{lem:reid1_sas}
  Suppose $I'_1,I'_2\in\Cube(D')$ and $I'_2$ is an immediate successor of $I'_1$. Write $I'_1=(I_1,x_1,\dots,x_m)$,
  $I'_2=(I_2,y_1,\dots,y_m)$ for $I_1,I_2\in\Cube(D)$. If $x_k\neq y_k$ for some $k$, then
  \[\sas'(I'_1,I'_2)=x_{k+1}+\dots+x_m.\]
  If $x_k=y_k$ for all $k$, then $\sas'(I'_1,I'_2)=x_1+\dots+x_m+\sas(I_1,I_2)$.
\end{lemma}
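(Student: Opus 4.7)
The plan is to prove Lemma~\ref{lem:reid1_sas} by induction on the number $m$ of new crossings, applying Lemma~\ref{lem:sas} one crossing at a time. Write $\sas^{(0)} = \sas$ on $\Cube(D)$, and for $j = 1, \ldots, m$ let $\sas^{(j)}$ denote the sign assignment on $\Cube(D) \times \{0,1\}^j$ produced from $\sas^{(j-1)}$ by Lemma~\ref{lem:sas}. Then $\sas' = \sas^{(m)}$, and the three defining rules are: for an edge that changes some coordinate in the first $j-1$ slots while the $j$-th new coordinate is held at $x_j$, we have $\sas^{(j)}(\cdot,\cdot) = \sas^{(j-1)}(\cdot,\cdot) + x_j$ (that is, $+0$ at level $0$ by \ref{item:horizontal}, and $+1$ at level $1$ by \eqref{eq:sas_prim}); for a vertical edge flipping the $j$-th new coordinate from $0$ to $1$, by \ref{item:vertical} we get $0$.

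The base case $m = 1$ is immediate: if $x_1 \neq y_1$ this is a vertical edge, and the formula $\sum_{\ell > 1} x_\ell$ is the empty sum~$0$, matching \ref{item:vertical}; if $x_1 = y_1$, the two cases $x_1 = 0$ and $x_1 = 1$ reproduce \ref{item:horizontal} and \eqref{eq:sas_prim} respectively, matching $x_1 + \sas(I_1, I_2)$.

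For the inductive step, assume the formula for $\sas^{(m-1)}$ and consider an immediate successor edge in $\Cube(D) \times \{0,1\}^m$. Split into three subcases. First, if only the last coordinate flips (i.e.\ $I_1 = I_2$, $x_\ell = y_\ell$ for $\ell < m$, and $x_m \neq y_m$), Lemma~\ref{lem:sas}\ref{item:vertical} gives $\sas'(I'_1, I'_2) = 0$, which agrees with the empty sum $\sum_{\ell > m} x_\ell$. Second, if a coordinate $x_k \neq y_k$ flips for some $k < m$ (with $x_m = y_m$), then by induction $\sas^{(m-1)}$ contributes $x_{k+1} + \cdots + x_{m-1}$, and Lemma~\ref{lem:sas}\ref{item:horizontal} together with \eqref{eq:sas_prim} adds $x_m$, giving exactly $x_{k+1} + \cdots + x_m$. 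Third, if $x_\ell = y_\ell$ for all $\ell$ (so $I_2$ is an immediate successor of $I_1$ in $\Cube(D)$), by induction $\sas^{(m-1)} = x_1 + \cdots + x_{m-1} + \sas(I_1, I_2)$, and again the extension adds $x_m$, yielding the required formula.

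The proof is essentially a direct bookkeeping exercise once the recursive description of $\sas^{(j)}$ from Lemma~\ref{lem:sas} is in place. The only potential pitfall is keeping track of the parity contribution of each newly added coordinate at each stage; the observation that replaces the guesswork is the uniform rule ``passing from $\sas^{(j-1)}$ to $\sas^{(j)}$ adds $x_j$ to every horizontal edge and sets every $j$-th vertical edge to~$0$''. Once this rule is extracted, each of the three cases of the inductive step is a two-line verification, and no further obstacle remains.
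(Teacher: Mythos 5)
Your proposal is correct and takes essentially the same route as the paper: apply Lemma~\ref{lem:sas} one new crossing at a time, using~\ref{item:vertical} to zero out the vertical edge at the flipped level and~\ref{item:horizontal}/\eqref{eq:sas_prim} to accumulate the contribution $x_j$ from each subsequent level. The only cosmetic difference is bookkeeping: the paper fixes the edge, anchors the induction at the flipping level $k$, and runs upward from $j=k$ to $j=m$ (leaving the all-equal case as ``analogous''), whereas you run an outer induction on the number of new crossings $m$ and enumerate all three edge types at each stage; both are the same calculation.
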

\begin{proof}
  Define $\sas'_\ell$ as the sign assignment on $D'_\ell$, the diagram obtained by successively applying the first $\ell$ Reidemeister moves.
  Suppose $x_k\neq y_k$ for some $k$.
  Then $I_1=I_2$, $x_i=y_i$ for $i\neq k$. By~\ref{item:vertical}:
  \[\sas'_k(I_{1},x_1,\dots,x_k)-\sas'_k(I_2,y_1,\dots,y_k)=0.\]
  Applying inductively for $j=k+1,\dots,m$ either~\ref{item:horizontal} (if $x_j=0$) or \eqref{eq:sas_prim} (if $x_j=1$),
  we obtain that
  \[\sas'_j(I_1,x_1,\dots,x_j)-\sas'_j(I_2,y_1,\dots,y_j)=x_{k+1}+\dots+x_j.\]
  Eventually, $\sas'(I'_1,I'_2)=x_{k+1}+\dots+x_m$.

  The case $x_1=y_1,\dots,x_m=y_m$ has an analogous proof.
\end{proof}
As a corollary, we prove the following fact, generalizing Lemma~\ref{lem:sat_on_equiv}.
\begin{lemma}\label{lem:sat_general_m}
  Suppose $\sas-g\sas=\partial\sat$.
  Consider the \(0\)-cochain on \(\SCube(D')\) defined by $\sat'(I,x_1,\dots,x_m)=\sat(I)+x_1(x_2+\dots+x_m)$.
  Then, for $I'_1,I'_2\in\Cube(D')$ and $I'_2$ an immediate successor of $I'_1$, we have
  \begin{equation}\label{eq:sas_to_sat}
  \sas'(I_1',I_2')-\sas'(gI_1',gI_2')=\sat'(I_1')-\sat(I_2').
\end{equation}
\end{lemma}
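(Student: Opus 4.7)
The plan is to verify \eqref{eq:sas_to_sat} by a case analysis, using Lemma~\ref{lem:reid1_sas} as the main computational tool and the hypothesis $\sas-g\sas=\partial\sat$ to handle the $\Cube(D)$-factor. Write $I_{1}'=(I_{1},x_{1},\ldots,x_{m})$ and $I_{2}'=(I_{2},y_{1},\ldots,y_{m})$. Since $I_{2}'$ is an immediate successor of $I_{1}'$, exactly one of the following holds: (a) $I_{1}=I_{2}$ and there is a unique $k$ with $x_{k}\neq y_{k}$, while $x_{i}=y_{i}$ for $i\neq k$; or (b) $x_{i}=y_{i}$ for all $i$ and $I_{2}$ is an immediate successor of $I_{1}$ in $\Cube(D)$. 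I would handle the cases separately, and then subdivide~(a) according to whether $k>1$ or $k=1$, since the cyclic shift $g(I,x_{1},\ldots,x_{m})=(gI,x_{2},\ldots,x_{m},x_{1})$ sends position $k$ to position $k-1$ for $k>1$ but wraps position $1$ around to position $m$.

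In case (b), Lemma~\ref{lem:reid1_sas} gives $\sas'(I_{1}',I_{2}')=x_{1}+\cdots+x_{m}+\sas(I_{1},I_{2})$; moreover, because the cyclic shift preserves the common sum $x_{1}+\cdots+x_{m}$ of loop coordinates, Lemma~\ref{lem:reid1_sas} applied to the shifted pair yields $\sas'(gI_{1}',gI_{2}')=x_{1}+\cdots+x_{m}+\sas(gI_{1},gI_{2})$. The difference therefore reduces to $\sas(I_{1},I_{2})-\sas(gI_{1},gI_{2})$, which by the hypothesis $\sas-g\sas=\partial\sat$ equals $\sat(I_{1})-\sat(I_{2})$. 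Because the loop coordinates agree, the cross term $x_{1}(x_{2}+\cdots+x_{m})$ in the definition of $\sat'$ cancels, so $\sat'(I_{1}')-\sat'(I_{2}')=\sat(I_{1})-\sat(I_{2})$, matching the left-hand side.

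In case (a), Lemma~\ref{lem:reid1_sas} gives $\sas'(I_{1}',I_{2}')=x_{k+1}+\cdots+x_{m}$. After the shift, the differing position becomes $k-1$ when $k>1$ and $m$ when $k=1$, so applying Lemma~\ref{lem:reid1_sas} a second time yields $\sas'(gI_{1}',gI_{2}')=x_{k+1}+\cdots+x_{m}+x_{1}$ in the first subcase and $0$ in the second. Expanding $\sat'(I,x_{1},\ldots,x_{m})=\sat(I)+x_{1}(x_{2}+\cdots+x_{m})$ directly shows that $\sat'(I_{1}')-\sat'(I_{2}')$ equals $x_{1}$ when $k>1$ (flipping $x_{k}$ toggles the inner sum while the prefactor is unchanged) and equals $x_{2}+\cdots+x_{m}$ when $k=1$ (flipping $x_{1}$ toggles the prefactor while the inner sum is unchanged). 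Both pairs agree in $\F_{2}$, so \eqref{eq:sas_to_sat} is verified.

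The main obstacle will be case (a) with $k=1$: the wrap-around of the cyclic shift is precisely what forces the non-symmetric cross term $x_{1}(x_{2}+\cdots+x_{m})$ in the definition of $\sat'$. Once this subcase is pinned down, the other two pieces are routine bookkeeping with Lemma~\ref{lem:reid1_sas} together with the cocycle relation for $\sat$, and no further input is needed.
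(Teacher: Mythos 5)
Your proposal matches the paper's proof in both structure and substance: both use Lemma~\ref{lem:reid1_sas} to compute $\sas'$ on each side of the $g$-shift, split into the cases $I_1\neq I_2$ (all loop coordinates equal) and $I_1=I_2$ (one loop coordinate flipped), subdivide the latter by $k>1$ versus $k=1$, and then match against the explicit difference of $\sat'$. Your write-up is in fact slightly more explicit than the paper's about the $k=1$ wrap-around, but there is no real difference in approach.
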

\begin{proof}
  Suppose $I_1=I_2$ and $x_i=y_i$ except that $x_k=0$ and $y_k=1$. By Lemma~\ref{lem:reid1_sas}, we have
  $\sas'(I_1',I_2')=x_{k+1}+\dots+x_m$. On the other hand, $gI'_1=(gI_1,x_2,\dots,x_m,x_1)$, that is
  \[\sas'(gI_1',gI_2')=x_{k+1}+\dots+x_m+x_1.\]
  Therefore,
  \[\sas'(I_1',I_2')-\sas'(gI_1',gI_2')=\begin{cases} x_1 & k>1 \\ x_2+\dots+x_m & k=1.\end{cases}\]

  On the other hand, if $k>1$, then
  \[\sat'(I_1')-\sat'(I_2')=x_1(x_2+\dots+x_m)-x_1(x_2+\dots+x_m+1)=x_1.\]
  If $k=1$, then
  \[\sat'(I_1')-\sat'(I_2')=x_1(x_2+\dots+x_m)-(x_1+1)(x_2+\dots+x_m)=x_2+\dots+x_m.\]
  If $I_1\neq I_2$, then $x_k=y_k$ for all $k$, and so
  \[\sas'(I_1',I_2')-\sas'(gI_1',gI_2')=\sas(I_1,I_2)-\sas(gI_1,gI_2).\]
  Equation~\eqref{eq:sas_to_sat} follows promptly, since in that case $\sat'(I_1')-\sat'(I_2')=\sat(I_1)-\sat(I_2)$.
\end{proof}
The remainder of the proof in case $m>2$ follows the proof in the case $m=2$. Namely, the methods of the proof of Lemma~\ref{lem:equal_as_maps} generalize
to show that $\phi^A_I$ and $\phi^B_I$ induce the same map 
\[\Phi\colon\llbra D\langle\smalldown,\dots,\smalldown\rangle\rrbra\xrightarrow{\simeq}\llbra D\langle\smallloop,\dots,\smallloop\rangle\rrbra.\]
An analog of the diagram \eqref{eq:algebraic_commutation} is 
\[
  \begin{tikzcd}[column sep=2cm]
    \llbra D\langle\smalldown,\dots,\smalldown\rangle, \sas \rrbra \ar[d,"\Phi"]\ar[r,blue,squiggly,"{\llbra \rho_{g}, \sat \rrbra}" above] 
    & \llbra D\langle\smalldown,\dots,\smalldown\rangle, \sas \rrbra \ar[d, "\Phi"] \\
    \llbra D\langle\smallloop,\dots,\smallloop, \sas'\rangle \rrbra \ar[r,blue,squiggly,"{\llbra \rho_{g}, \sat' \rrbra}" above]
    & \llbra D\langle\smallloop,\dots,\smallloop\rangle, g\sas' \rrbra.
  \end{tikzcd}
\]
The same argument as above implies that this diagram commutes. That is, $\Phi$ commutes with the group action. In particular, $\cF(\Phi)$
induces a $\Z_{m}$-equivariant chain homotopy equivalence, that is, a quasi-isomorphism in the category $\Kom{\cSym_N[\Z_{m}]}$.

\medskip
\textbf{Step 3. Reidemeister 2a move, $\Z_2$ action.}
We let $D'$ be the diagram after applying an equivariant Reidemeister 2a move.
Let us identify
\[\Cube(D') \cong \Cube(D) \times \{0,1\} \times \{-1,0\} \times \{0,1\} \times \{-1,0\}.\]
Given the resolution $I\in\Cube(D)$, denote
\[I'_{1} = (I,0,0,0,0), \quad I'_{2} = (I,1,-1,0,0), \quad I'_{3} = (I,0,0,1,-1), \quad I'_{4} = (I,1,-1,1,-1).\]
There are foams \(\phi_{I,k} \colon D_{I} \to D'_{I'_{k}}\), for \(k=1,2,3,4\), such that
the \(I\)-th component of the map $\Phi\colon\llbra D\rrbra\to\llbra D'\rrbra$ induced by the equivariant Reidemeister move is $\phi_I:=\phi_{I,1}+\phi_{I,2}+\phi_{I,3}+\phi_{I,4}$. Namely:
\begin{itemize}
  \item $\phi_{I,1}$ is the identity foam;
  \item $\phi_{I,2}$ is the foam from Figure~\ref{fig:try_to_draw_it} on the first place where the Reidemeister move is applied, followed by the identity foam;
  \item $\phi_{I,3}$ is the identity foam followed by the foam from Figure~\ref{fig:try_to_draw_it} for the second Reidemeister move;
  \item $\phi_{I,4}$ is the foam from Figure~\ref{fig:try_to_draw_it} for the first Reidemeister 2a move followed by the foam
    from Figure~\ref{fig:try_to_draw_it} for the second move.
\end{itemize}
Recall that $g$ is the generator of $\Z_2$. The argument as in Lemma~\ref{lem:equal_as_maps} shows the geometry part,
that is,
\begin{equation}\label{eq:gphi}
  g\phi_{I,1}=\phi_{gI,1},\ g\phi_{I,2}=\phi_{gI,3},\ g\phi_{I,3}=\phi_{gI,2},\ g\phi_{I,4}=\phi_{gI,4}.
\end{equation}
This, in turn, implies that $g\phi_I=\phi_{gI}$.
Let, as usual $\Phi\colon\llbra D\rrbra\to\llbra D'\rrbra$ be the map obtained by compositions of foams $\phi_I$. As the action
of $\Z_{m}$ on $\llbra D\rrbra$ and $\llbra D'\rrbra$ involves signs, we cannot immediately conclude that $\Phi$ commutes with $\Z_{m}$-action,
for the moment, we know that $\Phi$ commutes with the $\Z_{m}$-action up to sign. This is the end of the geometry part of the proof.

\smallskip
The sign assignment $\sas$ on $D$ induces the sign assignment $\sas'$ on $D'$ by adding crossing consecutively and applying Lemma~\ref{lem:sas}: first
we add crossings corresponding to $x_1$, then to $x_2$ and so on. We have the following variant of Lemma~\ref{lem:sat_general_m}.
\begin{lemma}\label{lem:sat_reid2}
  Suppose $\sas-g\sas=\partial\sat$.
  Consider the \(0\)-cochain on \(\SCube(D')\) defined by $\sat'(I,x_1,\dots,x_4)=\sat(I)+(x_1+x_2)(x_3+x_4)$.
  Then, $\sas'-g\sas'=\partial\sat'$.
\end{lemma}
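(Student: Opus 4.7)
The plan is to extend the two-crossing argument of Lemma~\ref{lem:sat_on_equiv} to four consecutive applications of Lemma~\ref{lem:sas}, adding the crossings parameterized by $x_1, x_2, x_3, x_4$ one at a time. My first step will be to unwind the resulting closed formula for $\sas'$. Writing $I'_1 = (I_1, x_1, x_2, x_3, x_4)$ and $\bar{x}_j := x_j \bmod 2 \in \F_2$ (so $\bar{x}_1 = x_1$, $\bar{x}_3 = x_3$ and $\bar{x}_2 = -x_2$, $\bar{x}_4 = -x_4$), an induction based on~\ref{item:horizontal}--\ref{item:vertical} of Lemma~\ref{lem:sas} will give two cases. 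For a \emph{horizontal} differential, where $I_2$ is an immediate successor of $I_1$ in $\Cube(D)$ and all four new coordinates agree,
\[\sas'(I'_1, I'_2) = \sas(I_1, I_2) + \bar{x}_1 + \bar{x}_2 + \bar{x}_3 + \bar{x}_4.\]
For a \emph{vertical} differential, where $I_1 = I_2$ and only the $k$-th new coordinate is incremented,
\[\sas'(I'_1, I'_2) = \sum_{j > k} \bar{x}_j.\]
These generalize equations~\eqref{eq:sas_3} and~\eqref{eq:sas_4} to four crossings.

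Next, the group action is $g(I, x_1, x_2, x_3, x_4) = (gI, x_3, x_4, x_1, x_2)$, which interchanges the two pairs of new crossings, so I will compute $g\sas'(I'_1, I'_2) = \sas'(gI'_1, gI'_2)$ from the same formulas. In the horizontal case the sum $\bar{x}_1 + \bar{x}_2 + \bar{x}_3 + \bar{x}_4$ is invariant under this swap, so after subtraction one is left with $\sas(I_1, I_2) - \sas(gI_1, gI_2)$, which equals $\sat(I_1) - \sat(I_2)$ by hypothesis; on the $\sat'$ side the bilinear correction is likewise invariant, so the two expressions agree. For a vertical differential in position $k$, the corresponding position in $gI'$ is $k' = k + 2 \pmod 4$ under the convention $1 \leftrightarrow 3$, $2 \leftrightarrow 4$, and direct bookkeeping will show that $\sas'(I'_1, I'_2) - \sas'(gI'_1, gI'_2)$ evaluates to $\bar{x}_3 + \bar{x}_4$ when $k \in \{1, 2\}$ and to $\bar{x}_1 + \bar{x}_2$ when $k \in \{3, 4\}$.

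To match this against $\partial \sat'$, I will expand the bilinear form $(x_1 + x_2)(x_3 + x_4) \bmod 2$ after incrementing the $k$-th coordinate. A change in $x_1$ or $x_2$ shifts the first factor by $\pm 1$, so $\sat'(I'_1) - \sat'(I'_2) \equiv \bar{x}_3 + \bar{x}_4 \pmod 2$, while a change in $x_3$ or $x_4$ analogously yields $\bar{x}_1 + \bar{x}_2$. These values coincide case by case with the differences computed in the previous paragraph, proving $\sas' - g\sas' = \partial \sat'$.

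The main obstacle will be the careful bookkeeping across the alternating sign conventions: the positive-crossing slots take $x_1, x_3 \in \{0,1\}$ while the negative-crossing slots take $x_2, x_4 \in \{-1, 0\}$, and the cyclic shift $k \mapsto k + 2 \pmod 4$ induced by the group action must be tracked consistently after reduction mod~$2$. Once the closed formula for $\sas'$ is established, the final verification reduces to four elementary substitutions in the spirit of the proof of Lemma~\ref{lem:sat_on_equiv}.
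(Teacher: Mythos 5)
Your proposal is correct and follows essentially the same route as the paper: invoke Lemma~\ref{lem:reid1_sas} to obtain the closed formula for $\sas'$ on $\Cube(D) \times \{0,1\} \times \{-1,0\} \times \{0,1\} \times \{-1,0\}$, split the check of $\sas' - g\sas' = \partial\sat'$ into the horizontal case (where $\sas(I_1,I_2)-\sas(gI_1,gI_2)=\sat(I_1)-\sat(I_2)$ and the $\sum\bar{x}_j$ correction cancels under the swap $(x_1,x_2,x_3,x_4)\mapsto(x_3,x_4,x_1,x_2)$) and the vertical cases $k\in\{1,2\}$ versus $k\in\{3,4\}$, then match against $\partial\sat'$ by expanding the bilinear form. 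The only stylistic difference is that you carry the $\bar{x}_1+\bar{x}_2+\bar{x}_3+\bar{x}_4$ term of Lemma~\ref{lem:reid1_sas} explicitly in the horizontal case — which is in fact more faithful to the cited lemma than the displayed formula in the paper's proof, though it cancels in the difference either way.
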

\begin{proof}
  Choose $I_1',I_2'\in\Cube(D')$ such that $I_2'$ is an immediate successor of $I_1'$.
  Write $I'_s= (I_s,x_{1s},x_{2s},x_{3s},x_{4s})$. By Lemma~\ref{lem:reid1_sas},
  we have that
  \[\sas'(I_1',I_2')=\begin{cases} x_{j+1,1}+\dots+x_{41} & x_{j1}\neq x_{j2}\\ \sas(I_1,I_2) & x_{j1}=x_{j2} \textrm{ for all $j$}.\end{cases}\]
  Note that if $I'=(I,x_1,\dots,x_4))\in\Cube(D')$, then $gI'=(gI,x_3,x_4,x_1,x_2)$. Thus
  \[\sas'(I_1',I_2')-\sas'(gI_1',gI_2')=\begin{cases}
      x_{31}+x_{41} & x_{11}\neq x_{12}\textrm{ or }x_{21}\neq x_{22}\\
      x_{11}+x_{21} & x_{31}\neq x_{32}\textrm{ or }x_{41}\neq x_{42}\\
      \sat(I_1)-\sat(I_2) & x_{j1}=x_{j2} \textrm{ for all $j$}.
  \end{cases}\]
  The proof is concluded in the same way as in Lemma~\ref{lem:sat_general_m}.
\end{proof}
We conclude now Step~3 of the proof of Lemma~\ref{lem:equiv_reid}. Consider the diagram
\[
  \begin{tikzcd}[column sep=2cm]
    \llbra D, \sas \rrbra \ar[d,"\Phi"]\ar[r,blue,squiggly,"{\llbra \rho_{g}, \sat \rrbra}" above] 
    & \llbra D, \sas \rrbra \ar[d, "\Phi"] \\
    \llbra D, \sas' \rrbra \ar[r,blue,squiggly,"{\llbra \rho_{g}, \sat' \rrbra}" above]
    & \llbra D,g\sas' \rrbra.
  \end{tikzcd}
\]
Above, we showed that this diagram commutes up to sign. Now, given Lemma~\ref{lem:sat_reid2}, we conclude
that the diagram commutes. This shows that $\Phi$ is $\Z_{m}$-equivariant. We know from Theorem~\ref{thm:invariance_reid}
that $\Phi$ is a chain homotopy equivalence. As in Steps~1 and~2, we conclude that $\cF(\Phi)$ is a
quasi-isomorphism in $\Kom{\cSym_N[\Z_{m}]}$ category.

The case of Reidemeister~2a move with a general cyclic group $\Z_{m}$, as well as the case of Reidemeister~2b move is analogous. Reidemeister~3 move
induces a natural bijection between crossings of $D$ and $D'$. This bijection induces a tautological
correspondence  between sign
assignments for $D$ and $D'$, therefore only the `geometry' part is needed, but that part follows from the same argument as
Step~1. We leave the details to the reader.

\section{The skein spectral sequence}
\label{sec:skein-spectr-sequ}
If there are given three links $L_+$, $L_-$, and $L_0$, differing at a single crossing, which is positive for $L_+$, negative for $L_-$
and it is $0$-resolved for $L_0$, then there is a long exact sequence, called the skein exact sequence in $\sln$-homology;
see~\cite[Proposition 7.6]{Rasmussen_some}. To some extent, the skein exact sequence controls the behavior
of the $\sln$-homology under a single crossing change.

If a link is periodic, a single crossing change does not preserve periodicity. To keep the symmetry, we need to perform
a crossing change on \emph{an orbit} of crossings. 
An analog of a skein long exact sequence is a spectral sequence, called the skein spectral sequence.
A non-equivariant version of this spectral sequence was already considered by Khovanov \cite{khovanov_categorification_2000}. 
The equivariant construction we provide
is a generalization of \cite{Politarczyk-Jones,Politarczyk-Khovanov}.

\subsection{Review of $\Ind$ and $\Res$ functors}
Before we construct the spectral sequence, we review some basic facts about the $\Ind$ and $\Res$ functors.

Recall that if \(G\) is a finite group, we denote by \(BG\) the category with a single object \(\ast\) and \(\Hom_{BG}(\ast,\ast) = G\).
If \(\cB\) is an additive category, we denote by \(\cB[G] = \operatorname{Fun}(BG,\cB)\) the category of \(G\)-objects in \(\cB\).
If \(H \subset G\) is a subgroup, there exists a canonical inclusion of categories \(BH \subset BG\), which gives the restriction functor \(\operatorname{Res}^{G}_{H} \colon \cB[G] \to \cB[H]\).
The induction functor
\[\Ind^{G}_{H} \colon \cB[H] \to \cB[G]\]
is the biadjoint to the restriction functor, i.e., for any \(C \in \cB[H]\) and any \(D \in \cB[G]\) there are bijections which are natural in both \(C\) and \(D\)
\begin{align}
  \Hom_{\cB[G]}(\Ind^{G}_{H}(C),D) &\cong \Hom_{\cB[H]}(C,\Res^{G}_{H}(D)), \label{eq:res-ind-duality}\\
  \Hom_{\cB[G]}(C,\Ind^{G}_{H}(D)) &\cong \Hom_{\cB[H]}(\Res^{G}_{H}(C),D). \nonumber
\end{align}
Let \(g_{1},g_{2},\ldots,g_{k}\) be coset representatives of \(G/H\), then \(\Ind^{G}_{H}(C)\) can be identified with the direct sum
\begin{equation}\label{eq:ind_as_a_coproduct}
  \Ind^{G}_{H}(C) = \bigoplus_{i=1}^{k} g_{i} C,
\end{equation}
and for any \(g \in G\), we can write uniquely \(g = g_{i} \cdot h\), where \(h \in H\), then
\[g \cdot (-) \colon g_{j} C \to g_{k} C, \quad x \mapsto (h'g_{j}^{-1}h g_{j}) \cdot x,\]
where \(g_{k} = g_{i} \cdot g_{j} \cdot h'\), where \(h' \in H\) and \(g_{k}\) represents the coset of \(g_{i} \cdot g_{j}\).

\subsection{Construction of the spectral sequence}\label{sub:spectral}
Our construction begins with a construction for general link diagrams. Later on, we will restrict our
attention to periodic link diagrams.
Let \(D\) be a 
labelled 
link diagram, i.e., components of \(D\) are labelled by \(c \in \{1,2,\ldots,N\}\). Recall that $\Cr(D)$ is the set of crossings.

We define the extended cube of resolutions $\ExCube(D)$. Recall from Subsection~\ref{sub:sym_eq}
that to each crossing $i\in\Cr(D)$, we associate the set $C_i$ depending on the sign of the crossing and labels of the strands meeting at \(i\). We let $\wh{C_i}=C_i\cup\{\ast\}$.
The extended cube of resolutions $\ExCube(D)$ is the product of the $\wh{C_i}$.

For $\wh{I}\in\ExCube(D)$, we can define a partial resolution diagram $D_{\wh{I}}$: if $\wh{I}$ at the $i$-th crossing
is $\wh{I}(i)$, we choose the $\wh{I}(i)$-th resolution, as for the standard case $D_I$. If $\wh{I}(i)=\ast$, 
we do not resolve the crossing at all.
The resolutions are depicted in Figure~\ref{fig:smoothing-pos-crossing}, see also the skein relation in Figure~\ref{fig:resolution}.

If $\wh{I}\in\ExCube$, we define $\supp\wh{I}$ to be the set of those crossings $i\in\Cr(D)$
for which $\wh{I}(i)\neq\ast$. 
If $\wh{I},\wh{J}\in\ExCube$ are such that $\supp\wh{I}\cap\supp\wh{J}=\emptyset$ we define $\wh{I}\vee \wh{J}$ to be the resolution
such that
\[(\wh{I}\vee\wh{J})(i) = \begin{cases} \wh{I}(i)& i\in\supp\wh{I}\\ \wh{J}(i)& i\in\supp\wh{J}\\ \ast &\textrm{ otherwise.}\end{cases}\]

With a partial resolution $\wh{I} \in \ExCube(D)$ with support \(X\), we can associate the bracket $\llbra D_{\wh{I}}\rrbra$.
It is a cochain complex generated
by $D_{I}$ such that $I$ and $\wh{I}$ coincide on $X$.
The differential is given by foams of Figure~\ref{fig:differential} with the sign assignment \(\sas_{I}\) inherited from the sign assignment \(\sas\) on $D$.
The set of crossings of $D_{\wh{I}}$ is a subset of $\Cr(D)$. For any $J\in\Cube(D_{\wh{I}})$, we write $\wh{J}\in\ExCube(D)$
obtained by adding value $\wh{J}(x)=\ast$ for each $x\in X$. We write $\wh{I}\vee J$ for $\wh{I}\vee\wh{J}$.
We define also
\[\deg\wh{I}=\sum_{i\in\supp \wh{I}}I(i).\]

For a fixed subset $X\subset\Cr(D)$, we let 
\[A(X)=\{\wh{I}\in\ExCube(D)\colon\supp\wh{I}=X\},\ \ A_k(X)=\{\wh{I}\in A(X)\colon \deg\wh{I}=k\}. \]

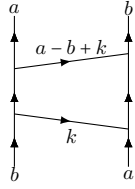
\begin{figure}
  \centering
  \begin{tikzpicture}[every node/.style={scale=0.7}]
    \draw[myendarrow,mybegarrow,mymidarrow] (1.5,-1) -- (1.5,1);
    \draw[myendarrow,mybegarrow,mymidarrow] (3,-1) -- (3,1);
    \draw(1.5,1.1) node {$a$};
    \draw(1.5,-1.1) node {$b$};
    \draw(3,1.1) node {$b$};
    \draw(3,-1.1) node {$a$};
    \draw[mymidarrow] (1.5,0.3) -- node[midway, above] {$a-b+k$} (3,0.5);
    \draw[mymidarrow] (1.5,-0.3) -- node[midway, below] {$k$} (3,-0.5);
  \end{tikzpicture}
  \caption{The \(k\)-smoothing of a positive crossing, for \(0 \leq k \leq b \leq a\). In order to obtain the \(k\)-smoothing of a negative crossing, reflect the above picture about the vertical line and switch labels.}
  \label{fig:smoothing-pos-crossing}
\end{figure}
Let $X\subset \Cr(D)$ be a subset of positive crossings (if $X$ is subset of negative crossings, the discussion is parallel). 
Set $Y=\Cr(D)\setminus X$. We let $\Cube(X),\Cube(Y)$ be the cubes of resolutions for $X$ and $Y$ (i.e. $\Cube(X)=\prod_{i\in X} C_i$,
$\Cube(Y)=\prod_{i\in Y} C_i$). We have $\Cube(D)=\Cube(X)\times\Cube(Y)$. Moreover, for $I\in\Cube(D)$, we let $I_X,I_Y$
be the projections. 

Consider the bicomplex obtained
by treating separately crossings of $D$ in $X$ and not in $X$. More precisely, suppose $I\in\Cube(D)$.
We let $\wh{I}$ be the element in $\ExCube(D)$ obtained by taking $I_X$ and extending $I_X$ by $\ast$ over all crossings in $Y$.
There is a partial resolution $D_{\wh{I}}$, whose set of crossings is $Y$. That is to say, $I_Y$ gives a resolution
of $D_{\wh{I}}$. Clearly, $(D_{\wh{I}})_{I_Y}=D_I$.
We can regard $\llbra D\rrbra$ as the following bicomplex.
\begin{equation}\label{eq:bicomplex}
  0\to \bigoplus_{\wh{I}\in A_0(X)} \llbra D_{\wh{I}}\rrbra\{q^{-b(N-b)|X|}\}
    \xrightarrow{\pm d_0}
    \bigoplus_{\wh{I}\in A_1(X)}\llbra D_{\wh{I}}\rrbra\{q^{1-b(N-b)|X|}\}
      \xrightarrow{\pm d_1}\dots.
\end{equation}
Here $q^{\bullet}$ is the grading shift. The differentials $d_i$ are computed as follows.
Take $I,J\in\Cube(D)$ such that $J$ is an immediate successor of $I$. We have two
cases
\begin{itemize}
  \item if $I_X=J_X$, then the component of the differential on $\llbra D\rrbra$ from $I$ to $J$ contributes
    to the differential on $\llbra D_{\wh{I}}\rrbra$ (going from $(D_{\wh{I}})_{I_Y}$ to $(D_{\wh{I}})_{J_Y}$ with
    sign $(-1)^{\sas(I,J)}$. We call this component the \emph{internal differential}
    or \emph{horizontal differential};
  \item if $I_Y=J_Y$, then we set $s=\deg\wh{I}$. The component of the differential on $\llbra D\rrbra$ contributes to
    the differential $d_s$ going from $\llbra D_{\wh{I}}\rrbra$ to $\llbra D_{\wh{J}}\rrbra$, more precisely
    going from $(D_{\wh{I}})_{I_Y}$ to $(D_{\wh{J}})_{I_Y}$.
    The sign is given by $\sas(I,J)$.  
    We refer to this component as the \emph{external} or
    \emph{vertical differential}.
\end{itemize}
The sum of the vertical differential and the horizontal differential is exactly the differential on $\llbra D\rrbra$.
Hence, we have a result, which we record for future use.

\begin{lemma}\label{lem:bicomplex}
  The total complex of the chain complex \eqref{eq:bicomplex} is equal to $\llbra D\rrbra$.
\end{lemma}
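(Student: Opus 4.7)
The proof is essentially a bookkeeping argument, verifying that the decomposition of $\Cube(D)$ as $\Cube(X)\times\Cube(Y)$ respects all of the data (summands, gradings, differentials, signs) appearing in $\llbra D\rrbra$.

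First I will set up the bijection on summands. Every $I\in\Cube(D)$ decomposes uniquely as a pair $(I_X, I_Y)\in \Cube(X)\times\Cube(Y)$, and the datum $I_X$ is equivalent to specifying an element $\wh{I}\in A(X)$ (extend $I_X$ by $\ast$ on $Y$). For this $\wh{I}$, the resolution $I_Y$ is an element of $\Cube(D_{\wh{I}})$, and by construction $(D_{\wh{I}})_{I_Y}=D_I$ as a web. So at the level of ungraded summands the total complex of \eqref{eq:bicomplex} and $\llbra D\rrbra$ agree.

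Next I will check quantum gradings. In $\llbra D\rrbra$, the summand $D_I$ sits at quantum shift $\saq(I)=\sum_{i\in\Cr(D)} q_i$ with $q_i=I(i)-c_i(N-c_i)$ for $i\in X$ (all positive crossings, with $c_i=b$), and the usual contribution for $i\in Y$. In the bicomplex \eqref{eq:bicomplex}, the $\wh{I}$-summand carries the overall shift $q^{\deg\wh{I}-b(N-b)|X|}=\prod_{i\in X} q^{I(i)-b(N-b)}$, and inside $\llbra D_{\wh{I}}\rrbra$ the web $(D_{\wh{I}})_{I_Y}$ carries the shift $\prod_{i\in Y} q^{q_i}$. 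Adding these shifts recovers $\saq(I)$. A parallel calculation handles homological degree: $\sah(I)=\sum_{i\in X} I(i)+\sum_{i\in Y} I(i)$ matches the sum of the horizontal position $\deg\wh{I}$ and the homological degree of $(D_{\wh{I}})_{I_Y}$ inside $\llbra D_{\wh{I}}\rrbra$.

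Now the differentials. If $I,J\in\Cube(D)$ and $J$ is an immediate successor of $I$, then they differ in exactly one coordinate, say at crossing $i$. Either $i\in Y$, in which case $I_X=J_X$, so $\wh{I}=\wh{J}$ and the component $(-1)^{\sas(I,J)}\delta(I,J)$ of the differential of $\llbra D\rrbra$ is, by definition, exactly the corresponding component of the internal (horizontal) differential of $\llbra D_{\wh{I}}\rrbra$ with its inherited sign assignment $\sas_{\wh{I}}=\sas|_{\Cube(D_{\wh{I}})}$. Or $i\in X$, in which case $I_Y=J_Y$, $\wh{J}$ is an immediate successor of $\wh{I}$ in $A(X)$, and the same component contributes as the vertical differential $d_{\deg\wh{I}}$ between $\llbra D_{\wh{I}}\rrbra$ and $\llbra D_{\wh{J}}\rrbra$ with sign $(-1)^{\sas(I,J)}$. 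There are no other nonzero components of the differential of $\llbra D\rrbra$ on a summand $D_I$, because a single change in $I$ touches either $X$ or $Y$ but not both.

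Finally, the fact that vertical and horizontal pieces assemble into a genuine bicomplex (so that the total complex is well-defined and equals $\llbra D\rrbra$) follows directly from the cocycle condition $\partial\sas=1$ on each $2$-face of $\SCube(D)$, which is the only place where a potential sign obstruction could arise. The commutation/anticommutation relations between $d_s$ and the internal differentials of $\llbra D_{\wh{I}}\rrbra$ are exactly the four-term cocycle relations for $\sas$ restricted to the two-dimensional faces of $\SCube(D)$ that mix one $X$-coordinate with one $Y$-coordinate. No additional work is needed, so the main (and essentially only) obstacle is to make sure the sign bookkeeping is done with the same $\sas$ on both sides, which is guaranteed by taking $\sas_{\wh{I}}$ to be the restriction of the fixed global $\sas$.
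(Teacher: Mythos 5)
Your proof is correct and follows the same route the paper takes: the paper's ``proof'' is essentially the paragraph preceding the lemma (``The sum of the vertical differential and the horizontal differential is exactly the differential on $\llbra D\rrbra$''), and you have simply expanded it into a careful check of summands, gradings, and differentials via the decomposition $\Cube(D)\cong\Cube(X)\times\Cube(Y)$. One small remark: the final paragraph about the cocycle condition is not logically needed for the statement of the lemma. Once you have identified the summands and verified that the sum of the horizontal and vertical differentials is literally the differential of $\llbra D\rrbra$ (with the same sign assignment $\sas$ on both sides), the total complex equals $\llbra D\rrbra$ tautologically, and $d_{\mathrm{tot}}^2=0$ is inherited for free; the cocycle-on-mixed-faces argument is only needed if one wants to verify separately that the rows/columns form a genuine bicomplex, which is a stronger claim than what the lemma asserts.
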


A general principle is that a bicomplex leads to a spectral sequence. To apply this principle, we apply the functor $\cF$
to \eqref{eq:bicomplex}, so that we work in an abelian category.
To make the notation more concise, we define the triply graded bicomplex
\[M(D,X)^{k,\ell,h}=\bigoplus_{\wh{I}\in A_k(X)}\cF(\llbra D_{\wh{I}}\rrbra)\{q^{k-|X|b(N-b)}\}_{\ell,h}.\]
Here, the subscript on the right-hand side should be understood as taking the part at homological grading $\ell$
and quantum grading $q$.
If $X$ is a subset of negative crossings, we define
\[M(D,X)^{k,\ell,h}=\bigoplus_{\wh{I}\in A_k(X)}\cF(\llbra D_{\wh{I}}\rrbra)\{q^{-k+|X|b(N-b)}\}_{\ell,h}\]
and $k$ takes negative values (we omit the details).

As noted above,
in the bicomplex $M(D,X)^{\bullet,\ell,h}$, there is an internal (horizontal) differential, and the external (vertical) differential
going from $M(D,X)^{\bullet,\ell,h}$ to $M(D,X)^{\bullet+1,\ell,h}$.
\begin{lemma}\label{lem:bicomplex2}
  The cohomology of the total complex $\Tot^{r,h} M(D,X)=\bigoplus_{k+\ell=r} M(D,X)^{k,\ell,h}$ is the $\S_N$-valued
  Khovanov--Rozansky homology of the link.
\end{lemma}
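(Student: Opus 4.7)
The plan is to deduce this from the previous lemma (Lemma~\ref{lem:bicomplex}) by applying the evaluation functor~\(\cF\) termwise, and then use the fact that additive functors commute with totalization of bicomplexes.

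First, I would observe that the double complex~\eqref{eq:bicomplex} lives in the category~\(\Kom{\SFoam_{N}}\), with each term \(\llbra D_{\wh{I}}\rrbra\{q^{k-|X|b(N-b)}\}\) being a cochain complex in \(\SFoam_{N}\), and two commuting differentials: the horizontal (internal) one on each \(\llbra D_{\wh{I}}\rrbra\) and the vertical (external) one changing \(\wh{I}\in A_{k}(X)\) to \(\wh{J}\in A_{k+1}(X)\). Applying the evaluation functor \(\cF \colon \SFoam_{N}\to\cSym_{N}\) termwise produces a bicomplex in \(\cSym_{N}\) whose \((k,\ell,h)\)-entry is, by construction and by the fact that \(\cF\) is additive and grading-preserving, exactly
\[\bigoplus_{\wh{I}\in A_k(X)} \cF(\llbra D_{\wh{I}}\rrbra)\{q^{k-|X|b(N-b)}\}_{\ell,h} = M(D,X)^{k,\ell,h}.\]

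Next, since \(\cF\) is an additive functor, it commutes with finite direct sums and hence with the totalization operation. Consequently,
\[\Tot^{r,h} M(D,X) \;=\; \cF\bigl(\Tot^{r,h}(\eqref{eq:bicomplex})\bigr)^{\ell,h}.\]
By Lemma~\ref{lem:bicomplex}, the total complex of~\eqref{eq:bicomplex} is \(\llbra D\rrbra\) in \(\Kom{\SFoam_N}\), so the right-hand side is \(\cF(\llbra D\rrbra)\) at homological degree \(r\) and quantum degree \(h\).

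Taking cohomology then yields
\[H^{r,h}(\Tot M(D,X)) \;\cong\; H^{r,h}(\cF(\llbra D\rrbra)) \;=\; \KR^{r,h}_{\S_N}(L),\]
the last equality being the definition of \(\S_N\)-valued Khovanov--Rozansky homology from Subsection~\ref{sub:sym_eq}. The main issue to be careful about is purely bookkeeping: verifying that the sign conventions and grading shifts assigned in the definition of \(M(D,X)^{k,\ell,h}\) match the ones appearing in~\eqref{eq:bicomplex} under \(\cF\), in particular that the shift \(q^{k-|X|b(N-b)}\) is exactly the quantum shift picked up from the local model in Figure~\ref{fig:resolution} at each crossing of the subset \(X\), and that the horizontal/vertical decomposition of the differential indeed recovers the differential on \(\llbra D\rrbra\) with the chosen sign assignment \(\sas\). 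Once this identification is made, Lemma~\ref{lem:bicomplex2} is immediate.
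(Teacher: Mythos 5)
Your proposal is correct, and it hits the same essential points as the paper's (rather terse) proof: use Lemma~\ref{lem:bicomplex} to identify the total complex of~\eqref{eq:bicomplex} with $\llbra D\rrbra$ in $\Kom{\SFoam_N}$, then apply $\cF$ termwise and observe that, $\cF$ being additive and the bicomplex bounded, $\cF$ commutes with totalization, so $\Tot M(D,X) = \cF(\llbra D\rrbra)$ and its cohomology is by definition the $\S_N$-valued Khovanov--Rozansky homology. You are in fact more explicit than the paper about the crucial interchange of $\cF$ with $\Tot$; the paper's proof instead immediately phrases the conclusion in terms of the associated bicomplex spectral sequence (with $E^1$-page given by horizontal cohomology, abutting to the cohomology of the total complex), which is really anticipating the later skein spectral sequence rather than adding anything to the proof of this lemma itself. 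Both arguments are sound and rest on the same two ingredients.
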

\begin{proof}
If we count only the horizontal differentials, up to grading shift, the cohomology of the complex $M(D,X)^{k,\ell,h}$ is equal to the sum of Khovanov--Rozansky
homologies of webs $D_{\wh{I}}$ where $\wh{I}\in A_k(X)$, that is of webs obtained as partial resolutions of the link diagram.
Lemma~\ref{lem:bicomplex} implies the existence of a spectral sequence, whose $E^1$ page is the cohomology of the complex $M(D,X)^{k,\ell,h}$.
The spectral sequence abutes to the Khovanov-Rozansky homology of $D$.
\end{proof}

\smallskip
Suppose now $D$ is a $\Z_m$-periodic link diagram.
We will be mainly interested in the case when \(X\) is an orbit of crossings.
In this case, the group \(\Z_{m}\) acts on \(\Cr(D)\) preserving \(X\) and there is an induced action on \(A_{k}(X)\), for any \(k \in\Z\).
For \(\wh{I} \in A_{k}(X)\) let \(\Iso(\wh{I}) = \{g \in \Z_{m} \colon \wh{I} \circ g = \wh{I}\}\) be the isotropy group of \(\wh{I}\).
For any \(d|m\) let
\begin{equation}\label{eq:aks}
  A_{k}^{d}(X) = \{\wh{I} \in A_{k}(X) \colon \Iso(\wh{I}) = \Z_{d}\}
\end{equation}
and denote by \(\overline{A}_{k}^{d}(X)\) the quotient of \(A_{k}^{d}(X)\) by the action of \(\Z_{m}\).
Observe that for any \(\wh{I} \in A_{k}^{d}(X)\), the diagram \(D_{\wh{I}}\) is \(d\)-periodic.
Furthermore, for any \(\wh{I} \in A(X)\), and any \(g \in \Z_{m}\), the group action on \(\llbra D \rrbra\) gives a map \(\cG_{g,\wh{I}} \colon \llbra D_{\wh{I}}, \sas_{\wh{I}} \rrbra \to \llbra D_{g \wh{I}},
\sas_{g \wh{I}} \rrbra\), where \(\sas_{\wh{I}}\) and \(\sas_{g \wh{I}}\) denote restrictions of the sign assignment \(\sas\) on \(\Cube(D)\) to \(\Cube(D_{\wh{I}})\) and \(\Cube(D_{g \wh{I}})\), respectively.


In order to generalize Lemma~\ref{lem:bicomplex} to the equivariant setting,
suppose $X$ is a $\Z_m$-invariant set of crossings, such that either all crossings in $X$ are positive
or all crossings in $X$ are negative.
For any \(\wh{I} \in A_{k}^{d}(X)\), the partial resolution \(D_{\wh{I}}\) is a \(d\)-periodic diagram.
Consider the natural \(\Z_{d}\)-action on \(\llbra D_{\wh{I}} \rrbra\) and \(\mathcal{F}(\llbra D_{\wh{I}} \rrbra)\) as defined in Proposition~\ref{prop:group_action_on_D}.
The equivariant version of the bicomplex $M(D,X)^{k,\ell,h}$ is defined by
\begin{equation}\label{eq:EM_def}
  \EM(D,X)^{k,l,\bullet} = 
  \begin{cases}
    \bigoplus_{d \mid k} \bigoplus_{\wh{I} \in \overline{A}_{k}^{d}(X)} \Ind_{\Z_{d}}^{\Z_{m}} \left( \cF(\llbra D_{\wh{I}}\rrbra) \{q^{-|X|b(N-b)+k} \} \otimes \C_{s(m,d,\wh{I})} \right) & \textrm{ positive},
  \\
    \bigoplus_{d \mid k} \bigoplus_{\wh{I} \in \overline{A}_{k}^{d}(X)} \Ind_{\Z_{d}}^{\Z_{m}} \left( \cF(\llbra D_{\wh{I}}\rrbra) \{q^{|X|b(N-b)-k}\} \otimes \C_{s(m,d,\wh{I})} \right) & \textrm{ negative}.
\end{cases}
\end{equation}
Here, the tensor product is over the ring $\C[\Z_d]$, where the $\cSym_N[\Z_d]$-module $\cF(\llbra D_{\wh{}}\rrbra)$ 
can be regarded as a right $\C[\Z_d]$-module (via the standard action of $\C$ on $\cSym_N$). The space
$\C_j$ a one dimensional complex vector space and the $\Z_d$-action is either trivial (if $j=0$) or
it is the sign action (the generator of $\Z_d$ acts on $\C$ by multiplication by $-1$), if $j=1$. For the latter, $d$ must necessarily be even.
The choice of representation depends on the number $s(m,d,\wh{I})\in\F_2$:
\begin{equation}
  \label{eq:twisting}
  s(m,d,\wh{I}) = \sat(I_{0}) + \sat(gI_{0}) + \cdots + \sat(g^{m/d-1}I_{0})
\end{equation}
with $g$ being the generator of $\Z_m$ acting on the plane by a rotation by the angle $\frac{2\pi}{m}$,
and $I_0=\wh{I}\vee J_0$ for $J_0=(0,\dots,0)\in\Cube(D_{\wh{I}})$
and $\sat$ a $0$-cochain on $\SCube(D)$ satisfying $g\sas-\sas=\partial \sat$, $\sat((0,\dots,0))=0$.
\begin{lemma}\label{lem:EM_is_M}
  There is an isomorphism $\EM(D,X)\cong M(D,X)$ as complexes of $\S_N$-modules.
\end{lemma}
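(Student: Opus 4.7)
The plan is to produce the isomorphism orbit-by-orbit, identifying each induced module appearing in $\EM(D,X)$ with the sum of $\cF(\llbra D_{\wh{I}'}\rrbra)$ over an orbit of $A_k(X)$ under $\Z_m$. The key ingredients are the coset decomposition of $\Ind_{\Z_d}^{\Z_m}$ from \eqref{eq:ind_as_a_coproduct}, the stratification $A_k(X) = \bigsqcup_{d\mid m} A_k^d(X)$ in which each $A_k^d(X)$ is a union of $\Z_m$-orbits of size $m/d$, and the rotation maps $\cG_{g,\wh{I}}$ provided by Proposition~\ref{prop:group_action_on_D}.

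First I would fix $d\mid m$ and an orbit representative $\wh{I}\in\overline{A}_k^d(X)$, and choose coset representatives $1, g, g^2,\dots,g^{m/d-1}$ of $\Z_m/\Z_d$. Since the sign twist $\C_{s(m,d,\wh{I})}$ is one-dimensional, tensoring by it does not change the underlying $\S_N$-module, so \eqref{eq:ind_as_a_coproduct} gives an isomorphism of $\S_N$-modules
\[
\Ind_{\Z_d}^{\Z_m}\!\left(\cF(\llbra D_{\wh{I}}\rrbra)\otimes\C_{s(m,d,\wh{I})}\right)\;\cong\;\bigoplus_{j=0}^{m/d-1} g^j\cdot\cF(\llbra D_{\wh{I}}\rrbra).
\]
I would then identify the $j$-th summand on the right with $\cF(\llbra D_{g^j\wh{I}}\rrbra)$ via $\cF(\cG_{g^j,\wh{I}})$. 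Summing over all $d\mid m$ and $\wh{I}\in\overline{A}_k^d(X)$, each element of $A_k(X)$ is hit exactly once, yielding the desired identification $\EM(D,X)^{k,\ell,h}\cong M(D,X)^{k,\ell,h}$ of $\S_N$-modules with matching quantum and homological grading shifts.

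Once the underlying modules are identified, matching differentials is the remaining task. The horizontal (internal) differentials on each summand $\cF(\llbra D_{g^j\wh{I}}\rrbra)$ are determined by the restriction of the sign assignment $\sas$ to $\Cube(D_{g^j\wh{I}})$; they agree on both sides since the rotation maps commute with the internal differentials by the construction in Proposition~\ref{prop:group_action_on_D}.

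The hard part will be matching the vertical (external) differentials between the $\wh{I}$-summand at level $k$ and the $\wh{J}$-summand at level $k+1$ for $\wh{J}$ an immediate successor of $\wh{I}$ in $A(X)$. This is precisely the role of the twist $\C_{s(m,d,\wh{I})}$. The sign \eqref{eq:twisting} measures the discrepancy between the $\Z_d$-action on $\llbra D_{\wh{I}}\rrbra$ obtained by restricting $\cG_{g^{m/d}}$ from the ambient $\llbra D\rrbra$ and the canonical $\Z_d$-action on $\llbra D_{\wh{I}}\rrbra$ viewed in its own right as a $d$-periodic diagram; tensoring by $\C_{s(m,d,\wh{I})}$ absorbs this sign. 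I would verify this by writing out, for a given pair $I\in\Cube(D_{\wh{I}})$ and its counterpart $J$ in $\Cube(D_{\wh{J}})$ sitting inside the same resolution of $D$, the vertical foam $\delta_{\wh{I},\wh{J},I}$ extracted from \eqref{eq:bicomplex}; comparing its sign $(-1)^{\sas(\wh{I}\vee I,\wh{J}\vee I)}$ on the $M$-side with the corresponding sign on the $\EM$-side, which involves both the rotation cochains $\sat$ (through the identification of $g^j$-translates of summands) and the twist $s(m,d,\cdot)$, and using the cocycle identity $g\sas-\sas=\partial\sat$ together with \eqref{eq:twisting} to cancel everything, completes the check.
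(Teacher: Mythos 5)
Your proposal follows the same overall approach as the paper: use the coset decomposition~\eqref{eq:ind_as_a_coproduct} of the induction functor together with the orbit stratification $A_k(X) = \bigsqcup_{d\mid m} A_k^d(X)$, and identify each coset summand $g^j\cdot\cF(\llbra D_{\wh{I}}\rrbra)$ with $\cF(\llbra D_{g^j\wh{I}}\rrbra)$ via the rotation maps. That part is correct and is exactly the argument the paper compresses into ``we compensate \dots\ by using the $\Ind$ functor.''

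However, there is a conceptual error in your account of the twist $\C_{s(m,d,\wh{I})}$. You assert that it is needed to match the vertical differentials, and that the sign on the $\EM$-side ``involves \dots\ the twist $s(m,d,\cdot)$.'' This cannot be: as you yourself observe at the start, $\C_{s(m,d,\wh{I})}$ is a one-dimensional $\C[\Z_d]$-module, so tensoring by it leaves the underlying $\S_N$-module --- and hence every $\S_N$-linear map, in particular both the horizontal and vertical differentials --- untouched. The twist alters only the $\Z_d$-action. The paper says this explicitly at the start of its proof: ``If we forget the group action, we need not consider tensoring by $\C_{s(m,d,\wh{I})}$.'' The discrepancy you correctly describe (between the $\Z_d$-action obtained by restricting $(\cG_g)^{m/d}$ and the intrinsic $\Z_d$-action on $\llbra D_{\wh{I}}\rrbra$ as a $d$-periodic diagram) is precisely the content of the \emph{next} lemma, Lemma~\ref{lem:equivariant_skein_relation}, not of this one. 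For Lemma~\ref{lem:EM_is_M} the sign bookkeeping is simpler than you anticipate: the rotation maps $\cG_{g^j}$ are chain maps for the full differential on $\llbra D\rrbra$, so they automatically intertwine the vertical components as well as the horizontal ones, and the twist never enters. Your argument would still reach the correct conclusion (the twist would cancel out of any $\S_N$-linear check, since it is invisible there), but the framing misattributes a role to $\C_{s(m,d,\wh{I})}$ that it does not play in this lemma.
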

\begin{proof}
  If we forget the group action, we need not consider tensoring by $\C_{s(m,d,\wh{I})}$. In the definition
  of $M(D,X)$ we sum over $\wh{I}\in A_k(X)$. In the definition of $\EM(D,X)$, we sum over the quotient
  $\overline{A}_k^d(X)$, but we compensate this by using the $\Ind$ functor. The statement follows
  from \eqref{eq:ind_as_a_coproduct}, we omit tedious but straightforward details.
\end{proof}


The next result generalizes Lemma~\ref{lem:bicomplex} as well as
an analogous statement of \cite[Lemma 2.10 and Section 4]{Politarczyk-Khovanov}.
\begin{lemma}\label{lem:equivariant_skein_relation}
  There is an isomorphism of the total complex of $\EM(D,X)$ and $\cF(\llbra D\rrbra)$ as $\S_N[\Z_m]$-modules.
\end{lemma}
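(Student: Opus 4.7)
The plan is to promote the $\S_N$-module isomorphism of Lemma~\ref{lem:EM_is_M} to an isomorphism in $\Kom{\cSym_N[\Z_m]}$ by tracking signs across orbits. Combined with Lemma~\ref{lem:bicomplex} applied after $\cF$, the underlying $\S_N$-module identification gives
\[
  \cF(\llbra D \rrbra) \;\cong\; \bigoplus_{\wh I \in A(X)} \cF(\llbra D_{\wh I} \rrbra),
\]
so the content of the present lemma is that the $\Z_m$-action on $\cF(\llbra D \rrbra)$ from Proposition~\ref{prop:group_action_on_D} agrees with the action implicit in the definition~\eqref{eq:EM_def} of $\EM(D,X)$.

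First I would partition the summands by $\Z_m$-orbits. For each $d \mid m$ and each orbit representative $\wh I_0 \in \overline A_k^d(X)$ with stabilizer $\Z_d \subset \Z_m$, the generator $g$ of $\Z_m$ cyclically permutes the $m/d$ summands $\cF(\llbra D_{g^i \wh I_0} \rrbra)$, $0 \le i < m/d$, since $\cG_g$ sends $\cF(\llbra D_{\wh I} \rrbra)$ to $\cF(\llbra D_{g\wh I} \rrbra)$. Through~\eqref{eq:ind_as_a_coproduct}, identifying this block as an induced module reduces to pinning down the sign by which $(\cG_g)^{m/d}$ acts on the first summand; showing this sign equals $(-1)^{s(m,d,\wh I_0)}$ will exactly account for the twist by $\C_{s(m,d,\wh I_0)}$ in~\eqref{eq:EM_def}.

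The key sign computation is as follows. Setting $h := g^{m/d}$, which generates $\Z_d$ and fixes $\wh I_0$, iteration of the formula $\cG_g(y) = (-1)^{\sat(I)} \cF(\rho_{g,I})(y)$ for $y \in \cF(D_I)$ yields, on $\cF(D_{\wh I_0 \vee J})$,
\[
  (\cG_g)^{m/d} \;=\; (-1)^{\sat''(J)}\, \cF(\rho_{h,\,\wh I_0 \vee J}), \qquad \sat''(J) \;:=\; \sum_{i=0}^{m/d-1} \sat(g^i(\wh I_0 \vee J)).
\]
A telescoping computation using $\partial \sat = g\sas - \sas$ shows that $\sat''$, viewed as a $0$-cochain on $\SCube(D_{\wh I_0})$, satisfies $\partial \sat'' = h \sas_{\wh I_0} - \sas_{\wh I_0}$, where $\sas_{\wh I_0}$ denotes the restriction of $\sas$ along $J \mapsto \wh I_0 \vee J$. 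Thus $\sat''$ is a valid sign cochain for the intrinsic $\Z_d$-action on the $d$-periodic diagram $D_{\wh I_0}$; by the uniqueness portion of Lemma~\ref{lem:coboundary} applied to $\sat''$ and the canonical cochain $\sat'$ (normalized by $\sat'((0,\dots,0)) = 0$), one has $\sat'' = \sat' + s(m,d,\wh I_0) \cdot \mathbf{1}$. Consequently $(\cG_g)^{m/d} = (-1)^{s(m,d,\wh I_0)} \cG'_h$, where $\cG'_h$ denotes the intrinsic $\Z_d$-action from Proposition~\ref{prop:group_action_on_D} applied to $D_{\wh I_0}$, which is exactly the twist recorded by $\C_{s(m,d,\wh I_0)}$.

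The hard part will be the telescoping verification $\partial \sat'' = h \sas_{\wh I_0} - \sas_{\wh I_0}$, which requires careful tracking of the $\Z_m$-action on sign cochains and the interplay between the ambient cochain $\sat$ on $\SCube(D)$ and the intrinsic cochain $\sat'$ on $\SCube(D_{\wh I_0})$. Once the signs are matched, the proof concludes: the horizontal differentials on both sides coincide because they both come from the restricted sign assignment $\sas_{\wh I_0}$, the vertical differentials along $X$ cohere with the $\Z_m$-action by construction of $\sas$, and the assembly of $\S_N$-module summands into induced modules is tautological from~\eqref{eq:ind_as_a_coproduct}.
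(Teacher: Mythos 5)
Your proposal is correct and takes essentially the same approach as the paper: both arguments compare the two candidate $\Z_d$-actions on $\llbra D_{\wh I_0}\rrbra$ — the restriction $(\cG_g)^{m/d}$ and the intrinsic $\cH_h$ — show they are realized by the same foams, invoke the uniqueness of a normalized sign $0$-cochain (Lemma~\ref{lem:coboundary}/Remark~\ref{remark:sign-ambiguity-gp-action}) to conclude they differ by a constant sign, and evaluate that constant at $J_0=(0,\dots,0)$ to get $(-1)^{s(m,d,\wh I_0)}$, matching the twist $\C_{s(m,d,\wh I_0)}$ in \eqref{eq:EM_def}. Your explicit telescoping verification that $\sat''$ satisfies the same coboundary equation as $\sat_{\wh I_0}$ is a useful spelling-out of what the paper justifies more tersely via the chain-map property of $(\cG_g)^{m/d}$.
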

\begin{proof}
  Given Lemma~\ref{lem:EM_is_M}, we need to show that the isomorphism of $\EM(D,X)$ and $\cF(\llbra D\rrbra)$ as $\S_N$-modules
  is $\Z_m$-equivariant.

  Recall that we let $g$ be the generator of \(\Z_{m}\) acting on the plane by rotation by the angle~\(\frac{2 \pi}{m}\).
  Fix a sign assignment $\sas$ on $D$, and let $\sat$ be the $0$-cochain satisfying $\partial \sat=g\sas-\sas$, $\sat((0,\dots,0))=0$.

  Suppose that \(d\) is a divisor of \(m\) and set \(h = g^{m/d}\) to be a generator of \(\Z_{d} \subset \Z_{m}\).
  Fix \(\wh{I} \in \overline{A}_{k}^{d}(X)\) and consider the partial resolution \(D_{\wh{I}}\).
  We define $\sas_{\wh{I}}$ to be the sign assignment on $\Cube(D_{\wh{I}})$, defined as $\sas_{\wh{I}}(J,J')=\sas(\wh{I}\vee J,\wh{I}\vee J')$.
  
  Since \(D_{\wh{I}}\) is a \(d\)-periodic diagram, we can define an action of \(\Z_{d}\) on \(\llbra D_{\wh{I}} \rrbra\) using the recipe from Proposition~\ref{prop:group_action_on_D}. In particular, we let $\sat_{\wh{I}}$ be the $0$-cochain on $\Cube(D_{\wh{I}})$
  such that $h\sas_{\wh{I}}-\sas_{\wh{I}}=\sat_{\wh{I}}$ and $\sat_{\wh{I}}(0,\dots,0)=0$.
  Denote by \(\cH_{h} \colon \llbra D_{\wh{I}} \rrbra \to \llbra D_{\wh{I}} \rrbra\) the map corresponding to the action of a generator \(h\) (i.e. rotation by the angle \(\frac{2\pi}{d}\)).

  There are now two choices of actions of $h$ on $\llbra D_{\wh{I}}\rrbra$. The external one is $(\cG_g)^{m/d}$, where $\cG_g$
  is the action constructed in Proposition~\ref{prop:group_action_on_D} for $\llbra D\rrbra$.
  The other map is $\cH_h$. These maps are equal up to a sign choice, in fact, they are obtained from
  the same sets of foams. They might differ by the sign. To complete the proof
  of Lemma~\ref{lem:equivariant_skein_relation}, we need to compare $\sat_{\wh{I}}(J)$ and $\sat(\wh{I}\vee\wh{J})$ for
  $J\in\Cube(D_{\wh{I}})$. Note that by Remark~\ref{remark:sign-ambiguity-gp-action}, the difference between the two
  does not depend on $J$ itself. In fact, any two $0$-cochains $\sat_{1},\sat_2$ such that
  $h\sas_{\wh{I}}-\sas_{\wh{I}}=\partial \sat_1= \partial\sat_2$, so $J\mapsto\sat_{\wh{I}}(J)$ and $J\mapsto\sat(\wh{I}\vee J)$
  are either equal, or differ by an overall sign. 

  To check this sign, let $J_0=(0,\dots,0)\in\Cube(D_{\wh{I}})$, 
  set $I_0=J_0\vee\wh{I}$.
  Suppose that \(\rho_{g} \colon D_{I_{0}} \to D_{g I_{0}}\) is the foam realizing the rotation of \(D_{I_{0}}\) by \(g \in \Z_{}\), i.e., the \(I_{0}\)-th component of \(\cG_{g}\)
  is equal to \((-1)^{\sat(I_{0})} \rho_{g}\).
  Let \(\rho_{h} = \rho_{g^{m/d-1}I_{0}} \circ \cdots \circ \rho_{gI_{0}} \circ \rho_{I_{0}}\) and \(\sat_{h}(I_0) = \sat(I_{0}) + \sat(gI_{0}) + \cdots + \sat(g^{m/d-1}I_{0})\), then the \(I_{0}\)-th component of \(\mathcal{H}_{h}\) is equal to
  \((-1)^{\sat_{h}(I_0)} \rho_{h}\).
  By convention from the proof of Proposition~\ref{prop:group_action_on_D},
  $\sat_{\wh{I}}(0,\dots,0)=0$. That is,
  the \(J_{0}\)-th component of \(\mathcal{H}_{h}\) is equal to \(\rho_{h}\).
  Consequently, $s(m,d,\wh{I})=\sat_h$. We conclude by \eqref{eq:twisting}.
\end{proof}

Using Lemma~\ref{lem:equivariant_skein_relation}, we obtain the skein spectral sequence for equivariant Khovanov-Rozansky homology.
\begin{proposition}[Skein spectral sequence]\label{prop:skein-spectral-sequence}
  Let \(D\) be an \(m=p^{\ell}\)-periodic labelled link diagram, where \(p\) is an odd prime and \(\ell \geq 1\).
  Let \(X \subset \Cr(D)\) be an orbit of crossings between an \(a\)-{labelled} overstrand and a \(b\)-{labelled} understrand, where \(a \geq b\)
  If \(0 \leq u \leq \ell\) and \(X\) consists of positive crossings, we obtain, for any \(1 \leq s \leq |X|b\) a spectral sequence with
  \begin{equation}\label{eq:to_be_E_1_page}
  E_{1}^{k,l,\bullet}(p^{\ell-u}) = \bigoplus_{p^{s} \mid k} \bigoplus_{{\wh{I}} \in \overline{A}_{k}^{s}(D,X) }  \EKR_{N}^{\bullet,\bullet,\kappa(u,s)}(D_{{\wh{I}}})^{\lambda(u,s)}t^{k} q^{-|X|b(N-b)-k},\end{equation}
  with \(0 \leq k \leq p^{\ell} b\) and
  \[\kappa(u,s) =
    \begin{cases}
      1, & u \geq s, \\
      p^{s-u}, & \text{otherwise},
    \end{cases}
    \quad
    \lambda(u,s) =
    \begin{cases}
      \phi(p^{\ell-u}), & u \geq s, \\
      p^{\ell-s}, & u < s,
    \end{cases}
  \]
  converging to \(\KR_{N}^{\bullet,\bullet,p^{\ell-u}}(D)\).
  Similarly, if \(X\) consists of negative crossings, we obtain a spectral sequence with
  \[E_{1}^{k,l,\bullet}(p^{\ell-u}) = \bigoplus_{p^{s} \mid k} \bigoplus_{{\wh{I}} \in \overline{A}_{p^{\ell}+k}^{s}(D,X) }  \EKR_{N}^{\bullet,\bullet,\kappa(u,s)}(D_{{\wh{I}}})^{\lambda(u,s)}t^{k} q^{|X|b(N-b)-k},\]
  where \(-p^{\ell}b \leq k \leq 0\).
\end{proposition}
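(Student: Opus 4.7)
The plan is to realize the desired spectral sequence as the column filtration spectral sequence of the bicomplex $\EM(D,X)$, after applying the exact functor $\Hom_{\C[\Z_m]}(\C[\Z_m]_{p^{\ell-u}},-)$ to extract the $p^{\ell-u}$-isotypic part of the $\Z_m$-action. By Lemma~\ref{lem:equivariant_skein_relation}, the total complex of $\EM(D,X)$ is $\S_N[\Z_m]$-equivariantly isomorphic to $\cF(\llbra D\rrbra)$. Since $\C[\Z_m]$ is semisimple, the functor $\Hom_{\C[\Z_m]}(\C[\Z_m]_{p^{\ell-u}},-)$ is exact, commutes with taking total complexes and with taking cohomology, and therefore the bicomplex structure on $\EM(D,X)$ yields a bicomplex computing $\EKR_N^{\bullet,\bullet,p^{\ell-u}}(D)$, and an abutment as claimed (cf.\ \eqref{eq:def-equivariant-homology}).

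The first step is to take vertical cohomology with respect to the internal differential. For each $\wh{I}\in\overline{A}_k^{p^s}(X)$, the partial resolution $D_{\wh{I}}$ is $p^s$-periodic, and the horizontal subcomplex attached to $\wh{I}$ is $\cF(\llbra D_{\wh{I}}\rrbra)$ with its natural $\Z_{p^s}$-action provided by Proposition~\ref{prop:group_action_on_D}. Hence the $E_1$-page of the column filtration spectral sequence is, before the isotypic projection, a sum indexed by $(s,\wh{I})$ of $\Ind^{\Z_m}_{\Z_{p^s}}\!\bigl(\EKR_{\S_N[\Z_{p^s}]}^{\bullet,\bullet}(D_{\wh{I}})\otimes\C_{s(m,p^s,\wh{I})}\bigr)$, equipped with the appropriate grading shift $q^{\pm(|X|b(N-b)-k)}t^k$. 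Here one should note that, since $p$ is odd, every subgroup $\Z_{p^s}\subseteq\Z_m$ has odd order, so the ``sign'' twist $\C_{s(m,p^s,\wh{I})}$ is forced to be the trivial representation; this removes the twisting that would otherwise be the most delicate sign-tracking issue.

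The second step is the character-theoretic computation. Applying $\Hom_{\C[\Z_m]}(\C[\Z_m]_{p^{\ell-u}},-)$ to a summand and using Frobenius reciprocity \eqref{eq:res-ind-duality} reduces to computing $\Res^{\Z_m}_{\Z_{p^s}}\!\bigl(\C[\Z_m]_{p^{\ell-u}}\bigr)$ as a $\Z_{p^s}$-representation. A character $\chi$ of $\Z_m$ of exact order $p^{\ell-u}$ is of the form $\chi(g)=\zeta_m^j$ with $\gcd(j,m)=p^u$, and its restriction to $\Z_{p^s}=\langle g^{p^{\ell-s}}\rangle$ sends the generator to $\zeta_m^{j\,p^{\ell-s}}$, a primitive $p^{\ell-u-\min(\ell-u,\ell-s)}$-th root of unity. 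Splitting into the two cases $u\ge s$ and $u<s$ gives $\kappa(u,s)=1$ and $\kappa(u,s)=p^{s-u}$ respectively; counting the fibers of the surjection $\widehat{\Z_m}\twoheadrightarrow\widehat{\Z_{p^s}}$ restricted to characters of the prescribed order yields the multiplicities $\lambda(u,s)=\phi(p^{\ell-u})$ (when $u\ge s$) and $\lambda(u,s)=p^{\ell-s}$ (when $u<s$). Substituting these into the Frobenius-reciprocity formula, and then applying Schur orthogonality \eqref{eq:schurs-lemma}, produces exactly $\EKR_N^{\bullet,\bullet,\kappa(u,s)}(D_{\wh{I}})^{\lambda(u,s)}$ as the contribution of $\wh{I}$ to the $E_1$-page, in agreement with \eqref{eq:to_be_E_1_page}. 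The case of negative $X$ is identical once the range of $k$ and the sign of the $q$-shift are adjusted, as already built into \eqref{eq:EM_def}.

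The main technical obstacle is the careful bookkeeping of isotropy groups and twisting, i.e.\ verifying that the orbit decomposition in \eqref{eq:aks} together with the induction formulation in \eqref{eq:EM_def} really matches a $\Z_m$-equivariant direct sum decomposition of the $E_0$-page, and that the internal differentials are $\Z_{p^s}$-equivariant on each summand (so that cohomology can be taken in the category $\Kom{\Sym_N[\Z_{p^s}]}$). For this I would use Lemma~\ref{lem:equivariant_skein_relation} and the argument of its proof verbatim: the sign cochain $\sat_{\wh{I}}$ for the $\Z_{p^s}$-action on $\llbra D_{\wh{I}}\rrbra$ agrees with the restriction of $\sat$ up to the global shift encoded in $s(m,p^s,\wh{I})$, which is precisely what justifies the tensor factor $\C_{s(m,p^s,\wh{I})}$ in \eqref{eq:EM_def} and, as noted above, is trivial when $p$ is odd. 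Once this equivariant identification is in place, the remainder of the argument is a formal manipulation of a spectral sequence of a bicomplex of $\C[\Z_m]$-modules combined with the character calculation above.
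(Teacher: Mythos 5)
Your proposal is correct and follows essentially the same approach as the paper: project onto the $p^{\ell-u}$-isotypic part of the bicomplex $\EM_0(D,X)$ via the exact functor $\Hom_{\C[\Z_m]}(\C[\Z_m]_{p^{\ell-u}},-)$, take cohomology with respect to the internal differential, and compute the resulting $E_1$-page via Frobenius reciprocity~\eqref{eq:res-ind-duality} together with the restriction formula~\eqref{eq:restriction-functor}. The one place where you are a bit more careful than the published argument is that you explicitly justify dropping the twisting factor $\C_{s(m,d,\wh{I})}$ from~\eqref{eq:EM_def} — since $p$ is odd, every stabilizer $\Z_{p^s}$ has odd order and thus admits no sign representation, so the twist is forced to be trivial — whereas the paper's computation suppresses this factor without comment.
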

\begin{remark}
  It is clear that the same proof works for $\ELee_N$. 
\end{remark}
\begin{proof}[Proof of Proposition~\ref{prop:skein-spectral-sequence}]
  We will give only the proof in the positive case.
  Note that the total complex of $\EM(D,X)$ is the complex of $\cF(\llbra D\rrbra)$ by Lemma~\ref{lem:equivariant_skein_relation}.
  By \(\EM_{0}(D,X)\) we will denote the singular specialization of \(\EM(D,X)\).
  Fix \(0 \leq u \leq \ell\) and consider the bicomplex derived from \(\EM(D,X)\):
    \[\EM^{k,l,\ast}(D,X,p^{\ell-u}) := \Hom_{\C[\Z_{p^{\ell}}]}(\C[\Z_{p^{\ell}}]_{p^{\ell-u}}, \EM_{0}^{k,l,\ast}(D,X)).\]
  On considering separately the internal (vertical) and the external (horizontal) differentials in $\EM(D,X,p^{\ell-u})$, we obtain a spectral sequence
  of $\C[\Z_m]$-modules converging to \(\EKR_{N}^{\ast,\ast,p^{\ell-u}}(D)\), whose $E_1$-page is given by
  \begin{align*}
    E_{1}^{k,l,\ast}(p^{\ell-u}) &= H^{k,\ast} (\EM_{0}^{\ast,l,\ast}(D,X,p^{\ell-u}), d_{\operatorname{vert}}) \\
    &\cong \Hom_{\C[\Z_{p^{\ell}}]} (\C[\Z_{p^{\ell}}]_{p^{\ell-u}}, H^{k,\ast}(\EM_{0}^{\ast,l,\ast}(D,X),d_{\operatorname{vert}})).
  \end{align*}
  i.e., we take the vertical homology of \(\EM(D,X,p^{\ell-u})\).
  The aim of the proof is to show that this page is isomorphic to~\eqref{eq:to_be_E_1_page}.

  Before further studying the \(E_{1}\)-page of the spectral sequence, let us consider the decomposition of the group algebra.
  Consider the group algebra \(\C[\Z_{p^{\ell}}]\) and recall from Section~\ref{sec:decomp-sln-homol} that 
  \[\C[\Z_{p^{\ell}}]_{p^{\ell-u}} = \bigoplus_{\stackrel{0 \leq i < p^{\ell}}{\gcd(i,p^{\ell}) = p^{u}}} \C_{\xi_{p^{\ell}}^{i}}.\] 
  Observe that for any \(0 \leq s \leq \ell\) we have
  \[\Res^{\Z_{p^{\ell}}}_{\Z_{p^{s}}}(\C_{\xi_{p^{\ell-u}}^{j}}) = \C_{(\xi_{p^{\ell-u}}^{j})^{p^{\ell-s}}} = 
    \begin{cases}
      \C_{1}, & s \leq u, \\
      \C_{\xi_{p^{s-u}}^{j}}, & s > u.
    \end{cases}
  \]
  Consequently,
  \begin{equation}
    \label{eq:restriction-functor}
    \Res^{\Z_{p^{\ell}}}_{\Z_{p^{s}}}(\C[\Z_{p^{\ell}}]_{p^{\ell-u}}) =
    \begin{cases}
      \C_{1}^{\phi(p^{\ell-u})}, & s \leq u, \\
      \C[\Z_{p^{s}}]_{p^{s-u}}^{p^{\ell-s}}, & s > u.
    \end{cases}
  \end{equation}
  
  Using the definition of \(\EM_{0}(D,X)\), we obtain
  \begin{align*}
    E^{k,l,\bullet}_{1}(p^{\ell-u}) &= \Hom_{\C[\Z_{p^{\ell}}]} (\C[\Z_{p^{\ell}}]_{p^{\ell-u}}, H^{k,\ast}(\EM_{0}^{\ast,l,\ast}(D,X),d_{\operatorname{vert}})) \\
    &\cong  \bigoplus_{p^{s} \mid k} \bigoplus_{{\wh{I}} \in \overline{A}_{k}^{s}(D,X)} \Hom_{\C[\Z_{p^{\ell}}]} \left( \C[\Z_{p^{\ell}}]_{p^{\ell-u}}, \Ind_{\Z_{p^{s}}}^{\Z_{p^{\ell}}} \EKR (D_{{\wh{I}}})t^{k}q^{-|X|b(N-b)+k}\right).
  \end{align*}
  We consider an individual summand of the right-hand side:
  \begin{align*}
    &\Hom_{\C[\Z_{p^{\ell}}]} \left( \C[\Z_{p^{\ell}}]_{p^{\ell-u}}, \Ind_{\Z_{p^{s}}}^{\Z_{p^{\ell}}} \EKR (D_{{\wh{I}}})t^{k}q^{-|X|b(N-b)+k}\right) \\
      &\stackrel{\eqref{eq:res-ind-duality}}{\cong} \Hom_{\C[\Z_{p^{s}}]}\left( \Res^{\Z_{p^{\ell}}}_{\Z_{p^{s}}} (\C[\Z_{p^{\ell}}]_{p^{\ell-u}}), \EKR(D_{{\wh{I}}}) t^{k} q^{-|X|b(N-b)+k} \right) \\
      &\stackrel{\eqref{eq:restriction-functor}}{=}
      \begin{cases}
        \Hom_{\C[\Z_{p^{\ell}}]}(\C_{1}^{\phi(p^{\ell-u})}, \EKR(D_{{\wh{I}}}) t^{k} q^{-|X|b(N-b)+k}) , & s \leq u, \\
        \Hom_{\C[\Z_{p^{\ell}}]}(\C[\Z_{p^{s}}]_{p^{s-u}}^{p^{\ell-s}}, \EKR(D_{{\wh{I}}}) t^{k} q^{-|X|b(N-b)+k}) & s > u
      \end{cases}
    \\
    &=
      \begin{cases}
        \EKR^{\ast,\ast,1}(D_{{\wh{I}}})^{\phi(p^{\ell-u})} t^{k} q^{-|X|b(N-b)+k}, & s \leq u \\
        \EKR^{\ast,\ast,p^{s-u}}(D_{{\wh{I}}})^{p^{\ell-s}} t^{k} q^{-|X|b(N-b)+k}, & s > u
      \end{cases} \\
    &= \EKR^{\ast,\ast,\kappa(u,s)}(D_{{\wh{I}}} )^{\lambda(u,s)} t^{k} q^{-|X|b(N-b)+k}.
  \end{align*}
  Hence, the proposition follows.
\end{proof}

\section{Polynomial invariants}\label{sec:polynomial}
\subsection{Poincar\'e polynomials of $\sln$ and Lee homology}
We first recall a standard construction.
\begin{definition}
  Let $L$ be a link. The Poincar\'e polynomial $\KRP_N(K)$ of $\sln$-homology is defined as:
  \[
    \KRP_{N}(L)=\sum_{k,r}t^kq^r\dim_{\C}\KR_N^{k,r}(L).
  \]
  The \emph{$\LeeP_N$ polynomial} is defined as
  \[\LeeP_N(L)=\sum_{k,r}\dim_\C\Gr^r\Lee_N^k(L)t^kq^r,\]
  where $\Gr^r$ is the $r$-th graded part of the filtered $\Lee_N$ homology.
\end{definition}
For an $m$-periodic link, we can refine this definition; generalizing the approach of \cite{Politarczyk-Jones}. 

\begin{definition}
  Let $L$ be an $m$-periodic link and $d|m$. The \emph{equivariant Poincar\'e polynomial},
  for $\sln$-homology is defined as:
\begin{equation}\label{eq:equivariant_poincare}
    \KRP_{N,d}(L)=\sum_{k,r}t^kq^r\dim_{\C_d}\EKR_N^{k,r,d}(L).
\end{equation}
  The \emph{equivariant Lee polynomial} is:
  \[\LeeP_{N,d}(L)=\sum_{k,r}\dim_{\C_d}\Gr^r\ELee_N^{k,d}(L)t^kq^r,\]
\end{definition}
We have the following basic identity.
\begin{equation}\label{eq:PN_and_PNd}
  \KRP_N(L)=\sum_{d|m} \phi(d)\KRP_{N,d},
\end{equation}
where $\phi(d) = \# \{1 \leq i \leq d \colon \gcd(i,d)=1\}$ is the Euler's totient function.

The Lee polynomial can be computed from~Proposition~\ref{prop:computingLee}. 
We give the precise formula for a knot.
We refer to \cite[Proposition 2.6]{Lewark_spectral}. Other references include \cite{Gornik, Lobb_slice,Lobb_gornik,RoseWedrich,Wu_quantum}.
\begin{lemma}\label{lem:anyknot}
  For any knot $K$ we have $\LeeP_N(K)=q^{s_N(K)}(q^{-N+1}+q^{-N+3}+\dots+q^{N-1})$,
  where $s_N(K)$ is the Lewark's $s_N$-invariant; see \cite{Lewark_spectral}.
\end{lemma}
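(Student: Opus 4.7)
The plan is to read off the dimension and homological grading of $\Lee_N(K)$ directly from Proposition~\ref{prop:computingLee}, and then to invoke the defining property of Lewark's $s_N$-invariant to pin down the quantum filtration levels of the canonical generators.

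First, I would apply Proposition~\ref{prop:computingLee} to the knot $K$. Since $K$ has a single component, colorings $\psi\colon\{\textrm{components of }K\}\to\{1,\dots,N\}$ are in bijection with elements of $\{1,\dots,N\}$, giving exactly $N$ generators $\sal_\psi$. For each such coloring, the preimage $\psi^{-1}(a)$ is either empty or equal to all of $K$, so each summand in
\[
  \deg(\sal_\psi)=\sum_{\substack{a,b=1,\dots,N\\ a\neq b}}\lk(\psi^{-1}(a),\psi^{-1}(b))
\]
vanishes. Consequently $\Lee_N(K)$ is an $N$-dimensional $\C$-vector space concentrated in homological degree zero, which already forces $\LeeP_N(K)$ to be a polynomial in $q$ only (no $t$-dependence).

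It remains to compute the quantum filtration levels of the $N$ generators $\sal_1,\dots,\sal_N$. This is where I would appeal to Lewark's $s_N$-invariant from \cite{Lewark_spectral}: by construction, the $N$ Gornik-type canonical generators of $\Lee_N(K)$ acquire filtration levels that form an arithmetic progression with common difference $2$ symmetrically distributed about $s_N(K)$, namely $s_N(K)-N+1,\,s_N(K)-N+3,\,\dots,\,s_N(K)+N-1$. Taking the associated graded, each of these contributes a $1$-dimensional summand at the corresponding quantum degree, which yields
\[
  \LeeP_N(K)=q^{s_N(K)}\bigl(q^{-N+1}+q^{-N+3}+\dots+q^{N-1}\bigr),
\]
as required.

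The only non-formal step is the computation of the filtration levels, which is precisely what the invariant $s_N$ encodes; hence there is no real obstacle once the definition from \cite{Lewark_spectral} is available. If one wished to make the argument self-contained, the work would shift to verifying that Gornik's generators $\sal_\psi$ agree with Lewark's canonical generators (up to a change of basis respecting the filtration), and that the filtration levels do indeed pair symmetrically about a single integer $s_N(K)$ that is a knot invariant — but both of these are precisely the content of \cite{Lewark_spectral}, so the lemma reduces to an application of that reference.
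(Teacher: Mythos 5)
Your argument is correct and matches the paper's treatment: the paper likewise reads the dimension ($N$) and homological degree ($0$) of $\Lee_N(K)$ from Proposition~\ref{prop:computingLee} (Gornik) and then cites Lewark's Proposition~2.6 for the distribution of quantum filtration levels; indeed the paper gives no further proof beyond pointing to that reference. Your elaboration of why the homological-degree sum vanishes for a knot is exactly the intended one-line check.
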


The following statement is an immediate consequence of Lemma~\ref{lem:trivialLee}.
\begin{lemma}\label{lem:eleep}
  If the action of the symmetry group on the set of components of $L$ is trivial, then $\LeeP_{N,d}$ is equal to $\LeeP_N$ if
  $d=1$, and to zero otherwise.
\end{lemma}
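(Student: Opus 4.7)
The plan is to derive Lemma~\ref{lem:eleep} almost directly from Lemma~\ref{lem:trivialLee}, with a small amount of bookkeeping to check that the quantum filtration interacts correctly with the eigenspace decomposition.

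First I would recall the setup from Section~\ref{sec:decomp-sln-homol}: since $\C[\Z_m]$ is semisimple, any $\C[\Z_m]$-module~$M$ splits as $M=\bigoplus_{d\mid m} M_d$, where $M_d$ collects the primitive $d$-th root of unity eigenspaces. Applied to the filtered $\C[\Z_m]$-module $\ELee_N^k(L)$, this gives a decomposition
\[
  \ELee_N^k(L)=\bigoplus_{d\mid m}\ELee_N^{k,d}(L).
\]
The key point is that the $\Z_m$-action on $\cF(\llbra D\rrbra)$ preserves the quantum grading (it is induced by rotations of resolution diagrams, which are quantum-grading-preserving). In particular, the quantum filtration on $\cF_{\Sigma}(\llbra D\rrbra)$ that produces $\ELee_N^k(L)$ is $\Z_m$-equivariant, so passing to the associated graded commutes with taking eigenspaces:
\[
  \Gr^r\ELee_N^k(L)=\bigoplus_{d\mid m}\Gr^r\ELee_N^{k,d}(L).
\]

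Now I would invoke Lemma~\ref{lem:trivialLee}: under the hypothesis that $\Z_m$ acts trivially on the components of $L$, every coloring $\psi\colon\{\text{components}\}\to\{1,\dots,N\}$ satisfies $\sao(\psi)=1$, so the Gornik generators $\sal_\psi$ of $\Lee_N^k(L)$ all lie in the $d=1$ isotypical piece. Hence $\ELee_N^{k,d}(L)=0$ for $d\neq 1$, and therefore $\Gr^r\ELee_N^{k,d}(L)=0$ as well. This gives $\LeeP_{N,d}(L)=0$ for $d\neq 1$.

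For $d=1$ the same argument identifies $\ELee_N^{k,1}(L)$ with the whole filtered vector space $\Lee_N^k(L)$, so $\Gr^r\ELee_N^{k,1}(L)=\Gr^r\Lee_N^k(L)$ as graded vector spaces, and summing over $k,r$ gives $\LeeP_{N,1}(L)=\LeeP_N(L)$. Since the argument is a direct consequence of Lemma~\ref{lem:trivialLee} and the compatibility of the filtration with the group action, there is no real obstacle; the only point requiring a moment's care is the assertion that $\Z_m$ preserves the quantum filtration, which follows from the construction of the action in Proposition~\ref{prop:group_action_on_D} via rotations of resolution diagrams.
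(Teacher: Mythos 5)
Your proposal is correct and follows the same route the paper intends: the paper treats Lemma~\ref{lem:eleep} as an immediate consequence of Lemma~\ref{lem:trivialLee}, and you simply spell out the (short) verification that the eigenspace decomposition commutes with the quantum filtration. The extra care you take about $\Z_m$ preserving the filtration is a reasonable thing to note explicitly, but it is the same argument the authors have in mind.
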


The next result relates the polynomials $\KRP$ and $\LeeP$. Its proof is the same as in the Khovanov case, see \cite[Proposition 2.17]{BP}.
See also \cite[Theorem 5.1]{Machine} and~\cite[Proposition 5.2]{Lewark_spectral}.
\begin{proposition}\label{prop:lee_spectral}
  For any link $L$ there are polynomials $R_1,\dots$ with non-negative coefficients such that
  \[\KRP_N(L)=\LeeP_N(L)+(1+tq^{2N})R_1+(1+tq^{4N})R_2+\dots.\]
  Moreover, if $L$ is $m$ periodic and $d|m$, then
  \[\KRP_{N,d}(L)=\LeeP_{N,d}(L)+(1+tq^{2N})R^d_1+(1+tq^{4N})R^d_2+\dots\]
  for polynomials $R^d_1,\dots$ with non-negative coefficients.
\end{proposition}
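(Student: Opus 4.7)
The plan is to use the Lee--Gornik spectral sequence (Theorem~\ref{thm:leegornik}) together with the standard Poincar\'e polynomial bookkeeping for spectral sequences of filtered complexes, mimicking the $\sl_2$ argument of~\cite[Proposition 2.17]{BP}. The $E_1$-page of that spectral sequence has bigraded Poincar\'e polynomial $\KRP_N(L)$, while the spectral sequence abuts to the associated graded of $\Lee_N(L)$, whose bigraded Poincar\'e polynomial is $\LeeP_N(L)$ by definition. Consequently, the difference $\KRP_N(L)-\LeeP_N(L)$ will be a sum of contributions coming from pairs of classes that cancel on intermediate pages.

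The key input will be a control of the bigrading shifts of those differentials. From the proof of Proposition~\ref{prop:abstract_lee_gornik}, the generic differential decomposes as $d_{gen}=d^0+d^1+d^2+\cdots$, where $d^j$ raises quantum grading by $2j$. The Gornik deformation polynomial has degree $N$ in variables of quantum degree $2$, so the non-trivial higher-order contributions concentrate in degrees that are multiples of $N$; hence the differential on the $r$-th non-trivial page of the spectral sequence has bigrading $(1,2Nr)$. Each pair $(x,y)$ cancelled on page $r$, with $x$ of bidegree $(k,q)$ and $y$ of bidegree $(k+1,q+2Nr)$, subtracts $(1+tq^{2Nr})\,t^kq^q$ from the running Poincar\'e polynomial. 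Collecting these contributions gives polynomials $R_r=\sum t^kq^q$ with manifestly non-negative coefficients, establishing the first identity.

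For the equivariant version, I will invoke the fact, established in Section~\ref{sec:periodic}, that the Lee--Gornik spectral sequence exists in the category of $\C[\Z_m]$-modules. Its differentials therefore commute with the $\Z_m$-action and preserve the isotypic decomposition~\eqref{eq:third_grading}. Running the same bookkeeping separately in each $d$-component, for $d\mid m$, produces polynomials $R^d_r$ with non-negative coefficients and yields the equivariant identity. The main obstacle is justifying that the shifts on non-trivial pages are precisely multiples of $2N$; this reduces to a local computation around the Gornik-deformed evaluation, which is treated in~\cite{Lewark_spectral,Machine,RoseWedrich}, so the remaining work is essentially bookkeeping.
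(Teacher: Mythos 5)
Your argument is essentially the same as the paper's: both rely on the Lee--Gornik spectral sequence, the key claim that the $r$-th page differential has bidegree $(1,2Nr)$ (which the paper, like you, defers to the literature, citing Lewark's sl$_3$ computation), and the standard Poincar\'e-polynomial bookkeeping adapted from \cite[Proposition 2.17]{BP}, with the equivariant refinement coming from the $\Z_m$-equivariance of the spectral sequence. One small caution: your heuristic that ``the non-trivial higher-order contributions concentrate in degrees that are multiples of $N$'' is the correct intuition but is not itself a proof that the components $d^j$ for $0<j<N$ vanish --- you do acknowledge this by citing the references, which is also what the paper does.
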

\begin{proof}
  The statement follows from the same argument as in \cite[Proposition 2.17]{BP}. The 
  differential on the $k$-th page of the Lee-Gornik spectral sequence (Theorem~\ref{thm:leegornik}) has
  bidegree $(1,2Nk)$; compare \cite[Section 3.2]{Lewark_sl3_calculation}. 
The factor $2$ before $Nk$ comes from the grading of $X_i$-variables in $\S_N$.

  The polynomials $R_k$ and $R^d_k$ are Poincar\'e
  polynomials associates to the image of the differential on the $E_k$-page, compare \cite[Proposition 2.16]{BP}.
\end{proof}
\subsection{The Reshetikhin--Turaev $\RT_N$ polynomials}\label{sub:RT_poly}
Recall \cite{HOMFLY,PT} that the HOMFLYPT polynomial of an oriented link in $\R^3$ is the polynomial $X(a,b)$ uniquely defined by
its value on the unknot and the skein relation
\begin{equation}\label{eq:hom_skein}aX_{L_+}(a,b)-a^{-1}X_{L_-}(a,b)=bX_{L_0}(a,b),\end{equation}
where $L_+$, $L_0$ and $L_-$ are links differing at a single crossing.

For $N \ge 0$ we consider the following specialization of the HOMFLYPT polynomial.
\begin{equation}\label{eq:RT_N}\RT_{N}(q)=X(q^{N},q-q^{-1})\end{equation}
with the normalization that
\[\RT_{N}(\textrm{unknot})=\frac{q^{N}-q^{-N}}{q-q^{-1}}.\]
The polynomial $\RT_{0}$ (with a slightly different normalization) is the Alexander polynomial, $\RT_{1}\equiv 1$, and $\RT_2$ is the Jones polynomial.
For $N>2$ we obtain a polynomial, which we refer to as the $\sln$-polynomial of $L$.
\begin{remark}
  The polynomials $\RT_N$ were introduced originally by Reshetikhin and Turaev \cite{ResheitikhinTuraev,Turaev_YB} via representation theory.
  We refer to \cite{MOY} for relations with webs.
\end{remark}

It was shown by \cite{Khovanov_Rozanski1,Khovanov_Rozanski2} 
that $\sln$-homology categorifies the $\sln$ polynomial.
For later use, we recall the statement.
\begin{lemma}
  Let $L$ be a link and $\KR_N^{k,r}(L)$ its $\sln$-homology. Then
  \[\RT_{N}(L)=\sum_{k,r}(-1)^kq^r\dim\KR_N^{k,r}(L)=\KRP_N|_{t=-1}.\]
\end{lemma}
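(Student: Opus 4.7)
The plan is to recognize $\KRP_{N}(L)|_{t=-1}$ as a link invariant satisfying the defining properties of $\RT_{N}$, and to conclude by uniqueness. Since $\cF_{0}(\llbra D\rrbra)$ is a bounded cochain complex of finite-dimensional graded $\C$-vector spaces, the Euler--Poincar\'e principle gives
\[
\sum_{k,r}(-1)^{k}q^{r}\dim \KR_{N}^{k,r}(L) \;=\; \sum_{k,r}(-1)^{k}q^{r}\dim\cF_{0}(\llbra D\rrbra)^{k,r},
\]
and by Theorem~\ref{thm:invariance} this quantity is intrinsic to $L$.

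To identify it with $\RT_{N}(L)$, I would verify the two properties characterizing the HOMFLYPT specialization. First, for the one-component unknot $U$ with label~$1$, the bracket $\llbra U\rrbra$ is the single web $\bigcirc_{1}$ concentrated in homological degree zero; Theorem~\ref{thm:decoration_face} (equivalently, direct foam evaluation via \eqref{eq:eval_color}) yields $\cF_{0}(\bigcirc_{1})\cong\bigoplus_{j=0}^{N-1}\C\{q^{-(N-1)+2j}\}$, whose graded dimension is exactly $[N]_{q}=(q^{N}-q^{-N})/(q-q^{-1})$, matching the required normalization. Second, for the HOMFLYPT skein relation $q^{N}\RT_{N}(L_{+})-q^{-N}\RT_{N}(L_{-}) = (q-q^{-1})\RT_{N}(L_{0})$, I would use the local form of the complex at a crossing with $a=b=1$ provided by Figure~\ref{fig:resolution}: a positive crossing becomes a two-term complex with the oriented resolution at homological degree $0$ and quantum shift $q^{-(N-1)}$, and the bigon (``theta'') resolution at degree $1$ and shift $q^{2-N}$; a negative crossing is the dual complex in degrees $-1,0$. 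A direct computation of graded Euler characteristics, combined with the MOY relation that expresses the bigon evaluation in terms of the oriented resolution, produces the skein identity on the nose.

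The main obstacle is the careful bookkeeping of quantum grading shifts and the MOY bigon formula; both are standard but require matching the conventions chosen in Section~\ref{sec:webs_and_foams}, and one must keep track of the ambient writhe-type shifts that determine the overall normalization. Given these three properties---invariance, the value on the unknot, and the HOMFLYPT skein relation---uniqueness of $\RT_{N}$ (an easy induction on the crossing number) concludes the argument.
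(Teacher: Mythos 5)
The paper itself offers no proof of this lemma---it simply cites Khovanov--Rozansky and recalls the statement for later use---so the appropriate comparison is to the strategy in those references, which is exactly the axiomatic route you describe: invariance, normalization on the unknot, the HOMFLYPT skein relation, plus uniqueness. Your outline matches this and is correct in spirit.

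One claim in your skein step is, however, inaccurate and would mislead you if you tried to fill in the details. You assert that ``the MOY relation that expresses the bigon evaluation in terms of the oriented resolution'' produces the skein identity. There is no such MOY relation: the bigon resolution~$D_{1}$ (two $1$-labelled strands merging into a $2$-edge and splitting back) and the oriented resolution~$D_{0}$ are distinct closed MOY webs once embedded in the rest of the diagram, and the ratio of their evaluations depends on the ambient diagram---it is not a universal scalar. The MOY digon-removal relation rewrites $D_{1}$ in terms of a web with a contracted $2$-edge, not in terms of~$D_{0}$. The actual mechanism behind the skein identity is a \emph{shift cancellation}: in the combination $q^{N}\chi\llbra L_{+}\rrbra - q^{-N}\chi\llbra L_{-}\rrbra$, the $\chi(D_{1})$-terms must cancel identically, and this happens precisely when the quantum shifts attached to the bigon resolution at a positive crossing and at a negative crossing differ by~$q^{2N}$; no web-evaluation identity enters at this step.

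Once you replace the appeal to a MOY relation by this cancellation argument, your plan works in principle, but the bookkeeping deserves more suspicion than you give it. Taking the shifts literally from Figure~\ref{fig:resolution} (namely $q^{1-N}$ and $q^{2-N}$ at a positive $(1,1)$-crossing) and using the formula for~$\saq$ as printed (with $q_i=-I(i)+c_i(N-c_i)$ at a negative crossing), the required $q^{2N}$ offset does \emph{not} fall out directly: either there is a sign slip in the stated negative-crossing formula for~$\saq$, or a writhe-type normalization is being absorbed elsewhere in the conventions inherited from the references. Resolving that discrepancy is precisely the ``careful bookkeeping'' you flag, but it is a genuine gap rather than a routine check, because the mechanism you invoke would not produce the cancellation even with the correct shifts.
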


The skein relation for the HOMFLYPT polynomial specializes to the skein relation for the $\RT_N$ polynomial.
\begin{equation}\label{eq:skein}
  q^{N} \raisebox{-1em}{\begin{tikzpicture} \draw[->] (0.5,-0.5) -- (-0.5,0.5);\fill[white] (0,0) circle (0.2); \draw[->] (-0.5,-0.5) -- (0.5,0.5); \end{tikzpicture}}
  -
  q^{-N} \raisebox{-1em}{\begin{tikzpicture} \draw[->] (-0.5,-0.5) -- (0.5,0.5); \fill[white] (0,0) circle (0.2); \draw[->] (0.5,-0.5) -- (-0.5,0.5);\end{tikzpicture}}
  =
  (q-q^{-1})
  \raisebox{-1em}{\begin{tikzpicture}
    \draw[->] (-0.5,-0.5) .. controls ++ (0.2,0.2) and ++(0.2,-0.2) .. (-0.5,0.5);
    \draw[->] (0.2,-0.5) .. controls ++ (-0.2,0.2) and ++(-0.2,-0.2) .. (0.2,0.5);
\end{tikzpicture}}
\end{equation}

\subsection{Difference polynomials}
Assume now that $m=p^\ell$ is a prime power and let \(D\) be an \(m\)-periodic diagram of an $m$-periodic link $L$. The
$\sln$ homology of $L$ decomposes as in \eqref{eq:third_grading}. We set
\[\RT_{N,j}=\KRP_{N,p^j}|_{t=-1},\]
where $\KRP_{N,d}$ is as in \eqref{eq:equivariant_poincare}.

For future use, we note the following corollary of Proposition~\ref{prop:e_mirror}.
\begin{corollary}\label{cor:p_mirror}
  If $L$ is a $p^n$ periodic link and $L'$ its mirror, then $\RT_{N,j}(L)(q)=\RT_{N,j}(L')(q^{-1})$.
\end{corollary}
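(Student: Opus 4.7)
The plan is to deduce this from Proposition~\ref{prop:e_mirror} by a direct computation of the defining sum for $\RT_{N,j}$, together with the observation that the decomposition $\EKR_N^{k,r} = \bigoplus_{d\mid m}\EKR_N^{k,r,d}$ is intrinsic to the $\C[\Z_m]$-module structure.

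First I would note that by Proposition~\ref{prop:e_mirror} we have a $\C[\Z_m]$-module isomorphism $\EKR_N^{k,r}(L)\cong\EKR_N^{-k,-r}(L')$. Since the summands $\EKR_N^{k,r,d}$ in the decomposition \eqref{eq:third_grading} are defined purely in terms of the $\C[\Z_m]$-action (they are the sums of eigenspaces of the generator $g$ corresponding to primitive $d$-th roots of unity, equivalently images of the central idempotents $e_d=\sum_{\gcd(i,d)=1}e_i$), any $\C[\Z_m]$-linear isomorphism automatically restricts to an isomorphism on each $d$-isotypic summand. Hence
\[\EKR_N^{k,r,p^j}(L)\cong\EKR_N^{-k,-r,p^j}(L')\]
as $\C$-vector spaces, and in particular the dimensions agree.

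Next I would plug this into the defining formula. By definition,
\[\RT_{N,j}(L)(q)=\KRP_{N,p^j}(L)\bigm|_{t=-1}=\sum_{k,r}(-1)^k q^r\dim_{\C}\EKR_N^{k,r,p^j}(L).\]
Using the dimension equality and making the substitution $k'=-k$, $r'=-r$,
\[\RT_{N,j}(L)(q)=\sum_{k,r}(-1)^k q^r\dim_{\C}\EKR_N^{-k,-r,p^j}(L')=\sum_{k',r'}(-1)^{k'}(q^{-1})^{r'}\dim_{\C}\EKR_N^{k',r',p^j}(L'),\]
where we used $(-1)^k=(-1)^{-k'}=(-1)^{k'}$ and $q^r=(q^{-1})^{r'}$. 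The right-hand side is precisely $\RT_{N,j}(L')(q^{-1})$, which yields the claimed identity.

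There is no real obstacle here: the entire content of the corollary is already contained in Proposition~\ref{prop:e_mirror} together with the observation that passing to the $p^j$-isotypic component is a functorial operation on $\C[\Z_m]$-modules. The only mild subtlety worth flagging is the compatibility of the grading reversal under mirroring with the mirror convention adopted before Proposition~\ref{prop:mirror} (the mirror is taken parallel to the plane so that crossings switch sign without inverting the $\Z_m$-action), which is what allows the isomorphism $\EKR_N^{k,r}(L)\cong\EKR_N^{-k,-r}(L')$ to be $\C[\Z_m]$-equivariant rather than equivariant only up to inversion of $\Z_m$.
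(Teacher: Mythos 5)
Your proof is correct and follows essentially the same route as the paper's: both deduce the corollary from Proposition~\ref{prop:e_mirror} by passing to the $p^j$-isotypic component of the $\C[\Z_m]$-module isomorphism and evaluating the resulting Poincar\'e polynomial at $t=-1$. You merely spell out the two steps the paper leaves implicit --- that a $\C[\Z_m]$-linear isomorphism respects the isotypic decomposition, and the index substitution $k\mapsto -k$, $r\mapsto -r$ that converts $q$ into $q^{-1}$.
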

\begin{proof}
  By Proposition~\ref{prop:e_mirror}, there is an isomorphism of $\C$-vector spaces $\EKR^{k,r,m}(L)=\EKR^{-k,-r,m}(L')$. The statement
  follows by definition of $\RT_{N,j}(L)$.
\end{proof}

We define the \emph{difference $\sln$-polynomials via}
\[
  \DP_{N,j}(D) =
  \begin{cases}
    \RT_{N,p^j}(D) - \RT_{N,p^{j+1}}(D) & 0 \leq j< \ell \\
    \RT_{N,p^{\ell}}(D) & j=\ell.
  \end{cases}
\]
Suppose $\Lpm$, $\Lmm$ and $\Lom$ are three $m=p^\ell$-periodic links for  for the periodic resolutions, that is, $\Lpm$ differs from $\Lmm$ and from $\Lom$
on an orbit of crossings.
\begin{proposition}[compare \expandafter{\cite[Theorem 4.7]{Politarczyk-Jones}}]\label{prop:jones}
  We have the following relations between $\DP_{N,j}(D)$ polynomials.
  \begin{enumerate}
  \item For \(j=0\) the following formula holds
    \[q^{m N} \DP_{N,0}(\Lpm) - q^{-m N} \DP_{N,0}(\Lmm) = (q^{-m} - q^{m}) \DDJ_{N,0}(\Lom).\]
  \item For any \(0 \leq j < \ell\), the following congruence is satisfied
    \[q^{m N} \DP_{N,\ell-j}(\Lpm) - q^{-m N} \DP_{N,\ell-j}(\Lmm) \equiv (q^{-m} - q^{m}) \DDJ_{N,\ell-j}(\Lom) \pmod{q^{p^{j}} - q^{-p^{j}}}.\]
  \end{enumerate}
\end{proposition}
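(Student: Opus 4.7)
The plan is to take equivariant Euler characteristics in the skein spectral sequence of Proposition~\ref{prop:skein-spectral-sequence}. Since the Euler characteristic is preserved through any convergent spectral sequence, setting $t=-1$ in the $E_1$-page for $L_+$ (with orbit $X$ of $m$ positive crossings) yields
\[\RT_{N,p^{\ell-u}}(L_+) = q^{-m(N-1)} \sum_{k=0}^{m}(-q)^{k}\sum_{p^{s}\mid k}\lambda(u,s)\sum_{\wh{I}\in \ol A^{s}_{k}(X)} \RT_{N,\kappa(u,s)}(D_{\wh{I}}),\]
with the analogous formula for $L_-$ obtained via the bijection $\wh{I}\leftrightarrow -\wh{I}$ between partial resolutions of positive and negative orbits, which identifies underlying resolution diagrams but reverses the quantum grading shift. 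Combining these after multiplying by $q^{\pm mN}$ gives
\[q^{mN}\RT_{N,p^{\ell-u}}(L_+) - q^{-mN}\RT_{N,p^{\ell-u}}(L_-) = (q^{-m}-q^{m}) \sum_{k=0}^{m}(-q)^{k}\sum_{p^{s}\mid k}\lambda(u,s)\sum_{\wh{I}\in \ol A^{s}_{k}(X)} \RT_{N,\kappa(u,s)}(D_{\wh{I}}),\]
and the $k=0$ summand, coming from $\wh{I}=0$ with $\Iso(\wh{I})=\Z_m$ (so $s=\ell$, $\kappa(u,\ell)=p^{\ell-u}$, $\lambda(u,\ell)=1$), contributes exactly $(q^{-m}-q^{m})\RT_{N,p^{\ell-u}}(L_0)$.

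Next, I would translate this into a statement about $\DP_{N,\ell-j}$ by subtracting the identity at $u=j-1$ from the one at $u=j$ and using $\DP_{N,i} = \RT_{N,p^i} - \RT_{N,p^{i+1}}$. A case analysis by the isotropy index $s$ rewrites the coefficient of each $D_{\wh{I}}$: for $s\ge j$ the combinations $\lambda(j,s)\RT_{N,\kappa(j,s)} - \lambda(j-1,s)\RT_{N,\kappa(j-1,s)}$ collapse into $p^{\ell-s}\cdot\DP_{N,s-j}(D_{\wh{I}})$, while for $s<j$ they become a scalar multiple of $\RT_{N,1}(D_{\wh{I}})$. The $k=0$ contribution yields the desired $(q^{-m}-q^{m})\DP_{N,\ell-j}(L_0)$ term.

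The main obstacle is establishing the divisibility in part (2): the $k\ge 1$ contributions must be divisible by $q^{p^{j}}-q^{-p^{j}}$. This will come from regrouping each $\wh{I}$ by its $\Z_m$-orbit of size $p^{\ell-s}$; the weighted alternating sum over such an orbit, once combined with the character values of $\Z_{p^{\ell-u}}$, becomes divisible by $q^{p^{\ell-s}}-q^{-p^{\ell-s}}$, which in turn divides $q^{p^{j}}-q^{-p^{j}}$ precisely when $s\le \ell-j$. Contributions from $s>\ell-j$ must vanish because the $\DP$-components they would contribute lie outside the representations that appear in the $\DP_{N,\ell-j}$-decomposition of $D_{\wh{I}}$. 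For part (1) (exact equality for $\DP_{N,0}$), the argument is sharper: the higher-$k$ terms cancel completely, which follows from an inclusion-exclusion telescoping analogous to \cite[Theorem~4.7]{Politarczyk-Jones} exploiting the full $\Z_m$-symmetry of the cube of resolutions. Tracking this combinatorics, especially the interplay of the multiplicities $\lambda(u,s)$ with orbit-grouping of partial resolutions, will be the technical heart of the proof.
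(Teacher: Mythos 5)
Your overall framework is correct and matches the paper's: take Euler characteristics in the skein spectral sequence of Proposition~\ref{prop:skein-spectral-sequence}, use the bijection between cubes of resolutions for $L_{+}$ and $L_{-}$, and isolate the $L_{0}$ contribution from the endpoint vertices $k=0$ and $k=p^{\ell}$. The resulting identity
\[q^{p^{\ell} N}\DP_{N,\ell-u}(L_{+})-q^{-p^{\ell} N}\DP_{N,\ell-u}(L_{-})-(q^{p^{\ell}}-q^{-p^{\ell}})\DP_{N,\ell-u}(L_{0})=\sum_{k=1}^{p^{\ell}-1}\sum_{s\ge u}\sum_{\wh{I}\in A_{k}^{s}}(-1)^{k}(q^{p^{\ell}-k}-q^{-p^{\ell}+k})\DP_{N,s-u}(D_{\wh{I}})\]
is also the same in both arguments.

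Where you diverge, and where the gap is, is in the final divisibility step. The paper's argument is far simpler than yours and is \emph{term by term}: each summand $(q^{p^{\ell}-k}-q^{-p^{\ell}+k})\DP_{N,s-u}(D_{\wh{I}})$ is individually divisible by $q^{p^{u}}-q^{-p^{u}}$, because $A_{k}^{s}$ is empty unless $p^{s}\mid k$, and since $s\ge u$, also $p^{u}\mid k$; together with $p^{u}\mid p^{\ell}$ this gives $p^{u}\mid(p^{\ell}-k)$, hence $(q^{p^{u}}-q^{-p^{u}})\mid(q^{p^{\ell}-k}-q^{-(p^{\ell}-k)})$. There is no need to regroup by orbits, no character calculation, and no vanishing claim. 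Your proposed vanishing of contributions with ``$s>\ell-j$'' is not what happens: those terms do not vanish, they are simply divisible (by the above) and absorbed into the congruence. Similarly, for part (1) your appeal to ``inclusion--exclusion telescoping'' is unnecessary: when $u=\ell$ only $s=\ell$ survives, and $A_{k}^{\ell}$ is empty for $1\le k\le p^{\ell}-1$ because a fully $\Z_{p^{\ell}}$-invariant partial resolution of a free orbit of $p^{\ell}$ crossings must have all coordinates equal, forcing $k\equiv 0 \pmod{p^{\ell}}$. So the right-hand side is literally the empty sum, giving exact equality rather than a cancellation phenomenon. You should replace both the orbit/character argument and the vanishing claim by these two elementary observations.
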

\begin{proof}
  We argue as in the proof of~\cite[Theorem 3.6]{Politarczyk-Jones}.
    Suppose that \(\{E_{r}^{\ast,\ast},d_{r}\}_{r \geq 1}\) is a spectral sequence of graded finite-dimensional \(\C\)-vector spaces which converges to a double graded \(\C\)-vector space \(H^{\ast,\ast}\).
    Suppose, furthermore, that the spectral sequence collapses at a finite stage.
    For any \(r \geq 1\) define the \emph{Poincar\'e polynomials} of the page \(E_{r}^{\ast,\ast}\),
    \[P(E_{r}^{\ast,\ast}) = \sum_{i,j} t^{i+j} \operatorname{qdim}_{\C} E_{r}^{i,j}.\]
    Recall that for a graded \(\C\)-vector space \(V^{\ast}\), we define
    \[\operatorname{qdim}_{\C} = \sum_{i} q^{i} \dim_{\C} V^{i}.\]
    From~\cite[Exercise 1.7]{mcclearyUserGuideSpectral2001}, we deduce that for any \(r \geq 1\),
    \begin{equation}
      \label{eq:equality-Poincare-polynomials-spec-seq}
      P(E_{r}^{\ast,\ast})(-1,q) = P(E_{\infty}^{\ast,\ast})(-1,q) = \sum_{i,j}(-1)^{i} \operatorname{qdim}_{\C} H^{i,\ast}.
    \end{equation}

  Fix a \(p^{\ell}\)-periodic diagram \(D\).
    We  apply the formula~\eqref{eq:equality-Poincare-polynomials-spec-seq} to spectral sequences constructed in Proposition~\ref{prop:skein-spectral-sequence}.
    We obtain
    \[P(E_{1}^{\ast,\ast}(p^{\ell-u}))(-1,q) = P(E_{\infty}^{\ast,\ast})(-1,q) = \RT_{N,\ell-u}(D).\]
    The description of the \(E_{1}^{\ast,\ast}(p^{\ell-u})\) given in Proposition~\ref{prop:skein-spectral-sequence} implies that \(P(E_{1}^{\ast,\ast})(-1,q)\) is a linear combination of polynomials \(\RT_{N,j}(D_{\wh{I}})\), where
    \(\wh{I}\in A_{k}(X)\) and appropriate \(j\).
    Consequently,
    \begin{equation}
      \label{eq:difference-polynomials-from-spec-seq}
      \begin{split}
	\DP_{N,\ell-u}(D)& = \RT_{N,\ell-u}(D) - \RT_{N,\ell-u+1}(D) =\\
			 &=P(E_{1}^{\ast,\ast}(p^{\ell-u}))(-1,q) - P(E_{1}^{\ast,\ast}(p^{\ell-u+1}))(-1,q).
      \end{split}
    \end{equation}
  
  We can now apply formula~\eqref{eq:difference-polynomials-from-spec-seq} to \(\DP_{N,\ell-u}(\Lpm)\) and \(\DP_{N,\ell-u}(\Lpm)\) obtaining
  \begin{align*}
    \DP_{N,\ell-u}(\Lpm) &= \sum_{k=0}^{p^{\ell}} \sum_{s=u}^{\ell} \sum_{{\wh{I}} \in A_{k}^{s}(\Lpm,X)} (-1)^{k} q^{-p^{\ell}(N-1)-k} \DP_{N,s-u}(D_{{\wh{I}}}), \\
    \DP_{N,\ell-u}(\Lmm) &= \sum_{k=-p^{\ell}}^{0} \sum_{s=u}^{\ell} \sum_{{\wh{I}} \in A_{p^{\ell}+k}^{s}(\Lmm,X)} (-1)^{k} q^{p^{\ell}(N-1)+p^{\ell}-k} \DP_{N,s-u}(D_{{\wh{I}}}).
  \end{align*}
  Since, \(A_{k}(\Lpm,X) = A_{p^{\ell}-k}(\Lmm,X)\), we obtain
  \[q^{p^{\ell} N} \DP_{N,\ell-u}(\Lpm) - q^{-p^{\ell} N} \DP_{N,\ell-u}(\Lmm) = \sum_{k=0}^{p^{\ell}} \sum_{s=u}^{\ell} \sum_{{\wh{I}} \in A_{k}^{s}(\Lpm,X)} (-1)^{k} (q^{p^{\ell}-k} - q^{-p^{\ell}+k}) \DP_{N,s-u}(D_{{\wh{I}}}).\]
  Observe that \(A_{k}^{s}(\Lpm,X)\) is empty unless \(p^{s}\) divides \(k\).
  Consequently,
  \begin{align*}
    &q^{p^{\ell} N} \DP_{N,\ell-u}(\Lpm) - q^{-p^{\ell} N} \DP_{N,\ell-u}(\Lmm) - (q^{p^{\ell}} - q^{-p^{\ell}}) \DP_{N,\ell-u}(\Lom) = \\
    &\sum_{k=1}^{p^{\ell}-1} \sum_{s=u}^{\ell} \sum_{{\wh{I}} \in A_{k}^{s}(\Lpm,X)} (-1)^{k} (q^{p^{\ell}-k} - q^{-p^{\ell}+k}) \DP_{N,s-u}(D_{{\wh{I}}}).
  \end{align*}
  For \(u = \ell\), the right-hand side of the above equality is zero, i.e.
  \[q^{p^{\ell} N} \DP_{N,0}(\Lpm) - q^{-p^{\ell} N} \DP_{N,0}(\Lmm) = (q^{p^{\ell}} - q^{-p^{\ell}}) \DP_{N,0}(\Lom),\]
  as desired.

  Suppose now that \(0 \leq u < \ell\).
  For any \(u \leq s \leq \ell\) and \(k\) divisible by \(p^{s}\), if \({\wh{I}} \in A_{k}^{s}(\Lpm,X)\), we have
  \[(q^{p^{\ell}-k} - q^{-p^{\ell}+k}) \DP_{N,s-u}(D_{{\wh{I}}}) \equiv 0 \pmod{q^{p^{u}}-q^{p^{u}}}.\]
  Consequently,
  \[q^{p^{\ell} N} \DP_{N,0}(\Lpm) - q^{-p^{\ell} N} \DP_{N,0}(\Lmm) \equiv (q^{p^{\ell}} - q^{-p^{\ell}}) \DP_{N,0}(\Lom) \pmod{q^{p^{u}}-q^{p^{-u}}}.\]
\end{proof}

\begin{corollary}
  Let \(I_{p^{\ell}}\) be the ideal in the polynomial ring \(\Z[q^{\pm1}]\) generated by polynomials
  \[(q^{p^{\ell}}-q^{p^{-\ell}}), p \cdot (q^{p^{\ell}-1}-q^{-p^{\ell}+1}), \ldots, p^{\ell-1}(q^{p}-q^{-p}), p^{\ell}.\]
  If \(L\) is a $p^\ell$-periodic link, then
  \[\RT_{N}(L)(q) \equiv \RT_{N}(L)(q^{-1}) \pmod{I_{p^{\ell}}}.\]
\end{corollary}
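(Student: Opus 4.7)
The plan is to combine Proposition~\ref{prop:jones} and Corollary~\ref{cor:p_mirror} into a \emph{periodic skein relation} modulo $I_{p^{\ell}}$, then use iterated orbit-changing to relate $L$ to its mirror $L'$, and finally apply the mirror identity.

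Starting from $\RT_{N}(L) = \sum_{j=0}^{\ell} \phi(p^{j}) \RT_{N,j}(L)$ (the $t=-1$ specialization of \eqref{eq:PN_and_PNd}), together with the telescoping $\RT_{N,j} = \sum_{k=j}^{\ell} \DP_{N,k}$ and the identity $\sum_{j=0}^{k} \phi(p^{j}) = p^{k}$, one obtains
\[\RT_{N}(L) = \sum_{k=0}^{\ell} p^{k} \DP_{N,k}(L).\]
Multiplying the exact equation in Proposition~\ref{prop:jones}(1) by $p^{0}=1$, and each congruence in Proposition~\ref{prop:jones}(2) (with $\ell - j = k$, $k = 1, \dots, \ell$) by $p^{k}$, and summing yields
\[q^{m N} \RT_{N}(\Lpm) - q^{-m N} \RT_{N}(\Lmm) \equiv (q^{-m} - q^{m}) \RT_{N}(\Lom) \pmod{J},\]
where $J$ is the ideal generated by $p^{k}(q^{p^{\ell-k}} - q^{-p^{\ell-k}})$ for $k = 1, \dots, \ell$. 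A direct check shows $J \subseteq I_{p^{\ell}}$: the $k=\ell$ contribution $p^{\ell}(q - q^{-1})$ is a multiple of $p^{\ell} \in I_{p^{\ell}}$, while for $k = 1, \dots, \ell-1$ these are generators of $I_{p^{\ell}}$.

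Since $m = p^{\ell}$, the element $q^{m} - q^{-m}$ is itself a generator of $I_{p^{\ell}}$, so $(q^{-m} - q^{m}) \RT_{N}(\Lom) \in I_{p^{\ell}}$, and similarly $q^{m N} - q^{-m N}$ is a multiple of $q^{m} - q^{-m}$, hence lies in $I_{p^{\ell}}$. The decomposition
\[q^{m N}\RT_{N}(\Lpm) - q^{-m N}\RT_{N}(\Lmm) = q^{m N}\bigl(\RT_{N}(\Lpm) - \RT_{N}(\Lmm)\bigr) + (q^{m N} - q^{-m N})\RT_{N}(\Lmm),\]
together with the fact that $q^{m N}$ is a unit in $\Z[q^{\pm 1}]$, then gives
\[\RT_{N}(\Lpm) \equiv \RT_{N}(\Lmm) \pmod{I_{p^{\ell}}},\]
i.e., reversing an entire orbit of crossings does not affect $\RT_{N}$ modulo $I_{p^{\ell}}$.

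Iterating this over the finitely many orbits of crossings in a $p^{\ell}$-periodic diagram of $L$ transforms $L$ into its mirror $L'$ (Proposition~\ref{prop:mirror}), yielding $\RT_{N}(L) \equiv \RT_{N}(L') \pmod{I_{p^{\ell}}}$. Summing the identity of Corollary~\ref{cor:p_mirror} weighted by $\phi(p^{j})$ gives the exact equality $\RT_{N}(L)(q) = \RT_{N}(L')(q^{-1})$. Observing that $I_{p^{\ell}}$ is stable under $q \mapsto q^{-1}$ (each generator is merely negated), the congruence for $L'$ combined with this equality gives the conclusion $\RT_{N}(L)(q) \equiv \RT_{N}(L)(q^{-1}) \pmod{I_{p^{\ell}}}$. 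The main technical subtlety is the bookkeeping in the first step: keeping track of the weighted moduli $p^{k}(q^{p^{\ell-k}} - q^{-p^{\ell-k}})$ and noting that the weakest congruence (at $k=\ell$) is absorbed precisely by the extra generator $p^{\ell}$ of $I_{p^{\ell}}$.
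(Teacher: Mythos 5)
Your proof is correct and follows the same overall strategy as the paper's sketch: transform $L$ to its mirror $L'$ by a sequence of equivariant orbit crossing changes, use Proposition~\ref{prop:jones} to control the change in $\RT_N$ modulo $I_{p^\ell}$, and conclude with Corollary~\ref{cor:p_mirror}. Your intermediate step is in fact sharper than the one written in the paper: by telescoping to $\RT_N=\sum_{k=0}^\ell p^k\,\DP_{N,k}$ and multiplying the $k$-th congruence of Proposition~\ref{prop:jones} by $p^k$, you land directly in the ideal $J$ generated by $p^k(q^{p^{\ell-k}}-q^{-p^{\ell-k}})$, $k=1,\dots,\ell$, and then verify $J\subseteq I_{p^\ell}$. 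The paper's sketch instead asserts that each $\RT_{N,j}$ changes by a multiple of $q^{p^{\ell-j}}-q^{-p^{\ell-j}}$, which as stated is too strong (since $\RT_{N,j}=\sum_{k\ge j}\DP_{N,k}$ also contains contributions controlled only by the weaker factors $q^{p^{\ell-k}}-q^{-p^{\ell-k}}$); your $p^k$-weighted combination is the correct way to make that sketch rigorous, and your handling of the $q^{mN}$-prefactors in the skein relation is carried out cleanly rather than suppressed.
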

\begin{proof}
  The proof is essentially a repetition of an analogous result in \cite{Politarczyk-Jones}. For the reader's convenience,
  we present a sketch.

  A $p^\ell$-periodic link can be transformed to its mirror by a sequence of equivariant crossing changes.
  Each such crossing change, by virtue of Proposition~\ref{prop:jones}, changes the difference polynomials $\DP_{N,j}$
  by a multiple of $q^{p^{\ell-j}}-q^{-p^{\ell-j}}$. From the definition of difference polynomials, 
  we recover that an equivariant crossing change changes the polynomial $\RT_{N,j}$ by a multiple of $q^{p^{\ell-j}}-q^{-p^{\ell-j}}$.
  The relation \eqref{eq:PN_and_PNd}, evaluated at $t=-1$ shows, that under an equivariant crossing change changes
  the polynomial $\RT_N$ by an element belonging to $I_{p^\ell}$. This means, that the polynomial $\RT_N$ of a link and of its mirror
  differ by an element from $I_{p^\ell}$. We conclude by Corollary~\ref{cor:p_mirror}.
\end{proof}

\subsection{Periodicity criterion}\label{sub:periodicity_criterion}
The following result ports the periodicity criterion of \cite{BP} to the case of $\sln$-homology.

\begin{theorem}\label{thm:periodicity}
  Suppose $L$ is an $m=p^\ell$ periodic knot with $p$-prime.
  Then, there exist polynomials $\cP_0,\cP_1,\dots$ such that
  \[\KRP_N=\cP_0+\sum_{j=1}^\ell (p^j-p^{j-1}) \cP_j.\]
  Here $\cP_0,\dots$ are Laurent polynomials in $t,q$ such that
  \begin{enumerate}[label=(P-\arabic*)]
    \item the Laurent polynomial $\cP_{0}$ can be presented as $\cP_0=q^{s_N(L)}(q^{1-N}+q^{3-N}+\dots+q^{N-1}) + \sum_{j=1}^\infty (1+tq^{Nj})\mathcal{S}_{0j}(t,q)$, while the Laurent polynomials $\cP_k$, $k>0$ can be presented as $\cP_k=\sum_{j=1}^{\infty}(1+tq^{Nj})\mathcal{S}_{kj}(t,q)$.\label{item:present}
    \item The Laurent polynomials $\mathcal{S}_{kj}$, $k\ge 0$, from item~\ref{item:present} have non-negative coefficients; \label{item:non-negative}
    \item the polynomials $\cP_k$, $k\ge 0$ satisfy the following congruence relation: $\cP_{k}(-1,q) - \cP_{k+1}(-1,q) \equiv \cP_{k}(-1,q^{-1}) - \cP_{k+1}(-1,q^{-1})\pmod{q^{p^{\ell-k}} - q^{-p^{\ell-k}}}$.\label{item:congruence}
  \end{enumerate}
  Here, $s_N(L)$ is the Lewark's $s$-invariant; see Lemma~\ref{lem:anyknot}.
\end{theorem}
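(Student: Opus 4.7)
The plan is to choose the natural candidates coming from the decomposition of the equivariant Poincar\'e polynomial. Set $\cP_0 := \KRP_{N,1}(L)$ and $\cP_k := \KRP_{N,p^k}(L)$ for $1 \leq k \leq \ell$, and adopt the convention $\cP_{\ell+1} := 0$. The decomposition of $\KRP_N$ is then an immediate consequence of \eqref{eq:PN_and_PNd} together with $\phi(p^j)=p^j-p^{j-1}$.

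For (P-1) and (P-2), I would invoke the equivariant Lee--Gornik comparison of Proposition~\ref{prop:lee_spectral}, which expresses each $\cP_k = \KRP_{N,p^k}$ as $\LeeP_{N,p^k}$ plus a non-negative combination of terms $(1+tq^{2Nj})R^{p^k}_j$. Because $L$ is a knot, it has a unique component which is necessarily fixed by the $\Z_m$-action, so Lemma~\ref{lem:eleep} gives $\LeeP_{N,1}(L)=\LeeP_N(L)$ and $\LeeP_{N,p^k}(L)=0$ for $k \geq 1$. Combining with Lemma~\ref{lem:anyknot} pins down $\cP_0$ in the asserted form and leaves only the $(1+tq^{2Nj})$-multiples in each $\cP_k$ for $k\geq 1$. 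Setting $\mathcal{S}_{k,2j}:=R^{p^k}_j$ and $\mathcal{S}_{k,2j+1}:=0$ produces the required presentations with non-negative coefficients.

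For (P-3), evaluate at $t=-1$ to obtain
\[\cP_k(-1,q)-\cP_{k+1}(-1,q)=\RT_{N,k}(L)(q)-\RT_{N,k+1}(L)(q)=\DP_{N,k}(L)(q),\]
where the case $k=\ell$ uses $\cP_{\ell+1}=0$ and the boundary convention for $\DP_{N,\ell}$. By Corollary~\ref{cor:p_mirror}, $\DP_{N,k}(L)(q^{-1})=\DP_{N,k}(\bar L)(q)$ for the mirror $\bar L$, so (P-3) is equivalent to
\[\DP_{N,k}(L)(q)\equiv\DP_{N,k}(\bar L)(q)\pmod{q^{p^{\ell-k}}-q^{-p^{\ell-k}}}.\]
Since $L$ can be turned into $\bar L$ by a finite sequence of equivariant crossing changes (changing whole orbits of crossings one at a time), it suffices to verify that a single equivariant crossing change preserves $\DP_{N,k}$ modulo $q^{p^{\ell-k}}-q^{-p^{\ell-k}}$. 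Substituting $j=\ell-k$ in Proposition~\ref{prop:jones} and multiplying by $q^{-mN}$ (with $m=p^\ell$) gives
\[\DP_{N,k}(L_+)-q^{-2mN}\DP_{N,k}(L_-)\equiv q^{-mN}(q^{-m}-q^m)\DP_{N,k}(L_0)\pmod{q^{p^{\ell-k}}-q^{-p^{\ell-k}}}.\]
In the quotient $\Z[q^{\pm1}]/(q^{p^{\ell-k}}-q^{-p^{\ell-k}})\cong\Z[q^{\pm1}]/(q^{2p^{\ell-k}}-1)$ the element $q^{2mN}$ is trivial (because $2p^{\ell-k}\mid 2mN$), and $q^{-m}-q^m=-(q^{p^\ell}-q^{-p^\ell})$ is divisible by $q^{p^{\ell-k}}-q^{-p^{\ell-k}}$. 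Hence the congruence collapses to $\DP_{N,k}(L_+)\equiv\DP_{N,k}(L_-)$ modulo $q^{p^{\ell-k}}-q^{-p^{\ell-k}}$, as required. The case $k=0$ follows the same way from the exact equality in part~(1) of Proposition~\ref{prop:jones}.

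The proof is fundamentally a bookkeeping exercise assembling Proposition~\ref{prop:lee_spectral}, Proposition~\ref{prop:jones}, Corollary~\ref{cor:p_mirror}, and Lemmas~\ref{lem:anyknot}--\ref{lem:eleep}. The only delicate point is the last algebraic reduction: it hinges on the fact that $m=p^\ell$ is a prime power, so that all relevant divisibility conditions $2p^{\ell-k}\mid 2mN$ and $(q^{p^{\ell-k}}-q^{-p^{\ell-k}})\mid(q^{p^\ell}-q^{-p^\ell})$ hold simultaneously; without the prime-power hypothesis the clean cancellation of the $q^{\pm mN}$ factors would fail, which is exactly the obstruction that limits the criterion to prime-power periods.
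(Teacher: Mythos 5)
Your proposal is correct and follows the same route as the paper: choose $\cP_k=\KRP_{N,p^k}$, apply \eqref{eq:PN_and_PNd}, Proposition~\ref{prop:lee_spectral}, Lemmas~\ref{lem:anyknot} and~\ref{lem:eleep} for (P-1)--(P-2), and Proposition~\ref{prop:jones} with Corollary~\ref{cor:p_mirror} for (P-3). Your reindexing $\mathcal{S}_{k,2j}:=R^{p^k}_j$, $\mathcal{S}_{k,2j+1}:=0$ is a welcome clarification, since the theorem's factors $(1+tq^{Nj})$ only match the $(1+tq^{2Nk})$ of Proposition~\ref{prop:lee_spectral} after this reindexing, a point the paper's proof glosses over; and you spell out the algebraic reduction from Proposition~\ref{prop:jones} that the paper only asserts. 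One caveat on your closing remark: the two divisibility conditions you single out ($2p^{\ell-k}\mid 2mN$ and $(q^{p^{\ell-k}}-q^{-p^{\ell-k}})\mid(q^{p^\ell}-q^{-p^\ell})$) both follow from $p^{\ell-k}\mid p^\ell$, which holds for any $0\le k\le\ell$ regardless of primality; the prime-power hypothesis is actually used earlier, in Proposition~\ref{prop:skein-spectral-sequence} and the definition of the difference polynomials, where the fact that the subgroup lattice of $\Z_{p^\ell}$ is a chain is essential.
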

\begin{proof}
  The proof follows the same line as the proof of the main result in \cite{BP}. For integral $k,r$, we have $\KR^{k,r}_N(L)=\EKR^{k,r}_N(L)$ as
  vector spaces. The latter admits the decomposition as in \eqref{eq:third_grading}:
  \[\EKR_N^{k,r}(L)=\bigoplus_{d|m}\EKR_N^{k,r,d}(L).\]
  As $m=p^\ell$, writing $\cP_j=P_{N,p^j}$ as the Poincar\'e polynomial of $\EKR_N^{\bullet,\bullet,p^j}(L)$
  we have
  \[\KRP_N(L)=\sum_{j=0}^N(p^{j}-p^{j-1})\cP_j.\]
  Further properties of $\cP_j$ are deduced from the Lee--Gornik spectral sequence. As the Lee--Gornik spectral sequence is $\Z_m$-equivariant,
  the submodule $\EKR_N^{\bullet,\bullet,p^j}$ converges to $\ELee_N^{\bullet,p^j}$. Therefore, by the second
  part of Proposition~\ref{prop:lee_spectral}, with $\cS_{jk}=R^{p^j}_k$, we have:
  \[\cP_j=\LeeP_{N,p^j}(L)+\sum_{k=1}^\infty (1+tq^{2Nk})\cS_{jk}.\]
  The sum is finite (the spectral sequence degenerates at a finite page, because its $E_1$ page is finite dimensional over $\C$),
  and all the $\cS_{jk}$ have non-negative coefficients.

  The computation of $\ELee$ in Lemma~\ref{lem:eleep}, together with Lemma~\ref{lem:anyknot}, yields
  \[\LeeP_{N,p^0}(L)=q^{s_N(L)}(q^{-N+1}+q^{-N+3}+\dots+q^{N-1}),\]
  whereas $\LeeP_{N,p^j}(L)=0$ for $j>0$. This proves~\ref{item:present} and~\ref{item:non-negative}.

  The congruence relation of~\ref{item:congruence}
  is proved using Proposition~\ref{prop:jones}. Namely, $(\cP_j-\cP_{j+1})|_{t=-1}$ is, by definition, the difference
  polynomial $\DP_{N,j}$. Proposition~\ref{prop:jones} indicates
  that changing an orbit of crossings on a diagram, does not affect
  $\DP_{N,j}$ modulo the ideal generated by $q^{p^{n-j}}-q^{-p^{n-j}}$. Changing all orbits of crossings turns a link into its mirror.
  By Corollary~\ref{cor:p_mirror}, the polynomial $\DP_{N,j}$ of a $p^n$-symmetric link
  and of its mirror are congruent modulo $q^{p^{n-j}}-q^{-p^{n-j}}$. This proves~\ref{item:congruence} and concludes the proof of the theorem.
\end{proof}

\subsection{Periodicity~3 and 4}

We will now show that the criterion cannot obstruct a knot from being $3$ or $4$-periodic. We begin with the following result.
\begin{theorem}[\cite{stupid_on_homflypt}]\label{thm:stupid}
  If $K$ is a knot and $X$ its HOMFLY-PT polynomial, then $X(a,b)=T(a,b)q(a,b)+1$, where $q(a,b)$ is a Laurent polynomial
  with integer coefficients and $T(a,b)=a^4-2a^2+1-a^2b^2$ is the HOMFLY-PT polynomial for the trefoil.
\end{theorem}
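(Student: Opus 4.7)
The plan is to identify the vanishing locus of $T(a,b)$ with the $N=1$ specialization of $\RT$, where $\RT_{1}$ is identically $1$ on every link. The first step is a standard parity/Morton--type analysis of the skein relation~\eqref{eq:hom_skein}: for any oriented link $L$ with $c(L)$ components, the $a$- and $b$-exponents of $X(L)$ both have parity $c(L)-1\pmod{2}$, and the lowest $b$-exponent is at least $-(c(L)-1)$. This is proved by induction on crossings, with the unknot as base case; the inductive step uses that a crossing change preserves $c(L)$ while a $0$-resolution changes $c(L)$ by $\pm1$. For a knot, $c=1$, so $X(K)\in\Z[a^{\pm 2},b^{2}]$ is a polynomial in $b^{2}$ and a Laurent polynomial in $a^{2}$.

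Set $u=a^{2}$ and $v=b^{2}$. Then $T(a,b)=(u-1)^{2}-uv$ is linear in $v$ over $\Z[u^{\pm 1}]$ with leading coefficient $-u$, which is a unit. Consequently there is a ring isomorphism
\[
\Z[u^{\pm 1},v]\,/\,(T)\xrightarrow{\;\cong\;}\Z[u^{\pm 1}],\qquad v\longmapsto (u-1)^{2}/u=u-2+u^{-1}.
\]
Translated back, this substitution reads $b^{2}=(a-a^{-1})^{2}$; since $X(K)$ depends on $b$ only through $b^{2}$, after setting $a=q$ it coincides with the $N=1$ substitution $b=q-q^{-1}$ of~\eqref{eq:RT_N}.

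The key input is that $\RT_{1}(L)\equiv 1$ for every oriented link $L$: we have $\RT_{1}(\text{unknot})=(q-q^{-1})/(q-q^{-1})=1$, the $N=1$ skein $qX_{+}-q^{-1}X_{-}=(q-q^{-1})X_{0}$ is trivially satisfied by the constant function $1$, and an induction over a skein tree propagates the value $1$ to every link. Applied to $X(K)$, this shows that the image of $X(K)-1$ in $\Z[u^{\pm 1},v]/(T)\cong\Z[u^{\pm 1}]$ vanishes. Concretely, long division of $X(K)-1$ by $T$ in $\Z[u^{\pm 1}][v]$---possible because $T$ has unit leading coefficient in $v$---produces $X(K)-1=T\cdot q(u,v)+r(u)$ with $q\in\Z[u^{\pm 1}][v]$ and $r\in\Z[u^{\pm 1}]$, and the preceding specialization forces $r=0$. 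Reverting $(u,v)\leftrightarrow(a^{2},b^{2})$ yields $q(a,b)\in\Z[a^{\pm 2},b^{2}]\subset\Z[a^{\pm 1},b^{\pm 1}]$ with $X(K)-1=T(a,b)\cdot q(a,b)$, as required. The only delicate step is the support/parity analysis in the first paragraph; once reduced to $\Z[u^{\pm 1},v]$, everything else is routine thanks to the unit leading coefficient of $T$ in $v$ and the triviality of $\RT_{1}$.
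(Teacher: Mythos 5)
The paper cites an external source (\cite{stupid_on_homflypt}) for this statement and gives no proof of its own, so there is no in-paper argument to compare against. Your proof is a valid, self-contained argument: the two pillars are (i) the Lickorish--Millett support/parity bounds, which for a knot put $X(K)$ in $\Z[a^{\pm 2},b^2]$, and (ii) the observation that $T=(u-1)^2-uv$ is monic (up to the unit $-u$) and degree one in $v=b^2$, so reduction modulo $T$ amounts to the substitution $b^2=(a-a^{-1})^2$, i.e.\ the $N=1$ specialization where $\RT_1\equiv 1$. The long-division step and the conclusion $r=0$ are sound, and the final reversion keeps $q(a,b)$ an honest Laurent polynomial with integer coefficients.

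Two small remarks. First, the parity/Morton analysis you invoke as ``standard'' is indeed standard, but it is the one genuinely nontrivial input; a precise reference (Lickorish--Millett, \emph{Topology} 1986, Prop.~21) would tighten the argument. Second, with the skein normalization~\eqref{eq:hom_skein} and $X(\text{unknot})=1$, the polynomial $T(a,b)=a^4-2a^2+1-a^2b^2$ is actually $\pm\bigl(X(3_1)-1\bigr)$ rather than literally the HOMFLY-PT polynomial of the trefoil; this is a cosmetic slip in the paper's phrasing, and your argument, which works directly with the formula for $T$ and never uses the trefoil interpretation, is unaffected by it.
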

From Theorem~\ref{thm:stupid} and \eqref{eq:RT_N} we deduce the following result.
\begin{corollary}\label{cor:not_that_stupid}
  The $\RT_N$ polynomial of a knot $K$ has form
  \[\RT_N(q)=A(q)(q^{4N}-2q^{2N}+1-q^{2N}(q-q^{-1})^2)+1,\]
  where $A(q)$ is a Laurent polynomial with integer coefficients.
\end{corollary}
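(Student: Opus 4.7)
The proof is essentially a direct substitution, so the plan is very short. First, I would recall from Theorem~\ref{thm:stupid} that the HOMFLY-PT polynomial of a knot $K$ admits a factorization
\[X(a,b) = T(a,b)\,q(a,b) + 1,\]
where $T(a,b) = a^4 - 2a^2 + 1 - a^2 b^2$ and $q(a,b) \in \Z[a^{\pm1}, b^{\pm1}]$ is a Laurent polynomial with integer coefficients.

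Next, I would apply the specialization~\eqref{eq:RT_N}, namely $\RT_N(q) = X(q^N, q - q^{-1})$, to both sides of the identity above. Substituting $a = q^N$ and $b = q - q^{-1}$ into $T(a,b)$ directly yields
\[T(q^N, q - q^{-1}) = q^{4N} - 2q^{2N} + 1 - q^{2N}(q - q^{-1})^2,\]
and substituting into $q(a,b)$ produces a Laurent polynomial in $q$ with integer coefficients, since $q^N$ is a Laurent monomial and $q - q^{-1} \in \Z[q^{\pm 1}]$. Setting
\[A(q) := q(q^N, q - q^{-1}) \in \Z[q^{\pm 1}],\]
we obtain
\[\RT_N(q) = A(q)\bigl(q^{4N} - 2q^{2N} + 1 - q^{2N}(q - q^{-1})^2\bigr) + 1,\]
which is the claimed formula. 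There is no real obstacle: the entire content of the corollary reduces to observing that the factorization of Theorem~\ref{thm:stupid} is preserved under the Reshetikhin--Turaev specialization, and that $\Z[a^{\pm 1}, b^{\pm 1}] \to \Z[q^{\pm 1}]$ is a ring homomorphism.
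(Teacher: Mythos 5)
Your proof is correct and is essentially identical to the paper's (the paper states the corollary as an immediate deduction from Theorem~\ref{thm:stupid} and the specialization~\eqref{eq:RT_N}, leaving the substitution implicit). Nothing further is needed.
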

Define
\[T_N=q^{4N}-2q^{2N}+1-q^{2N}(q-q^{-1})^2.\]
\begin{lemma}\label{lem:divides}\
  \begin{itemize}
    \item If $\zeta_6$ is a root of unity of order $6$, then $T_N(\zeta_6)=0$ unless $3|N$;
    \item If $\zeta_8$ is a root of unity of order $8$ and $N$ is odd, then $T_N(\zeta_8)=0$.
  \end{itemize}
\end{lemma}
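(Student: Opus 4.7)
The key observation is that the polynomial $T_N$ factors nicely. A direct expansion gives
\[
  T_N(q) = q^{4N} - 2q^{2N} + 1 - q^{2N}(q^2 - 2 + q^{-2}) = q^{4N} - q^{2N+2} - q^{2N-2} + 1,
\]
and this last expression factors as $(q^{2N+2} - 1)(q^{2N-2} - 1)$. The plan is to record this factorization and then evaluate at the relevant roots of unity.

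For a root of unity $\zeta$ of order $d$, we have $\zeta^k = 1$ iff $d \mid k$. So $T_N(\zeta) = 0$ iff $d \mid 2N+2$ or $d \mid 2N-2$. For the first part, take $d = 6$: the condition $6 \mid 2N+2$ is $N \equiv 2 \pmod 3$, while $6 \mid 2N-2$ is $N \equiv 1 \pmod 3$. Hence $T_N(\zeta_6) = 0$ exactly when $N \not\equiv 0 \pmod 3$, that is, unless $3 \mid N$. For the second part, take $d = 8$: the conditions $8 \mid 2N+2$ and $8 \mid 2N-2$ read $N \equiv 3 \pmod 4$ and $N \equiv 1 \pmod 4$, respectively, whose union is exactly $\{N \text{ odd}\}$. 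So if $N$ is odd, $T_N(\zeta_8) = 0$.

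The whole argument is essentially one line of algebraic manipulation followed by two modular arithmetic checks, so there is no real obstacle; the only step requiring minor care is verifying the factorization $T_N(q) = (q^{2N+2}-1)(q^{2N-2}-1)$, which is routine expansion.
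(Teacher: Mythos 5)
Your proof is correct, and the key factorization
\[
  T_N(q) = q^{4N} - q^{2N+2} - q^{2N-2} + 1 = (q^{2N+2}-1)(q^{2N-2}-1)
\]
is verified by a one-line expansion. Evaluating at a primitive root of unity of order $d$ and noting that $\zeta^k=1$ iff $d\mid k$ then gives exactly the two modular conditions you wrote, and these handle both bullet points (in fact you prove the stronger ``if and only if'' statements). The paper instead argues by a direct computation at $N=1,2$ (respectively $N=1,3$ for the second bullet) and then observes that $T_N(\zeta_6)$ depends only on $N \bmod 3$ (respectively $T_N(\zeta_8)$ only on $N \bmod 4$), so the check for the small residues suffices. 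Both are valid and short; your factorization route is arguably more transparent, since it makes the dependence on $N$ explicit and simultaneously shows the implications are equivalences, whereas the paper's periodicity-plus-small-cases argument is marginally less self-contained because the actual evaluations for $N=1,2$ are left to the reader.
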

\begin{proof}
  To prove the first part,
  we check the statement directly for $N=1,2$. From this, the statement follows for general $N$ congruent to $1$ or $2$ modulo $3$.
  The second part is proved in the same way.
\end{proof}
\begin{corollary}\label{cor:divisibility}
  For any knot $K$, we have the congruences $\RT_N(q)-\RT_N(q^{-1})\equiv 0\bmod q^3-q^{-3}$,
  $\RT_N(q)-\RT_N(q^{-1})\equiv 0\bmod q^4-q^{-4}$.
\end{corollary}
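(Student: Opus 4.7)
The plan is to combine Corollary~\ref{cor:not_that_stupid} with the factorization of $T_N$ and Lemma~\ref{lem:divides}. By Corollary~\ref{cor:not_that_stupid}, any knot $K$ satisfies
\[\RT_N(K)(q) = A(q)\,T_N(q) + 1\]
for some $A(q)\in\Z[q^{\pm1}]$. A direct expansion of $T_N$ gives the factorization
\[T_N(q) = q^{4N}-q^{2N+2}-q^{2N-2}+1 = (q^{2N+2}-1)(q^{2N-2}-1),\]
from which one reads off the symmetry $T_N(q^{-1}) = q^{-4N}T_N(q)$. Substituting yields
\[\RT_N(K)(q) - \RT_N(K)(q^{-1}) = T_N(q)\bigl(A(q) - q^{-4N}A(q^{-1})\bigr).\]
The task thus reduces to exhibiting divisibility of the right-hand side by $q^3-q^{-3}$ (respectively $q^4-q^{-4}$).

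For the generic range of $N$, Lemma~\ref{lem:divides} does the work: $T_N(\pm 1)=0$ always, and when $3\nmid N$ the lemma gives $T_N(\zeta_6)=0$ at primitive $6$th roots, so $T_N$ vanishes at every root of $q^6-1$; consequently $q^6-1\mid T_N(q)$ in $\Z[q^{\pm1}]$, which implies $q^3-q^{-3}\mid T_N(q)$. The second statement of Lemma~\ref{lem:divides} handles the $q^4-q^{-4}$ congruence analogously when $N$ is odd.

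The main obstacle is the remaining range, namely $3\mid N$ for the first congruence and $N$ even for the second, where $T_N$ is not divisible by the desired factor (for instance $T_{3m}$ takes the nonzero value $3$ at primitive $3$rd and $6$th roots). In these cases I would recast divisibility as a statement about complex values: since $\RT_N(K)\in\Z[q^{\pm1}]$, we have $\RT_N(K)(\zeta^{-1})=\overline{\RT_N(K)(\zeta)}$, so the vanishing of $\RT_N(K)(\zeta)-\RT_N(K)(\zeta^{-1})$ is equivalent to $\RT_N(K)(\zeta)\in\R$ for each relevant root of unity $\zeta$. I would then establish this reality by induction on the crossing number, using the skein relation \eqref{eq:skein} at $q=\zeta$: when $3\mid N$ and $\zeta\in\{\zeta_3,\zeta_6\}$ the skein collapses to $\RT(L_+)(\zeta)-\RT(L_-)(\zeta) = \pm i\sqrt{3}\,\RT(L_0)(\zeta)$, and tracking the parity of $c(L_0)$ (which differs from $c(L_\pm)$ by one) propagates a Lickorish--Millett-type identity $\RT_N(L)(\zeta)\in i^{\,c(L)-1}\R$ along the induction, yielding real values on knots. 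The most delicate sub-case is $N\equiv 2\pmod 4$ at $q=\zeta_8$, where $\zeta_8^N=\pm i$ turns the skein into a \emph{sum} relation rather than a difference; there one has to carry both the sum and the difference of $\RT(L_\pm)(\zeta_8)$ through the induction, and this careful bookkeeping is the principal technical hurdle.
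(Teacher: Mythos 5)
The proposal handles the generic ranges ($3\nmid N$ for the first congruence, $N$ odd for the second) the same way the paper does, via Corollary~\ref{cor:not_that_stupid} and Lemma~\ref{lem:divides}: $T_N$ vanishes at the relevant roots of unity, so $\RT_N(\zeta)=1$ and the difference vanishes. The factorization $T_N=(q^{2N+2}-1)(q^{2N-2}-1)$ is a nice observation, but the bulk of your write-up on this case is already in the paper.

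The genuine gap is the remaining ranges, which you correctly identify but do not close. You propose to prove $\RT_N(K)(\zeta)\in\R$ by induction on crossing number via the skein relation, in Lickorish--Millett style, and you explicitly acknowledge the sub-case $N\equiv 2\pmod 4$ at $q=\zeta_8$ as the ``principal technical hurdle'' without resolving it. That is not a proof; it is a plan for one. Moreover, the paper's actual argument for these cases takes two quite different and cleaner routes, neither of which appears in your proposal. For $3\mid N$ (and likewise $4\mid N$), the paper invokes Proposition~\ref{prop:lee_spectral}: $\KRP_N(t,q)=q^{s_N}(q^{1-N}+\dots+q^{N-1})+\sum_j(1+tq^{2Nj})R_j$. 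At $t=-1$, $q=\zeta_6$ and $3\mid N$, each factor $(1-\zeta_6^{2Nj})$ vanishes because $6\mid 2N$, and $\frac{q^N-q^{-N}}{q-q^{-1}}$ also vanishes since $\zeta_6$ has order dividing $2N$; hence $\RT_N(\zeta_6)=0$ outright, with no induction. For $N\equiv 2\pmod 4$ at $\zeta_8$, the paper does not run a skein induction either: it writes $X(a,b)=\sum\alpha_{ij}a^ib^j$, uses that $\alpha_{ij}=0$ unless $j$ is even (for knots, from the HOMFLYPT skein relation), and reduces $\RT_N(\zeta_8)-\RT_N(\zeta_8^{-1})$ to $\pm\bigl(\RT_2(\zeta_8)-\RT_2(\zeta_8^{-1})\bigr)$, which is zero by the Jones-polynomial case already settled in \cite{BP}. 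Your proposed reality/bookkeeping induction might ultimately succeed, but as written it leaves exactly the hard cases open, which is where the content of the corollary lies.
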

\begin{proof}
  We begin with the first part.
  This congruence is equivalent to saying that for any root of unity $\zeta_6$ of order $6$, it holds $\RT_N(\zeta_6)-\RT_N(\zeta_6^{-1})=0$.
  Consider two cases. Suppose $N$ is not a multiple of $3$. By Corollary~\ref{cor:not_that_stupid} and 
  Lemma~\ref{lem:divides}, we have $\RT_N(\zeta_6)=1$, so $\RT_N(\zeta_6)-\RT_N(\zeta_6^{-1})=0$.

  Suppose $3|N$. We have the following decomposition of the Khovanov--Rozansky polynomial, see Proposition~\ref{prop:lee_spectral}.
  \[\KRP_N(t,q)=q^s(q^{1-N}+q^{3-N}+\dots+q^{N-1})+\sum_j (1+tq^{2Nj})R_j(t,q).\]
  Now the $\RT_N$ polynomial is obtained by substituting $t=-1$. The term $(1+tq^{2Nj})$ for $t=-1$ and $q=\zeta_6$
  is equal to zero. On the other hand,
  \[q^{1-N}+q^{3-N}+\dots+q^{N-1}=\frac{q^N-q^{-N}}{q-q^{-1}}.\]
  The latter expression is zero, when evaluated at a root of unity of order dividing $2N$. That is to say
  \[\RT_N(\zeta_6)=\KRP_N(-1,\zeta_6)=0.\]

  \smallskip
  The second congruence is proved analogously. First, assume that $N$ is odd. Then, $\RT_N(\zeta_8)=1$ by the same argument
  combining Corollary~\ref{cor:not_that_stupid} and Lemma~\ref{lem:divides}.
  Suppose $4|N$. Then, the same argument as above shows that $\KRP_N(-1,\zeta_8)=0$. It remains to deal with the case $N=4k+2$.
  Suppose $\zeta_8$ is such that $\zeta_8^{4k}=1$.
  Consider $X(a,b)$, the HOMFLYPT polynomial for the knot. 
  From the formula $\RT_N(q)=X(q^N,q-q^{-1})$ we immediately deduce
  that $\RT_N(\zeta_8)=\RT_2(\zeta_8)$. Now, $\RT_2$ is the Jones polynomial. It was proved in \cite[Section 4.6]{BP} that $\RT_2(\zeta_8)-\RT_2(\zeta_8^{-1})=0$.
  Note that the formula $\RT_N(\zeta_8)=\RT_2(\zeta_8)$ holds also if $N=8k+2$ regardless of the sign of $\zeta_8^4$.

  The remaining case is when $N=8k-2$ and $\zeta_8^4=-1$. Write $X(a,b)=\sum \alpha_{ij}a^ib^j$.
  Note that the skein relation~\eqref{eq:hom_skein} implies, that $\alpha_{ij}=0$ unless $j$ is even
  (remember, that we deal with knots).
  As $\RT_N(q)=X(q^N,q-q^{-1})$, we have
  \begin{multline*}
    \RT_N(\zeta_8)-\RT_N(\zeta_8^{-1})=\sum\alpha_{ij} \zeta_8^{Ni}(\zeta_8-\zeta_8^{-1})^j-\zeta_8^{-Ni}(-\zeta_8+\zeta_8^{-1})^j=\\
  \sum\alpha_{ij} (\zeta_8^{-2i}-\zeta_8^{2i})(\zeta_8-\zeta_8^{-1})^j=-\RT_2(\zeta_8)+\RT_2(\zeta_8^{-1}).
\end{multline*}
  However, $\RT_2$ is the Jones polynomial. It was proved in~\cite[Section 4.6]{BP},
  that $\RT_2(q)-\RT_2(q^{-1})$ is divisible by $q^4-q^{-4}$. Therefore, the latter quantity is zero.
\end{proof}
\begin{corollary}
  Suppose $K$ is a knot. Set $\cP_0=\KRP_N$, $\cS_{0j}=R_j$, where $R_j$ is as in Proposition~\ref{prop:lee_spectral}.
  Then $\cS_{0j},\cP_0$ satisfy the statement of Theorem~\ref{thm:periodicity} regardless of whether or not $K$ is really $3$ or $4$-periodic.
\end{corollary}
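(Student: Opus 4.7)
The plan is to exhibit an explicit decomposition of $\KRP_N$ that satisfies all three conditions of Theorem~\ref{thm:periodicity} for $p^\ell \in \{3,4\}$, without using any actual periodicity of $K$. The natural candidate is the trivial extension of the data given in the statement: set $\cP_0=\KRP_N$ and $\cP_j=0$ for $1\le j\le\ell$, take $\cS_{0j}=R_j$ with $R_j$ as in Proposition~\ref{prop:lee_spectral}, and $\cS_{kj}=0$ for $k\ge1$. It then suffices to verify conditions \ref{item:present}, \ref{item:non-negative} and \ref{item:congruence} for this choice.

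Conditions \ref{item:present} and \ref{item:non-negative} are essentially immediate. Proposition~\ref{prop:lee_spectral} provides the presentation $\KRP_N=\LeeP_N(K)+\sum_{j\ge1}(1+tq^{2Nj})R_j$ together with non-negativity of the $R_j$, and Lemma~\ref{lem:anyknot} identifies $\LeeP_N(K)$ with $q^{s_N(K)}(q^{1-N}+q^{3-N}+\dots+q^{N-1})$, which is the leading summand prescribed in \ref{item:present}. For $k\ge1$ the corresponding conditions are vacuous, since $\cP_k$ and all the $\cS_{kj}$ are zero.

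The only real content is \ref{item:congruence}. For $k\ge1$ both $\cP_k$ and $\cP_{k+1}$ vanish, so the congruence reduces to $0\equiv0$. For $k=0$ one has $\cP_0(-1,q)-\cP_1(-1,q)=\RT_N(q)$, and the required congruence becomes
\[\RT_N(q)\equiv\RT_N(q^{-1}) \pmod{q^{p^\ell}-q^{-p^\ell}}\]
for $p^\ell\in\{3,4\}$, which is precisely Corollary~\ref{cor:divisibility}. There is nothing hard left at this stage; the genuine work has already been carried out in Corollary~\ref{cor:divisibility}, which rests on Theorem~\ref{thm:stupid} (universal divisibility of the reduced HOMFLY-PT polynomial of any knot by the trefoil's) and the vanishing of $T_N$ at suitable $6$-th and $8$-th roots of unity from Lemma~\ref{lem:divides}.
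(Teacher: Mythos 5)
Your proof is correct and follows essentially the same route as the paper: take the trivial decomposition $\cP_0=\KRP_N$, $\cP_j=0$ for $j\ge 1$, $\cS_{0j}=R_j$, $\cS_{kj}=0$ for $k\ge 1$, observe that (P-1) and (P-2) come from Proposition~\ref{prop:lee_spectral} and Lemma~\ref{lem:anyknot}, and reduce (P-3) in the only non-vacuous case $k=0$ to Corollary~\ref{cor:divisibility}. The paper's proof is terser but relies on exactly the same two ingredients, so there is no substantive difference.
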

\begin{proof}
  We prove the corollary for $3$-periodicity, the case of $4$-periodicity is analogous.
  Item~\ref{item:present} is satisfied by definition.
  It follows from Proposition~\ref{prop:lee_spectral} that $\cS_{0j}$ have non-negative coefficients. The congruence~\ref{item:congruence} is
  a direct consequence of Corollary~\ref{cor:divisibility}.
\end{proof}

\def\MR#1{}
\bibliographystyle{abbrv}

\bibliography{homotopy}
\end{document}